\renewcommand{\mathbf}{\mathbold}
\renewcommand{\@secnumfont}{\bfseries}
\def\l@subsection{\@tocline{2}{0pt}{2.5pc}{5pc}{}}
\numberwithin{equation}{section}
\newcommand{\ps}[1]{\mkern-.25mu\mathbin{(\mkern-3.5mu({#1})\mkern-3.5mu)}}
\newcommand{\N}{\mathbb{N}}
\newcommand{\R}{\mathbb{R}}
\newcommand{\C}{\mathbb{C}}
\newcommand{\E}{\mathbb{E}}
\newcommand{\bx}{\mathbf{x}}
\newcommand{\by}{\mathbf{y}}
\newcommand{\bs}{\mathbf{s}}
\newcommand{\bt}{\mathbf{t}}
\newcommand{\bE}{\mathbf{E}}
\newcommand{\cP}{\mathcal{P}}
\newcommand{\cA}{\mathcal{A}}
\newcommand{\cB}{\mathcal{B}}
\newcommand{\cW}{\mathcal{W}}
\newcommand{\cD}{\mathcal{D}}
\newcommand{\cK}{\mathcal{K}}
\newcommand{\hcA}{\widehat{\cA}}
\newcommand{\hcB}{\widehat{\cB}}
\DeclareMathOperator{\cmCh}{\mathbf{Ch}}
\newcommand{\bsC}{\textsf{\textbf{C}}}
\newcommand{\bsD}{\textsf{\textbf{D}}}
\newcommand{\cmG}{\mathbf{G}}
\newcommand{\cmg}{{\boldsymbol{\mathfrak{g}}}}
\newcommand{\cmh}{{\boldsymbol{\mathfrak{h}}}}
\newcommand{\cmA}{\mathbf{A}}
\newcommand{\cmH}{\mathbf{H}}
\newcommand{\mult}{\odot}
\newcommand{\cmb}{\delta} %
\newcommand{\gt}{\vartriangleright}
\newcommand{\gtd}{\gtrdot}
\newcommand{\ltd}{\lessdot}
\newcommand{\dg}{\textsf{\textbf{D}}}
\newcommand{\hA}{\hat{A}}
\newcommand{\bF}{\mathbf{F}}
\newcommand{\bS}{\mathbf{S}}
\newcommand{\oT}{\overline{T}}
\newcommand{\hH}{\widehat{H}}
\newcommand{\hcmA}{\hat{\cmA}}
\newcommand{\bT}{\mathbf{T}}
\newcommand{\bB}{\mathbf{B}}
\newcommand{\hT}{\widehat{T}}
\newcommand{\tbx}{\widetilde{\bx}}
\newcommand{\tby}{\widetilde{\by}}
\newcommand{\pcmH}{\widehat{\pcmH}}
\newcommand{\ucona}{\zeta}
\newcommand{\uconc}{Z}
\newcommand{\V}{V}
\newcommand{\units}{\times}
\newcommand{\hs}{\hat{s}}
\newcommand{\ts}{\tilde{s}}
\newcommand{\oE}{\overline{E}}
\newcommand{\orho}{\sigma}
\newcommand{\CTRL}{W}
\newcommand{\ctr}[2]{\omega^{#1}_{#2}}
\newcommand{\inc}[2]{\Delta_{#1, #2}}
\newcommand{\CTR}[3]{W^{#1}_{#2}(#3)}
\newcommand{\INC}[3]{D^{#1}_{#2}(#3) }
\newcommand{\hotimes}{\widehat{\otimes}}
\newcommand{\ffact}[2]{(#1)_{#2}!}
\newcommand{\fbinom}[3]{\binom{#1}{#2}_{\mkern-5.5mu #3}}
\newcommand{\andd}{\quad \text{and} \quad}
\newcommand{\fg}{\mathfrak{g}}
\newcommand{\fh}{\mathfrak{h}}
\newcommand{\GL}{\text{GL}}
\newcommand{\cmB}{\mathbf{B}}
\newcommand{\ox}{\overline{x}}
\newcommand{\oy}{\overline{y}}
\newcommand{\oX}{\overline{X}}
\newcommand{\oY}{\overline{Y}}
\newcommand{\rrX}{\mathbb{X}}
\newcommand{\rX}{\boldsymbol{X}}
\newcommand{\rx}{\boldsymbol{x}}
\newcommand{\gr}[1]{(#1)}
\newcommand{\pgr}[1]{\{#1\}}
\newcommand{\orrX}{\overline{\rrX}}
\newcommand{\orX}{\overline{\rX}}
\newcommand{\orx}{\overline{\rx}}
\newcommand{\orrY}{\overline{\rrY}}
\newcommand{\orY}{\overline{\rY}}
\newcommand{\ory}{\overline{\ry}}
\newcommand{\hrrX}{\widehat{\rrX}}
\newcommand{\trrX}{\widetilde{\rrX}}
\newcommand{\trX}{\widetilde{\rX}}
\newcommand{\trx}{\widetilde{\rx}}
\newcommand{\rrY}{\mathbb{Y}}
\newcommand{\rY}{\boldsymbol{Y}}
\newcommand{\ry}{\boldsymbol{y}}
\newcommand{\trrY}{\widetilde{\rrY}}
\newcommand{\trY}{\widetilde{\rY}}
\newcommand{\try}{\widetilde{\ry}}
\newcommand{\dya}{\mathbb{D}}
\newcommand{\tf}{\widetilde{f}}
\newcommand{\cmFXA}{\mathbf{FXA}}
\newcommand{\intind}{\tau}
\newcommand{\sig}{S}
\newcommand{\ssig}{R}
\newcommand{\hssig}{\widehat{R}}
\newcommand{\cona}{\alpha}
\newcommand{\conb}{\beta}
\newcommand{\conc}{\gamma}
\newcommand{\con}{{\boldsymbol{\omega}}}
\newcommand{\tcona}{\widetilde{\cona}}
\newcommand{\tconc}{\widetilde{\conc}}
\newcommand{\tucona}{\widetilde{\zeta}}
\newcommand{\tuconc}{\widetilde{Z}}
\newcommand{\gridmult}{\boxplus}
\newcommand{\tgridmult}{\widetilde{\gridmult}}
\newcommand{\hmult}{\odot_h}
\newcommand{\vmult}{\odot_v}
\newcommand*{\bighmult}[2]{{\bigodot_{#1}^{#2} }\,_{\mkern-2.5mu h} \, }
\newcommand*{\bigvmult}[2]{{\bigodot_{#1}^{#2} }\,_{\mkern-2.5mu v}  \, }
\newcommand{\smm}{s_-}
\newcommand{\spp}{s_+}
\newcommand{\sss}{s_*}
\newcommand{\tmm}{t_-}
\newcommand{\tpp}{t_+}
\newcommand{\tss}{t_*}
\newcommand{\smp}{\smm, \spp}
\newcommand{\tmp}{\tmm, \tpp}
\newcommand{\smpi}[1]{\smm^{#1}, \spp^{#1}}
\newcommand{\tmpi}[1]{\tmm^{#1}, \tpp^{#1}}
\newcommand{\ppart}{\mathcal{P}}
\newcommand{\dpart}{\mathcal{D}}
\newcommand{\surfaces}{\mathsf{Surfaces}}
\newcommand{\paths}{\mathsf{Paths}}
\newcommand{\concat}{\star}
\newcommand{\bSigma}{{\boldsymbol{\Sigma}}}
\newcommand{\bdy}{\partial}
\newcommand{\sC}{\mathsf{C}}
\newcommand{\ovrho}{\overline{\rho}}
\newcommand*{\pmat}[1]{\begin{pmatrix}#1\end{pmatrix}}
\DeclareMathOperator{\sab}{sab}
\DeclareMathOperator{\Lie}{Lie}
\DeclareMathOperator{\FL}{FL}
\DeclareMathOperator{\Pf}{Pf}
\DeclareMathOperator{\id}{id}
\DeclareMathOperator{\im}{im}
\DeclareMathOperator{\op}{op}
\DeclareMathOperator{\spann}{span}
\DeclareMathOperator{\FXA}{FXA}
\DeclareMathOperator{\FBi}{FBi}
\DeclareMathOperator{\Mat}{Mat}
\DeclareMathOperator{\Ch}{Ch}
\DeclareMathOperator{\hCh}{\widehat{Ch}}
\DeclareMathOperator{\ad}{ad}
\DeclareMathOperator{\DGF}{DGF}
\DeclareMathOperator{\RP}{RP}
\DeclareMathOperator{\RS}{RS}
\DeclareMathOperator{\Aut}{Aut}
\DeclareMathOperator{\LCS}{LCS}
\DeclareMathOperator{\diam}{diam}
\DeclareMathOperator{\HPF}{HPF}
\DeclareMathOperator{\VPF}{VPF}
\DeclareMathOperator{\SF}{SF}
\DeclareMathOperator{\Law}{Law}
\DeclareMathOperator{\PBi}{PBi}
\newtheorem{counter}{Counter}[section]
\newtheorem{lemma}[counter]{Lemma}
\newtheorem{proposition}[counter]{Proposition}
\newtheorem{theorem}[counter]{Theorem}
\newtheorem{corollary}[counter]{Corollary}
\theoremstyle{definition}
\newtheorem{definition}[counter]{Definition}
\newtheorem{example}[counter]{Example}
\newtheorem{remark}[counter]{Remark}
\title{The Surface Signature and Rough Surfaces}
\author{Darrick Lee}
\date{\today}
\begin{document}

\maketitle

\begin{abstract}
    \small
    Parallel transport, or path development, provides a rich characterization of paths which preserves the underlying algebraic structure of concatenation.
    The path signature is universal among such maps: any (translation-invariant) parallel transport factors uniquely through the path signature. 
    Furthermore, the path signature is a central object in the theory of rough paths, which provides an integration theory for highly irregular paths.
    A fundamental result is Lyons' extension theorem, which allows us to compute the signature of rough paths, and in turn provides a way to compute parallel transport of arbitrarily irregular paths.

    In this article, we consider the notion of surface holonomy, a generalization of parallel transport to the higher dimensional setting of surfaces parametrized by rectangular domains, which preserves the higher algebraic structures of horizontal and vertical concatenation. 
    Building on work of Kapranov, we introduce the surface signature, which is universal among surface holonomy maps with respect to continuous 2-connections.
    Furthermore, we introduce the notion of a rough surface and prove a surface extension theorem, which allows us to compute the signature of rough surfaces.
    By exploiting the universal property of the surface signature, this provides a method to compute the surface holonomy of arbitrarily irregular surfaces.
\end{abstract}

\clearpage
$\,$\vspace{10pt}
\tableofcontents

\clearpage
\section{Introduction}

\subsection{Paths} \label{ssec:paths}

Continuous paths are ubiquitous in both pure and applied mathematics, modelling sequential phenomena such as stochastic processes or empirical time series data. Consider the space of parametrized piecewise smooth paths
\begin{align}\label{eq:paths}
    \paths^\infty(\V) \coloneqq \bigcup_{s_1 \leq s_2} C^\infty([s_1, s_2], \V). 
\end{align}
into a finite dimensional Hilbert space $\V$. While this space is inherently nonlinear, as there is no natural way to add paths parametrized on different intervals, paths are equipped with a concatenation structure if their parametrization and boundaries coincide. Suppose $s_1 \leq s_2 \leq s_3$. Two paths $x \in C^\infty([s_1, s_2], \V)$ and $y \in C^\infty([s_2, s_3], \V)$ are \emph{composable} if $x_{s_2} = y_{s_2}$, and we define their concatenation
\begin{align} \label{eq:intro_path_concatenation}
    x \concat y \in C^\infty([s_1, s_3], \V) \quad \text{by} \quad (x \concat y)_s \coloneqq \left\{
        \begin{array}{ll}
            x_{s} & : s \in [s_1, s_2] \\
            y_{s} & : s \in [s_2, s_3].
        \end{array}
    \right.
\end{align}
This concatenation structure highlights the fundamental sequential nature of paths: since $x$ is parametrized on $[s_1, s_2]$, it occurs before $y$, which is parametrized on $[s_2, s_3]$. \medskip

We often need to work with either measures on path spaces (such as for stochastic processes), or functions on path spaces (such as for time series classification). One way to study measures and functions on $\paths^\infty(\V)$ is to build descriptions of paths $F: \paths^\infty(\V) \to A_0$ valued in a unital associative algebra $(A_0, \cdot, 1)$ which preserve the concatenation structure,
\begin{align}
    F(x \concat y) = F(x) \cdot F(y).
\end{align}

One example is the notion of \emph{path development} or \emph{(translation-invariant) parallel transport}\footnote{We refer to parallel transport with respect to translation-invariant connections as \emph{path development}, which is the common terminology in the analysis literature. We use the terms \emph{holonomy} and \emph{parallel transport} interchangeably, as is common in the higher geometry literature.}. Given a linear map $\cona \in L(\V, A_0)$ which we call a \emph{connection}, we define the \emph{parallel transport with respect to $\cona$}, denoted $F^\cona: \paths^\infty(\V) \to A_0$, as the solution of the differential equation for $F^\cona_t(x): [s_1, s_2] \to A_0$ given by
\begin{align} \label{eq:intro_parallel_transport}
    \frac{dF^\cona_s(x)}{dt} = F^\cona_s(x) \cdot \cona\left( \frac{dx}{ds}\right), \quad F^{\cona}_{s_1}(x) = 1,
\end{align}
with $F^\cona(x) \coloneqq F^{\cona}_{s_2}(x)$, which preserves this concatenation structure. Parallel transport maps provide a rich description of path spaces up to translation of both the parametrization and the paths. In other words, if we append the parametrization to the path\footnote{To be precise, for a path $x$, we append the parametriation by considering $\overline{x}_t = (t, x_t)$ valued in $\R \oplus \V$; thus, connections are valued in $L(\R \oplus \V, A_0)$.}, they
\begin{enumerate}
    \item \textbf{separate paths:} given two distinct paths $x, y \in C^\infty([0,1],\V)$, there exists an algebra $A_0$ and a connection $\cona$ such that $F^\cona(\ox) \neq F^\cona(\oy)$;
    \item are \textbf{analytically universal}\footnote{We use the term \emph{analytically universal} to refer to density of functions, in order to distinguish this from \emph{universal} which refers to the category theoretic meaning.}\textbf{:} linear functionals $\langle \ell, F^\cona(\overline{\cdot})\rangle: \cK \to \R$ are dense in $C(\cK, \R)$ equipped with the uniform topology, where $\cK \subset C^\infty([0,1],\V)$ is a compact subset; and
    \item are \textbf{characteristic:} given two distinct measures $\mu, \nu$ valued on the compact subset $\cK$, there exists an algebra $A_0$ and a connection $\cona$ such that $\E_{x \sim \mu}[F^\cona(\ox)] \neq \E_{y \sim \nu}[F^\cona(\oy)]$. 
\end{enumerate}

The \emph{path signature} is the universal path development map. It can be defined explicitly for a path $x: [\smp] \to \V$ as a sequence of iterated integrals valued in a group $G_0 \subset T_0\ps{\V} \coloneqq \prod_{n=0}^\infty \V^{\otimes n}$ embedded in formal power series of tensors
\begin{align} \label{eq:intro_path_signature}
    S: \paths^\infty(\V) \to G_0, \quad S(x) = 1 + \sum_{n=1}^\infty \int_{\Delta^n(\smp)} dx_{s_1} \cdots dx_{s_n},
\end{align}
where $\Delta^n(\smp) \coloneqq \{ \smm \leq s_1 < \ldots < s_n \leq \spp\}$ and $\cdot$ denotes the tensor product in $T_0\ps{\V}$. The parameter $n$ denotes the \emph{level} of the tensor algebra and path signature. The path signature satisfies a well-known universal property.

\begin{theorem}[Informal] \label{thm:intro_path_signature_universal}
    Let $(A_0, \cdot, 1)$ be a unital Banach algebra, and $\cona \in L(\V, A_0)$ be a bounded linear map. Then, there exists a unique map $\tcona: G_0 \to A_0$ such that 
    \begin{align}\nonumber
        F^\cona(x) = \tcona(S(x)).
    \end{align}
\end{theorem}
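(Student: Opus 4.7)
The plan is to construct $\tcona$ in two stages: first, extend the linear map $\cona: \V \to A_0$ to a continuous unital algebra homomorphism on $T_0\ps{\V}$ via the universal property of the tensor algebra; second, verify that the restriction to $G_0$ intertwines the signature with parallel transport.

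For the extension, I would define $\tcona(v_1 \otimes \cdots \otimes v_n) := \cona(v_1) \cdot \cona(v_2) \cdots \cona(v_n)$ on simple tensors, extend by linearity to each level $\V^{\otimes n}$, and then sum over levels. Convergence of the resulting series $\tcona(S(x))$ in the Banach algebra $A_0$ follows from the factorial decay $\|S_n(x)\| \leq \|x\|_{1\text{-var}}^n / n!$ at each signature level, combined with the operator-norm bound $\|\cona\|^n$; the series thus converges absolutely, and $\tcona$ restricts to a well-defined map on $G_0$.

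To identify $\tcona(S(x))$ with $F^\cona(x)$, I see two natural routes. The first is direct expansion: Picard iteration applied to the integral form $F^\cona_s(x) = 1 + \int_{\smm}^s F^\cona_r(x) \cdot \cona(dx_r)$ of \eqref{eq:intro_parallel_transport} produces exactly the series $1 + \sum_n \int_{\Delta^n(\smp)} \cona(dx_{s_1}) \cdots \cona(dx_{s_n})$, which equals $\tcona(S(x))$ term by term. The second is a uniqueness-of-ODE argument: Chen's multiplicativity $S(x \concat y) = S(x) \otimes S(y)$ together with the algebra-homomorphism property of $\tcona$ forces $s \mapsto \tcona(S(x|_{[\smm, s]}))$ to solve the same initial value problem as $F^\cona_s(x)$, whence the two agree.

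Uniqueness of $\tcona$ — understood as a continuous unital algebra homomorphism restricting to $\cona$ on $\V$ — is immediate because level-one elements topologically generate $T_0\ps{\V}$ as a complete algebra. The main obstacle I anticipate is functional-analytic bookkeeping rather than ideas: pinning down the correct topology on $T_0\ps{\V}$ and the precise continuity hypothesis on $\cona$ so that the extension is simultaneously well-defined and unique, and clarifying which structure on $G_0$ (grouplikeness in the shuffle Hopf algebra, or being a topological group under truncated tensor multiplication) makes the informal uniqueness assertion rigorous. In the smooth setting this is largely administrative, but it is the substantive analytic input that must later be generalised when the paper turns to rough surfaces.
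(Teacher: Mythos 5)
Your proposal takes essentially the same route as the paper: construct $\tcona$ by the universal property of the tensor algebra, check convergence via factorial decay of the signature against the operator-norm bound on $\cona$, and identify $F^\cona(x) = \tcona(S(x))$ by matching Picard iterations. The functional-analytic bookkeeping you flag at the end is exactly what the paper resolves by working in $E_0(\V)$, the locally $m$-convex completion of $T_0(\V)$ under the norms $p_\lambda(x) = \sum_n \lambda^n \|x^{\gr{n}}\|$ from Chevyrev--Lyons (\Cref{cor:E0_universal}): this is the right intermediate space because the whole product $T_0\ps{\V}$ is too large for $\tcona$ to extend continuously for arbitrary bounded $\cona$ (e.g.\ $\sum_n v^{\otimes n}$ with $\|\cona(v)\|\ge 1$), while $E_0(\V)$ still contains the signature and admits a unique continuous extension; so your claim that level-one elements generate $T_0\ps{\V}$ should be read as generating $E_0(\V)$.
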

In turn, the path signature itself can separate paths, and satisfies the analytic universality and characteristicness properties above. \medskip

So far, our discussion has been limited to smooth paths, and a natural question is whether such constructions are well-defined for lower regularity paths, such as $\rho$-H\"older paths,
\begin{align} \label{eq:intro_holder_paths}
    C^\rho([\smp], \V) \coloneqq \left\{ x \in C([\smp], \V) \, : \, \|x\|_\rho \coloneqq \sup_{(s_1, s_2) \in \Delta^2(\smp)} \frac{\|x_{s_1} - x_{s_2}\|}{|s_1 - s_2|^\rho} < \infty\right\},
\end{align}
for $\rho \in (0,1]$. For $\rho \in (\frac{1}{2}, 1]$, we can use the Young integral~\cite{young_inequality_1936} to compute the iterated integrals in~\Cref{eq:intro_path_signature} to define the path signature~\cite{lyons_differential_2007}. Furthermore, this boundary of $\rho = \frac12$ is sharp: for $\rho \in (0, \frac{1}{2}]$, there does not exist an integration theory that continuously extends Riemann integration (see~\cite[Section 1.5]{lyons_differential_2007}). \medskip

However, the theory of rough paths~\cite{lyons_differential_1998} allows us to go beyond the Young regime by considering lifted paths valued in the tensor algebra of the state space $\V$. In particular, for $\rho \in (0,1]$, a \emph{(weakly geometric) $\rho$-rough path}\footnote{We only work with weakly geometric rough paths in this article, and will assume all rough paths are weakly geometric throughout.} is a \emph{multiplicative functional}
\begin{align} \label{eq:intro_multiplicative_functional}
    \rx : \Delta^2(\smp) \to G_0^{\gr{\leq n_\rho}} \quad \text{which satisfies} \quad  \rx_{s_1, s_2} \cdot \rx_{s_2, s_3} = \rx_{s_1, s_3},
\end{align}
where $n_\rho \coloneqq \lfloor \rho^{-1} \rfloor$, and such that certain regularity conditions at each level. Here, $G_0^{\gr{\leq n}}$ denotes the truncation of $G_0$ at level $n$. We denote the space of $\rho$-rough paths by $\RP^\rho$.\medskip

For a path $y \in C^\rho([\smp], \V)$ in the Young regime ($\rho > \frac12$), we can construct a multiplicative functional $\ry: \Delta^2(\smp) \to G_0^{\gr{\leq n}}$ for any $n \in \N$ via the path signature
\begin{align} \label{eq:intro_young_mult_func}
    \ry^{\gr{k}}_{s_1, s_2} \coloneqq S^{\gr{k}}(y|_{[s_1, s_2]}).
\end{align}
However, for a path $x \in C^\rho([\smp], \V)$ beyond the Young regime ($\rho \leq \frac12$), the path signature is no longer well-defined. Instead, we \emph{postulate} the path signature of $x$ up to level $k = n_\rho$ in such a way that the multiplicativity condition in~\Cref{eq:intro_multiplicative_functional} holds. Lyons' extension theorem~\cite{lyons_differential_1998} (stated formally in~\Cref{thm:path_extension}) shows that there exist a unique multiplicative extension of rough paths.

\begin{theorem}[Informal~\cite{lyons_differential_1998}] \label{thm:intro_informal_path_extension}
    Let $\rho \in (0,1]$ and $\rx: \Delta^2 \to G_0^{\gr{\leq n_\rho}}$ be a $\rho$-rough path. Then, there exists a unique multiplicative functional $\trx: \Delta^2 \to G_0$ which satisfies certain regularity conditions and $\trx^{\gr{k}} = \rx^{\gr{k}}$ for all $k \in 0, 1, \ldots, n_\rho$.
\end{theorem}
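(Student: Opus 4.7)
The plan is to prove the theorem by induction on the level, constructing $\trx^{\gr{n+1}}$ from the extension at levels $\leq n$ for each $n \geq n_\rho$, with base case $\trx^{\gr{k}} = \rx^{\gr{k}}$ for $k \leq n_\rho$. This is the classical Lyons argument, and the critical observation is that $(n+1)\rho > 1$ for every $n \geq n_\rho$ by definition of $n_\rho = \lfloor \rho^{-1}\rfloor$, which is precisely what drives convergence of the partition products used in the construction.

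For the inductive step, assume $\trx$ has been defined at all levels $\leq n$, is multiplicative, and satisfies a H\"older-type bound $\|\trx^{\gr{k}}_{s_1, s_2}\| \lesssim |s_2 - s_1|^{k\rho}$ for each $k \leq n$. Given $(s_1, s_2) \in \Delta^2(\smp)$ and a partition $\pi = \{s_1 = t_0 < t_1 < \cdots < t_m = s_2\}$, form
\begin{align*}
    \trx^{\pi}_{s_1, s_2} \coloneqq \trx^{\gr{\leq n}}_{t_0, t_1} \cdot \trx^{\gr{\leq n}}_{t_1, t_2} \cdots \trx^{\gr{\leq n}}_{t_{m-1}, t_m}
\end{align*}
in the truncation of $T_0\ps{\V}$ at level $n+1$, and define $\trx^{\gr{n+1}}_{s_1, s_2}$ as the limit of the level-$(n+1)$ component of $\trx^\pi_{s_1, s_2}$ as $|\pi| \to 0$. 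The key analytic step is the refinement estimate: inserting a single intermediate point $t^* \in (t_i, t_{i+1})$ changes the level-$(n+1)$ component by exactly $\sum_{j=1}^n \trx^{\gr{j}}_{t_i, t^*} \cdot \trx^{\gr{n+1-j}}_{t^*, t_{i+1}}$ (every other factor contributes at level $0$), which is bounded by $C|t_{i+1} - t_i|^{(n+1)\rho}$ via the inductive H\"older bounds. Choosing at each stage a point whose removal is cheapest via the pigeonhole bound $|t_{k+1} - t_{k-1}| \lesssim (s_2 - s_1)/|\pi|$ and summing over removals then yields a uniform Cauchy estimate, since $\sum_{m \geq 1} m^{-(n+1)\rho}$ converges when $(n+1)\rho > 1$.

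The three required properties of $\trx^{\gr{n+1}}$ follow easily. Multiplicativity $\trx_{s_1, s_2} \cdot \trx_{s_2, s_3} = \trx_{s_1, s_3}$ at level $n+1$ holds by restricting to partitions of $[s_1, s_3]$ containing $s_2$ and passing to the limit, while the H\"older bound at level $n+1$ comes from applying the accumulated refinement estimate starting from the trivial partition $\{s_1, s_2\}$ (whose partition product has vanishing level-$(n+1)$ component). For uniqueness, let $\trx'$ be any multiplicative extension with the same regularity, so $(\trx')^{\gr{\leq n}} = \trx^{\gr{\leq n}}$ by induction. Expanding $\trx'_{s_1, s_2} = \trx'_{t_0, t_1} \cdots \trx'_{t_{m-1}, t_m}$ and extracting the level-$(n+1)$ piece gives
\begin{align*}
    (\trx')^{\gr{n+1}}_{s_1, s_2} = \left(\trx^{\pi}_{s_1, s_2}\right)^{\gr{n+1}} + \sum_{i=0}^{m-1} (\trx')^{\gr{n+1}}_{t_i, t_{i+1}},
\end{align*}
since level-$(n+1)$ contributions arise either from products of strictly lower-level factors (reproducing the partition product) or from a single level-$(n+1)$ factor combined with unit-level ones. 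The last sum is bounded by $C \sum_i |t_{i+1} - t_i|^{(n+1)\rho} \to 0$ as $|\pi| \to 0$, forcing $(\trx')^{\gr{n+1}}_{s_1, s_2}$ to coincide with the partition limit.

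The main obstacle is the bookkeeping in the refinement estimate: one must carefully track how level-$(n+1)$ contributions arise from products of strictly lower-level factors and exploit multiplicativity at those levels to isolate the single surviving term with the claimed $|t_{i+1} - t_i|^{(n+1)\rho}$ decay. This is elementary but combinatorially fiddly in the tensor algebra, and the entire scheme collapses if $(n+1)\rho \leq 1$, which is precisely why the initial rough path must be specified up to level $n_\rho$ rather than any smaller level.
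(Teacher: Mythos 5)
Your sketch is the classical Lyons extension argument (partition products, the single-point refinement estimate, the pigeonhole maximal inequality, and convergence driven by $(n_\rho+1)\rho>1$), which is precisely what the paper relies on by citing~\cite{lyons_differential_1998} — the paper gives no independent proof of this statement. The construction and uniqueness argument you outline are correct.

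One ingredient you leave implicit is worth flagging: the partition-product limit a priori produces a multiplicative functional valued in $T_0^{\gr{\leq n+1}}$, and one must still check that it lands in the group-like elements $G_0^{\gr{\leq n+1}}$ (equivalently, that the extension of a weakly geometric rough path remains weakly geometric). This does not follow from the convergence estimate alone, because the individual factors $\trx^{\gr{\leq n}}_{t_i,t_{i+1}}$ viewed in $T_0^{\gr{\leq n+1}}$ with zero top-level entry are \emph{not} group-like. The paper handles this by a separate citation to~\cite[Corollary 3.9]{cass_integration_2016} in the formal restatement (\Cref{thm:path_extension}); if you want a self-contained proof you would either invoke that result or argue via Baker--Campbell--Hausdorff that the limit of products of group-like lifts stays in the closed subset $G_0^{\gr{\leq n+1}}$.
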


For the path $y$ in the Young regime, we can define a corresponding $\rho$-rough path $\ry: \Delta^2 \to G_0^{\gr{\leq 1}} = \V$ as the path itself, $\ry_{s_1, s_2} = y_{s_2} - y_{s_1}$. We can apply the extension theorem to obtain $\try: \Delta \to G_0$, and by uniqueness, this is equivalent to the multiplicative functional defined via the path signature in~\Cref{eq:intro_young_mult_func}. This justifies the definition of the path signature of $\rho$-rough paths as
\begin{align} \label{eq:intro_rough_path_signature}
    S: \RP^\rho \to G_0, \quad \text{where} \quad S(\bx) \coloneqq \trx_{0,1},
\end{align}
where $\trx: \Delta^2 \to G_0$ is the unique extension from~\Cref{thm:intro_informal_path_extension}. In fact, we can define a \emph{$\rho$-rough path metric} $d_\rho$ such that $(\RP^\rho, d_\rho)$ is a complete metric space, and the path signature in~\Cref{eq:intro_rough_path_signature} becomes continuous with respect to this metric. Furthermore, this allows us to compute the parallel transport of $\rho$-rough paths.

\begin{theorem}[Informal] \label{eq:intro_rough_path_parallel_transport}
    Let $(A_0, \cdot, 1)$ be a unital Banach algebra, and $\cona \in L(\V, A_0)$ be a bounded linear map. Then, parallel transport of $\rho$-rough paths with respect to $\cona$, denoted $F^\cona :\RP^\rho \to A_0$ is a continuous map (with respect to $d_\rho$) defined by 
    \begin{align}\nonumber
        F^\cona(\rx) \coloneqq \tcona(S(\rx)),
    \end{align}
    where $\tcona$ is the map induced by the unique map $\tcona: G_0 \to A_0$ from~\Cref{thm:intro_path_signature_universal}.
\end{theorem}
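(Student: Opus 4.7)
The plan is to define $F^\cona$ on $\RP^\rho$ as the composition $\tcona \circ S$, where $S : \RP^\rho \to G_0$ is the rough-path signature from~\Cref{eq:intro_rough_path_signature} and $\tcona : G_0 \to A_0$ is the unique algebra map factoring smooth parallel transport, produced by~\Cref{thm:intro_path_signature_universal}. Well-posedness requires that $\tcona$ be defined on the image of $S$. Concretely, $\tcona$ arises as the continuous extension to a suitable subset of $T_0\ps{\V}$ of the algebra homomorphism $T_0(\V) \to A_0$ induced by $\cona : \V \to A_0$; since $\cona$ is bounded and the elements of $G_0$ obtained from the extension theorem exhibit factorial decay at each level, the defining series converges absolutely, so $\tcona(S(\rx))$ makes sense for every $\rx \in \RP^\rho$.

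I would next verify the consistency of this definition with the smooth case. When $\rx$ is the canonical lift of a smooth path $x$ (or more generally of a Young-regime path, via~\Cref{eq:intro_young_mult_func}), the uniqueness clause of~\Cref{thm:intro_informal_path_extension} identifies $S(\rx)$ with the iterated-integral signature $S(x)$ from~\Cref{eq:intro_path_signature}, so $\tcona(S(\rx)) = \tcona(S(x))$ coincides with the ODE-defined parallel transport~\Cref{eq:intro_parallel_transport} by~\Cref{thm:intro_path_signature_universal}. Together with continuity, this will show that $F^\cona$ is the unique continuous extension of smooth parallel transport to $(\RP^\rho, d_\rho)$.

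Continuity of the composition is then established factor-by-factor. Continuity of $S : (\RP^\rho, d_\rho) \to G_0$ follows from the quantitative form of Lyons' extension theorem: the higher levels $\trx^{\gr{k}}$ admit local uniform bounds with factorial decay in $k$, controlled in the $d_\rho$ topology, while the extension itself depends continuously on the low levels $\rx^{\gr{1}}, \ldots, \rx^{\gr{n_\rho}}$ that enter $d_\rho$. Continuity of $\tcona$ on its domain is then a dominated-convergence argument, since $\|\cona^{\otimes k}\| \leq \|\cona\|^k$ dominates the level-$k$ component of $\tcona$ and absolute summability on factorially decaying inputs yields uniform convergence on $d_\rho$-bounded sets. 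The principal obstacle is the quantitative continuity of $S$, which relies on the neoclassical-type inequalities underlying Lyons' extension theorem; once these estimates are in hand, continuity of $F^\cona$ follows automatically by composition.
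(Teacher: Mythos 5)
Your proposal is correct and follows essentially the same route as the paper: define $F^\cona = \tcona \circ S$, establish that $S: \RP^\rho \to G_0$ is continuous via the quantitative continuity of Lyons' extension theorem and factorial decay of the extended levels, and combine this with continuity of $\tcona$. The paper packages the continuity of $\tcona$ through the universal property of the locally $m$-convex completion $E_0(\V)$ (Corollary~\ref{cor:E0_universal}) rather than an explicit dominated-convergence argument, but the underlying estimate — summability of $\lambda^k \|\cdot^{\gr{k}}\|$ against $\|\cona\|^k$ — is the same.
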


Thus, the theory of rough paths allows us to work with paths of arbitrary $\rho$-H\"older regularity, at the cost of postulating additional data in terms of low level signature terms. In fact, both parallel transport and the path signature can still be used to characterize (probability measures of) rough paths, and approximate functions in the space of rough paths~\cite{chevyrev_characteristic_2016,chevyrev_signature_2022,cuchiero_global_2023}. \medskip

\subsection{Surfaces} \label{ssec:surfaces}
Motivated by the success of rough paths and signature methods in stochastic analysis and machine learning, we consider the extension of these constructions to the 2-dimensional setting of surfaces. Similar to paths, continuous surfaces are pervasive in mathematics, with random fields and image data being the natural analogues of our prior examples of stochastic processes and time series data. Consider the space of parametrized piecewise smooth surfaces,
\begin{align} \label{eq:surface}
    \surfaces(\V) \coloneqq \bigcup_{s_1 \leq s_2, \, t_1 \leq t_2} C^\infty([s_1, s_2] \times [t_1, t_2], \V).
\end{align}
This space of surfaces is equipped with a partially defined horizontal concatenation operation. If $X \in C^\infty([s_1, s_2] \times [t_1, t_2], \V)$ and $Y \in C^\infty([s_2, s_3] \times [t_1, t_2], \V)$ such that $X_{s_2, t} = Y_{s_2, t}$ for all $t \in [t_1, t_2]$, we say that $X$ and $Y$ are \emph{horizontally composable} and we define 
\begin{align} \label{eq:intro_surf_hconcat}
    X \concat_h Y \in C^\infty([s_1, s_3]\times[t_1, t_2], \V) \quad \text{by} \quad (X \concat_h Y)_{s,t} \coloneqq \left\{
        \begin{array}{ll}
            X_{s,t} & : s \in [s_1, s_2] \\
            Y_{s,t} & : s \in [s_2, s_2].
        \end{array}
    \right.
\end{align}
Similarly, if $Z \in C^\infty([s_1, s_2] \times[t_2, t_3])$ such that $X_{s, t_2} = Z_{s, t_2}$ for all $s \in [s_1, s_2]$, we say that $X$ and $Z$ are \emph{vertically composable} and we define
\begin{align} \label{eq:intro_surf_vconcat}
    X \concat_v Z \in C^\infty([s_1, s_2] \times[t_1, t_3], \V) \quad \text{by} \quad (X \concat_v Z)_{s,t} \coloneqq \left\{
        \begin{array}{ll}
            X_{s,t} & : t \in [t_1, t_2] \\
            Z_{s,t} & : t \in [t_2, t_3].
        \end{array}
    \right.
\end{align}
Both horizontal and vertical concatenation are associative by definition when the operations are well-defined. Furthermore, these two operations are compatible via the \emph{interchange law}: for $X, Y, Z, W \in \surfaces(\V)$ which are appropriately composable, in particular with $X,Y,Z$ defined as above, and $W \in C^\infty([s_2, s_3] \times [t_2, t_3], \V)$ such that $W_{s, t_2} = Y_{s,t_2}$ for all $s \in [s_2, s_3]$ and $W_{s_2,t} = Z_{s_2, t}$ for all $t \in [t_2, t_3]$, 
\begin{align} \label{eq:intro_interchange}
    (X \concat_h Y ) \concat_v (Z \concat_h W) = (X \concat_v Z) \concat_h (Y \concat_v W).
\end{align}

\begin{figure}[!h]
    \includegraphics[width=\linewidth]{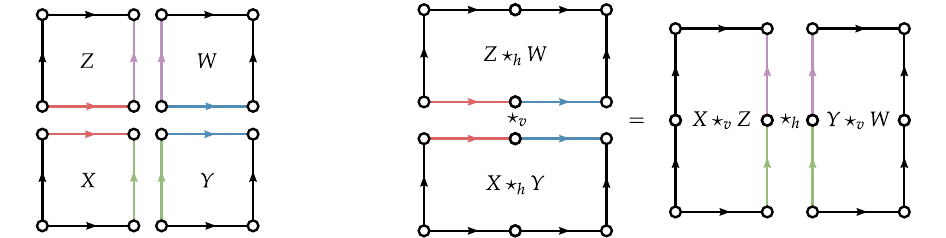}
\end{figure}

We wish to consider the notion of a structure-preserving map for surfaces $\bF$, which is valued in an algebraic structure equipped with two algebraic operations $\hmult$ and $\vmult$ which respect the interchange law, such that horizontal concatenation and vertical concatenation are preserved,
\begin{align} \label{eq:intro_preserves_concatenation}
    \bF(X \concat_h Y) = \bF(X) \hmult \bF(Y), \quad \bF(X \concat_v W) = \bF(X) \vmult \bF(W).
\end{align}
The simplest such structure would be to equip a set $M$ with these two algebraic operations. However, by the classical Eckmann-Hilton argument, this results in a trivial abelian structure.
\begin{theorem}[Eckmann-Hilton]
    Let $M$ be a set equipped with two unital monoid structures, $(M,\hmult, e_h)$ and $(M,\vmult, e_v)$.
    If the interchange law  
    \begin{align}\nonumber
        (a \hmult b) \vmult (c \hmult d) = (a \vmult c) \hmult (b \vmult d) 
    \end{align} 
    holds for all $a,b,c,d \in M$, then both monoid structures are equal, associative, and commutative, in particular,  $a\hmult b= b \hmult a = a \vmult b =b \vmult a $ for all $a,b \in M$.
\end{theorem}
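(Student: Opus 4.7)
The plan is to first show that the two units coincide, then deduce that the two operations agree, and finally establish commutativity. The key technique throughout is a judicious choice of the four inputs to the interchange law so that both sides collapse via one of the unit axioms.

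First I would prove that $e_h = e_v$. Setting $a = d = e_v$ and $b = c = e_h$ in the interchange law gives
\begin{align}\nonumber
    (e_v \hmult e_h) \vmult (e_h \hmult e_v) = (e_v \vmult e_h) \hmult (e_h \vmult e_v).
\end{align}
The left-hand side simplifies to $e_v \vmult e_v = e_v$ using that $e_h$ is a two-sided unit for $\hmult$ and that $e_v$ is a two-sided unit for $\vmult$; symmetrically the right-hand side simplifies to $e_h \hmult e_h = e_h$. Hence $e_v = e_h$, and I denote the common unit by $e$.

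Next I would show $\hmult = \vmult$. For arbitrary $a,b \in M$, apply interchange with inputs $(a, e, e, b)$:
\begin{align}\nonumber
    (a \hmult e) \vmult (e \hmult b) = (a \vmult e) \hmult (e \vmult b).
\end{align}
Each factor on each side is just $a$ or $b$ by the unit laws applied for the appropriate operation (using the already established equality $e_h = e_v = e$), so the identity reduces to $a \vmult b = a \hmult b$. For commutativity, I apply interchange once more, this time with inputs $(e, b, a, e)$:
\begin{align}\nonumber
    (e \hmult b) \vmult (a \hmult e) = (e \vmult a) \hmult (b \vmult e),
\end{align}
which collapses to $b \vmult a = a \hmult b$. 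Combined with $\hmult = \vmult$, this gives $a \hmult b = b \hmult a$. Associativity is part of the hypothesis that each operation is a monoid structure, so nothing further is required.

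The only step with any real content is the choice of substitution that forces both sides of the interchange law to collapse; once the units are identified, the remaining two identities follow from the same pattern applied to different quadruples. I do not anticipate any genuine obstacle, since the argument is purely equational and three carefully chosen substitutions suffice.
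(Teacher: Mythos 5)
Your proof is correct and is the standard Eckmann--Hilton argument: identify the units, then collapse the interchange law with suitable substitutions to equate the two operations and then to derive commutativity. The paper states this theorem without proof (it is a classical result cited as motivation for passing to double groups), so there is no proof in the paper to compare against; your three substitutions — $(e_v,e_h,e_h,e_v)$, $(a,e,e,b)$, and $(e,b,a,e)$ — each do exactly what is claimed, and associativity is indeed part of the monoid hypothesis as you note.
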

This motivates the use of higher algebraic structures from higher category theory. In particular, we use the notion of a \emph{double group} as our desired codomain. A double group $\dg(\cmG)$ is constructed from two ordinary groups $\cmG = (G_1, G_0)$ which are equipped with some additional algebraic structure to form a \emph{crossed module of groups}. In this article, we will primarily work with groups which are embedded in associative algebras, and consider double groups constructed from a \emph{crossed module of associative algebras} $\cmA = (A_1, A_0)$. Elements of $\dg(\cmA) \subset A_0^4 \times A_1$ are arranged in a square: with four elements of $A_0$ on each of the boundary edges, and one element of $A_1$ in the interior. Our desired structure preserving map for surfaces will be a map 
\begin{align} \label{eq:intro_sh_functor}
    \bF: \surfaces^\infty(\V) \to \dg(\cmA),
\end{align}
which is made up of two components:
\begin{itemize}
    \item a \textbf{path component} $F: \paths^\infty(\V) \to A_0$, which is used to map the boundaries of a surface to the boundaries of a square in $\dg(\cmA)$ valued in $A_0$, and
    \item a \textbf{surface component} $H: \surfaces^\infty(\V) \to A_1$, which maps a surface to the interior of a square in $\dg(\cmA)$. 
\end{itemize}
In particular, we can visualize the map $\bF$ applied to a surface $X \in \surfaces^\infty(\V)$ with boundary paths $x,y,z,w \in \paths^\infty(\V)$ as follows. 

\begin{figure}[!h]
    \includegraphics[width=\linewidth]{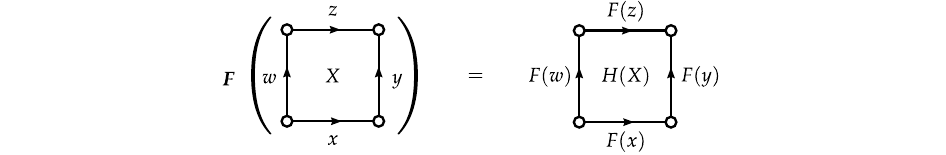}
\end{figure}   

Double groups are equipped with two composition operators, $\hmult$ and $\vmult$. Similar to the case of surfaces, these operations are only well defined when the appropriate boundaries coincide, and these operations satisfy the interchange law (see~\Cref{eq:intro_interchange}). Thus, we say that the map $\bF$ is a \emph{functor} if it preserves the concatenation structure; in other words,~\Cref{eq:intro_preserves_concatenation} holds. \medskip

\emph{Surface holonomy} is a generalization of parallel transport of paths to the setting of surfaces developed in the mathematical physics community~\cite{baez_higher_2006,schreiber_smooth_2011,breen_differential_2005, martins_surface_2011,faria_martins_crossed_2016, yekutieli_nonabelian_2015}, and yields such a functor. A \emph{2-connection} valued in a crossed module of algebras $\cmA = (A_1, A_0)$ consists of an ordinary connection $\cona \in L(\V, A_0)$ along with a linear map $\conc \in L(\Lambda^2 \V, A_1)$, where $\Lambda^2 \V$ is the exterior power\footnote{If $\V$ has a basis $\{e_i\}_{i=1}^d$, then $\{e_i \wedge e_j\}_{i < j}$ forms a basis for $\Lambda^2 \V$.} of $\V$, which satisfies a \emph{fake-flatness condition} (\Cref{def:2connection}). Given a 2-connection $(\cona, \conc)$, the \emph{surface holonomy functor with respect to $(\cona, \conc)$} is a functor $\bF^{\cona, \conc}: \surfaces^\infty(\V) \to \dg(\cmA)$ which consists of a path component $F^\cona: \paths^\infty(\V) \to A_0$ given by parallel transport defined in~\Cref{eq:intro_parallel_transport}, and a surface component $H^{\cona, \conc}: \surfaces^\infty(\V) \to A_1$ which is also given by a differential equation, which will be defined later (\Cref{def:sh}). We provide a more detailed background on surface holonomy in~\Cref{sec:surface_holonomy}.\medskip

Similar to parallel transport of paths, surface holonomy provides a rich, structured description of the space of surfaces up to translation. If we append the parametrization of surfaces\footnote{For a surface $X$, we append the parametrization by considering $\oX_{s,t} = (s,t, X_{s,t})$ valued in $\R^2 \oplus \V$.}, surface holonomy maps
\begin{enumerate}
    \item \textbf{separates parametrized surfaces:} given two distinct surfaces $X, Y \in C^\infty([0,1]^2, \V)$, there exists a 2-connection $(\cona, \conc)$ such that $H^{\cona, \conc}(\oX) \neq H^{\cona, \conc}(\oY)$;
    \item are \textbf{analytically universal:} the span of exponentials of linear functionals $\exp( i \langle \ell, H^{\cona, \conc}(\overline{\cdot})\rangle) : \cK \to \C$ is dense in $C(\cK, \C)$ equipped with the uniform topology, where $\cK \subset C^\infty([0,1]^2, \V)$ is a compact subset; and 
    \item are \textbf{characteristic:} given two distinct measures $\mu, \nu$ valued on the compact subset $\cK$, there exists a 2-connection $(\cona, \conc)$ such that $\Law_{X \sim \mu} H^{\cona, \conc}(\oX) \neq \Law_{Y \sim \nu} H^{\cona, \conc}(\oY)$. 
\end{enumerate}
This was shown in~\cite{lee_random_2023}, where surface holonomy is formulated in terms of Lie groups and Lie algebras, but can be equivalently formulated in terms of associative algebras (see~\Cref{apxsec:matrix_sh}).

\subsection{Contributions}

In this article, we continue the generalization of the path space constructions and extend the three main theorems in~\Cref{ssec:paths}. Our main contributions are:
\begin{enumerate}
    \item An explicit formulation of the \emph{surface signature} in terms of associative algebras, which allows us to show that this surface signature is the universal surface holonomy map, generalizing~\Cref{thm:intro_path_signature_universal}.
    \item An extension theorem for \emph{rough surfaces}, generalizing~\Cref{thm:intro_informal_path_extension}, which allows us to compute the surface signature for arbitrary $\rho$-H\"older surfaces, provided that certain lower level signature terms are provided, in analogy with the extension theorem for rough paths. Combined with the universal property of the surface signature, this allows us to compute surface holonomy for rough surfaces, generalizing~\Cref{eq:intro_rough_path_parallel_transport}.
\end{enumerate}

\subsubsection{Surface Signature.} \label{sssec:intro_surface_signature}
In~\cite{kapranov_membranes_2015}, Kapranov proposed a \emph{universal translation invariant connection} on a finite dimensional vector space $\V$ in terms of crossed modules of Lie algebras, and its associated higher holonomy\footnote{In fact, Kapranov also considers the more general setting of higher holonomy for $n$-dimensional maps such as $[0,1]^n \to \V$, but we exclusively focus on the case of $n=2$ in this article.}. We call the surface holonomy with respect to this universal 2-connection the surface signature. \medskip

Our primary contribution to the study of the surface signature in~\Cref{sec:surface_signature} is a reformulation of the construction in terms of associative algebras, which allows us to explicitly define the surface signature via Picard iterations. By considering a free crossed module of associative algebras, we introduce the higher analogue of the tensor algebra as a quotient
\begin{align}\nonumber
    T_1(\V) \coloneqq \left.\bigoplus_{n=2}^\infty \, \bigoplus_{k=0}^{n-2} \V^{\otimes k} \otimes \Lambda^2 \V \otimes \V^{\otimes(n-k-2)}\right/\Pf,
\end{align}
where $\Pf$ is a linear subspace which we call the \emph{Peiffer ideal}. The parameter $n$ denotes the \emph{level} of elements in $T_1(\V)$, and coincides with the level of the tensor algebra. We show in~\Cref{ssec:relationship_kapranov} that Kapranov's universal 2-connection embeds into our free crossed module of algebras, and this allows us to define the \emph{surface signature} $\hssig: \surfaces^\infty(\V) \to G_1$ into the \emph{group-like elements} $G_1 \subset T_1\ps{\V}$ in the completion of $T_1(\V)$, as
\begin{align}\nonumber
    \hssig_{s,t}(X) = 1 + \sum_{n=1}^\infty \int_{\bs \in [0,1]^n} \int_{\bt \in \Delta^n} \Big(S(x^{s_1,t_1}) \gt dX_{s_1, t_1} \wedge dX_{s_1, t_1}\Big) * \ldots * \Big(S(x^{s_n,t_n}) \gt dX_{s_n, t_n} \wedge dX_{s_n, t_n}\Big) 
\end{align}
for a surface $X \in C^\infty([0,1]^2, \V)$, where $*$ is the product in $T_1(\V)$, and $\gt$ is an action of $G_0$ on $T_1\ps{\V}$. Here, we interpret $dX_{s,t} \wedge dX_{s,t}$ as a differential 2-form valued in $\Lambda^2 \V \subset T_1\ps{\V}$, and $x^{s,t} :[0, s+t] \to \V$ is the \emph{$(s,t)$-tail path} of $X$, defined by
\begin{align}\nonumber
    x^{s,t}_u \coloneqq \left\{ \begin{array}{ll}
        X_{0,u} & : u \in [0,t] \\
        X_{u-t, t} & : u \in [t, t+s].
    \end{array}\right.
\end{align}
In particular, the pair $\cmG = (G_1, G_0)$ of group-like elements in $T_1\ps{\V}$ and $T_0\ps{\V}$ respectively, forms a crossed module of groups, and we obtain the \emph{surface signature functor}
\begin{align} \label{eq:intro_surface_signature_functor}
    \bS: \surfaces^\infty(\V) \to \dg(\cmG),
\end{align}
with path component given by the path signature $S$, and the surface component given by the surface signature $\hssig$.
Moreover, the surface signature is the universal surface holonomy map in the following sense. This is stated formally and proved in~\Cref{thm:surface_signature_universal}.
\begin{theorem}[Informal] \label{thm:intro_surface_signature_universal}
    Let $(\cona, \conc)$ be a continuous 2-connection valued in a crossed module of Banach algebras $\cmA = (A_1, A_0)$. Then, there exists a unique map $\tconc : G_1 \to A_1$ such that for any $X \in \surfaces^\infty(\V)$,
    \begin{align}
        H^{\cona, \conc}(X) = \tconc\left(\hssig(X)\right).
    \end{align}
\end{theorem}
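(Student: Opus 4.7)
The plan is to leverage the universal property built into the construction of the free crossed module $(T_1(\V), T_0(\V))$, reduce the equality $H^{\cona,\conc}(X) = \tconc(\hssig(X))$ to a uniqueness argument for a Picard iteration, and handle uniqueness of $\tconc$ via the generating nature of surface signatures.

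\textbf{Existence of $\tconc$.} First I would invoke the universal property of $(T_1(\V), T_0(\V))$ as a free crossed module of associative algebras: the Peiffer ideal $\Pf$ is defined precisely so that, together with the fake-flatness relation, this pair is the free crossed module of algebras generated by the linear maps $\V \hookrightarrow T_0(\V)$ and $\Lambda^2\V \hookrightarrow T_1(\V)$. Given a 2-connection $(\cona, \conc)$ on $\cmA = (A_1, A_0)$, the bounded linear maps $\cona \in L(\V, A_0)$ and $\conc \in L(\Lambda^2 \V, A_1)$ therefore extend uniquely to a morphism of crossed modules of algebras $(\tcona, \tconc) : (T_1(\V), T_0(\V)) \to \cmA$; fake-flatness on the target ensures compatibility with the boundary map $\cmb$, so the extension descends through $\Pf$. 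Using the continuity hypothesis on $(\cona, \conc)$ and the standard factorial decay of signature levels (already exploited in \Cref{thm:intro_path_signature_universal}), I would then extend $(\tcona, \tconc)$ by continuity to the completion $(T_1\ps{\V}, T_0\ps{\V})$ and restrict to the group-like elements $(G_1, G_0)$.

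\textbf{Agreement with surface holonomy.} Next I would verify $\tconc(\hssig(X)) = H^{\cona, \conc}(X)$ by showing that both sides satisfy the same defining Picard iteration, valued in $A_1$. The explicit formula
\[
\hssig(X) = 1 + \sum_{n=1}^\infty \int \bigl( S(x^{s_1,t_1}) \gt dX_{s_1,t_1}\wedge dX_{s_1,t_1}\bigr) * \cdots * \bigl( S(x^{s_n,t_n}) \gt dX_{s_n,t_n}\wedge dX_{s_n,t_n}\bigr)
\]
is the universal instance of the Picard iteration defining $H^{\cona,\conc}$ (see \Cref{def:sh}). Applying $\tconc$ term by term, the $*$-product becomes the product in $A_1$, the $\gt$-action becomes the $A_0$-action on $A_1$, the two-form $dX \wedge dX$ becomes $\conc(dX \wedge dX)$, and each tail path signature $S(x^{s,t})$ becomes $F^\cona(x^{s,t})$ by \Cref{thm:intro_path_signature_universal}. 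Hence $\tconc(\hssig(X))$ solves the same iteration as $H^{\cona,\conc}(X)$ in the Banach algebra $A_1$, and uniqueness of that solution yields the equality.

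\textbf{Uniqueness of $\tconc$.} Finally, any other map $\tconc' : G_1 \to A_1$ satisfying the intertwining identity $H^{\cona,\conc}(X) = \tconc'(\hssig(X))$ for all $X$ must agree with $\tconc$ on surface signatures of infinitesimal square patches, which pin down the values on the generators $\Lambda^2 \V$; since $G_1$ is generated, as a subset of the crossed module $\cmG$, by such elements together with the $G_0$-conjugation and the $*$-product, and since the structure on $G_1$ is preserved by both $\tconc$ and $\tconc'$ via their origin from crossed-module morphisms of algebras, we get $\tconc = \tconc'$.

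The principal obstacle I expect is giving a clean, rigorous statement of the universal property of the free crossed module of associative algebras in a form adapted to Banach completions: namely, verifying that the Peiffer ideal together with fake-flatness cuts out exactly the relations required for the extension to the target crossed module to exist and be unique, and establishing the level-wise factorial estimates that allow the purely algebraic morphism to extend continuously to $T_1\ps{\V}$ and restrict sensibly to $G_1$. The termwise intertwining in the Picard iteration is then essentially formal once the universal property and continuity are in place.
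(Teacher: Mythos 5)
Your existence and Picard-iteration arguments match the paper's own proof of the formal version (Theorem 4.16 / \Cref{thm:surface_signature_universal}). The paper likewise first invokes the universal property of the free crossed module $\bT(\V)$ (\Cref{prop:univ_2con_alg}), then upgrades to a continuous crossed-module morphism $(\tcona,\tconc):\bE \to \cmA$ via the locally $m$-convex completion (\Cref{prop:E0_con_universal}), and then compares Picard iterates term by term using the path-signature universal property, $\tconc|_{\Lambda^2\V}=\conc$, and the crossed-module morphism identity~\Cref{eq:tconc_factoring_property}. On these two steps the argument is sound and essentially the same.

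The uniqueness paragraph, however, is where you drift. Notice that the paper's formal theorem does not assert uniqueness of $\tconc$ among \emph{all} maps satisfying the intertwining identity $H^{\cona,\conc}(X)=\tconc(\hssig(X))$; it asserts that $\tconc$ is the unique \emph{continuous crossed-module morphism} $\bE\to\cmA$ extending $(\cona,\conc)$ — that is exactly the uniqueness from \Cref{prop:E0_con_universal}, and it is resolved entirely at the algebra level, before surface holonomy even enters. Your argument tries to prove the stronger claim (uniqueness over arbitrary maps $G_1\to A_1$) by asserting that surface signatures of small patches pin down values on $\Lambda^2\V$ and that $G_1$ is generated from these under $G_0$-conjugation and $*$. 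Two problems arise: first, that would only constrain $\tconc'$ on the image of $\hssig$, and there is no established density or generation result identifying $\overline{\mathrm{Im}(\hssig)}$ with $G_1$; second, when you then appeal to ``the structure on $G_1$ is preserved by both $\tconc$ and $\tconc'$ via their origin from crossed-module morphisms of algebras,'' you have silently assumed that $\tconc'$ is a crossed-module morphism — at which point the universal property already gives $\tconc'=\tconc$ and the generation argument is superfluous. Either restrict the uniqueness claim to continuous crossed-module morphisms (which is what the paper means and what the universal property delivers immediately), or fill the genuine gap by proving a density/generation statement for $\hssig(\surfaces^\infty(\V))$ in $G_1$, which is nontrivial and not in the paper.
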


\subsubsection{Rough Surfaces}\label{sssec:intro_rough_surfaces}
We will now go beyond the smooth setting from Kapranov~\cite{kapranov_membranes_2015}, and consider \emph{$\rho$-H\"older surfaces} defined on rectangles $Q = [\smp]\times[\tmp] \subset \R^2$,
\begin{align} \label{eq:intro_holder_surfaces}
    C^\rho(Q, \V) \coloneqq \left\{ X \in C(Q, \V) \, : \, \|X\|_\rho \coloneqq \sup_{\substack{(s_1, s_2) \in \Delta^2(\smp) \\(t_1, t_2)\in \Delta^2(\tmp)}} \frac{\|X_{s_1,t_1} - X_{s_2, t_2}\|}{|(s_1,t_1) - (s_2, t_2)|^\rho} < \infty\right\}.
\end{align}

Z\"ust~\cite{zust_integration_2011} introduced the generalization of 1D Young integration to H\"older forms, which we state in the 2-dimensional setting.
\begin{theorem}{\cite{zust_integration_2011}} \label{thm:zust}
    Let $Q = [\smp] \times [\tmp] \subset \R^2$ be a rectangle. Suppose $f \in C^{\rho_f}(Q, \R)$ and $g^i \in C^{\rho_i}(Q, \R)$ for $i = 1,2$, where $\ovrho = \rho_f + \rho_1 + \rho_2 > 2$. Then, the integral $\int_Q f \, dg^1 \wedge dg^2$ is well-defined and satisfies
    \begin{align}\nonumber
        \left|\int_Q f_{s,t} \, dg^1_{s,t} \wedge dg^2_{s,t}\right| \leq C \diam(Q)^{\ovrho} \|f\|_{\rho_f} \cdot \|g^1\|_{\rho_1} \cdot \|g^2\|_{\rho_2},
    \end{align}
    where $C> 0$ is a constant which depends on $\rho_f$, $\rho_1$, and $\rho_2$. 
\end{theorem}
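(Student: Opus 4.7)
The plan is to construct $\int_Q f\, dg^1 \wedge dg^2$ as the limit of Riemann-type sums indexed by dyadic refinements of $Q$, and to control the refinement error using a two-dimensional Young-type cancellation that exploits the wedge structure of the integrand together with the hypothesis $\ovrho > 2$.

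First I would fix, for each axis-aligned sub-rectangle $R \subseteq Q$, a natural antisymmetric discrete analogue $\Xi(R)$ of $\int_R f\, dg^1 \wedge dg^2$, built from the value of $f$ at a chosen corner of $R$ together with a wedge-type combination of corner values of $g^1$ and $g^2$ (e.g.\ the antisymmetrized product of the horizontal increment of $g^1$ with the vertical increment of $g^2$). For each $n \geq 0$, let $\pi_n$ denote the partition of $Q$ into $4^n$ congruent sub-rectangles obtained by $n$-fold dyadic bisection in both coordinates, and set $I_n := \sum_{R \in \pi_n} \Xi(R)$.

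The central step is the refinement estimate
\begin{align*}
    |I_{n+1} - I_n| \;\leq\; C\, 2^{-n(\ovrho - 2)} \diam(Q)^{\ovrho}\, \|f\|_{\rho_f}\|g^1\|_{\rho_1}\|g^2\|_{\rho_2}.
\end{align*}
To prove this, I would work one rectangle $R \in \pi_n$ at a time. Splitting $R$ into its four dyadic children $R_1, \ldots, R_4$, the local error $\Xi(R) - \sum_i \Xi(R_i)$ decomposes into (i) terms of the form $(f_{v_R} - f_{v_{R_i}})$ multiplied by products of small $g^1$- and $g^2$-increments, bounded directly by $\|f\|_{\rho_f}\|g^1\|_{\rho_1}\|g^2\|_{\rho_2}\cdot \diam(R)^{\ovrho}$; and (ii) "cross-terms" involving products of $g^1$-increments on one sub-rectangle with $g^2$-increments on another, whose leading-order contributions telescope by the antisymmetric/wedge structure, leaving residuals again of order $\|g^1\|_{\rho_1}\|g^2\|_{\rho_2}\cdot\diam(R)^{\rho_1 + \rho_2}$ times $\|f\|_\infty$ on $R$. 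Summing the $O(\diam(R)^{\ovrho}) = O(2^{-n\ovrho}\diam(Q)^{\ovrho})$ local error over the $4^n$ rectangles in $\pi_n$ produces the displayed bound, in which the exponent $\ovrho - 2$ arises precisely from the mismatch between the $4^n$ rectangles and the $2^{-n\ovrho}$ decay.

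Since $\ovrho > 2$, the right-hand side is summable in $n$, so $(I_n)$ is Cauchy, and I would then set $\int_Q f\, dg^1\wedge dg^2 := \lim_n I_n$. Independence of the particular sequence of partitions follows by the same refinement estimate applied to the common refinement of any two partitions, combined with uniform continuity of $f$ and $g^i$. The stated global bound then comes from telescoping $I - I_0 = \sum_{n\geq 0}(I_{n+1} - I_n)$ together with a direct bound on $|I_0|$, absorbing the $\|f\|_\infty$ appearing at level $0$ into $\|f\|_{\rho_f}\diam(Q)^{\rho_f}$ after noting that replacing $f$ by $f - f_{v_Q}$ does not change the integral (a constant $f$ contributes zero by antisymmetry of the wedge).

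The hard part will be the cancellation in (ii): one must arrange the corner choices and antisymmetrization so that the "large" $g^i$-increments across the unrefined rectangle decompose as telescoping sums across the bisection edges whose diagonal pairings cancel exactly, leaving only $O(\diam(R)^{\rho_1 + \rho_2})$ residuals rather than $O(\diam(R)^{\min(\rho_1,\rho_2)})$. This is the genuinely two-dimensional content of the theorem — it has no one-dimensional analogue, and it is what forces the critical threshold $\ovrho > 2$ (rather than the $\ovrho > 1$ one sees in Young's original result). Once this bookkeeping is complete, the rest of the argument reduces to geometric summation.
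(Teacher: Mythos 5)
The paper offers no proof of this result: it is a direct citation of Z\"ust~\cite{zust_integration_2011}, so there is no in-text argument to compare against. I will assess your proposal on its own terms.

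Your overall dyadic-sewing framework is the right shape of argument, but two points do not close as written.

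First, the claim that ``a constant $f$ contributes zero by antisymmetry of the wedge'' is false. For smooth $g^1, g^2$ one has $\int_Q dg^1 \wedge dg^2 = \int_{\partial Q} g^1\, dg^2$ by Stokes, which is generically nonzero (take $g^1(s,t) = s$, $g^2(s,t) = t$, so $\int_Q dg^1\wedge dg^2 = |Q|$). Consequently you cannot replace $f$ by $f - f_{v_Q}$ without changing the value of the integral, and the $\|f\|_\infty$ contribution at level $0$ cannot be absorbed into $\|f\|_{\rho_f}\diam(Q)^{\rho_f}$. This also reveals that the bound with $\|\cdot\|_{\rho_f}$ understood as the H\"older seminorm cannot hold as written: to control the integral by the seminorm alone, one must first subtract the one-point approximation $f(z_0)\int_Q dg^1\wedge dg^2$ from the left-hand side.

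Second, your own error budget is internally inconsistent. You announce that part (ii) leaves a residual of order $\diam(R)^{\rho_1+\rho_2}\|f\|_\infty$ per rectangle, yet you then sum ``$O(\diam(R)^{\ovrho})$'' local errors over $4^n$ rectangles. Since $\rho_1 + \rho_2 = \ovrho - \rho_f$ and $4^n\cdot 2^{-n(\rho_1+\rho_2)} = 2^{n(2-\rho_1-\rho_2)}\to\infty$ (as $\rho_i\leq 1$), the residuals of (ii) cannot simply be summed over all rectangles. The cancellation that actually saves the argument is not internal to a single rectangle's refinement; it occurs \emph{across} adjacent rectangles: the triangle-type error attached to each bisection edge appears in the refinement budgets of the two rectangles sharing that edge, with opposite orientations, and therefore cancels up to the mismatch in the $f$-factors. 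Tracking this, interior-edge contributions pick up a factor $|f(v_R) - f(v_{R'})| \lesssim \|f\|_{\rho_f}\diam(R)^{\rho_f}$ and sum to $O(2^{n(2-\ovrho)})$, while boundary-edge contributions (which have no partner) sum to $O(\|f\|_\infty\, 2^{n(1-\rho_1-\rho_2)})$. Both converge under $\ovrho>2$, but the boundary term reintroduces $\|f\|_\infty$, which loops back to the first issue.

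A cleaner germ sidesteps this bookkeeping entirely: take $\Xi(R) := f(v_R)\int_{\partial R} g^1\, dg^2$, where the boundary integral is a one-dimensional Young integral over the boundary loop of $R$ (well-defined since $\rho_1+\rho_2 > 1$, which follows from $\ovrho>2$ and $\rho_f\leq 1$). This germ is \emph{exactly} additive in its $g$-part under subdivision, because the interior-edge Young integrals cancel by orientation; so the only source of refinement error is the variation of $f$ across the children, which directly gives $O(\|f\|_{\rho_f}\diam(R)^{\ovrho})$ per rectangle and a convergent sewing sum with no $\|f\|_\infty$ dependence.
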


\Cref{thm:zust} suggests that we can compute the surface signature for $\rho$-H\"older surfaces when $\rho > \frac{2}{3}$, which we call the \emph{Z\"ust regime}. However, this application is not immediate. Instead, we move on to considering rough surfaces, and we will see that this is a corollary of the surface extension theorem. 

\begin{remark}
    We can also consider surfaces $X$ with additional regularity conditions on the 2D increments
    \begin{align}\nonumber
        \square_{s_1, s_2; t_1, t_2}[X] \coloneqq X_{s_1, t_1} - X_{s_1, t_2} - X_{s_2, t_1} + X_{s_2, t_2},
    \end{align}
    which has a different 2D generalization of Young integration~\cite{towghi_multidimensional_2002,harang_extension_2021}, which holds for \emph{rectangular} $\rho$-H\"older surfaces when $\rho > \frac{1}{2}$, which we call the \emph{Young regime}. Our focus in this article is on $\rho$-H\"older surfaces \emph{without} additional rectangular regularity conditions, but we discuss the surface signature for surfaces in the Young regime in~\Cref{ssec:surface_extension_young_zust} and~\Cref{apxsec:surface_extension_rectangular}.
\end{remark}

We define a \emph{$\rho$-rough surface} to be a \emph{multiplicative double group functional} $\rrX: \Delta^2 \times \Delta^2 \to \dg(\cmG^{\gr{\leq N_\rho}})$, where $N_\rho = \lfloor 2\rho^{-1}\rfloor$, which satisfies
\begin{align}\nonumber
    \rrX_{s_1, s_2; t_1, t_2} \hmult \rrX_{s_2, s_3; t_1, t_2} = \rrX_{s_1, s_3; t_1, t_2} \andd \rrX_{s_1, s_2; t_1, t_2} \hmult \rrX_{s_1, s_2; t_2, t_3} = \rrX_{s_1, s_2; t_1, t_3},
\end{align}
and some additional regularity conditions. If $Y \in C^\infty([0,1]^2, \V)$ is a smooth surface, we can construct a $\rrY : \Delta^2 \times \Delta^2 \to \dg(\cmG^{\gr{\leq n}})$ for any $n \in \N$ via the surface signature functor from~\Cref{eq:intro_surface_signature_functor},
\begin{align} \label{eq:intro_mult_dgf_smooth_surface}
    \rrY_{s_1, s_2; t_1, t_2}^{\gr{k}} \coloneqq \bS^{\gr{k}}(Y|_{[s_1, s_2] \times[t_1, t_2]}).
\end{align}
Thus, for an arbitrary $\rho$-rough surface $\rrX$, we \emph{postulate} the surface signature up to level $N_\rho$. Our main result is the following extension theorem for rough surfaces, stated formally in~\Cref{thm:surface_extension}.
\begin{theorem}[Informal] \label{thm:intro_surface_extension}
    Let $\rho \in (0,1]$ and $\rrX: \Delta^2 \times \Delta^2 \to \dg(\cmG^{\gr{\leq N_\rho}})$ be a $\rho$-rough surface. Then, there exists a unique multiplicative double group functional $\trrX: \Delta\times \Delta \to \dg(\cmG)$ which satisfies certain regularity conditions and $\trrX^{\gr{k}} = \rrX^{\gr{k}}$ for $k = 0,1 \ldots, N_\rho$. 
\end{theorem}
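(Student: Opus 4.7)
The plan is to extend $\rrX$ inductively level by level, starting from level $N_\rho + 1$. I would first observe that the four boundary path components (the top, bottom, left, right edges of each rectangle $Q = [s_1,s_2]\times[t_1,t_2]$) are themselves truncated multiplicative functionals valued in $G_0^{\gr{\leq N_\rho}}$, and can be extended directly via Lyons' path extension theorem (\Cref{thm:intro_informal_path_extension}). This handles the $A_0$-valued data on the four edges of each element of $\dg(\cmG)$ at all levels $k > N_\rho$, uniformly in the rectangle.

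The substantive content is extending the interior surface component $\trrX^{\gr{k}}$ valued in the level-$k$ piece of $T_1\ps{\V}$. Following the spirit of Lyons' construction, I would define a candidate by partition refinement: for a grid partition $\pi$ of $Q$ into subrectangles $Q_{ij}$, form the double-group product of the already-known values $\rrX_{Q_{ij}}$, composing horizontally (via $\hmult$) along each row and vertically (via $\vmult$) between rows. The interchange law ensures the product is independent of the order. The core analytic tool is a two-dimensional sewing-type estimate: the change in the partition product under a single refinement step is controlled by $C \diam(Q)^{k\rho}$ times the H\"older seminorms of the levels below $k$, using \Cref{thm:zust} together with the fact that $k\rho > 2$ precisely when $k > N_\rho = \lfloor 2\rho^{-1}\rfloor$. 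Summability of these increments yields Cauchy convergence as the mesh shrinks, continuity of the limit in the four corner variables, and preservation of horizontal and vertical multiplicativity in the limit.

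Uniqueness at each inductive step would follow from the observation that the difference of any two candidate extensions is itself a multiplicative increment of H\"older exponent $k\rho > 2$ whose lower-level components vanish; a two-parameter analog of the classical rough-path argument (controlled increments with exponent strictly greater than the dimension of the parameter space must vanish) forces this difference to be identically zero. Propagating the induction through all levels $k > N_\rho$ then produces the unique multiplicative double group functional $\trrX : \Delta^2 \times \Delta^2 \to \dg(\cmG)$ extending $\rrX$.

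The principal obstacle, and where the argument genuinely departs from the one-dimensional setting, is in formulating and proving the 2D sewing lemma compatible with the double group structure. In one dimension the almost-multiplicative error lives in a single algebra and is estimated scalarly; here the error is a $\dg(\cmG)$-element with four boundary corrections in $A_0$ and an interior correction in $A_1$, and the Peiffer ideal together with fake-flatness must be preserved at every level of the induction. Moreover, the tail-path structure in the definition of $\hssig$ couples the $G_0$-action $\gt$ on $T_1\ps{\V}$ to the path-level data, so the inductive hypothesis must carry joint H\"older estimates on both the path and surface components, and the refinement estimate must track both pieces simultaneously. Managing this bookkeeping — in particular ensuring that the contributions of the $G_0$-action do not degrade the $k\rho$-scaling needed for summability — is the crux of the argument.
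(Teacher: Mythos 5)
Your high-level structure — extend the path components via Lyons' theorem, build the surface component by 2D grid multiplication using the interchange law, prove convergence via a 2D sewing estimate, and get uniqueness from the fact that a multiplicative increment with exponent $>2$ vanishes — matches the paper's proof in outline. The gap is in the step you leave implicit: what the initial guess at the new level $n+1$ should be on each subrectangle before you start refining partitions. Your plan to ``form the double-group product of the already-known values $\rrX_{Q_{ij}}$'' amounts to the trivial lift (set level $n+1$ to zero and let the partition products accumulate it), which is exactly what one does for rough paths. But the paper shows in Remark~\ref{rem:almost_mult_algebra_section} that for surfaces this trivial initial lift produces almost-multiplicativity constants $C_{n+1}$ that grow with $n$: the $\gt$-action of the boundary signature on the interior term contributes at every level and does not cancel. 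The resulting extension would then fail to satisfy factorial decay uniformly in $n$, so $\hssig(\rrX)$ would not land in $E_1$, and the universal property relative to Banach crossed modules (Theorem~\ref{thm:surface_signature_universal}) would collapse. You correctly identify ``ensuring that the contributions of the $G_0$-action do not degrade the $k\rho$-scaling'' as the crux, but the proposal supplies no mechanism for achieving it.

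The paper's solution is the bounded algebra section $\ts : U([\fg_0,\fg_0]) \to T_1$ built from a graded Lie section in Appendix B. The surface component of the initial pathwise extension is defined as $\orX^{\gr{n+1}} = \ts(\,\text{boundary path signature}\,)^{\gr{n+1}}$. Because $\ts$ is multiplicative and operator norm $\le 1$, the $T_1^s$-component of $\orX$ is \emph{exactly} multiplicative (Lemma~\ref{lem:pathwise_ext_delta_multiplicative}), so the full obstruction to multiplicativity is confined to $T_1^{\ker} = \ker(\delta)$, where the product in $T_1$ vanishes. This is what allows the almost-multiplicativity estimate with a uniform constant $\beta$ across all levels, and hence factorial decay. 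One further correction: you invoke Z\"ust's theorem for the refinement estimate, but the paper uses Z\"ust only to motivate the regularity conditions and produce the initial level-$2$ lift in the $\rho > \tfrac23$ regime (Proposition~\ref{prop:dgf_zust_case}); the sewing estimate in the extension theorem itself is purely algebraic, relying on the decomposition $T_1 = T_1^s \oplus T_1^{\ker}$, the neoclassical inequality, and the path regularity and continuity conditions — no integral estimate appears.
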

In the smooth setting, we show that this unique extension coincides with the functional defined via the surface signature as in~\Cref{eq:intro_mult_dgf_smooth_surface} in~\Cref{cor:smooth_extension_surface_signature}. Thus, we define the surface signature of $\rho$-rough surfaces as
\begin{align}\nonumber
    \bS: \RS^\rho \to \dg(\cmG), \quad \text{by} \quad \bS(\rrX) \coloneqq \trrX_{0,1;0,1},
\end{align}
where $\trrX$ is the unique extension from~\Cref{thm:intro_surface_extension}. For a surface $Y \in C^\rho([0,1]^2, \V)$ in the Z\"ust regime ($\rho > \frac{2}{3}$), we can define a $\rho$-rough surface $\rrY : \Delta^2 \times \Delta^2 \to \dg(\cmG^{\gr{\leq 2}})$ from the surface itself (\Cref{prop:dgf_zust_case}), and thus compute the surface signature without additional information. Furthermore, we define a $\rho$-rough surface metric $d_\rho$ such that $(\RS^\rho, d_\rho)$ is a complete metric space (\Cref{prop:rough_surface_complete}) and the extension is continuous with respect to this metric (\Cref{thm:cont_extension}). Finally, this allows us to compute the surface holonomy of $\rho$-rough surfaces, which is stated formally in~\Cref{thm:surface_holonomy_rough}.

\begin{theorem}[Informal]
    Let $(\cona, \conc)$ be a continuous 2-connection valued in a crossed module of Banach algebras $\cmA = (A_1, A_0)$. Surface holonomy of $\rho$-rough surfaces, denoted $H^{\cona, \conc} : \RS^\rho \to A_1$ is a continuous map defined by
    \begin{align}\nonumber
        H^{\cona, \conc}(\rrX) \coloneqq \tconc \left( \hssig(\rrX)\right),
    \end{align}
    where $\tconc: G_1 \to A_1$ is the unique map from~\Cref{thm:intro_surface_signature_universal}.
\end{theorem}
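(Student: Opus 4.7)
The plan is to construct $H^{\cona, \conc}$ as a composition of two continuous maps and thereby reduce continuity of surface holonomy to continuity of the two factors, following exactly the same template that produces~\Cref{eq:intro_rough_path_parallel_transport} from~\Cref{thm:intro_path_signature_universal} and~\Cref{thm:intro_informal_path_extension} in the path case. Concretely, I would define
\begin{align*}
    H^{\cona, \conc} : \RS^\rho \xrightarrow{\ \hssig\ } G_1 \xrightarrow{\ \tconc\ } A_1,
\end{align*}
where $\hssig(\rrX)$ denotes the surface-component of the extension $\trrX_{0,1;0,1}$ produced by the surface extension theorem (\Cref{thm:intro_surface_extension}), and $\tconc$ is the unique morphism supplied by the universal property of the surface signature (\Cref{thm:intro_surface_signature_universal}). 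Well-definedness is immediate; compatibility with the smooth case follows because, for a smooth surface $Y$, the $\rho$-rough surface $\rrY$ built from $\bS(Y)$ satisfies $\hssig(\rrY) = \hssig(Y)$ by the uniqueness clause of~\Cref{thm:intro_surface_extension}, and $\tconc(\hssig(Y)) = H^{\cona, \conc}(Y)$ by~\Cref{thm:intro_surface_signature_universal}.

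For continuity, I would invoke the two inputs separately. First, continuity of the extension (\Cref{thm:cont_extension}) ensures that $\rrX \mapsto \trrX$, and in particular $\hssig$, is continuous from $(\RS^\rho, d_\rho)$ to $G_1 \subset T_1\ps{\V}$ equipped with the topology coming from the grading. Second, the map $\tconc : G_1 \to A_1$ is continuous because it is built, via the universal property, from the bounded linear maps $\cona$ and $\conc$ valued in the Banach algebras $A_0$ and $A_1$: on each graded component of $T_1\ps{\V}$ the induced map is a finite product/action of bounded operators, and compatibility with the Peiffer ideal together with the completeness of $A_1$ promotes it to a continuous morphism on the completion. Composing the two continuous maps yields the desired continuity of $H^{\cona, \conc}$.

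The main obstacle is verifying the second continuity statement in a way that is genuinely compatible with the topology used on $G_1$ in the rough setting. The subtlety is that $\tconc$ is originally constructed so as to make the smooth identity $H^{\cona,\conc}(X) = \tconc(\hssig(X))$ hold, but to apply it to $\hssig(\rrX)$ we need control of its action on all of $G_1$, including limits of group-like elements that do not arise from smooth surfaces. The resolution is to observe that the growth of $\tconc$ on level-$n$ elements is controlled by $\|\cona\|^k \|\conc\|^{\lceil(n-k)/2\rceil}$ up to combinatorial factors, and that the norm on $G_1 \subset T_1\ps{\V}$ inherited from the extension theorem enforces the factorial decay needed to sum these contributions absolutely in $A_1$. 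Once this estimate is in place, continuity of $\tconc$ with respect to the graded topology follows, and the theorem is obtained by composition.
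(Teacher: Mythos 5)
Your overall strategy — factor $H^{\cona,\conc}$ as $\tconc \circ \hssig$ and argue continuity of each factor — is exactly the paper's proof (\Cref{thm:surface_holonomy_rough}): the paper cites continuity of $\hssig: \RS^\rho \to G_1 \subset E_1$ from~\Cref{cor:surface_sig_rs_cont} and continuity of $\tconc: E_1 \to A_1$ from~\Cref{prop:E0_con_universal}, then composes.

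The one substantive issue is the estimate in your last paragraph. The level-$n$ component $T_1^{\gr{n}}(\V)$ is a quotient of $\bigoplus_{k=0}^{n-2}\V^{\otimes k}\otimes\Lambda^2\V\otimes\V^{\otimes(n-k-2)}$, so each generator contains exactly one $\Lambda^2\V$ factor; the operator norm of $\tconc$ on $T_1^{\gr{n}}$ is controlled by $\|\cona\|^{n-2}\|\conc\|$, not $\|\cona\|^k\|\conc\|^{\lceil(n-k)/2\rceil}$. (You may be conflating the \emph{level} grading with the \emph{Picard depth} grading~\Cref{eq:hssig_picard_grading} — the $p$-th Picard iterate involves $p$ copies of $\conc$, but any fixed level $n$ receives contributions from Picard depths $1,\dots,\lfloor n/2\rfloor$, and that is a different decomposition.) More to the point, the paper does not need to re-derive any estimate here: the continuity of $\tconc$ on all of $E_1(\V)$ — not just on signatures of smooth surfaces — is built into~\Cref{prop:E0_con_universal}, whose proof in~\Cref{apxsec:universal_locally_convex} equips $T_1(\V)$ with the locally $m$-convex topology generated by the $P_\lambda$ norms of~\Cref{eq:Plambda_norm} and shows that the universal morphism extending a continuous $\conc$ is automatically continuous on the completion. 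What remains to check in~\Cref{thm:surface_holonomy_rough} is only that $\hssig(\rrX)\in E_1$, which is~\Cref{cor:surface_sig_rs_cont} (using the factorial decay from the extension theorem together with the neoclassical inequality). So your concern about ``control of its action on all of $G_1$, including limits'' is exactly the right one, but the resolution lives in the construction of $E_1$, not in an ad hoc estimate at this stage.
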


\subsection{Related Work}
This article extends the notion of the path signature~\cite{chen_integration_1958} and the extension theorem for rough paths~\cite{lyons_differential_1998}. The rough path extension theorem can also be formulated as \emph{sewing lemma}~\cite{gubinelli_controlling_2004,feyel_non-commutative_2008,gerasimovics_non-autonomous_2021}, and our generalization can be viewed as a 2D nonabelian sewing lemma. Various aspects of these two generalizations have previously been studied.

The notion of a rough sheet was introduced in~\cite{chouk_rough_2014} as a generalization of controlled rough paths, and along with~\cite{chouk_skorohod_2015} studied change of variable formulas for Brownian sheets in this context. However, this theory leads to \emph{``... a very complex algebraic structure which is not well understood at the moment''}~\cite{chouk_rough_2014}. From a more geometric perspective,~\cite{alberti_integration_2023,stepanov_towards_2021} studied integration of rough differential forms via sewing lemmas, motivated by Z\"ust's generalization of the 1D Young integral. Furthermore, abelian multiparameter sewing lemmas have been studied in both the deterministic~\cite{harang_extension_2021} and stochastic~\cite{bechtold_multiparameter_2023} settings. However, these works do not approach the problem by generalizing the notion of the signature itself.

Recently, several notions of a multiparameter signature have been developed. In~\cite{giusti_topological_2022}, a topological/geometric signature based on Jacobian minors was developed for maps from arbitrary cubical domains $[0,1]^d$, while in~\cite{diehl_two-parameter_2022}, a discrete variant is introduced based on discrete 2D increments, motivated by the algebraic perspective of quasisymmetric functions. In~\cite{diehl_signature_2024}, a combination of the two above approaches is considered by developing a continuous 2D signature based on continuous 2D increments. Furthermore, these notions have recently been applied to image and texture classification in~\cite{zhang_two-dimensional_2022}.

While these notions of multiparameter signatures share some analytic and algebraic properties of the path signature, they all fail to be multiplicative with respect to horizontal/vertical concatenation, and are only studied in the Lipschitz setting. Surface holonomy was originally developed as a generalization of parallel transport to a higher dimensional setting~\cite{baez_higher_2006,schreiber_smooth_2011,breen_differential_2005, martins_surface_2011,faria_martins_crossed_2016, yekutieli_nonabelian_2015}. In~\cite{lee_random_2023} it was shown that surface holonomy provides a multiplicative approach to studying functions and measures on the space of surfaces in the Young regime. 

\begin{remark}
    While this article was being prepared, the author was notified of~\cite{chevyrev_multiplicative_2024} which independently achieved similar results as this article. Both of these articles are motivated by Kapranov's notion of a universal translation-invariant surface holonomy~\cite{kapranov_membranes_2015}, but focus on different and complementary aspects of this problem.
    
    In terms of algebraic aspects,~\cite{chevyrev_multiplicative_2024} works primarily in the Lie algebraic setting introduced by Kapranov and develops the notion of a 2D Magnus expansion for surfaces, analogous to the notion of the \emph{log signature} of a path. Here, we develop an approach based on associative algebras, and show that Kapranov's crossed module of Lie algebras embeds into our construction (\Cref{prop:inclusion_kapranov}). We define the surface signature in terms of associative algebras, analogous to the \emph{signature} of a path.

    In terms of analytic aspects, both articles provide a 2D nonabelian sewing lemma (or extension theorem) which allows us to consider the log signature (in~\cite{chevyrev_multiplicative_2024}) and signature (here) of surfaces with arbitrary $\rho$-H\"older regularity, provided certain (log-) signature terms are postulated. In this article, we explicitly consider the factorial decay of the terms in the extension theorem, which ensures that the surface signature of a rough surface is a bounded object. This allows us to prove a global universal property of the surface signature in the smooth setting, and we use this to define surface holonomy with respect to bounded $2$-connections in the rough setting. In~\cite{chevyrev_multiplicative_2024}, studying this decay is not possible in the Lie algebra setting, and thus can only consider a local universal property for smooth surfaces parametrized on sufficiently small domains. However, this allows~\cite{chevyrev_multiplicative_2024} to use a simpler approach to proving the extension theorem, which does not use a section $s$ of the boundary map $\delta$, which is required to obtain the desired bounds here.
\end{remark}

{\small
\textbf{Acknowledgements.}
The author would like to thank Chad Giusti, Vidit Nanda, and Harald Oberhauser for several helpful discussions at the beginning of this project.
Additionally, the author would like to thank Ilya Chevyrev for the suggestion to consider the universal locally $m$-convex algebras from~\cite{chevyrev_characteristic_2016}.
The author would also like to thank the organizers, Kurusch Ebrahimi-Fard and Fabian Harang, of the \emph{Signatures of Paths and Images} workshop, where this work was first presented. 
DL was supported by the Hong Kong Innovation and Technology Commission (InnoHK Project CIMDA).} 

\section{The Path Signature} \label{sec:path_signature}

In this section, we review the construction of the path signature from the perspective of universal properties. In particular, our aim is to show that the path signature is the universal parallel transport map. We will introduce some related concepts along the way and discuss basic properties. We work exclusively with piecewise smooth paths $\paths^\infty(\V)$ equipped with the composition from~\Cref{eq:intro_path_concatenation}. We will generalize these constructions in~\Cref{sec:surface_signature} to the case of surfaces, where we mimic the structure of this section.\medskip

\textbf{Universal Connection.} Let $\V$ be a finite dimensional vector space and $T_0(\V)$ be the free associative algebra (tensor algebra),
\begin{align}\nonumber
    T_0(\V) \coloneqq \bigoplus_{n=0}^\infty \V^{\otimes n}.
\end{align}
Let $(A_0, \cdot, 1)$ be a unital associative algebra and $\cona \in L(\V, A_0)$. Then, by the universal property of the tensor algebra, there exists a unique algebra morphism $\tcona: T_0(\V) \to A_0$ such that
\[
    \begin{tikzcd}
        V \ar [r, "\cona"] \ar[d, swap,"\zeta"] & A_0 \\
        T_0(\V) \ar[ur, swap, dashed, "\tcona"]&
    \end{tikzcd}
\]
commutes. Here, $\zeta \in L(V, T_0(\V))$ is the canonical inclusion of $V \hookrightarrow T_0(\V)$. In the geometric language of~\Cref{ssec:paths}, we call $\zeta$ the \emph{universal (translation-invariant) connection}\footnote{All connections we consider are translation-invariant, and we will simply call them connections.}. \medskip

\textbf{Free Lie Algebra.}
We can also consider this universal connection from the perspective of Lie algebras. Let $\fg_0$ be the free Lie algebra with respect to $\V$. The free Lie algebra also satisfies an analogous universal property. Let $\fh$ be a Lie algebra, and $\cona \in L(V, \fh)$. Then, there exists a unique Lie algebra morphism $\tcona: \fg_0 \to \fh$ such that
\[
    \begin{tikzcd}
        V \ar [r, "\cona"] \ar[d, swap,"\zeta"] & \fh \\
        \fg_0 \ar[ur, swap, dashed, "\tcona"]&
    \end{tikzcd}
\]
commutes. Here, $\zeta \in L(\V, \fg_0)$ is the canonical inclusion of $V \hookrightarrow \fg_0$. However, we note that $T_0(\V)$ is the universal enveloping algebra of $\fg_0$, and we can define an inclusion of Lie algebras
\begin{align}\nonumber
    \iota_0 : \fg_0 \hookrightarrow \Lie(T_0(\V)),
\end{align}
where $\Lie(T_0(\V))$ is the Lie algebra on $T_0(\V)$ equipped with the commutator bracket. \medskip

\textbf{Norms.}
Suppose $\V$ is a finite dimensional Hilbert space with orthonormal basis $\{e_1, \ldots, e_d\}$, and extend the Hilbert structure to $\V^{\otimes n}$ by taking $\{e_{i_1} \cdots e_{i_n}\}_{i_j \in [d]}$ to be an orthonormal basis. Note that we have the property that for $a^{\gr{n}} \in V^{\otimes n}$ and $b^{\gr{k}} \in V^{\otimes k}$, we have 
\begin{align}\nonumber
    \|a^{\gr{n}} \cdot b^{\gr{k}}\| \leq \| a^{\gr{n}}\| \cdot \|b^{\gr{k}}\|.
\end{align}

\textbf{Truncation.}
The \emph{level $n$ truncated tensor algebra} is
\begin{align}\nonumber
    T^{\gr{\leq n}}_0(\V) \coloneqq \bigoplus_{k=0}^n V^{\otimes k},
\end{align}
and equipped with the norms above, $T^{\gr{\leq n}}_0(\V)$ is a Banach algebra. We denote the truncation map by $\pi_0^{\gr{n}} : T_0(\V) \to T^{\gr{\leq n}}_0(\V)$.\medskip

\textbf{Group-Like Elements.} We can also consider truncations of the Lie algebra $\fg_0$. We define the lower central series of $\fg_0$ recursively as
\begin{align}\nonumber
    \LCS_1(\fg_0) \coloneqq \fg_0 \andd \LCS_r(\fg_0) \coloneqq [\fg_0, \LCS_{r-1}(\fg_0)].
\end{align}
By definition, the elements of $\LCS_n(\fg_0)$ are all the elements of at least level $n$.
Then, we define the $n$-truncated nilpotent Lie algebras $\fg_0^{\gr{\leq n}} $by
\begin{align} \label{eq:truncated_fg0}
   \fg_0^{\gr{\leq n}} \coloneqq \fg_0/\LCS_{n+1}(\fg_0).
\end{align}
Let $T^{\gr{\leq n}, 0}_0(\V),\, T^{\gr{\leq n}, 1}_0(\V) \subset T^{\gr{\leq n}}_0(\V)$ be the elements with constant term $0$ and $1$ respectively, and define the exponential $\exp: T^{\gr{\leq n}, 0}_0(\V) \to T^{\gr{\leq n}, 1}_0(\V)$ and logarithm $\log: T^{\gr{\leq n}, 1}_0(\V) \to T^{\gr{\leq n}, 0}_0(\V)$ maps by 
\begin{align} \label{eq:explog0}
\exp(x) \coloneqq \sum_{k=0}^n \frac{x^{\cdot k}}{k!} \andd \log(y) \coloneqq \sum_{k=1}^n (-1)^{k+1} \frac{y^{\cdot k}}{k}
\end{align}
respectively. Then, the \emph{level $n$ group-like elements} is the Lie group associated to $\fg_0^{\gr{\leq n}}$ defined by
\begin{align} \label{eq:truncated_G0}
    G_0^{\gr{\leq n}} \coloneqq \{ \exp(x) \in T_0^{\gr{\leq n}} \, : \, x \in \fg_0^{\gr{\leq n}} \}.
\end{align}

\textbf{Completion.}
Let $T_0\ps{\V}$ be the completion of $T_0(\V)$ as formal noncommutative power series
\begin{align}\nonumber
    T_0\ps{\V} \coloneqq \prod_{n=0}^\infty \V^{\otimes n}.
\end{align}
However, we will require a topology our algebras in order to discuss analytic properties. Following~\cite[Section 2]{chevyrev_characteristic_2016}, we equip $T_0(\V)$ with the topology generated by the norms
\begin{align} \label{eq:plambda_norm}
    p_\lambda(x) \coloneqq \sum_{n=0}^\infty \lambda^n \|x^{\gr{n}}\|,
\end{align}
for all $\lambda > 0$, where $x = (x^{\gr{n}})_{n=0}^\infty$ with $x^{\gr{n}} \in V^{\otimes n}$. The completion of $T_0(\V)$ with respect to this topology is the locally $m$-convex algebra denoted $E_0(\V)$, and can be expressed as~\cite[Corollary 2.5]{chevyrev_characteristic_2016}
\begin{align} \label{eq:E0_characterization}
    E_0(\V) = \left\{ a \in T\ps{\V} \, : \, \sum_{k=0}^\infty \lambda^k \|a^{\gr{k}}\| < \infty \, \text{ for all } \, \lambda > 0\right\}.
\end{align}
In particular, this has the following universal property.
\begin{corollary}[\cite{chevyrev_characteristic_2016}]\label{cor:E0_universal}
    Let $A_0$ be a Banach algebra, and $\cona \in L(\V, A_0)$ be continuous. Then, there exists a unique continuous map $\tcona: E_0(\V) \to A_0$ such that $\cona = \tcona \circ \zeta$, where $\zeta \in L(\V, E_0(\V))$ is the inclusion.
\end{corollary}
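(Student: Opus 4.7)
The plan is to build the extension level by level from the tensor algebra universal property already stated above, and then pass to the completion using the explicit characterization in~\Cref{eq:E0_characterization}. First, I would invoke the tensor algebra universal property recalled earlier to obtain the unique algebra morphism $\tcona_0 : T_0(\V) \to A_0$ with $\tcona_0 \circ \zeta = \cona$, which on homogeneous elements is the obvious multilinear map $\tcona_0(v_1\otimes\cdots\otimes v_n) = \cona(v_1)\cdots\cona(v_n)$. Since $\V$ is finite-dimensional and $\cona$ is continuous, it is bounded; let $M \coloneqq \|\cona\|_{\mathrm{op}}$. Using submultiplicativity of the norm in the Banach algebra $A_0$ and the compatibility of the chosen norms on $\V^{\otimes n}$ with the tensor product, one gets the clean bound
\begin{equation*}
    \|\tcona_0(a^{\gr{n}})\|_{A_0} \leq M^n \|a^{\gr{n}}\|, \qquad a^{\gr{n}} \in \V^{\otimes n}.
\end{equation*}

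Next I would define $\tcona$ on $E_0(\V)$ by the series $\tcona(a) \coloneqq \sum_{n\geq 0} \tcona_0(a^{\gr{n}})$ for $a = (a^{\gr{n}})_{n\geq 0} \in E_0(\V)$. The characterization in~\Cref{eq:E0_characterization} with $\lambda = M$ gives $\sum_{n\geq 0} M^n \|a^{\gr{n}}\| = p_M(a) < \infty$, so the series is absolutely convergent in the Banach algebra $A_0$ and $\tcona$ is well defined with the uniform bound $\|\tcona(a)\|_{A_0} \leq p_M(a)$. A short check using the Cauchy product shows that $\tcona$ is an algebra morphism: grouping terms of fixed total level reproduces the convolution product on $T_0\ps{\V}$, and absolute convergence justifies the rearrangement.

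For continuity, the inequality $\|\tcona(a)\|_{A_0} \leq p_M(a)$ says exactly that $\tcona$ is continuous at $0$ for the seminorm $p_M$, and hence continuous on $E_0(\V)$ since its topology is generated by the family $\{p_\lambda\}_{\lambda>0}$ which includes $p_M$. Uniqueness is then a density argument: $T_0(\V)$ sits densely in $E_0(\V)$ because any $a \in E_0(\V)$ is approximated in every $p_\lambda$ by its truncations $\pi_0^{\gr{n}}(a)$ (the tails $\sum_{k>n} \lambda^k \|a^{\gr{k}}\|$ go to $0$ as $n \to \infty$ since the full series is summable). Any continuous extension of $\cona$ must agree with $\tcona_0$ on $T_0(\V)$, and two continuous maps into the Hausdorff space $A_0$ that agree on a dense subset coincide. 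The only genuinely delicate point I expect is verifying multiplicativity carefully under the $p_\lambda$-topology, which boils down to the standard Cauchy product estimate and the fact that $p_M$ itself is submultiplicative, so no new ideas are required beyond bookkeeping.
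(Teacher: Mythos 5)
The paper does not prove this corollary — it is quoted directly from Chevyrev and Oberhauser, so there is no in-paper argument to compare against. Your self-contained proof follows the standard architecture and the overall plan is sound: lift $\cona$ to $T_0(\V)$ by the algebraic universal property, bound the extension level-wise, show the series is $p_\lambda$-dominated for a suitable $\lambda$, deduce continuity, check multiplicativity via the Cauchy product, and obtain uniqueness from density of $T_0(\V)$ in its completion $E_0(\V)$ (the truncation argument is fine: tails of the convergent series $\sum_k\lambda^k\|a^{\gr{k}}\|$ vanish). You are also right to read ``unique continuous map'' as ``unique continuous algebra morphism''; otherwise uniqueness is false.

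One step, as written, is not justified by what the paper actually sets up. You assert the bound $\|\tcona_0(a^{\gr{n}})\|\le M^n\|a^{\gr{n}}\|$ with $M=\|\cona\|_{\op}$, invoking ``compatibility of the chosen norms on $\V^{\otimes n}$ with the tensor product''. That compatibility is the submultiplicativity $\|a\cdot b\|\le\|a\|\,\|b\|$ \emph{internal} to the tensor algebra; it says nothing about the operator norm of $\tcona_0$ restricted to $\V^{\otimes n}$ when $A_0$ is a general Banach algebra. The bound $M^n$ would be exact for the projective cross-norm on $\V^{\otimes n}$, but the paper uses the Hilbert–Schmidt norm induced by an orthonormal basis, which is \emph{smaller} than the projective norm; so the operator norm of $\tcona_0|_{\V^{\otimes n}}$ may exceed $M^n$. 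What one does get unconditionally, by expanding $a^{\gr{n}}=\sum_I c_I\,e_{i_1}\cdots e_{i_n}$, bounding each word by $\|\cona(e_{i_1})\|\cdots\|\cona(e_{i_n})\|\le M^n$, and applying Cauchy--Schwarz over the $d^n$ multi-indices, is
\[
\|\tcona_0(a^{\gr{n}})\|\;\le\;M^n\sum_I|c_I|\;\le\;\bigl(M\sqrt{d}\bigr)^n\|a^{\gr{n}}\|,\qquad d=\dim\V.
\]
This is harmless for the conclusion: since the topology on $E_0(\V)$ is generated by the whole family $\{p_\lambda\}_{\lambda>0}$, simply replace $p_M$ by $p_{M\sqrt{d}}$ and the rest of your argument goes through verbatim. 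But the $M^n$ bound as stated needs either this correction or an explicit argument that the HS norm behaves like a uniform cross-norm in this setting, which you have not provided.
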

\noindent We define the \emph{group-like elements} to be 
\begin{align} \label{eq:G0_def}
    G_0 \coloneqq \left\{ x \in E_0(\V) \, : \, \pi^{\gr{n}}(x) \in G^{\gr{\leq n}}_0 \, \text{ for all } n \in \N\right\}.
\end{align}

\textbf{The Path Signature.}
We can consider parallel transport of $x \in C^\infty([\tmp], \V)$ with respect to $\zeta$ using~\Cref{eq:intro_parallel_transport} as
\begin{align}\nonumber
    \frac{d S_t(x)}{dt} = S_t(x) \cdot \frac{dx_t}{dt}, \quad S_{\tmm}(x) = 1,
\end{align}
where we use $\cdot$ to denote the tensor product. The solution is called the \emph{path signature of $x$}, denoted by $S(x) \coloneqq S_{\tpp}(x)$, and defines a map
\begin{align}\nonumber
    S: \paths^\infty(\V) \to T_0\ps{\V}.
\end{align}
In fact, due to the factorial decay, the path signature is valued in $E_0(\V)$, and furthermore, can be shown to be valued in the group-like elements. 
\begin{corollary}
    For any $x \in \paths^\infty(\V)$, we have $S(x) \in G_0$.
\end{corollary}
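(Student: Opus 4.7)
The plan is to reduce to the level-$n$ truncations and establish group-likeness by approximating the driving path by piecewise linear paths. First, by the definition of $G_0$ in~\Cref{eq:G0_def}, it suffices to prove that $S^{\gr{\leq n}}(x) \coloneqq \pi_0^{\gr{n}}(S(x))$ lies in $G_0^{\gr{\leq n}}$ for each $n \in \N$. Since $\pi_0^{\gr{n}}$ is an algebra morphism onto the finite-dimensional nilpotent Banach algebra $T_0^{\gr{\leq n}}(\V)$, applying it to the defining ODE shows that $S^{\gr{\leq n}}_t(x)$ solves the analogous equation in $T_0^{\gr{\leq n}}(\V)$ with infinitesimal driver $\iota_0\bigl(\tfrac{dx_t}{dt}\bigr) \in \fg_0^{\gr{\leq n}}$.

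Next, I would verify the statement for piecewise linear paths. For a linear segment $\gamma_s = x_0 + (s-a)v$ with $v \in \V$ on $[a,b]$, direct integration of the signature ODE gives $S(\gamma) = \exp((b-a)v)$, which lies in $G_0^{\gr{\leq n}}$ at every level by~\Cref{eq:truncated_G0}. The multiplicative flow property of the signature ODE yields Chen's identity $S(x \concat y) = S(x) \cdot S(y)$, so the signature of a piecewise linear path decomposes as a product of such exponentials. Because the Baker--Campbell--Hausdorff series terminates in each nilpotent quotient $\fg_0^{\gr{\leq n}}$, the set $G_0^{\gr{\leq n}}$ is a group under the tensor product, and hence the truncated signatures of piecewise linear paths lie in $G_0^{\gr{\leq n}}$.

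For a general piecewise smooth $x$, approximate it by piecewise linear paths $x^{(m)}$ converging uniformly together with their derivatives. A Grönwall-type estimate applied to the truncated ODE gives $S^{\gr{\leq n}}(x^{(m)}) \to S^{\gr{\leq n}}(x)$ in $T_0^{\gr{\leq n}}(\V)$. The subset $G_0^{\gr{\leq n}} = \exp(\fg_0^{\gr{\leq n}})$ is the image of the finite-dimensional vector space $\fg_0^{\gr{\leq n}}$ under the polynomial map $\exp$ of~\Cref{eq:explog0}, whose polynomial inverse $\log$ is globally defined on $T_0^{\gr{\leq n}, 1}(\V)$; hence $G_0^{\gr{\leq n}}$ is closed in $T_0^{\gr{\leq n}}(\V)$, and the limit $S^{\gr{\leq n}}(x)$ is group-like.

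The main obstacle is establishing the group structure and closedness of $G_0^{\gr{\leq n}}$ inside the nilpotent Banach algebra $T_0^{\gr{\leq n}}(\V)$, which ultimately rests on the Baker--Campbell--Hausdorff formula truncating in $\fg_0^{\gr{\leq n}}$. Once this is in place, Chen's identity is immediate from the ODE flow property and continuity of the signature under $C^1$ perturbations of the driving path is a standard Grönwall argument; together, these reduce the general case to linear paths, for which the result is a direct calculation.
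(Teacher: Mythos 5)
The paper offers no proof of this corollary---it is stated as a classical fact (essentially Chen's theorem) following the observation that the factorial decay places $S(x)$ in $E_0(\V)$---so there is no in-paper argument to compare against. Your proposal is a correct and standard route: reduce to the nilpotent truncations, verify group-likeness for linear segments by direct integration, propagate to piecewise linear paths via Chen's identity and the terminating BCH formula in $\fg_0^{\gr{\leq n}}$, and then pass to the limit using closedness of $G_0^{\gr{\leq n}} = \log^{-1}(\fg_0^{\gr{\leq n}}) \cap T_0^{\gr{\leq n},1}(\V)$ under the continuous polynomial map $\log$. (You implicitly rely on the paper's separately-established fact that $S(x) \in E_0(\V)$ for membership in $G_0$ as defined in~\Cref{eq:G0_def}; worth stating.)

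One technical slip: you cannot approximate a smooth path by piecewise linear paths ``converging uniformly together with their derivatives.'' The derivative of a piecewise linear interpolant is a step function and cannot converge uniformly to a non-constant $\dot{x}$. What one actually has for the interpolants $x^{(m)}$ along a refining partition is uniform convergence $x^{(m)} \to x$ together with $\dot{x}^{(m)} \to \dot{x}$ in $L^1$ (equivalently, uniformly bounded $1$-variation with pointwise a.e.\ convergence). This weaker convergence is enough: writing $\frac{d}{dt}\bigl(S^{\gr{\leq n}}_t - S^{(m),\gr{\leq n}}_t\bigr) = (S^{\gr{\leq n}}_t - S^{(m),\gr{\leq n}}_t)\dot{x}_t + S^{(m),\gr{\leq n}}_t(\dot{x}_t - \dot{x}^{(m)}_t)$ and using that $\|S^{(m),\gr{\leq n}}_t\|$ is uniformly bounded by the total variation, the Gr\"onwall step closes with the $L^1$ norm of $\dot{x} - \dot{x}^{(m)}$. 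So the error is in the stated mode of convergence, not in the machinery; the proof goes through once this is corrected. As an alternative that avoids approximation entirely, one can verify the shuffle identity for iterated integrals via integration by parts and invoke the equivalence between shuffle characters and group-like elements of the Hopf algebra $T_0\ps{\V}$; this is the other classical route to the same fact.
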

By considering Picard iterations, the path signature can be expressed as the infinite sum of iterated integrals, $S(x) = (S^{\gr{n}}(x))_{n=0}^\infty$, where $S^{\gr{0}}(x) = 1$ and 
\begin{align}\nonumber
    S^{\gr{n}}(x) = \int_{\Delta^n(\tmp)} dx_{t_1} \cdot dx_{t_2} \cdots dx_{t_n} \in \V^{\otimes n},
\end{align}
where 
\begin{align}\nonumber
    \Delta^n(\tmp) \coloneqq \{ \tmm \leq t_1 < \ldots < t_n \leq \tpp\}.
\end{align}
Furthermore, the signature preserves the compositional structure of paths, which is often called \emph{Chen's identity}: if $x \in C^\infty([t_1, t_2], \V)$ and $y \in C^\infty([t_2, t_3], \V)$ are composable, then
\begin{align}\nonumber
    S(x \concat y) = S(x) \cdot S(y).
\end{align}

\textbf{Universal Parallel Transport.}
The path signature can be viewed as the \emph{universal parallel transport map on $\V$}, in the following sense.

\begin{theorem} \label{thm:universal_path_signature}
    Let $(A_0, \cdot, 1)$ be a Banach algebra, and $\cona \in L(\V, A_0)$ be continuous. Let $\tcona: E_0(\V) \to A_0$ be the unique algebra morphism from~\Cref{cor:E0_universal}. Then, the parallel transport map $F^\cona : \paths^\infty(\V) \to A_0$ from~\Cref{eq:intro_parallel_transport} satisfies
    \begin{align}\nonumber
        F^{\cona}(x) = \tcona(S(x)).
    \end{align}
\end{theorem}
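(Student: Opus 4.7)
The plan is to show that both sides of the desired identity, viewed as functions of the right endpoint $t$, satisfy the same linear ODE on $A_0$ with the same initial condition, and then invoke uniqueness. First, I will record that $\tcona \colon E_0(\V) \to A_0$ is a \emph{continuous unital algebra morphism} with $\tcona \circ \zeta = \cona$; unitality follows from the universal property of $E_0(\V)$ applied to the zeroth component, and continuity plus the algebra property are precisely what \Cref{cor:E0_universal} delivers. Next, I will observe that for $x \in C^\infty([\tmm,\tpp],\V)$ the map $t \mapsto S_t(x) \in E_0(\V)$ is differentiable in the topology of $E_0(\V)$ (each component $S_t^{\gr{n}}(x)$ is smooth and the partial sums converge in every $p_\lambda$ norm thanks to the factorial decay coming from the simplex of integration), and that its derivative is $S_t(x) \cdot \zeta(\dot x_t)$.

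Having established this, I will apply $\tcona$ to the defining ODE of the path signature. Using continuity of $\tcona$ to pass it through the limit that defines the derivative, together with the algebra morphism property, I obtain
\begin{equation*}
    \frac{d}{dt}\, \tcona\bigl(S_t(x)\bigr) \;=\; \tcona\!\left(S_t(x) \cdot \zeta(\dot x_t)\right) \;=\; \tcona(S_t(x)) \cdot \tcona(\zeta(\dot x_t)) \;=\; \tcona(S_t(x)) \cdot \cona(\dot x_t),
\end{equation*}
with initial condition $\tcona(S_{\tmm}(x)) = \tcona(1) = 1$. This is exactly the defining ODE \eqref{eq:intro_parallel_transport} for $F_t^\cona(x)$.

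Since this is a linear ODE in the Banach algebra $A_0$ with a continuous driver $t \mapsto \cona(\dot x_t)$ (recall $x$ is piecewise smooth, so one argues piece by piece and uses the semigroup property of both sides under concatenation, or equivalently that both sides are continuous in $t$ across break points), standard Picard uniqueness gives $\tcona(S_t(x)) = F_t^\cona(x)$ for all $t$, and evaluating at $t = \tpp$ yields the claim.

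The only real technical point, and the step I expect to be the mild obstacle, is rigorously justifying that $\tcona$ commutes with $\tfrac{d}{dt}$ of $S_t(x)$; this requires knowing that the difference quotients converge in the locally $m$-convex topology of $E_0(\V)$, not merely componentwise. A clean way around this is to bypass the ODE entirely and work with the series expansion: by continuity and linearity,
\begin{equation*}
    \tcona(S(x)) \;=\; \sum_{n=0}^\infty \tcona\!\left(\int_{\Delta^n(\tmp)} dx_{t_1} \cdots dx_{t_n}\right) \;=\; \sum_{n=0}^\infty \int_{\Delta^n(\tmp)} \cona(\dot x_{t_1}) \cdots \cona(\dot x_{t_n})\, dt_1 \cdots dt_n,
\end{equation*}
where pulling $\tcona$ through the iterated integrals uses its continuity and algebra morphism property on finite-dimensional subspaces of $E_0(\V)$. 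The right-hand side is the convergent Picard series for $F^\cona(x)$ in the Banach algebra $A_0$, giving the identity directly; whichever route one prefers, uniqueness (for ODEs, or of the Picard series) closes the argument.
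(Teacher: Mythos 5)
Your second argument—pushing $\tcona$ through the iterated-integral series using continuity and the algebra morphism property, and recognizing the result as the convergent Picard series for $F^\cona$ in $A_0$—is exactly the paper's proof. Your first, ODE-uniqueness route is a valid alternative once the differentiability of $t \mapsto S_t(x)$ in the topology of $E_0(\V)$ is justified (the point you flag yourself), but the paper simply takes the series route.
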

\begin{proof}
    By considering Picard iterations of~\Cref{eq:intro_parallel_transport}, we obtain
    \begin{align}\nonumber
        F^\cona(x) = 1 + \sum_{n=1}^\infty \int_{\Delta^n} \cona(dx_{t_1}) \cdots \cona(dx_{t_n}) = 1 + \sum_{n=1}^\infty \tcona( S^{\gr{n}}(x)) = \tcona(S(x)).
    \end{align}
\end{proof}

\section{Surface Holonomy} \label{sec:surface_holonomy}

In this section, we provide an introduction to the notion of \emph{surface holonomy}, a generalization of parallel transport of paths to surfaces. As discussed in~\Cref{ssec:surfaces}, the space of piecewise smooth surfaces
\begin{align}\nonumber
    \surfaces(\V) \coloneqq \bigcup_{s_1 \leq s_2, \, t_1 \leq t_2} C^\infty([s_1, s_2] \times [t_1, t_2], \V).
\end{align}
parametrized by rectangular domains is equipped with two distinct partially-defined algebraic operations: horizontal and vertical concatenation defined in~\Cref{eq:intro_surf_hconcat} and~\Cref{eq:intro_surf_vconcat} respectively. This structure can be encoded in a structure called a \emph{double category}. However, we aim to minimize category-theoretic notions in the main text, and discuss this in~\Cref{apxsec:categorical_structure} for the interested reader. 

Surface holonomy is a map on the space of surfaces which is compatible with both horizontal and vertical concatenation. 
The codomain of the surface holonomy is a \emph{double group}, an algebraic structure equipped with partially defined horizontal and vertical compositions, similar to the structure of surfaces. 
Double groups can be encoded using structures called \emph{crossed modules of groups}, which are convenient for computational purposes. We will then consider \emph{crossed modules of algebras} and their associated double groups. Finally, we provide an introduction to surface holonomy valued in algebras. For a more detailed discussion of surface holonomy, we refer the reader to~\cite{lee_random_2023,faria_martins_crossed_2016,martins_surface_2011}.

\subsection{Crossed Modules and Double Groups}      

In this section, we provide the necessary background on crossed modules and double groups. In particular, we will only define double groups with respect to a crossed module. The interested reader is referred to~\Cref{apxsec:categorical_structure}, \cite[Section 4 and Appendix C]{lee_random_2023} and~\cite{brown_nonabelian_2011} for further details on the category theoretic picture. 

\begin{definition}
    \label{def:GCM}
        A \emph{crossed module of groups},
        \begin{align*}
            \cmG = \left(\cmb: G_1\rightarrow G_0,\quad \gt: G_0 \rightarrow \Aut(G_1) \right)
        \end{align*}
        is given by two groups $(G_0, \cdot), (G_1, *)$, a group morphism $\cmb: G_1 \rightarrow G_0$ and a left action of $G_0$ on $G_1$ (denoted elementwise by $g \gt : G_1 \rightarrow G_1$ for $g \in G$) which is a group morphism, such that
        \begin{enumerate}
            \item[(CM1)] \textbf{(First Peiffer relation)} $\cmb(g \gt E) = g \cdot \cmb(E) \cdot g^{-1}$; for $g \in G_0$ and $E \in G_1$
            \item[(CM2)] \textbf{(Second Peiffer relation)} $\cmb(E_1) \gt (E_2) = E_1 * E_2 *E_1^{-1}$; for $E_1, E_2 \in G_1$.
        \end{enumerate}
        A \emph{crossed module of Lie groups} is the same as above, except $G$ and $H$ are Lie groups, and all morphisms are smooth.
        Given another crossed module $\cmH = (\cmb: H_1 \to H_0, \gt)$, a \emph{morphism of crossed modules} $f = (f_1, f_0) : \cmG \to \cmH$ consists of group homomorphisms $f_0: G_0 \to H_0$ and $f_1 : G_1 \to H_1$ such that for all $g \in G_0$ and $E \in G_1$, we have
        \begin{align}\nonumber
            \delta \circ f_1(E) = f_0 \circ \delta(E) \andd f_1(g \gt E) = f_0(g) \gt f_1(E)
        \end{align}
\end{definition}

Given a crossed module of groups, we define the associated notion of a double group.

\begin{definition}
    Let $\cmG = (\cmb: G_1 \to G_0, \gt)$ be a crossed module of groups. The \emph{double group\footnote{More precisely, these are the \emph{squares} or \emph{2-morphisms} of a double group.} of $\cmG$} is
    \begin{align} \label{eq:dg_squares}
        \dg(\cmG) \coloneqq \left\{ S = (x,y,z,w, E) \in G_0^4 \times G_1 \, : \, \cmb(E) = x \cdot y \cdot z^{-1} \cdot w^{-1} \right\},
    \end{align}
    where the elements are arranged in a square as follows.
    \begin{figure}[!h]
        \includegraphics[width=\linewidth]{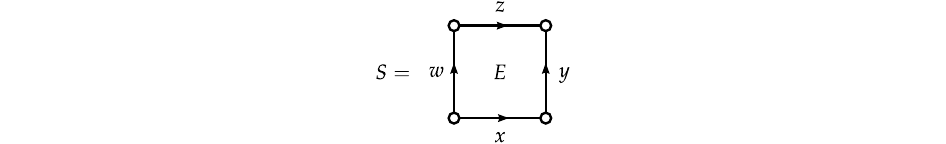}
    \end{figure}       

    Let $S = (x,y,z,w, E), S' = (x', y', z', y, E') \in \dg(\cmG)$. We say that $S$ and $S'$ are \emph{horizontally composable} if $y = w'$, and define the \emph{horizontal composition} to be 
    \begin{align} \label{eq:dg_hmult}
         S \hmult S' = (x\cdot x', y', z \cdot z', w, (x \gt E') * E).
    \end{align}
    Similarly, we say $S$ and $S'$ are \emph{vertically composable} if $z = x'$, and define \emph{vertical composition} to be
    \begin{align} \label{eq:dg_vmult}
        S \vmult S' = (x, y\cdot y', z', w \cdot w', E * (w \gt E')).
    \end{align}
    Furthermore, the \emph{interchange law} holds: for appropriately composable squares $X, Y, Z, W \in \dg(\cmG)$,
    \begin{align} \label{eq:dg_interchange}
        (X \hmult Y) \vmult (Z \hmult W) = (X \vmult Z) \hmult (Y \vmult W).
    \end{align}
    Let $e_0 \in G_0$ and $e_1 \in G_1$ denote the respective group identities. Then, for $x \in G_0$, we define the horizontal and vertical identities by
    \begin{align}\nonumber
        1^h_x \coloneqq (e_0, x, e_0, x, e_1) \andd 1^v_x \coloneqq (x, e_0, x, e_0, e_1),
    \end{align}
    which satisfy
    \begin{align}\nonumber
        1^h_x \hmult S = S, \quad S \hmult 1^h_x  = S, \quad 1^v_x \vmult S = S, \andd S \vmult 1^v_x = S,
    \end{align}
    whenever $S$ is appropriately composable with the identity. 
    Finally, for a square $S = (x,y,z,w,E) \in \dg(\cmG)$, the horizontal and vertical inverses are given by
    \begin{align}\nonumber
        S^{-h} \coloneqq (x^{-1}, w, z^{-1}, y, x^{-1} \gt E^{-1}) \andd S^{-v} \coloneqq (z, y^{-1}, x, w^{-1}, w^{-1} \gt E^{-1}),
    \end{align}
    which satisfies
    \begin{align}\nonumber
        S \hmult S^{-h} = 1^h_w, \quad S^{-h} \hmult S^h = y, \quad S \vmult S^{-v} = 1^v_x, \andd S^{-v} \vmult S = 1^v_y.
    \end{align}
\end{definition}

There are two main consistency properties that one must check. The first is that the boundary condition in the definition of squares in~\Cref{eq:dg_squares} holds for horizontal and vertical compositions, and the second is that the interchange law holds. These properties hold due to the first and second Peiffer relations respectively, and further details can be found in~\cite[Section 6.6]{brown_nonabelian_2011}.

\subsection{Crossed Modules of Algebras}
In this section, we provide a brief introduction to crossed modules of associative algebras, for further details, see~\cite{wagemann_crossed_2021,faria_martins_crossed_2016}.
We start with the definition of an algebra bimodule, see~\cite[Section 1.1.5]{loday_algebraic_2012}
\begin{definition}
    Let $(A, \cdot)$ be an algebra. An $A$-bimodule is a vector space $M$ equipped with a left and right action of $A$ on $M$, denoted $\gtd$ and $\ltd$ respectively such that for any $a_1, a_2 \in A$ and $m \in M$, we have
    \begin{align}\nonumber
        (a_1 \cdot a_2) \gtd m = a_1 \gtd (a_2 \gtd m), \quad m \ltd (a_1 \cdot a_2) = (m \ltd a_1) \ltd a_2, \andd (a_1 \gtd m) \ltd a_2 = a_1 \gtd (m \ltd a_2).
    \end{align}
    If $A$ is a unital algebra with unit $1$, then we also require
    \begin{align}\nonumber
        1 \gtd m = m \andd m \ltd 1 = m.
    \end{align}
\end{definition}

\begin{definition}
    A \emph{crossed module of algebras}
    \begin{align}\nonumber
        \cmA = (\delta: A_1 \to A_0, \gtd, \ltd)
    \end{align}
    consists of a unital algebra $(A_0, \cdot, 1)$ and a non-unital algebra $(A_1, *)$ equipped with left and right actions, $\gtd$ and $\ltd$, of $A_0$ on $A_1$ such that $A_1$ is an $A_0$-bimodule. These actions are compatible with the product on $A_1$, such that for any $a \in A_0$ and $E,E' \in A_1$, we have
    \begin{align}\nonumber
        a \gtd (E* E') = (a \gtd E) * E' \andd (E* E') \ltd a = E * (E' \ltd a),
    \end{align}
    and the left and right actions are fully interchangeable,
    \begin{align}\nonumber
        (E \ltd a) * (a' \gtd E') = E * ((a \cdot a') \gtd E') = (E \ltd (a\cdot a')) * E'.
    \end{align}
    Furthermore, there is an algebra morphism $\delta: A_1 \to A_0$ such that
    \begin{enumerate}
        \item \textbf{(First Peiffer Identity)} $\delta (a \gtd E) = a \cdot \delta(E)$ and $\delta(E \ltd a) = \delta(E) \cdot a$ for any $a \in A_0$ and $E \in A_1$, and
        \item \textbf{(Second Peiffer Identity)} $\delta(E) \gtd E' = E* E'$ and $ E \ltd \delta(E') = E*E'$ for any $E,E' \in A_1$. 
    \end{enumerate}
    Given another crossed module of algebras $\bB = (\cmb: B_1 \to B_0, \gtd, \ltd)$ a \emph{morphism of crossed modules of algebras} $f = (f_1, f_0) : \cmA \to \cmB$ consists of algebra morphisms $f_0 : A_0 \to B_0$ and $f_1 : A_1 \to B_1$ such that for all $a \in A_0$ and $E \in A_1$, 
    \begin{align}\nonumber
        \delta \circ f_1(E) = f_0 \circ d(E), \quad f_1(a \gtd E) = f_0(a) \gtd f_1(E), \andd f_1(E \ltd a) = f_1(E) \ltd f_0(a).
    \end{align}
\end{definition}

In this definition, $A_1$ is assumed to be non-unital; however, we will often require both algebras in the crossed module to be unital. We can simply adjoin a unit to $A_1$ (which we also denote this as $1$, and generally do not distinguish between the identity element of $A_0$ and $A_1$) and define
\begin{align}\nonumber
    \hA_1 \coloneqq \R \oplus A_1.
\end{align}
Multiplication in $\hA_1$ is defined for $(\lambda_1, a_1), (\lambda_2, a_2) \in \hA_1$ by
\begin{align}\nonumber
    (\lambda_1, a_1) * (\lambda_2, a_2) = (\lambda_1 \lambda_2, \lambda_1 a_2 + \lambda_2 a_1 + a_1 * a_2).
\end{align}
We can then extend $\delta$ by $\delta(1) = 1$. However, there is no canonical way to define the action $a \gtd 1$, such that the boundary map satisfies (by the first Peiffer identity)
\begin{align}\nonumber
    \delta(a \gtd 1) = a \cdot \delta(1) = a.
\end{align}
Instead, by considering the group of units of a unital algebra, we obtain the structure required for surface holonomy. For a unital algebra $\hA$, we let $\hA^{\units}$ denote the group of invertible elements in $\hA$. Consider a crossed module of algebras $\cmA = (\delta: A_1 \to A_0, \gtd, \ltd)$. We define an action $\gt$ of $A_0^{\units}$ on $\hA_1$ by
\begin{align} \label{eq:unit_action}
    a \gt (\lambda, b) \coloneqq (\lambda, a \gtd b \ltd a^{-1}).
\end{align}

\begin{proposition}{\cite[Proposition 8]{faria_martins_crossed_2016}} \label{prop:cma_invertible}
    Let $\cmA = (\delta: A_1 \to A_0, \gtd, \ltd)$ be a crossed module of algebras. Then, the action $\gt$ defined in~\Cref{eq:unit_action} of $A_0^{\units}$ on $\hA_1$ acts by unital algebra isomorphisms. Furthermore,
    \begin{align}\nonumber
        \hcmA^{\units} \coloneqq (\delta: \hA_1^{\times} \to A_0^{\times}, \gt)
    \end{align}
    is a crossed module of groups. 
\end{proposition}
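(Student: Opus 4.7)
The plan is to verify the two claims in sequence, both reducing to careful applications of the axioms of a crossed module of algebras.

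For the first claim, fix $a \in A_0^{\units}$. I would show that $a \gt (\cdot) : \hA_1 \to \hA_1$ is a unital algebra isomorphism. Unitality is immediate from $a \gtd 0 \ltd a^{-1} = 0$. For multiplicativity, $a \gt (E * E') = (a \gt E) * (a \gt E')$, expanding using the product in $\hA_1$ reduces the problem to the single identity
\begin{align*}
    a \gtd (c * b) \ltd a^{-1} = (a \gtd c \ltd a^{-1}) * (a \gtd b \ltd a^{-1}),
\end{align*}
which I would establish by applying the interchangeability rule $(E \ltd a) * (a' \gtd E') = E * ((a \cdot a') \gtd E')$ with the inner pair $(a^{-1}, a)$ so that $a^{-1} \cdot a = 1$ telescopes, and then using the product--action compatibilities $a \gtd (E * E') = (a \gtd E) * E'$ and $(E * E') \ltd a = E * (E' \ltd a)$. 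Invertibility with inverse $a^{-1} \gt (\cdot)$ and the action property $(a_1 \cdot a_2) \gt E = a_1 \gt (a_2 \gt E)$ both follow from $\gtd$ and $\ltd$ being commuting actions of $A_0$.

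For the second claim, the map $\delta$ extended by $\delta(1) = 1$ is a unital algebra morphism $\hA_1 \to A_0$, hence restricts to a group morphism $\hA_1^{\units} \to A_0^{\units}$, and the first part shows that $\gt$ restricts to an action by group automorphisms on $\hA_1^{\units}$. The First Peiffer relation is a short direct computation: for $E = (\lambda, b) \in \hA_1^{\units}$,
\begin{align*}
    \delta(a \gt E) = \lambda + \delta(a \gtd b \ltd a^{-1}) = \lambda + a \cdot \delta(b) \cdot a^{-1} = a \cdot \delta(E) \cdot a^{-1},
\end{align*}
using the first Peiffer identity of $\cmA$ and the centrality of the scalar $\lambda$ in $A_0$.

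The principal obstacle is the Second Peiffer relation $\delta(E) \gt E' = E * E' * E^{-1}$, which I would prove by an explicit comparison. Writing $E = (\mu, c)$, $E' = (\lambda, b)$, and $E^{-1} = (\mu^{-1}, d)$, invertibility of $E$ forces $\mu \neq 0$ together with the key relation
\begin{align*}
    \mu d + \mu^{-1} c + c * d = 0
\end{align*}
coming directly from $E * E^{-1} = (1, 0)$. Both sides of the Second Peiffer relation have scalar part $\lambda$, so only the $A_1$ component is at issue. The second Peiffer identities $\delta(c) \gtd b = c * b$ and $b \ltd \delta(d) = b * d$ in $\cmA$ collapse $\delta(E) \gtd b \ltd \delta(E)^{-1}$ to $b + \mu^{-1} c * b + \mu b * d + c * b * d$, while direct expansion of $E * E' * E^{-1}$ in $\hA_1$ produces the same four terms together with an extra $\lambda (\mu d + \mu^{-1} c + c * d)$, which vanishes by the inverse relation. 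This bookkeeping is purely formal once $E^{-1}$ is made explicit, and it completes the Second Peiffer identity and hence the crossed module axioms for $\hcmA^{\units}$.
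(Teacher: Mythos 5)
Your proof is correct. Note that the paper does not supply its own proof for this proposition; it cites Faria Martins \cite[Proposition 8]{faria_martins_crossed_2016}, so your blind argument fills in a verification rather than competing with one in the text. The one step worth spelling out carefully—the reduction of $(a \gtd b \ltd a^{-1}) * (a \gtd b' \ltd a^{-1})$ to $a \gtd (b * b') \ltd a^{-1}$—does indeed follow from one application of the full interchange axiom with the middle factors $a^{-1}\cdot a = 1$ followed by the two one-sided compatibilities, exactly as you indicate. Likewise, your expansion of $E * E' * E^{-1}$ in $\hA_1$ and the cancellation of the $\lambda(\mu d + \mu^{-1} c + c*d)$ term via $E * E^{-1} = (1,0)$ is correct, and the collapse of $\delta(E) \gtd b \ltd \delta(E)^{-1}$ using the algebraic second Peiffer identity matches term by term. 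One small thing you glossed over but which is harmless: you implicitly use that $\delta(E^{-1}) = \delta(E)^{-1}$, which holds because $\delta$ extends to a unital algebra morphism $\hA_1 \to A_0$ and hence restricts to a group morphism on units, as you observed at the start of the second part.
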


\subsection{Surface Holonomy}

In this section, we will consider algebra-valued surface holonomy. This has previously been considered in~\cite{faria_martins_crossed_2016}, which was called \emph{bare holonomy}, but our formulation is slightly different since we use a cubical formulation rather than a globular one. 

\begin{definition} \label{def:2connection}
    Let $\V$ be a vector space and $\cmA = (\delta : A_1 \to A_0, \gtd, \ltd)$ be a crossed module of algebras. A \emph{(translation-invariant, algebra-valued, fake-flat) 2-connection on $\V$ valued in $\cmA$} is a pair $(\cona, \conc)$ of linear maps $\cona \in L(\V, A_0)$ and $\conc \in L(\Lambda^2 \V, A_1)$ such that the \emph{fake-flatness condition}
    \begin{align} \label{eq:fake_flatness}
        \delta \circ \conc = [\cona, \cona]_0
    \end{align}
    holds, where the bracket $[\cdot, \cdot]_0$ is the commutator in $A_0$. 
\end{definition}

Now, we will define the notion of surface holonomy for algebra-valued 2-connections.
\begin{definition} \label{def:sh}
    Let $(\cona, \conc)$ be a 2-connection valued in a crossed module $\cmA$. Let $X \in C^\infty([\smp]\times[\tmp], \V)$ be a smooth surface, and we define the differential equation for $\hH^{\cona, \conc}_{s,t}(X) : [\smp]\times[\tmp] \to \hA_1^{\units}$ by
    \begin{align}\nonumber
        \frac{\partial \hH^{\cona, \conc}_{s,t}(X)}{\partial t} = \hH^{\cona, \conc}_{s,t}(X) * \int_{\smm}^s F^{\cona}(x^{s',t}) \gt \conc \left( \frac{\partial X_{s',t}}{\partial s'}, \frac{\partial X_{s',t}}{\partial t}\right) \, ds', \quad \quad \hH^{\cona, \conc}_{s,\tmm}(X) = 1_1,
    \end{align}
    where $x^{s,t} = [0, (s- \smm) + (t - \tmm)] \to \V$ is the \emph{$(s,t)$-tail path} of $X$ defined by
    \begin{align} \label{eq:tail_path}
        x^{s,t}_u \coloneqq \left\{ \begin{array}{ll}
            X_{u+\smm,\tmm} & : u \in[0, s-\smm] \\
            X_{s, (u-s+\smm) + \tmm} & : u \in [s- \smm, (s-\smm) + (t- \tmm)].
        \end{array}\right.
    \end{align}
    Let $\hH^{\cona, \conc}(X) \coloneqq \hH^{\cona, \conc}_{\spp, \tpp}(X)$ be the \emph{surface holonomy of $X$ with respect to $(\cona, \conc)$}.
\end{definition}

This definition only defines the surface holonomy of a surface as an element in $\hA_1^\units$. However, in order to consider horizontal and vertical composition structures, we define the \emph{surface holonomy functor} into $\dg(\cmA^\units)$. 

\begin{definition} \label{def:sh_functor}
    Let $(\cona, \conc)$ be a 2-connection valued in a crossed module $\cmA$. The \emph{surface holonomy functor} $\bF^{\cona, \conc} : \surfaces^\infty(\V) \to \dg(\cmA^\units)$ is defined for a surface $X \in \surfaces^\infty(\V)$ with boundary paths $x,y,z,w \in \paths^\infty(\V)$ (see the figure after~\Cref{eq:intro_sh_functor}) by
    \begin{align}\nonumber
        \bF^{\cona, \conc}(X) \coloneqq (F^\cona(x), \, F^\cona(y), \, F^\cona(z), \, F^{\cona}(w), \, \hH^{\cona,\conc}(X)).
    \end{align}
\end{definition}

The fact that the surface holonomy functor is indeed valued in the double group (it satisfies the boundary condition in~\Cref{eq:dg_squares}) and preserves horizontal/vertical compositions is well-known~\cite{martins_surface_2011,faria_martins_crossed_2016,schreiber_smooth_2011,baez_higher_2006,yekutieli_nonabelian_2015}. However, due to the importance of multiplicativity in the present article, we will provide a proof of this in our setting, which is adapted from~\cite[Lemmas 2.23, 2.25]{martins_surface_2011}.

\begin{proposition} \label{prop:sh_multiplicative}
    Let $X \in C^\infty([s_1, s_2] \times [t_1, t_2], \V)$, $Y \in C^\infty([s_2, s_3] \times [t_1, t_2], \V)$ and $Z \in C^\infty([s_1, s_2] \times [t_2, t_3], \V)$ such that $X$ and $Y$ are horizontally composable, and $X$ and $Z$ are vertically composable. Let $u = \bdy_b(X)$ and $v = \bdy_l(X)$ be the bottom and left boundary paths of $X$ respectively. Then,
    \begin{align}
        \hH^{\cona, \conc}(X \concat_h Y) &= (F^\cona(u) \gt \hH^{\cona, \conc}(Y)) * \hH^{\cona, \conc}(X) \label{eq:sh_vfunc}\\
        \hH^{\cona, \conc}(X \concat_v Z) &= \hH^{\cona, \conc}(X) * (F^\cona(v) \gt \hH^{\cona, \conc}(Z)). \label{eq:sh_hfunc}
    \end{align}
\end{proposition}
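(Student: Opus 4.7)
The plan is to verify each identity by exhibiting an explicit candidate for the holonomy of the composed surface and appealing to the uniqueness of the defining PDE. Write $\alpha(s,t) = \hH^{\cona,\conc}_{s,t}(X)$, $\beta(s,t) = \hH^{\cona,\conc}_{s,t}(Y)$, $\gamma(s,t) = \hH^{\cona,\conc}_{s,t}(Z)$, and let $\eta_h, \eta_v$ denote the corresponding holonomies of the composed surfaces. In both cases a direct uniqueness argument handles the ``first half'' of the composed domain, so I focus on the ``second half.''

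For the horizontal identity, $\eta_h(s,t) = \alpha(s,t)$ for $s \in [s_1,s_2]$ since the surface and its tail paths restrict to those of $X$; in particular $\eta_h(s_2,t) = \alpha(s_2,t)$. For $s \in [s_2,s_3]$ I would split the spatial integral of the PDE as $\int_{s_1}^s = \int_{s_1}^{s_2} + \int_{s_2}^s$; on the second piece the tail path decomposes as $\tilde{x}^{s',t} = u \concat y^{s',t}$, where $y^{s',t}$ is the tail path of $Y$, and multiplicativity of $F^\cona$ on concatenations yields $F^\cona(\tilde{x}^{s',t}) \gt (\cdot) = F^\cona(u) \gt (F^\cona(y^{s',t}) \gt (\cdot))$. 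Propose the candidate $\Phi(s,t) := (F^\cona(u) \gt \beta(s,t)) * \alpha(s_2,t)$, observe $\Phi(s,t_1) = 1$, and verify via Leibniz (using that $\gt$ is a unital algebra morphism) that $\Phi$ satisfies the same PDE as $\eta_h$; uniqueness then forces $\eta_h = \Phi$, and evaluation at $(s_3,t_2)$ yields the horizontal formula.

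For the vertical identity, $\eta_v(s,t) = \alpha(s,t)$ for $t \in [t_1,t_2]$ by uniqueness, providing the initial condition $\eta_v(s,t_2) = \alpha(s,t_2)$ at $t=t_2$. For $t \in [t_2,t_3]$ the tail path factors as $\tilde{x}^{s',t} = x^{s',t_2} \concat Z_{s',\cdot}|_{[t_2,t]}$. The double-group boundary relation applied to the sub-square gives
\[
\delta(\alpha(s',t_2)) = F^\cona(X_{\cdot,t_1}|_{[s_1,s']}) \cdot F^\cona(X_{s',\cdot}|_{[t_1,t_2]}) \cdot F^\cona(X_{\cdot,t_2}|_{[s_1,s']})^{-1} \cdot F^\cona(v)^{-1},
\]
and using the shared boundary $X_{\cdot,t_2}|_{[s_1,s']} = Z_{\cdot,t_2}|_{[s_1,s']}$ one rewrites $F^\cona(\tilde{x}^{s',t}) = \delta(\alpha(s',t_2)) \cdot F^\cona(v \concat z^{s',t})$, where $z^{s',t}$ is the tail path of $Z$ at $(s',t)$. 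The group-form second Peiffer identity $\delta(\alpha) \gt E = \alpha * E * \alpha^{-1}$ then converts the $\delta(\alpha)$-action into conjugation by $\alpha$, and one checks that the candidate $\Psi(s,t) := \alpha(s,t_2) * (F^\cona(v) \gt \gamma(s,t))$, satisfying $\Psi(s,t_2) = \alpha(s,t_2)$, solves the PDE for $\eta_v$; evaluation at $(s_2,t_3)$ produces the vertical formula.

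The main technical obstacle is the PDE verification for $\Phi$ and $\Psi$: naive Leibniz expansion produces extra $\alpha$-conjugation or $\delta(\alpha)$-action terms that must be reconciled with the tail-path factorizations via both Peiffer identities (algebra form $\delta(E)\gtd E' = E*E' = E \ltd \delta(E')$ and its group-form counterpart), the compatibility of $\gt$ with the $*$-product, the fully-interchangeable axiom $(E \ltd a) * (a' \gtd E') = E * ((a \cdot a') \gtd E')$ of the $A_0$-bimodule structure on $A_1$, and fake-flatness of $(\cona,\conc)$, which closes the $\delta$-boundary relations on sub-rectangles. The overall strategy follows the template of~\cite[Lemmas 2.23, 2.25]{martins_surface_2011}, adapted to the present cubical, associative-algebra formulation.
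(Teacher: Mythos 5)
Your overall strategy — exhibit an explicit candidate, verify via Leibniz that it solves the defining ODE, conclude by uniqueness — is exactly the paper's. But you have the two cases reversed, and this creates a genuine gap.

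Look at where the Peiffer identity actually enters. In the paper's proof the \emph{vertical} case is the direct one: the tail path of $X \concat_v Z$ at $(s',t)$ for $t > t_2$ is simply $v \concat z^{s',t}$, so the Leibniz calculation on $B_t = \hH(X) * (F^\cona(v) \gt \hH_{s_2,t}(Z))$ immediately reproduces the right-hand side of the PDE. The \emph{horizontal} case is the one needing the second Peiffer identity, to commute $\hH_{s_2,t}(X)$ past the integral coming from $\partial_t \beta$; this produces a correction factor $\cmb(\hH_{s_2,t}(X))^{\pm 1}$ which, crucially, is \emph{independent of the integration variable $s'$} and thus combines with $F^\cona(u\concat y^{s',t})$ inside the integral to give the tail path of the composed surface.

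Your version tries to make the horizontal case the direct one (claiming $\tilde{x}^{s',t} = u\concat y^{s',t}$ with no correction) and put the Peiffer identity in the vertical case. The vertical factorization you write down, $F^\cona(\tilde{x}^{s',t}) = \delta(\alpha(s',t_2))\cdot F^\cona(v\concat z^{s',t})$, is a correct identity of parallel transports for the corner-first parametrization in \Cref{eq:tail_path}. But the conjugator $\alpha(s',t_2) = \hH_{s',t_2}(X)$ depends on $s'$ and sits \emph{inside} the $ds'$-integral. When you compute $\partial_t \Psi$ for $\Psi(s,t)=\alpha(s,t_2)*(F^\cona(v)\gt\gamma(s,t))$ via Leibniz and the fact that $F^\cona(v)\gt(\cdot)$ is an algebra morphism, you get $\partial_t\Psi = \Psi * \int_{s_1}^{s_2}F^\cona(v\concat z^{s',t})\gt\conc(\cdots)\,ds'$ — no conjugators at all. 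Matching this against the PDE for $\eta_v$ would require
\begin{align*}
\int_{s_1}^{s_2}\alpha(s',t_2)*\bigl(F^\cona(v\concat z^{s',t})\gt\conc(\cdots)\bigr)*\alpha(s',t_2)^{-1}\,ds' = \int_{s_1}^{s_2}F^\cona(v\concat z^{s',t})\gt\conc(\cdots)\,ds',
\end{align*}
which does not follow from the Peiffer identities (or anything else) since the conjugation is nontrivial and the conjugator varies with $s'$. The single factor $\alpha(s_2,t_2)$ that you have available outside the integral cannot absorb an $s'$-dependent family of conjugations inside it. So $\Psi$ does not in fact solve the PDE under your conventions, and the uniqueness step has nothing to act on.

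The same mismatch lurks in your horizontal sketch: the Leibniz computation on $\Phi(s,t) = (F^\cona(u)\gt\beta(s,t))*\alpha(s_2,t)$ inevitably produces a $\cmb(\alpha(s_2,t))^{-1}$ factor when you move $\alpha(s_2,t)$ past $\int_{s_2}^s F^\cona(u\concat y^{s',t})\gt\conc\,ds'$, yet your claimed tail-path factorization $\tilde{x}^{s',t}=u\concat y^{s',t}$ has no such factor. Either the factor is spurious (in which case the Leibniz calculation is wrong) or it is needed (in which case your tail-path factorization is wrong). You cannot have both.

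The fix is to use the tail path that goes up the left edge first and then along the top (the parametrization actually used in the paper's proof and in the introduction's display, as opposed to the one written in \Cref{eq:tail_path}). With that convention: the vertical tail path of $X\concat_v Z$ \emph{is} $v\concat z^{s',t}$, so the vertical case is direct with no Peiffer step; and the horizontal tail path satisfies $F^\cona(\tilde{x}^{s',t}) = \cmb(\alpha(s_2,t))^{-1}\cdot F^\cona(u\concat y^{s',t})$ with an $s'$-independent correction, which is exactly what the Leibniz plus Peiffer calculation produces. Your candidates $\Phi$ and $\Psi$ are fine; it is the tail-path factorizations and the placement of the Peiffer step that need to match.
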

\begin{proof}
    We will show both of these properties by showing that both sides satisfy the same differential equations. For the vertical composition, let
    \begin{align}\nonumber
        B_t = \left\{ \begin{array}{ll}
            \hH^{\cona, \conc}_{s_2, t}(X)  & t \in [t_1, t_2] \\
            \hH^{\cona, \conc}(X) * (F^\cona(v) \gt \hH_{s_2, t}^{\cona, \conc}(Z)) & t \in [t_2, t_3]
        \end{array}\right.
    \end{align}
    be the right hand side of~\Cref{eq:sh_hfunc} as a function of $t$. For $t \in [t_1, t_2]$, we have
    \begin{align}\nonumber
        \frac{\partial B_t}{\partial t} = \frac{\partial \hH^{\cona, \conc}_t(X \concat_v Z)}{\partial t}
    \end{align}
    by definition. Then for $t \in [t_2, t_3]$, we have
    \begin{align}\nonumber
        \frac{\partial B_t}{\partial t} &= \hH^{\cona, \conc}(X) * \left(F^\cona(v) \gt \frac{\hH_{s_2, t}^{\cona, \conc}(Z)}{\partial t}\right) \\
        &= \hH^{\cona, \conc}(X) * \left(F^\cona(v) \gt \hH_{s_2, t}^{\cona, \conc}(Z)\right) *  \int_{s_1}^{s_2} \left( F^\cona(v) \gt F^{\cona}(z^{s,t}) \gt \conc \left( \frac{\partial Z_{s,t}}{\partial s}, \frac{\partial Z_{s,t}}{\partial t}\right) \, ds\right) \nonumber\\
        & = \frac{\partial \hH^{\cona, \conc}_t(X \concat_v Z)}{\partial t}\nonumber
    \end{align}
    where we use the fact that $G_0$ acts by group homomorphisms in the second line, and the fact that $v \concat z^{s,t}$ is the tail path in the differential equation for $\hH^{\cona, \conc}_t(X \concat_v Z)$. Thus, $B_t = \hH^{\cona, \conc}_t(X \concat_v Z)$ since they satisfy the same differential equation.\medskip

    For the horizontal case, let 
    \begin{align}\nonumber
        C_t = (F^\cona(u) \gt \hH_{s_3, t}^{\cona, \conc}(Y)) * \hH_{s_2, t}^{\cona, \conc}(X)
    \end{align}
    be the right hand side of~\Cref{eq:sh_vfunc} as a function of $t$. Then, we have
    \begin{align}\nonumber
        \frac{\partial C_t}{\partial t}  =& \left(F^\cona(u) \gt \frac{\partial \hH_{s_3, t}^{\cona, \conc}(Y)}{\partial t}\right) * \hH_{s_2, t}^{\cona, \conc}(X) + (F^\cona(u) \gt \hH_{s_3, t}^{\cona, \conc}(Y)) * \frac{\partial \hH_{s_2, t}^{\cona, \conc}(X)}{\partial t}\\
         =& \left(F^\cona(u) \gt \hH_{s_3, t}^{\cona, \conc}(Y)\right) * \left(\int_{s_2}^{s_3}F^\cona(u \concat y^{s,t})  \gt \conc \left( \frac{\partial Y_{s,t}}{\partial s}, \frac{\partial Y_{s,t}}{\partial t}\right) ds\right) * \hH_{s_2, t}^{\cona, \conc}(X) \nonumber\\
        & + (F^\cona(u) \gt \hH_{s_3, t}^{\cona, \conc}(Y)) * \hH_{s_2, t}^{\cona, \conc}(X) * \int_{s_1}^{s_2}F^\cona(x^{s,t})  \gt \conc \left( \frac{\partial X_{s,t}}{\partial s}, \frac{\partial X_{s,t}}{\partial t}\right) ds.\nonumber
    \end{align}
    By the second Peiffer identity, the first term in the sum is equivalent to
    \begin{align}\nonumber
        \left(F^\cona(u) \gt \hH_{s_3, t}^{\cona, \conc}(Y)\right) * \hH_{s_2, t}^{\cona, \conc}(X) * \left(\int_{s_2}^{s_3}\cmb(\hH_{s_2, t}^{\cona, \conc}(X)) \gt F^\cona(u \concat y^{s,t})  \gt \conc \left( \frac{\partial Y_{s,t}}{\partial s}, \frac{\partial Y_{s,t}}{\partial t}\right) ds\right).
    \end{align}
    Then, we have $\cmb(\hH_{s_2, t}^{\cona, \conc}(X)) = F^\cona(\bdy X|_{[s_1, s_2]\times [t_1, t]})$, and since $\bdy X|_{[s_1, s_2]\times [t_1, t]} \concat y^{s,t}$ is exactly the tail path in the differential equation for $\hH^{\cona, \conc}(X \concat_h Y)$, we have
    \begin{align}\nonumber
        \frac{\partial C_t}{\partial t} = \frac{\partial \hH^{\cona, \conc}_t(X \concat_h Y)}{\partial t},
    \end{align}
    so $C_t = \hH^{\cona, \conc}_t(X \concat_h Y)$.
\end{proof}

In particular, this shows that for appropriately composable surfaces $X,Y,Z \in \surfaces^\infty(\V)$, we have
\begin{align}\nonumber
    \bF^{\cona, \conc}(X \concat_h Y) = \bF^{\cona, \conc}(X) \hmult \bF^{\cona, \conc}(Y) \andd \bF^{\cona, \conc}(X \concat_v Z) = \bF^{\cona, \conc}(X) \vmult \bF^{\cona, \conc}(Z).
\end{align}

\begin{remark} \label{rem:thin_homotopy}
    Another important property of surface holonomy is invariance with respect to \emph{thin homotopy}~\cite{martins_surface_2011,faria_martins_crossed_2016,schreiber_smooth_2011,baez_higher_2006,lee_random_2023}. This is analogous to the property that parallel transport of paths and the path signature are invariant with respect to \emph{tree-like equivalence}. In fact, for smooth paths, the notions of thin homotopy and tree-like equivalence coincide~\cite{meneses_thin_2021,tlas_holonomic_2016}. However, because thin homotopy invariance is not a main focus of this article, we refer the interested reader to the above references for discussion on this property. 
\end{remark}

\section{The Surface Signature} \label{sec:surface_signature}

We will now consider the universal (translation-invariant) surface holonomy functor, which was originally proposed by Kapranov~\cite{kapranov_membranes_2015}. We begin by formulating the universal 2-connection in terms of a free crossed module of associative algebras $\bT$, following the procedure discussed for the path signature in~\Cref{sec:path_signature}. This procedure for constructing the universal 2-connection differs slightly from that of Kapranov, who formulates it in terms of a free crossed module of Lie algebras, $\cmg^{\sab}$. We show that these two constructions are equivalent via an inclusion of crossed modules of Lie algebras,
\begin{align}
    \cmg^{\sab} \hookrightarrow \Lie(\bT).
\end{align}
This inclusion allows us to formulate a decomposition of $\bT$, yielding properties of the algebra structure which will be crucial to the extension theorem in the following section. We will then discuss norms on the free crossed module $\bT$, along with truncations and completions, analogous to truncation and completion of the tensor algebra used for path signatures. By leveraging the algebra structure, we define the \emph{surface signature} as an infinite series of iterated integrals. Finally, we show that the surface signature is universal, in the sense that surface holonomy for bounded translation-invariant 2-connections uniquely factors through the surface signature.

\subsection{Construction of the Universal 2-Connection} \label{ssec:construct_univ2con}
We begin by defining the universal 2-connection as the canonical inclusion map which arises from a free construction, analogous to the setting of the path signature in~\Cref{sec:path_signature}. \medskip

\subsubsection{Free Crossed Modules of Associative Algebras}
Our goal in this section is to develop the crossed module analogue of the tensor algebra $T_0(\V)$ via a free construction. 
Because crossed modules of algebras consist of algebra bimodules, we begin with the notion of a free $A$-bimodule.

\begin{lemma}[\cite{loday_algebraic_2012}] \label{lem:fbi_universal}
    Let $(A, \cdot, 1)$ be a unital algebra, and $W$ a vector space. The \emph{free $A$-bimodule over $W$} is $\FBi(A,W) \coloneqq A \otimes W \otimes A$, and satisfies the universal property that for any $A$-bimodule $M$ and a linear map $f: W \to M$, there exists a unique morphism of $A$-bimodules $\tf: \FBi(A,W) \to M$ such that
    \[
        \begin{tikzcd}
            W \ar [r, "f"] \ar[d, swap,"\iota"] & M \\
            \FBi(A,W) \ar[ur, swap, dashed, "\tf"]&
        \end{tikzcd}
    \]
    where $\iota(w) = 1 \otimes w \otimes 1$ is the inclusion.
\end{lemma}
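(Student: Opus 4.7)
The plan is to leverage the universal property of the tensor product: since the assignment $(a, w, b) \mapsto a \gtd f(w) \ltd b$ is trilinear (using that the left and right actions of $A$ on $M$ are linear in each argument), it factors through a unique linear map $\tf : A \otimes W \otimes A \to M$ characterized on pure tensors by $\tf(a \otimes w \otimes b) = a \gtd f(w) \ltd b$. This gives existence of $\tf$ as a linear map, and what remains is to verify three things: that $\tf$ is a bimodule morphism, that the diagram commutes, and that $\tf$ is uniquely determined.

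Next, I would check $\tf$ is a morphism of $A$-bimodules. For the left action, a short computation shows $\tf(c \cdot (a \otimes w \otimes b)) = (c \cdot a) \gtd f(w) \ltd b = c \gtd \tf(a \otimes w \otimes b)$, which follows from associativity of the left action on $M$; the right action check is symmetric, and the full left/right interchangeability on $M$ ensures there is no ambiguity when both actions are simultaneously present. For commutativity of the diagram, the unitality of the bimodule actions gives $\tf(\iota(w)) = 1 \gtd f(w) \ltd 1 = f(w)$, which is precisely where unitality of $A$ is needed.

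For uniqueness, suppose $g : \FBi(A,W) \to M$ is another $A$-bimodule morphism satisfying $g \circ \iota = f$. The key observation is that every pure tensor factors as $a \otimes w \otimes b = a \gtd \iota(w) \ltd b$, so the bimodule compatibility of $g$ forces $g(a \otimes w \otimes b) = a \gtd g(\iota(w)) \ltd b = a \gtd f(w) \ltd b = \tf(a \otimes w \otimes b)$, and since pure tensors span $\FBi(A,W)$ this determines $g = \tf$. There is no real obstacle; the argument is a direct application of the universal property of the tensor product together with the bimodule axioms, with the only subtle point being that the unit $1 \in A$ is essential both for defining $\iota$ and for obtaining $\tf \circ \iota = f$ on the nose.
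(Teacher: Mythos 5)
Your proof is correct and complete. The paper does not supply its own proof of this lemma --- it simply cites \cite{loday_algebraic_2012} --- so there is no in-text argument to compare against, but what you wrote is the standard derivation: trilinearity gives existence of the linear map, the bimodule axioms (associativity of actions, their interchangeability, and unitality) give the morphism property and commutativity of the triangle, and the factorization $a \otimes w \otimes b = a \gtd \iota(w) \ltd b$ on pure tensors gives uniqueness. Nothing is missing.
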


\begin{lemma} \label{lem:fbi_functorial}
    Let $A$ and $B$ be unital algebras and $W$ a vector space. Suppose we have an algebra morphism $f: A \to B$, then there exists a unique morphism of bimodules $\tf: \FBi(A,W) \to \FBi(B, W)$ such that
    \begin{align}\nonumber
        \iota_B = \tf \circ \iota_A \andd \tf(a_1 \gtd P \ltd a_2) = f(a_1) \gtd \tf(P) \ltd f(a_2).
    \end{align}
\end{lemma}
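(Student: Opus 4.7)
The plan is to reduce the statement directly to the universal property established in Lemma~\ref{lem:fbi_universal} by pulling back the $B$-bimodule structure on $\FBi(B,W)$ along $f$. Explicitly, using the unital algebra morphism $f: A \to B$, equip $\FBi(B,W) = B \otimes W \otimes B$ with an $A$-bimodule structure by setting
\[
a \gtd (b_1 \otimes w \otimes b_2) \ltd a' \coloneqq f(a) \cdot b_1 \otimes w \otimes b_2 \cdot f(a'),
\]
for $a, a' \in A$, $b_1, b_2 \in B$, and $w \in W$. The associativity, bimodule compatibility, and unit axioms for this $A$-action follow immediately from $f$ being a unital algebra morphism together with the existing $B$-bimodule structure on $\FBi(B,W)$.

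With this structure in place, the inclusion $\iota_B : W \to \FBi(B,W)$, $w \mapsto 1 \otimes w \otimes 1$, is a linear map from $W$ into an $A$-bimodule. Applying Lemma~\ref{lem:fbi_universal} with $M = \FBi(B,W)$ viewed as an $A$-bimodule in this way, and with the linear map $\iota_B : W \to M$, yields a unique $A$-bimodule morphism $\tf : \FBi(A,W) \to \FBi(B,W)$ such that $\tf \circ \iota_A = \iota_B$. The $A$-bimodule morphism condition is precisely the identity
\[
\tf(a_1 \gtd P \ltd a_2) = f(a_1) \gtd \tf(P) \ltd f(a_2),
\]
so existence and uniqueness are obtained simultaneously. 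Chasing the construction through the proof of Lemma~\ref{lem:fbi_universal} also provides the explicit formula $\tf(a_1 \otimes w \otimes a_2) = f(a_1) \otimes w \otimes f(a_2)$, which will be convenient in later sections.

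There is no substantive obstacle here: the entire argument is a routine reinterpretation of the target as an $A$-bimodule followed by invocation of the preceding universal property. The only small verification is that the pullback bimodule structure is well defined, which is immediate from $f$ being a unital algebra morphism.
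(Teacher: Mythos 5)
Your proof is correct and takes essentially the same approach as the paper's — both rely on the freeness (universal property) of $\FBi(A,W)$ established in Lemma~\ref{lem:fbi_universal}. Your version is a bit more careful: by explicitly endowing $\FBi(B,W)$ with the pulled-back $A$-bimodule structure via $f$, you get existence and uniqueness simultaneously from Lemma~\ref{lem:fbi_universal}, whereas the paper's terse proof only spells out the uniqueness argument and leaves the existence (the explicit formula $\tf(a_1 \otimes w \otimes a_2) = f(a_1) \otimes w \otimes f(a_2)$) implicit.
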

\begin{proof}
    These two properties determine the morphism since the first condition implies that we must have $\tf(1_A \otimes w \otimes 1_A) = 1_B \otimes w \otimes 1_B$; then since $\FBi(A,W)$ is freely generated by the $A$-bimodule action on $W$, the second condition determines the morphism. 
\end{proof}

Next, we require the notion of a free crossed module of algebras.
\begin{definition} \label{def:fxa}
    Let $A_0$ be an associative algebra, $W$ be a vector space and $\delta_0: W \to A_0$ be a linear map. The \emph{free crossed module of algebras with respect to $\delta_0$}, denoted
    \begin{align}\nonumber
        \cmFXA(\delta_0) = (\delta: \FXA_1(\delta_0) \to A_0, \gtd, \ltd),
    \end{align}
    along with the linear inclusion $\iota: W \to \FXA_1$ such that $\delta_0 = \delta \circ \iota$,
    has the following universal property: for any crossed module of algebras $\cmA = (\delta^A : A_1 \to A_0, \gtd_A, \ltd_A)$ with a linear map $\conc: W \to A_1$ such that $\delta_0 = \delta_A \circ \conc$, there exists a unique algebra map $\tconc : \FXA_1 \to A_1$ such that the following diagram commutes
    \[
    \begin{tikzcd}
        W \ar[r, "\iota"] \ar[dr, swap, "\conc"] & \FXA_1(\delta_0) \ar[r, "\delta"] \ar[d, dashed, "\tconc"] & A_0 \ar[d, equal] \\
        & A_1 \ar[r, "\delta^A"] & A_0
    \end{tikzcd}
    \]
\end{definition}

An explicit model for the free crossed module of algebras is provided in~\cite[Lemma 3.2.6]{wagemann_crossed_2021}, and for completeness, we provide a proof of the following result in~\Cref{apx:fxa}.
\begin{proposition} \label{prop:fxa_construction}
    Let $A_0$ be a unital associative algebra, $W$ be a vector space and $\delta_0: W \to A_0$ be a linear map. Define the linear map $\delta: A_0 \otimes W \otimes A_0$ by
    \begin{align}\nonumber
        \delta(a_1 \otimes w \otimes a_2) = a_1 \otimes \delta_0(w) \otimes a_2
    \end{align}
    We define the \emph{Peiffer ideal} $\Pf \subset A_0 \otimes W \otimes A_0$ by
    \begin{align}\nonumber
        \Pf \coloneqq \spann \{ \delta(E) \gtd F - E \ltd \delta(F) \, : \, E, F \in A_0 \otimes W \otimes A_0\}.
    \end{align}
    Then, we define
    \begin{align}\nonumber
        \FXA_1(\delta_0) \coloneqq (A_0 \otimes W \otimes A_0)/ \Pf
    \end{align}
    The $A_0$-bimodule structure on $A_0 \otimes W \otimes A_0$ and the map $\delta$ is well-defined on $\FXA_1(\delta_0)$, and we define the product $*$ on $\FXA_1(\delta_0)$ by
    \begin{align} \label{eq:fxa_product}
        P * Q \coloneqq \delta(P) \gtd Q = P \ltd \delta(Q).
    \end{align}
    Then, $\cmFXA(\delta_0) = (\delta: \FXA_1(\delta_0) \to A_0, \gtd_A, \ltd_A)$ is a free crossed module of algebras.
\end{proposition}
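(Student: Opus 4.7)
The plan is to verify, in order: (i) the bimodule structure and the map $\delta$ descend to the quotient $\FXA_1(\delta_0) = (A_0 \otimes W \otimes A_0)/\Pf$; (ii) the product $*$ defined by the two expressions $\delta(P)\gtd Q = P \ltd \delta(Q)$ is well-defined and gives $\FXA_1(\delta_0)$ the structure of a crossed module of algebras over $A_0$; and (iii) the pair $(\cmFXA(\delta_0), \iota)$ with $\iota(w) = 1 \otimes w \otimes 1$ enjoys the universal property of \Cref{def:fxa}. The central observation underlying everything is that $\delta: A_0 \otimes W \otimes A_0 \to A_0$ is itself a morphism of $A_0$-bimodules, i.e., $\delta(a_1 \gtd E \ltd a_2) = a_1 \cdot \delta(E) \cdot a_2$ for all $a_1, a_2 \in A_0$, $E \in A_0 \otimes W \otimes A_0$.

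First I would check that $\Pf$ is a sub-bimodule. For a generator $\delta(E) \gtd F - E \ltd \delta(F)$ and $a \in A_0$, the left action produces $\delta(a \gtd E) \gtd F - (a \gtd E) \ltd \delta(F)$, again a generator; the right action is symmetric. The same bimodule-morphism property gives $\delta(\delta(E) \gtd F - E \ltd \delta(F)) = \delta(E)\cdot \delta(F) - \delta(E) \cdot \delta(F) = 0$, so $\delta$ factors through $\FXA_1(\delta_0)$. Well-definedness of $*$ is then immediate: the two expressions $\delta(P)\gtd Q$ and $P \ltd \delta(Q)$ differ by an element of $\Pf$ by definition, and if either $P$ or $Q$ lies in $\Pf$ then $\delta(P)=0$ or $\delta(Q)=0$, making the product zero in $\FXA_1(\delta_0)$.

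Next I would verify the algebra and crossed-module axioms. The identity $\delta(P * Q) = \delta(P) \cdot \delta(Q)$ follows from $\delta(\delta(P) \gtd Q) = \delta(P)\cdot \delta(Q)$, so $\delta$ is an algebra morphism on $\FXA_1(\delta_0)$; associativity of $*$ then reads $(P*Q)*R = \delta(P)\cdot \delta(Q) \gtd R = \delta(P) \gtd (\delta(Q)\gtd R) = P*(Q*R)$. Distributivity is linear. For the compatibility of $*$ with the bimodule action: $a \gtd (P * Q) = (a \cdot \delta(P)) \gtd Q = \delta(a \gtd P) \gtd Q = (a \gtd P) * Q$, and symmetrically on the right. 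The mixed identity $(E \ltd a) * (a' \gtd E') = E * ((a \cdot a') \gtd E')$ reduces to the same bimodule computation. The first Peiffer identity $\delta(a \gtd E) = a \cdot \delta(E)$ is the bimodule-morphism property of $\delta$, and the second Peiffer identity $\delta(E) \gtd F = E * F$ is the definition of the product.

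For the universal property, given a crossed module $\cmA$ and a linear $\conc: W \to A_1$ with $\delta_0 = \delta^A \circ \conc$, I would apply \Cref{lem:fbi_universal} to obtain a unique bimodule morphism $\hat{\conc} : A_0 \otimes W \otimes A_0 \to A_1$ with $\hat{\conc}(a_1 \otimes w \otimes a_2) = a_1 \gtd_A \conc(w) \ltd_A a_2$. Since $\delta^A$ is a bimodule morphism over $\delta_0 = \delta^A \circ \conc$, a direct check gives $\delta^A \circ \hat{\conc} = \delta$. Applying the second Peiffer identity \emph{inside $\cmA$} then shows that $\hat{\conc}(\delta(E)\gtd F) = \delta^A(\hat{\conc}(E)) \gtd_A \hat{\conc}(F) = \hat{\conc}(E) *_A \hat{\conc}(F) = \hat{\conc}(E) \ltd_A \delta^A(\hat{\conc}(F)) = \hat{\conc}(E \ltd \delta(F))$, so $\hat{\conc}$ vanishes on $\Pf$ and descends to $\tconc: \FXA_1(\delta_0) \to A_1$. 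The algebra-morphism property of $\tconc$ follows once more from $\delta^A \circ \tconc = \delta$, and uniqueness is inherited from that of $\hat{\conc}$. The step I expect to require the most care is the descent of $\hat{\conc}$ to the quotient, since it is the only point at which the second Peiffer identity of the target $\cmA$ is actually used; everything else is bookkeeping about the bimodule-morphism identity for $\delta$.
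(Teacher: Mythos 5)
Your argument is correct and follows essentially the same route as the paper's: construct $\hat{\conc}$ via the universal property of the free bimodule, observe $\delta^A \circ \hat{\conc} = \delta$, and use the Peiffer identities of $\cmA$ to show $\hat{\conc}$ annihilates the Peiffer ideal. Your coordinate-free phrasing is slightly cleaner than the paper's elementary-tensor computation, but the underlying ideas are identical; one small caveat is that the step $\delta^A \circ \hat{\conc} = \delta$ already invokes the \emph{first} Peiffer identity of $\cmA$, so your closing remark that only the second Peiffer identity of the target is used is not quite accurate.
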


Furthermore, this free construction is functorial, where the proof is in~\Cref{apx:fxa}.

\begin{proposition} \label{prop:fxa_functorial}
    Let $W$ be a vector space, $A_0, B_0$ be associative algebras, and $\delta^A_0: W \to A_0$ and $\delta^B_0: W \to B_0$ be linear maps. Suppose $f_0: A_0 \to B_0$ such that $\delta^B_0 = f \circ \delta^A_0$. Then, there exists a morphism of crossed modules of algebras $ (\FXA_1(f_0), f_0) : \cmFXA(\delta^A_0) \to \cmFXA(\delta^B_0)$ such that the following commutes
    \[
    \begin{tikzcd}
        W \ar[r, "\iota^A"] \ar[dr, swap, "\iota^B"] & \FXA_1(\delta^A_0) \ar[r, "\delta^A"] \ar[d, dashed, "\FXA_1(f_0)"] & A_0 \ar[d, "f_0"] \\
        & \FXA_1(\delta^B_0) \ar[r, "\delta^B"] & B_0
    \end{tikzcd}
    \]
\end{proposition}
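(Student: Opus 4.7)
The plan is to construct $\FXA_1(f_0)$ at the level of the explicit model from~\Cref{prop:fxa_construction}, where $\FXA_1(\delta_0^A) = (A_0 \otimes W \otimes A_0)/\Pf^A$ and $\FXA_1(\delta_0^B) = (B_0 \otimes W \otimes B_0)/\Pf^B$, and then verify the four morphism-of-crossed-module conditions. Since the statement refers to a morphism of crossed modules of algebras, I read $f_0$ as a unital algebra morphism.

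First I would apply~\Cref{lem:fbi_functorial} to $f_0$, viewing $\FBi(B_0, W)$ as an $A_0$-bimodule by restriction along $f_0$, to obtain the bimodule morphism $\hat{f}: \FBi(A_0,W) \to \FBi(B_0,W)$ determined by $\hat{f}(a_1 \otimes w \otimes a_2) = f_0(a_1) \otimes w \otimes f_0(a_2)$. The key compatibility identity is
\[
    \delta^B \circ \hat{f} \;=\; f_0 \circ \delta^A
\]
as linear maps $\FBi(A_0,W) \to B_0$, which follows immediately from the defining formulas $\delta^A(a_1 \otimes w \otimes a_2) = a_1 \cdot \delta_0^A(w) \cdot a_2$ and $\delta^B(\ldots)$, the hypothesis $\delta_0^B = f_0 \circ \delta_0^A$, and the fact that $f_0$ is an algebra morphism.

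Next I would show that $\hat{f}$ descends to the quotients, i.e.\ $\hat{f}(\Pf^A) \subseteq \Pf^B$. A typical generator of $\Pf^A$ has the form $\delta^A(E) \gtd F - E \ltd \delta^A(F)$ for $E, F \in \FBi(A_0,W)$. Applying $\hat{f}$, using that $\hat{f}$ is a bimodule morphism along $f_0$ and the identity above, yields
\[
    \hat{f}\bigl(\delta^A(E) \gtd F - E \ltd \delta^A(F)\bigr) \;=\; \delta^B(\hat{f}(E)) \gtd \hat{f}(F) - \hat{f}(E) \ltd \delta^B(\hat{f}(F)),
\]
which is a generator of $\Pf^B$. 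Hence $\hat{f}$ induces a well-defined linear map $\FXA_1(f_0): \FXA_1(\delta_0^A) \to \FXA_1(\delta_0^B)$, and this is the only step requiring any care; the remainder is bookkeeping.

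Finally I would verify the four properties of a morphism of crossed modules of algebras: (i) $\FXA_1(f_0) \circ \iota^A = \iota^B$, immediate from $f_0(1) = 1$ and $\iota^A(w) = 1 \otimes w \otimes 1$; (ii) $\delta^B \circ \FXA_1(f_0) = f_0 \circ \delta^A$, inherited from the identity already established at the level of $\hat{f}$; (iii) intertwining of bimodule actions, $\FXA_1(f_0)(a_1 \gtd P \ltd a_2) = f_0(a_1) \gtd \FXA_1(f_0)(P) \ltd f_0(a_2)$, which is built into $\hat{f}$; and (iv) preservation of the product, using the formula $P * Q = \delta(P) \gtd Q$ from~\Cref{eq:fxa_product} together with (ii) and (iii):
\[
    \FXA_1(f_0)(P * Q) = \FXA_1(f_0)(\delta^A(P) \gtd Q) = f_0(\delta^A(P)) \gtd \FXA_1(f_0)(Q) = \FXA_1(f_0)(P) * \FXA_1(f_0)(Q).
\]
No substantive obstacle arises; all difficulty is concentrated in checking that the Peiffer ideals are compatible, and that reduces to the one-line identity $\delta^B \circ \hat{f} = f_0 \circ \delta^A$. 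I would also remark that uniqueness of the map is automatic from the universal property as soon as one reinterprets $\cmFXA(\delta_0^B)$, viewed along $f_0$, as a candidate target for the universal map out of $\cmFXA(\delta_0^A)$ — but the explicit construction above suffices.
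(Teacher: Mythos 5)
Your proof is correct, and it is exactly the argument one would expect given the tools the paper supplies: lift $f_0$ to the free bimodules via \Cref{lem:fbi_functorial}, check the intertwining identity $\delta^B \circ \hat{f} = f_0 \circ \delta^A$ directly from the definition of the boundary maps and $\delta_0^B = f_0 \circ \delta_0^A$, use it to verify $\hat{f}(\Pf^A) \subseteq \Pf^B$ so that the map descends to the quotient, and then read off the remaining crossed-module morphism conditions from the product formula \eqref{eq:fxa_product}. For what it is worth, although the text claims the proof of \Cref{prop:fxa_functorial} is in \Cref{apx:fxa}, that appendix in fact contains only the proof of \Cref{prop:fxa_construction}, so there is no paper proof to compare against; your argument fills a genuine omission. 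One small caveat on your closing remark: the universal property in \Cref{def:fxa} is stated for crossed modules over the \emph{same} base algebra $A_0$, so to derive uniqueness from it you would first have to restrict scalars on $\cmFXA(\delta_0^B)$ along $f_0$ to view it as a crossed module over $A_0$; this works, but it is not immediate as stated, and your explicit construction sidesteps the issue cleanly.
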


\subsubsection{Universal 2-Connection}
Now, we will construct the universal 2-connection. We start with the universal 1-connection $\ucona \in L(\V, T_0(\V))$, and consider the \emph{curvature} $\kappa \in L(\Lambda^2 \V, T_0(\V))$ defined by
\begin{align}\nonumber
    \kappa(v_1 \wedge v_2) \coloneqq [\ucona(v_1), \ucona(v_2)] = v_1 \cdot v_2 - v_2 \cdot v_1 \in V^{\otimes 2} \subset T(\V)
\end{align}
The higher analogue of the tensor algebra $T_0(\V)$, is the free crossed module with respect to the curvature,
\begin{align}\nonumber
    T_1(\V) \coloneqq \FXA_1(\kappa) = \Big(T_0(\V) \otimes \Lambda^2 V \otimes T_0(\V)\Big) /\Pf.
\end{align}
For $E, F \in T_1(\V)$, the product is defined by
\begin{align} \label{eq:bT_product}
    E * F = \delta(E) \cdot F = E \cdot \delta(F).
\end{align}
Note that the boundary map is defined on the free $T_0(\V)$-bimodule $\delta: T(\V) \otimes \Lambda^2 V \otimes T(\V) \to T(\V)$ by
\begin{align} \nonumber
    \delta(a \otimes v \otimes b) = a \cdot \kappa(v) \cdot b,
\end{align}
and this is well-defined on $T_1(\V)$ by~\Cref{prop:fxa_construction}. We will denote this crossed module by
\begin{align} \label{eq:bT_def}
    \bT(\V) \coloneqq (\delta: T_1(\V) \to T_0(\V), \gtd, \ltd),
\end{align}
where the inclusion $\uconc \in L(\Lambda^2 \V, T_1)$ given by
\begin{align}\nonumber
    \uconc(v_1 \wedge v_2) = 1 \otimes (v_1 \wedge v_2) \otimes 1
\end{align}
is the \emph{universal 2-connection}. 

\begin{proposition} \label{prop:univ_2con_alg}
    Let $(\zeta, Z)$ be the 2-connection valued in $\bT(V)$ defined above. For any 2-connection $(\cona, \conc)$ valued in a crossed module of algebras $\cmA = (\delta: A_1 \to A_0)$, there exists a unique morphism of crossed modules of algebras $(\tcona, \tconc): \bT(\V) \to \cmA$ such that
    \begin{align}\nonumber
        \cona = \tcona \circ \zeta \andd \conc = \tconc \circ Z.
    \end{align}
\end{proposition}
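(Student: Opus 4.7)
The plan is to exploit the two universal properties packaged into $\bT(V)$ in sequence: first the universal property of the tensor algebra $T_0(V)$ (as in \Cref{thm:universal_path_signature}) to produce $\tcona$, and then the universal property of the free crossed module of algebras (\Cref{def:fxa}) together with its functoriality (\Cref{prop:fxa_functorial}) to produce $\tconc$.

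First, since $\cona \in L(V, A_0)$, the universal property of $T_0(V)$ supplies a unique algebra morphism $\tcona: T_0(V) \to A_0$ with $\cona = \tcona \circ \zeta$. The decisive compatibility check is that $\tcona$ sends the curvature $\kappa: \Lambda^2 V \to T_0(V)$ to $\delta \circ \conc$: on a generator $v_1 \wedge v_2$ one computes
\begin{align*}
\tcona(\kappa(v_1 \wedge v_2)) = \tcona(v_1 \cdot v_2 - v_2 \cdot v_1) = [\cona(v_1), \cona(v_2)]_0,
\end{align*}
which equals $\delta(\conc(v_1 \wedge v_2))$ by the fake-flatness condition (\Cref{eq:fake_flatness}). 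Thus $\delta \circ \conc = \tcona \circ \kappa$ as linear maps $\Lambda^2 V \to A_0$.

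Second, I would construct $\tconc$ in two steps. Applying \Cref{prop:fxa_functorial} to the algebra morphism $\tcona: T_0(V) \to A_0$ (with $\delta_0^A := \kappa$ and $\delta_0^B := \tcona \circ \kappa$) yields a morphism of crossed modules of algebras $(\FXA_1(\tcona), \tcona): \cmFXA(\kappa) \to \cmFXA(\tcona \circ \kappa)$. The codomain now lives over $A_0$, and the compatibility above says that $\conc: \Lambda^2 V \to A_1$ satisfies $\delta \circ \conc = \tcona \circ \kappa$, which is exactly the hypothesis needed to invoke the universal property (\Cref{def:fxa}) of $\cmFXA(\tcona \circ \kappa)$. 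This supplies a unique algebra morphism $\bar{\gamma}: \FXA_1(\tcona \circ \kappa) \to A_1$ with $\conc = \bar{\gamma} \circ \iota$ and $\delta \circ \bar{\gamma} = \tcona \circ \kappa$, whence $\tconc := \bar{\gamma} \circ \FXA_1(\tcona): T_1(V) \to A_1$ satisfies $\tconc \circ Z = \conc$; the pair $(\tcona, \tconc)$ is then a morphism of crossed modules of algebras because each of the two arrows composed to build it is.

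For uniqueness, any candidate $(\tcona', \tconc')$ must have $\tcona' = \tcona$ by the uniqueness clause of the universal property of $T_0(V)$. With $\tcona$ fixed, the crossed-module morphism axioms force $\tconc'$ to be $T_0(V)$-bimodule equivariant along $\tcona$ and to agree with $\conc$ on $Z(\Lambda^2 V)$; since $T_1(V) = \FXA_1(\kappa)$ is generated as a $T_0(V)$-bimodule by $Z(\Lambda^2 V)$, this determines $\tconc' = \tconc$. The main obstacle, mild but essential, is that the two universal properties live over different base algebras ($T_0(V)$ for $\cmFXA(\kappa)$ versus $A_0$ for $\cmA$), so \Cref{prop:fxa_functorial} is needed as the bridge between them; once that bridge is in place the argument reduces to a clean diagram chase.
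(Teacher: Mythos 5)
Your proof is correct and follows essentially the same route as the paper's: produce $\tcona$ from the universal property of $T_0(\V)$, then splice the universal property of the free crossed module over $A_0$ together with the functoriality of $\FXA_1$ (\Cref{prop:fxa_functorial}) to build $\tconc$. You have merely reordered the steps (obtaining $\tcona$ before introducing $\cmFXA(\tcona\circ\kappa)$, rather than after) and filled in two points the paper leaves implicit: the explicit verification that fake-flatness gives $\tcona\circ\kappa = \delta\circ\conc$, which is what licenses both the functoriality step and the universal-property step, and the uniqueness argument via bimodule-generation of $T_1(\V)$ by $Z(\Lambda^2\V)$. Both additions are sound and welcome; the paper's own uniqueness is left to the reader.
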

\begin{proof}
    First, consider the free crossed module with resepct to $\delta^A_0 = \delta^A \circ \gamma : \Lambda^2 \V \to A_0$ to obtain
    \begin{align}\nonumber
        \cmFXA(\delta^A_0) = (\delta^F: \FXA_1(\delta^A_0) \to A_0),
    \end{align} 
    along with the inclusion $\iota_A: \Lambda^2 \V \to \FXA_1(\delta^A_0)$. Given the linear map $\conc \in L(\Lambda^2 \V, A_1)$, there exists a unique map $\overline{\conc}: \FXA_1(\delta^A_0) \to A_1$ such that $\conc = \overline{\conc} \circ \iota_A$ by the universal property of free crossed modules of algebras in~\Cref{def:fxa}. Next, by the universal property of the free algebra $T_0(\V)$, there exists a unique map $\tcona : T_0(\V) \to A_0$ such that $\cona = \tcona \circ \zeta$. Then, by the functoriality of the free crossed module construction from~\Cref{prop:fxa_functorial}, we obtain the following commutative diagram,
    \[
        \begin{tikzcd}
            & T_1(\V) \ar[r,"\delta^T"] \ar[d, "\FXA_1(\tcona)", dashed] & T_0(\V) \ar[d, "\tcona", dashed] \\
            \Lambda^2 \V \ar[ru, "\uconc"] \ar[r, "\iota_A"] \ar[dr, swap, "\conc"] & \FXA_1(\delta_0) \ar[r, "\delta^F"] \ar[d, "\overline{\conc}", dashed] & A_0 \ar[d,equal]  \\
            & A_1 \ar[r, "\delta^A"] & A_0
        \end{tikzcd}
    \]
    where the dashed arrows are unique maps given by free constructions. Then, we define $\tconc = \FXA_1(\tcona) \circ \overline{\conc} : T_1(\V) \to A_1$, and since $(\tconc, \tcona): \bT(\V) \to \cmA$ is a composition of two morphisms of crossed modules, it is also a morphism of crossed modules. 
\end{proof}

\subsection{Relationship with Kapranov's Construction and Decomposition of \texorpdfstring{$T_1(\V)$}{T1(V)}} \label{ssec:relationship_kapranov}

In this section, we will show how our construction of the universal 2-connection coincides with Kapranov's construction in~\cite{kapranov_membranes_2015}. We begin with an overview of Kapranov's construction. Here, we require the notion of a crossed module of Lie algebras.

\begin{definition}
    A \emph{crossed module of Lie algebras}
    \begin{align}\nonumber
        \cmg = (\delta: \fg_1 \to \fg_0, \gt)
    \end{align}
    consists of Lie algebras $(\fg_0, [\cdot, \cdot]_0)$ and $(\fg_1, [\cdot, \cdot]_1)$ equipped with a Lie algebra action of $\fg_0$ on $\fg_1$ by derivations; in other words, it satisfies
    \begin{align}\nonumber
        x \gt [E, F]_1 = [x \gt E, F]_1 + [E, x \gt F]_1 \andd [x, y]_0 \gt E = x \gt (y \gt E) - y \gt (x \gt E)
    \end{align}
    for all $x,y \in \fg_0$ and $E, F \in \fg_1$. Furthermore, there is a morphism of Lie algebras $\delta: \fg_1 \to \fg_0$ such that
    \begin{enumerate}
        \item \textbf{(First Peiffer Identity)} $\delta(x \gt E) = [x, \delta(E)]_0$ for all $x \in \fg_0$ and $E \in \fg_1$; and
        \item \textbf{(Second Peiffer Identity)} $\delta(E) \gt E' = [E, E']_1$ for all $E, E' \in \fg_1$. 
    \end{enumerate}
    Suppose $\cmh = (\delta : \fh_1 \to \fh_0, \gt)$ is another crossed module of Lie algebras. A \emph{morphism between crossed module of Lie algebras} $f = (f_0, f_1) : \cmg \to \cmh$ consists of two morphisms of Lie algebras $f_0 : \fg_0 \to \fh_0$ and $f_1 : \fg_1 \to \fh_1$ such that for all $x \in \fg_0$ and $E \in \fg_1$, 
    \begin{align} \label{eq:cmla_morphism}
        \delta \circ f_1(E) = f_0 \circ \delta(E)  \andd f_1 ( x \gt E) = f_0(x) \gt f_1(E).
    \end{align}
\end{definition}

\subsubsection{Kapranov's Construction}
Let $\V$ be a finite dimensional vector space. Kapranov~\cite{kapranov_membranes_2015} begins by constructing a graded vector space
\begin{align}\nonumber
    \Lambda^\bullet \V \coloneqq \bigoplus_{k=1}^\infty \Lambda^k \V,
\end{align}
where elements in $\Lambda^k\V$ are of degree $k-1$. Next, we construct the free graded Lie algebra $\fg_\bullet \coloneqq \FL(\Lambda^\bullet V)$, where $\fg_0 = \FL(\V)$ is the usual free Lie algebra of $\V$, and $\fg_1$ is freely generated as a vector space by the elements of the form
\begin{align}\nonumber
    [u_1, \ldots [u_{k}, v \wedge w]\ldots],
\end{align}
where $u_1, \ldots, u_k, v, w \in \V$. Next, one can define a differential $\delta: \fg_1 \to \fg_0$ by
\begin{align}\nonumber
    \delta([u_1, \ldots [u_{k}, v \wedge w]\ldots]) =  [u_1, \ldots [u_{k}, [v,w]]\ldots],
\end{align}
which is a derivation of Lie algebras. In fact, a differential can be defined on the entire $\fg_\bullet$ to construct a differential graded Lie algebra, though we only require the first two degrees. Kapranov then introduces a \emph{semiabelianization} procedure to truncate the (differential) graded Lie algebra into a crossed module of Lie algebras. In particular, we define the equivalence relation $[\delta(E), F] \sim [E, \delta(F)]$ for $E,F \in \fg_1$, and define the Lie algebra
\begin{align}\nonumber
    \fg_1^{\sab} \coloneqq \fg_1/\sim \quad \text{with Lie bracket} \quad [E, F]_{\sab} \coloneqq [\delta(E), F] = [E, \delta(F)].
\end{align}
Kapranov shows in~\cite[Example 1.3.2, Proposition 1.3.6]{kapranov_membranes_2015} that
\begin{align}\nonumber
    \cmg^{\sab} = (\delta: \fg_1^{\sab} \to \fg_0, \gt),
\end{align}
where the action is defined by $x \gt E \coloneqq [x, E]$ for $x \in \fg_0$ and $E \in \fg_1^{\sab}$, is a crossed module of Lie algebras. In fact, one can show that this is the free crossed module of Lie algebras with respect to the map $\delta_0: \Lambda^2 \V \to \fg_0$ defined by $\delta_0(v \wedge w) = [v, w]$ (see the construction of free crossed modules of Lie algebras in~\cite[Section 3.6]{cirio_categorifying_2012}).
Kapranov's universal 2-connection is defined in terms of crossed modules of Lie algebras, given by the linear inclusion maps $\tucona \in L(\V, \fg_0)$ and $\tuconc \in L(\Lambda^2 \V, \fg_1^{\sab})$.\medskip

\subsubsection{Relationship with Kapranov's Construction}
Now, we discuss how Kapranov's construction coincides with our construction in terms of algebras in~\Cref{ssec:construct_univ2con}. We begin by considering the universal enveloping algebra~\cite[Section 21a]{felix_rational_2001} of the free Lie algebra $\fg_\bullet$, which is a differential graded algebra denoted by $U_\bullet \coloneqq U(\fg_\bullet)$. Note that $U_0 = T(\V)$ and $U_1 = \oT_1(\V) \coloneqq \FBi(T(\V), \Lambda^2 \V)$, which is the free $T(\V)$-bimodule. There is a natural linear embedding $\overline{\iota}_1: \fg_1 \hookrightarrow \oT_1(\V)$. For $x \in \fg_0$ and $E \in \fg_1$, we have
\begin{align} \label{eq:iota1_fbi}
    \iota_1([x, E]) = x\cdot E - E \cdot x
\end{align}
where $[\cdot, \cdot]$ is the bracket in $\fg_\bullet$ and $\cdot$ is the product in $U_\bullet$.
Once we quotient $\oT_1(\V)$ by the Peiffer subspace $\Pf$, the kernel of the linear map $\iota_1: \fg_1 \rightarrow T_1(\V)$ is generated by
\begin{align}\nonumber
    [\delta(E), F] - [E, \delta(F)]  &= \delta(E) \cdot F - F \cdot \delta(E) - E \cdot \delta(F) + \delta(F) \cdot E \\
    &= ( \delta(E) \cdot F - E \cdot \delta(F)) + (\delta(F) \cdot E - F \cdot \delta(E)),
\end{align}
where $E, F \in \fg_1$. Thus, we obtain an injective linear map $\iota_1: \fg_1^{\sab} \to T_1(\V)$. 
Furthermore, if we consider the commutator bracket $[\cdot, \cdot]_1$ on $T_1(\V)$ and using the definition of the product in~\Cref{eq:bT_product}, we have
\begin{align}\nonumber
    \iota_1([E, F]_{\sab}) = \iota_1([\delta(E), F]) = (\delta(E) \cdot F - F \cdot \delta(E))  = (E * F - F * E) = [\iota_1(E), \iota_1(F)]_1.
\end{align}
And therefore, $\iota_1$ is an inclusion of Lie algebras of $\fg_1^{\sab}$ into $\Lie(T_1(\V))$, where $\Lie(A)$ is the Lie algebra of an algebra $A$ equipped with the commutator bracket. In fact, applying the Lie functor to a crossed module of algebras yields a crossed module of Lie algebras.

\begin{proposition}{\cite[Lemma 6]{faria_martins_crossed_2016}}
    Let $\cmA = (\delta: A_1 \to A_0, \gtd, \ltd)$ be a crossed module of algebras. Then, 
    \begin{align}\nonumber
        \Lie(\cmA) = (\delta: \Lie(A_1) \to \Lie(A_0), \gt),
    \end{align}
    where $\Lie(A_0)$ and $\Lie(A_1)$ are the Lie algebras induced by the commutator brackets on $A_0$ and $A_1$ respectively, and the action is defined for $X \in \Lie(A_0)$ and $E \in \Lie(A_1)$ by
    \begin{align}\nonumber
        X \gt E \coloneqq X \gtd E - E \ltd X,
    \end{align}
    is a crossed module of Lie algebras. 
\end{proposition}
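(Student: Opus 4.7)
The plan is to verify the defining axioms of a crossed module of Lie algebras one-by-one, reducing each to the corresponding axiom of the crossed module of associative algebras $\cmA$. I would first dispatch the easy verifications: that $\delta: \Lie(A_1) \to \Lie(A_0)$ is a morphism of Lie algebras, and the two Peiffer identities in the Lie setting. Since $E * F = \delta(E) \gtd F$, the first Peiffer identity for algebras gives $\delta(E*F) = \delta(E)\cdot \delta(F)$, hence $\delta([E,F]_1) = [\delta(E), \delta(F)]_0$. For the first Peiffer identity for Lie algebras, expand $\delta(x \gt E) = \delta(x \gtd E) - \delta(E \ltd x) = x\cdot \delta(E) - \delta(E) \cdot x = [x, \delta(E)]_0$. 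For the second, expand $\delta(E) \gt E' = \delta(E) \gtd E' - E' \ltd \delta(E) = E*E' - E'*E = [E, E']_1$, using both halves of the second Peiffer identity of $\cmA$.

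The remaining two properties — that $\gt$ is a Lie algebra action and that it acts by derivations — require slightly more bookkeeping. For the action axiom $[x,y]_0 \gt E = x \gt (y \gt E) - y \gt (x \gt E)$, I would expand the right-hand side and use the middle-four exchange $(a \gtd E)\ltd b = a \gtd (E \ltd b)$ from the bimodule axioms to see that the mixed terms $x \gtd (E \ltd y)$, $y \gtd (E \ltd x)$ cancel in pairs, leaving exactly the expansion of the left-hand side.

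For the derivation axiom $x \gt [E,F]_1 = [x \gt E, F]_1 + [E, x \gt F]_1$, the key simplifications come from two consequences of $E*F = \delta(E) \gtd F = E \ltd \delta(F)$ combined with the first Peiffer identity: namely $(x \gtd E) * F = x \gtd (E * F)$ (obtained via $\delta(x \gtd E) = x \cdot \delta(E)$) and $E * (F \ltd x) = (E * F) \ltd x$ (obtained symmetrically). Using these, the right-hand side collapses term-by-term to $x \gtd (E*F) - (E*F) \ltd x - x \gtd (F*E) + (F*E) \ltd x$, which is precisely $x \gt [E,F]_1$.

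The main obstacle is purely organizational: ensuring that in the derivation computation one consistently converts products of the form $(E \ltd a)*F$ and $F*(a \gtd E)$ into expressions where the outer action can be factored out, using the first Peiffer identity for the boundary and the compatibility $(a \gtd E)*F' = \delta(a \gtd E) \gtd F' = a \gtd(E*F')$. No genuinely new idea is needed beyond the axioms of $\cmA$ and the bimodule structure on $A_1$; the entire proof is a direct expansion and cancellation.
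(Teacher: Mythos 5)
The paper states this result as a citation to Faria Martins's Lemma~6 and does not supply its own proof, so there is no in-paper argument to compare against; I can only assess correctness.

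Your proof is correct. The easy verifications (that $\delta$ is a Lie morphism, and the two Peiffer identities in the Lie setting) are handled exactly as you describe, following from the algebra morphism property of $\delta$, the algebra Peiffer identities, and the definition $X \gt E = X \gtd E - E \ltd X$. For the Lie-action axiom your cancellation of the mixed terms via the middle-four exchange $(a \gtd E) \ltd b = a \gtd (E \ltd b)$ and the associativity of the one-sided actions is precisely what is needed: after those cancellations one is left with $(xy) \gtd E - (yx) \gtd E - E \ltd (xy) + E \ltd (yx) = [x,y]_0 \gt E$. For the derivation axiom, the relevant identities are $(x \gtd E)*F = x \gtd (E*F)$, $E*(F \ltd x) = (E*F) \ltd x$ (both directly in the crossed module compatibility axioms, though your derivation via the boundary and the first Peiffer identity is also valid), together with $(E \ltd x)*F = E*(x \gtd F)$ and $F*(x \gtd E) = (F \ltd x)*E$, which follow from the interchange property of the left and right actions, or equivalently from the boundary calculation you indicate. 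Expanding $[x \gt E, F]_1 + [E, x \gt F]_1$ and applying these, the terms $E*(x \gtd F)$ and $(F \ltd x)*E$ cancel, leaving exactly $x \gtd (E*F) - x \gtd (F*E) - (E*F) \ltd x + (F*E) \ltd x = x \gt [E,F]_1$. No gap; the argument is a complete direct verification of the crossed module of Lie algebras axioms.
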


Furthermore, we have a natural inclusion of Lie algebras $\iota_0 : \fg_0 \to \Lie(T_0(\V))$. In fact, this induces a morphism at the level of crossed modules.

\begin{proposition} \label{prop:inclusion_kapranov}
    Let $\bT(\V) = (\delta: T_1(\V) \to T_0(\V), \gtd, \ltd)$ be the free crossed module in~\Cref{eq:bT_def}. Then, the inclusions $\iota = (\iota_0, \iota_1) : \cmg^{\sab} \to \Lie(\bT(\V))$ is a morphism of crossed modules of Lie algebras.
\end{proposition}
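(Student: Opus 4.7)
The plan is to verify the two defining conditions in \Cref{eq:cmla_morphism} for the pair $(\iota_0, \iota_1)$, since the discussion preceding the statement has already established that $\iota_0 : \fg_0 \hookrightarrow \Lie(T_0(\V))$ and $\iota_1 : \fg_1^{\sab} \hookrightarrow \Lie(T_1(\V))$ are injective Lie algebra morphisms. Concretely, I need to show
\begin{align*}
    \delta \circ \iota_1 = \iota_0 \circ \delta \andd \iota_1(x \gt E) = \iota_0(x) \gt \iota_1(E)
\end{align*}
for all $x \in \fg_0$ and $E \in \fg_1^{\sab}$, where on the right the action is the one induced on $\Lie(\bT(\V))$ by the bimodule actions, namely $X \gt E = X \gtd E - E \ltd X$.

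For the first identity, I would argue on generators of $\fg_1^{\sab}$ of the form $[u_1, \ldots, [u_k, v \wedge w]\ldots]$, which under the differential of Kapranov's crossed module are sent to $[u_1, \ldots, [u_k, [v,w]]\ldots] \in \fg_0$. On the other side, under the linear embedding $\iota_1$ these generators land in $T_1(\V)$ (built via the free bimodule construction), and the boundary $\delta: T_1(\V) \to T_0(\V)$ of \Cref{eq:bT_def} acts by $a \otimes (v \wedge w) \otimes b \mapsto a \cdot \kappa(v \wedge w) \cdot b = a \cdot (v\cdot w - w \cdot v) \cdot b$. Iterating the identity \Cref{eq:iota1_fbi} to unwind $\iota_1$ on nested brackets, both compositions evaluate to the same element of $T_0(\V)$, namely the image of $[u_1, \ldots, [u_k, [v,w]]\ldots]$ under $\iota_0$. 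A short induction on the depth $k$ of the iterated bracket makes this precise.

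For the second identity, the action of $\fg_0$ on $\fg_1^{\sab}$ is by definition $x \gt E = [x, E]$ in the graded Lie algebra $\fg_\bullet$. Applying \Cref{eq:iota1_fbi} gives $\iota_1([x, E]) = \iota_0(x) \cdot \iota_1(E) - \iota_1(E) \cdot \iota_0(x)$, where the products take place in the universal enveloping algebra $U_\bullet$, and these are exactly the left and right bimodule actions of $T_0(\V)$ on $T_1(\V)$ applied to $\iota_1(E)$. Hence the right-hand side coincides with $\iota_0(x) \gtd \iota_1(E) - \iota_1(E) \ltd \iota_0(x) = \iota_0(x) \gt \iota_1(E)$, as required.

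No step poses a serious obstacle, since the main content of the proposition (that $\fg_1^{\sab}$ embeds as a Lie subalgebra of $\Lie(T_1(\V))$ and that the Peiffer quotient matches) has already been carried out in the preceding paragraphs. The only mild subtlety is keeping track of the fact that $\iota_1$ on a generator like $[u_1, \ldots, [u_k, v \wedge w]\ldots]$ is determined by repeated application of \Cref{eq:iota1_fbi} and the bimodule structure, which is why the induction on the bracket depth is the natural way to organize the verification of the compatibility with $\delta$.
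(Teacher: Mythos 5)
Your proposal is correct and follows essentially the same route as the paper: verify the action-compatibility directly via \eqref{eq:iota1_fbi}, and verify the boundary-compatibility first on the generators $v\wedge w \in \Lambda^2\V$ and then on the spanning set of nested brackets $[u_1,\ldots,[u_k,v\wedge w]\ldots]$. The paper organizes your ``induction on depth $k$'' slightly more crisply by invoking the first Peiffer identity on both $\cmg^{\sab}$ and $\Lie(\bT)$ to push $\delta$ through the nested brackets in one pass, but this is the same computation as iterating \eqref{eq:iota1_fbi}.
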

\begin{proof}
    In our discussion above, we have already shown that $\iota_0$ and $\iota_1$ are morphisms of Lie algebras, so we are just left to show that the two conditions in~\Cref{eq:cmla_morphism} hold. To distinguish the Lie brackets, we let
    \begin{itemize}
        \item $[\cdot, \cdot]$ denote the underlying Lie bracket of the differential graded Lie algebra $\fg_\bullet$, which coincides with the Lie bracket of $\fg_0$;
        \item $[\cdot, \cdot]_0$ denote the commtuator bracket in $T_0(\V)$; and
        \item $[\cdot, \cdot]_1$ denote the commutator bracket in $T_1(\V)$.
    \end{itemize}
    
    Let $x \in \fg_0$ and $E \in \fg_1^{\sab}$. Then,
    \begin{align}\nonumber
        \iota_1( x \gt E) = \iota_1([x,E]) = (x\cdot E - E \cdot x) = \iota_0(x) \gt \iota_1(E).
    \end{align} 
    Next, we must show that the following commutes.
    \[
        \begin{tikzcd}
            \fg_1^{\sab} \ar[r, "\delta_\fg"] \ar[d, swap, "\iota_1"] & \fg_0 \ar[d, "\iota_0"] \\
            \Lie(T_1(\V)) \ar[r, "\delta_T"] & \Lie(T_0(\V)).
        \end{tikzcd}    
    \]
    Let $(v \wedge w)_{\fg_1^{\sab}} \in \Lambda^2 \V \subset \fg_1^{\sab}$. Then,
    \begin{align} \label{eq:kapranov_cmla_morphism_generator}
        \iota_0 \circ \delta_\fg((v \wedge w)_{\fg_1^{\sab}}) = \iota_0([v, w]) = v\cdot w - w \cdot v = \delta_T ((v \wedge w)_{T_1}) = \delta_T \circ \iota_1((v \wedge w)_{\fg_1^{\sab}}),
    \end{align}
    where $(v \wedge w)_{T_1} \in \Lambda^2 \V \subset \Lie(T_1(\V))$, and we note that $\iota_1((v \wedge w)_{\fg_1^{\sab}}) = (v \wedge w)_{T_1}$. Next, by~\cite[Proposition 1.5.1]{kapranov_membranes_2015}, the Lie algebra $\fg_1^{\sab}$ is spanned by elements of the form $[x_1, \ldots, [x_k, E] \ldots]$, where $x_1, \ldots x_k \in \V$ and $E \in \Lambda^2 \V$. Then, we have
    \begin{align}\nonumber
        \iota_0 \circ \delta_\fg([x_1, \ldots, [x_k, E] \ldots]) &= \iota_0( [x_1, \ldots, [x_k, \delta_\fg(E)] \ldots])  \quad \text{(first Peiffer identity of $\cmg$)}\\
        & = [x_1, \ldots, [x_k, \iota_0 \circ \delta_\fg(E)]_0 \ldots]_0  \quad \text{(by definition of $\iota_0$)}\nonumber\\
        & = [x_1, \ldots, [x_k, \delta_T \circ \iota_1 (E)]_0 \ldots]_0  \quad \text{(by~\Cref{eq:kapranov_cmla_morphism_generator})}\nonumber\\
        & = \delta_T([x_1, \ldots, [x_k, \iota_1 (E)]_1 \ldots]_1 ) \quad \text{(first Peiffer identity of $\Lie(\bT)$)} \nonumber\\
        & = \delta_T \circ \iota_1([x_1, \ldots, [x_k, E] \ldots] ) \quad \text{(by definition of $\iota_1$ in~\Cref{eq:iota1_fbi})}\nonumber
    \end{align}
\end{proof}

\medskip
\subsubsection{Decomposition of $T_1(\V)$}
Now, we will use this relationship to Kapranov's construction to provide a decomposition of $T_1(\V)$, which will help us better understand its algebra structure. Kapranov shows in~\cite[Proposition 2.5.1]{kapranov_membranes_2015} that $\fg_1^{\sab}$ decomposes as 
\begin{align} \label{eq:kapranov_decomposition}
    \fg_1^{\sab} = [\fg_0, \fg_0] \oplus \fg_1^{\sab, \ker},
\end{align}
where $[\fg_0, \fg_0]$ is the commutant of $\fg_0$, and $\fg_1^{\sab, \ker} = \ker(\delta: \fg_1^{\sab} \to)$ is an abelian Lie algebra. 
We briefly recall part of the argument. First, we note that the image of the crossed module boundary is $\im(\delta: \fg_1^{\sab} \to \fg_0) = [\fg_0, \fg_0]$. By the Shirshov-Witt theorem~\cite[Theorem 2.5]{reutenauer_free_1993}, any Lie subalgebra of a free Lie algebra is free, thus $[\fg_0, \fg_0]$ is free. Then, there exists a Lie algebra section $\hs: [\fg_0, \fg_0] \to \fg_1^{\sab}$. Indeed, let $L \subset [\fg_0, \fg_0]$ be a vector space of homogeneous Lie algebra generators of $[\fg_0, \fg_0]$, and choose a graded linear section $s: L \to \fg_1^{\sab}$ such that $\delta \circ s = \id_L$. Then, define $\hs: [\fg_0, \fg_0] \to \fg_1^{\sab}$ to be the unique Lie algebra morphism obtained via the universal property
\[
    \begin{tikzcd}
        L \ar[r, "s"] \ar[d] & \fg_1^{\sab} \\
        \left[\fg_0, \fg_0\right] \ar[ru, dashed, swap, "\hs"] & 
    \end{tikzcd}
\]
Then, $\delta\circ \hs ([E,F]) = [\delta\circ \hs(E), \delta\circ \hs(F)]$, and since $\delta \circ \hs = \id$ on the Lie algebra generators, this is a section. \medskip

We wish to repeat this argument for $\bT(\V)$ at the level of algebras. Because $[\fg_0, \fg_0]$ is a Lie subalgebra of $\fg_0$, there is an inclusion of algebras $U([\fg_0, \fg_0]) \hookrightarrow T_0(\V)$ of the universal enveloping algebras. Consider the same graded linear section $s: L \to \fg_1^{\sab}$ of Lie algebra generators, and let $\ts_0 = \iota_1 \circ s : L \to T_1(\V)$ be the linear section into $T_1(\V)$. Then, using the fact that $U([\fg_0, \fg_0]) \cong T_0(L)$ is a free algebra, we use the universal property of $T_0(L)$, 
\[
    \begin{tikzcd}
        L \ar[r, "\tilde{s}_0"] \ar[d] & T_1(\V) \\
        T_0(L) \ar[ru, dashed, swap, "\ts"]
    \end{tikzcd}
\]
to extend the linear section to an algebra morphism $\ts : T_0(L) \to T_1(\V)$. Then for $\ell_1, \ldots, \ell_k \in L$, we have
\begin{align}\nonumber
    \delta \circ \ts (\ell_1 \cdots \ell_k) = (\delta \circ \ts(\ell_1)) \cdots (\delta \circ \ts(\ell_k)) = \ell_1 \cdots \ell_k
\end{align}
so $\delta \circ \ts = \id$, and $\ts$ is a section of algebras. Now we define
\begin{align} \label{eq:T1s}
    T_1^s \coloneqq \ts(U([\fg_0, \fg_0])) \subset T_1,
\end{align}
where we use the superscript $s$ to emphasize the fact that this subspace depends on our initial choice of linear $s: L \to \fg_1^{\sab}$. Furthermore, because the original section $s$ is graded, the section $\ts$ is well-defined level-wise, and we define
\begin{align}\nonumber
    T_1^{s, \gr{n}}(\V) \coloneqq \ts(U^{\gr{n}}([\fg_0, \fg_0])) \subset T_1^{\gr{n}}.
\end{align}

\begin{remark}
    In general, $\im(\delta: T_1(\V) \to T_0(\V)) \neq  U([\fg_0, \fg_0])$. For instance, let $u,v, w \in \V$, and consider $u \cdot (v \wedge w) \in T_1^{\gr{3}}(\V)$. Then, $\delta(u \cdot (v \wedge w)) = u \cdot (v \cdot w - w \cdot v) \notin U([\fg_0, \fg_0])$. Indeed, this is the case because $u$ and $[v,w]$ can be chosen as part of a linear basis for $\fg_0$, but $u$ will not be part of a linear basis for $[\fg_0, \fg_0]$, and by the Poincare-Birkhoff-Witt basis~\cite[Theorem 0.2]{reutenauer_free_1993} for $[\fg_0, \fg_0]$, we have $u \cdot [v,w] \notin U([\fg_0, \fg_0])$. Due to these elements, an analogous argument to obtain an algebra section for $\im(\delta: T_1(\V) \to T_0(\V))$ without reference to free Lie algebras and universal enveloping algebras may not be possible (see~\cite[Theorem 3.4]{cohn_subalgebras_1964}).
\end{remark}

\begin{proposition} \label{prop:T1_decomposition}
    Let $\bT(\V) = (\delta: T_1(\V) \to T_0(\V), \gtd, \ltd)$ be the free crossed module. Let $T_1^{\ker}(\V) \coloneqq \ker(\delta)$, and $T_1^s$ be defined in~\Cref{eq:T1s} with respect to a linear section $s: L \to \fg_1^{\sab}$. Then, the subspace $T_1^{\ker}(\V) \oplus T_1^s(\V) \subset T_1(\V)$ is independent of the choice of section $s$. Furthermore, we have the following multiplicative properties
    \begin{align} \label{eq:T1s_mult_prop}
        T_1^s(\V) * T_1^s(\V) \subset T_1^s(\V) \andd T_1^s(\V) * T_1^{\ker}(\V) = T_1^{\ker}(\V) * T_1^s(\V) = T_1^{\ker}(\V) * T_1^{\ker}(\V) = 0.
    \end{align}
\end{proposition}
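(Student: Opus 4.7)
The plan is to prove the two assertions separately, noting that both rely on the defining property that $\ts$ is an algebra section of $\delta$ (so $\delta \circ \ts = \id$ on $U([\fg_0,\fg_0])$) and the fact that the product in $T_1(\V)$ is defined by $E*F = \delta(E)\cdot F = E\cdot \delta(F)$.

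For the multiplicative properties, which I would dispatch first, observe that if $E \in T_1^{\ker}(\V)$ then $\delta(E)=0$, so for any $F \in T_1(\V)$ we have $E*F = \delta(E)\cdot F = 0$ and $F*E = F\cdot \delta(E) = 0$. This immediately gives
\begin{align*}
    T_1^s(\V) * T_1^{\ker}(\V) = T_1^{\ker}(\V) * T_1^s(\V) = T_1^{\ker}(\V) * T_1^{\ker}(\V) = 0.
\end{align*}
For the remaining inclusion $T_1^s(\V) * T_1^s(\V) \subset T_1^s(\V)$, I would use the fact that $\ts: T_0(L) \to T_1(\V)$ was constructed as an algebra morphism with respect to $*$. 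Thus, for $a,b \in U([\fg_0,\fg_0]) \cong T_0(L)$, one has $\ts(a) * \ts(b) = \ts(a\cdot b) \in T_1^s(\V)$.

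Next I would verify that the sum $T_1^{\ker}(\V) + T_1^s(\V)$ is indeed direct. If $E = \ts(a) \in T_1^s(\V) \cap T_1^{\ker}(\V)$, then $0 = \delta(E) = \delta(\ts(a)) = a$, so $E = \ts(0) = 0$. This shows $\ts$ is injective and the intersection is trivial, so the decomposition $T_1^{\ker}(\V) \oplus T_1^s(\V)$ makes sense as a subspace.

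Finally, for independence of the choice of section, let $s, s': L \to \fg_1^{\sab}$ be two graded linear sections and let $\ts, \ts' : U([\fg_0,\fg_0]) \to T_1(\V)$ be the corresponding algebra sections. The key observation is that for any $a \in U([\fg_0,\fg_0])$,
\begin{align*}
    \delta\bigl(\ts(a) - \ts'(a)\bigr) = a - a = 0,
\end{align*}
so $\ts(a) - \ts'(a) \in T_1^{\ker}(\V)$. Consequently $T_1^s(\V) \subset T_1^{s'}(\V) + T_1^{\ker}(\V)$, which yields $T_1^{\ker}(\V) + T_1^s(\V) \subset T_1^{\ker}(\V) + T_1^{s'}(\V)$; by symmetry the two subspaces coincide. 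I expect no serious obstacle: the whole argument is essentially a tautology once one notices that both sections land in the same fiber of $\delta$ and that elements of $T_1^{\ker}(\V)$ annihilate the $*$-product. The only point requiring a small amount of care is confirming that $\ts$ was genuinely built as an algebra (not merely linear) map into $(T_1(\V), *)$, which is exactly how it was defined via the universal property of $T_0(L)$ above.
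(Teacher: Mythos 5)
Your proof is correct and follows essentially the same route as the paper: you use that the $*$-product factors through $\delta$ to kill products involving $T_1^{\ker}$, that $\ts$ is an algebra morphism to get $T_1^s * T_1^s \subset T_1^s$, and the observation that any two elements of $T_1$ with the same image under $\delta$ differ by something in $T_1^{\ker}$ to get independence of the section. You in fact add a small verification (that the sum is direct, via $\delta\circ\ts = \id$) that the paper leaves implicit.
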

\begin{proof}
    First, let $E, F \in T_1(\V)$ such that $\delta(E) = \delta(F)$; then $\delta(E - F) = 0$, so $E-F \in T_1^{\ker}$. Thus given two sections $s, s' : L \to \fg_1^{\sab}$, we have
    \begin{align}\nonumber
        T_1^{s'}(\V) \subset T_1^s(\V) \oplus T_1^{\ker}(\V),
    \end{align}
    so the subspace $T_1^s(\V) \oplus T_1^{\ker}(\V)$ is independent of the choice of section. \medskip

    Next, fix a section $s$, and let $\ts(E), \ts(F) \in T_1^s(\V)$. Then, since $\ts$ is an algebra morphism, we have
    \begin{align}\nonumber
        \ts(E) * \ts(F) = \ts(E \cdot F) \in T_1^s(\V)
    \end{align}
    so $T_1^s(\V) * T_1^s(\V) \subset T_1^s(\V)$. Now, for the other three properties in~\Cref{eq:T1s_mult_prop}, let $E \in T_1^{\ker}(\V)$ and $F \in T_1(\V)$. Then, we have
    \begin{align}\nonumber
        E*F = (\delta(E) \cdot F) = 0 \andd F * E = (F \cdot \delta(E)) = 0.
    \end{align}
\end{proof}

\subsection{Norms, Truncation, and Completion}

In this section, we discuss norms on the free crossed module $\bT(\V)$, as well as notions of truncation and completion. In particular, we will considering the structure of crossed modules of algebras with topological structure. 

\begin{definition}
    A \emph{crossed module of Banach algebras (resp.~locally $m$-convex algebras)} is a crossed module of algebras,
    \begin{align}\nonumber
        \cmA = (\delta: A_1 \to A_0, \gtd, \ltd)
    \end{align}
    such that $A_0$ and $A_1$ are Banach algebras (resp.~locally $m$-convex algebras), $\delta$ is a continuous algebra morphism, and the actions $\gtd$ and $\ltd$ are short (resp.~continuous).
\end{definition}
In particular, in the Banach algebra setting, for $a \in A_0$ and $E \in A_1$, we have
\begin{align}\nonumber
    \|a \gtd E\|_1 \leq \|a\|_0 \cdot \|E\|_1 \andd \|E \ltd a\|_1 \leq \|a\|_0 \cdot \|E\|_1.
\end{align}
Furthermore, we define the \emph{group-like elements} in the truncation and completion of $\bT(\V)$, which leads to crossed modules of groups which are embedded in the invertible elements (\Cref{prop:cma_invertible}).

\begin{definition} \label{def:continuous_connection}
    Let $\cmA$ be a crossed module of Banach or locally $m$-convex algebras. A 2-connection $(\cona, \conc)$ valued in $\cmA$ is \emph{continuous} if both $\cona \in L(\V, A_0)$ and $\conc \in L(\Lambda^2 \V, A_1)$ are continuous. 
\end{definition}

\subsubsection{Norms} \label{sssec:norms}
We begin at the level of bimodules prior to taking the quotient by the Peiffer ideal. We note that the free $T_0(\V)$-bimodule has the form
\begin{align}\nonumber
    \oT_1(\V) \coloneqq \FBi(T_0(\V), \Lambda^2 \V) = T_0(\V) \otimes \Lambda^2 \V \otimes T_0(\V) = \bigoplus_{n=2}^\infty \, \bigoplus_{k=0}^{n-2} \V^{\otimes k} \otimes \Lambda^2 \V \otimes \V^{\otimes(n-k-2)}.
\end{align}
Here, we have decomposed this free bimodule in terms of the grading given by the total degree of tensors, where elements of $\V$ have degree $1$ and elements of $\Lambda^2 \V$ have degree $2$. We will refer to this grading $n$ as the \emph{level} of the tensors. We define the level $(n,k)$ subspace and level $n$ subspace respectively by 
\begin{align}\nonumber
    \oT_1^{\gr{n,k}}(\V) = \V^{\otimes k} \otimes \Lambda^2 \V \otimes \V^{\otimes(n-k-2)} \andd \oT_1^{\gr{n}}(\V) \coloneqq \bigoplus_{k=0}^{n-2} \oT_1^{\gr{n,k}}(\V).
\end{align}

Now, we suppose that $\V$ is a finite dimensional Hilbert space with orthonormal basis $\{e_1, \ldots, e_d\}$. Then, there is a natural Hilbert space structure on $\Lambda^2 \V$ given by the orthonormal basis $\{ e_{i,j} \coloneqq e_i \wedge e_j\}_{i < j}$. We can further extend this Hilbert space structure to $\oT_1^{\gr{n,k}}(\V)$ by defining
\begin{align} \label{eq:oT1_basis}
    e_{q_1} \cdots e_{q_k} \cdot e_{i,j} \cdot e_{q_{k+1}} \cdots e_{q_{n-2}}
\end{align}
for $q_1, \ldots, q_{n-2} \in [d]$ and $i < j \in [d]$ to be an orthonormal basis. Furthermore, we give $V^{\otimes n}$ the usual Hilbert space structure by defining $\{e_{q_1} \cdots e_{q_n}\}_{q_i \in [d]}$ to be an orthonormal basis. 
We use $\|\cdot\|$ to denote the norm on all $\oT_1^{\gr{n,k}}(\V)$ and $V^{\otimes n}$. 
These choices of inner products have the property that for any $u \in \oT^{\gr{n,k}}_1(\V)$, $v \in \V^{\otimes m}$, and $w \in \V^{\otimes p}$, we have 
\begin{align} \label{eq:norm_properties}
    \|u \cdot v\| = \|u\| \cdot \|v\|, \quad \|v \cdot u\| = \|u\| \cdot \|v\|, \andd \|v \cdot w\| = \|v\|\cdot  \|w\|.
\end{align}
The boundary map $\delta_{\gr{n,k}} : \oT_1^{\gr{n,k}}(V) \to V^{\otimes n}$ is defined on the basis vectors in~\Cref{eq:oT1_basis} by
\begin{align}\nonumber
    \delta_{\gr{n,k}}(e_{q_1} \cdots e_{q_k} \cdot e_{i,j} \cdot e_{q_{k+1}} \cdots e_{q_{n-2}}) = e_{q_1} \cdots e_{q_k} \cdot (e_i \cdot e_j - e_j \cdot e_i) \cdot e_{q_{k+1}} \cdots e_{q_{n-2}},
\end{align}
and thus has norm $\|\delta_{\gr{n,k}}\|_{\op} = \sqrt{2}$.
We extend the Hilbert space structure via the direct sum to $\oT_1^{\gr{n}}$, and the boundary map $\delta_{\gr{n}} : \oT_1^{\gr{n}} \to T_0$ given by $\delta_{\gr{n}} \coloneqq \sum_{k=0}^{n-2} \delta_{\gr{n,k}}$ has the same norm.

The level $n$ closed Peiffer subspace $\Pf_n \subset \oT_{1}^{\gr{n}}(\V)$ is
\begin{align}\nonumber
    \Pf_n \coloneqq \spann \{ \delta E \cdot F - E \cdot \delta F \, : \, E \in \oT_1^{\gr{n_1}}, \, F \in \oT_1^{\gr{n_2}}, \, n_1 + n_2 = n\}.
\end{align}
Because we are working with Hilbert spaces, we can define the quotient by the orthogonal complement
\begin{align}\nonumber
    T_1^{\gr{n}}(\V) \coloneqq \oT_1^{\gr{n}}(\V)/\Pf_n \cong \Pf_n^{\perp},
\end{align}
where $\Pf_n^{\perp}$ denotes the orthogonal complement of $\Pf_n$. We will use the $(\cdot)^{\perp}$ notation to denote the projection map from $T_1$ to $\Pf^{\perp}$. In particular, the product in $T_1$ is defined on $E, F \in T_1$ to be
\begin{align}\nonumber
    E * F = (\delta(E) \cdot F)^{\perp} = (E \cdot \delta(F))^{\perp} = (\delta(E) \cdot F +P)^{\perp},
\end{align}
where $P$ can be any element in the Peiffer subspace, $P \in \Pf_n$.\medskip

Furthermore, multiplication in $T_1(V)$ has norm bounded by $1$. Indeed, let $E \in T_1^{\gr{n}}(\V) \cong \Pf_n^\perp$ and $F \in T_1^{\gr{m}}(\V) \cong \Pf_m^\perp$, then we have
\begin{align}
    \|E * F\|^2_{T_1} &= \inf_{P \in \Pf_n}\|\delta(E) \cdot F + P\|^2_{\oT_1} \nonumber \\
    & \leq \inf_{t \in [0,1]} \| (1-t) \delta(E) \cdot F - t E \cdot \delta(F) \|^2_{\oT_1}\nonumber \\
    & \leq \inf_{t \in [0,1]} \| (1-t) \delta(E) \cdot F\|^2_{\oT_1} + \|t E \cdot \delta(F) \|^2_{\oT_1}\nonumber  \\
    & \leq \inf_{t \in [0,1]} \Big( (1-t)^2 + t^2\Big) 2 \|E\|_{\oT_1}^2 \cdot \|F\|_{\oT_1}^2 \nonumber \\
    & \leq \|E\|_{\oT_1}^2 \cdot \|F\|_{\oT_1}^2. \label{eq:levelwise_ba}
\end{align}
Furthermore, the actions of $\V^{\otimes n}$ on $T_1^{\gr{m}}$ are short linear maps by using~\Cref{eq:norm_properties}, for $a^{\gr{n}} \in \V^{\otimes n}$ and $E^{\gr{m}} \in T_1^{\gr{m}}$, we have
\begin{align} \label{eq:levelwise_short_action}
    \|a^{\gr{n}} \cdot E^{\gr{m}}\|_{T_1} = \|(a^{\gr{n}}\cdot E^{\gr{m}})^\perp\|_{\oT_1} \leq \|a^{\gr{n}} \cdot E^{\gr{m}}\|_{\oT_1} \leq \|a^{\gr{n}}\|_{T_0} \cdot \|E^{\gr{m}}\|_{\oT_1} = \|a^{\gr{n}}\|_{T_0} \cdot \|E^{\gr{m}}\|_{T_1}.
\end{align}
The right action can be bound in the same manner. \medskip

\subsubsection{Truncation}
For $m \geq 2$, we define the \emph{level $m$ truncation of $\bT(\V)$} by
\begin{align}\nonumber
    \bT^{\gr{\leq m}}(\V) = (\cmb: T_1^{\gr{\leq m}}(\V) \to T_0^{\gr{\leq m}}(V), \gtd, \ltd), \quad \text{where} \quad T_1^{\gr{\leq m}}(\V) \coloneqq \bigoplus_{n=2}^m T^{\gr{n}}(\V).
\end{align}
The truncation maps $\pi^{\gr{n}} = (\pi_0^{\gr{n}}, \pi_1^{\gr{n}})$ given by $\pi^{\gr{n}}_0 : T_0(\V) \to T_0^{\gr{\leq n}}(\V)$ and $\pi^{\gr{n}}_1 : T_1(\V) \to T_1^{\gr{\leq n}}(\V)$ form a morphism of crossed modules. 
Using the norms from the previous section, we can summarize the results as follows.

\begin{lemma}
    The level $n$ truncation $\bT^{\gr{\leq n}}(\V)$ is a crossed module of Banach algebras. 
\end{lemma}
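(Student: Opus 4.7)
The statement has essentially four content items: the spaces $T_0^{\gr{\leq n}}(\V)$ and $T_1^{\gr{\leq n}}(\V)$ are Banach algebras, the boundary $\delta$ is continuous, and the two actions $\gtd, \ltd$ are short. Since each truncated space is finite-dimensional, completeness is automatic, so the real content is choosing a norm and verifying submultiplicativity together with the short-action bounds. The plan is to promote the levelwise Hilbert norms from~\Cref{sssec:norms} to a global norm on each truncated space by taking the $\ell^1$ sum of the levelwise Hilbert norms, i.e.\ $\|E\| \coloneqq \sum_{m=2}^{n} \|E^{\gr{m}}\|$ for $E \in T_1^{\gr{\leq n}}(\V)$, and analogously on $T_0^{\gr{\leq n}}(\V)$. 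This is compatible with the weighted norms $p_\lambda$ from~\Cref{eq:plambda_norm} at $\lambda = 1$.

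First I would treat $T_0^{\gr{\leq n}}(\V)$, which is the standard truncated tensor algebra: the levelwise identity $\|a^{\gr{m}} \cdot b^{\gr{k}}\| = \|a^{\gr{m}}\| \cdot \|b^{\gr{k}}\|$ from~\Cref{eq:norm_properties} together with the fact that products of total degree exceeding $n$ are truncated to zero gives, after collecting levels and applying the Cauchy product inequality, submultiplicativity of the $\ell^1$ norm. Next I would do the same for $T_1^{\gr{\leq n}}(\V)$ using the product $E * F = (\delta(E) \cdot F)^{\perp}$: the levelwise bound $\|E^{\gr{m}} * F^{\gr{k}}\|_{T_1} \leq \|E^{\gr{m}}\|_{T_1} \cdot \|F^{\gr{k}}\|_{T_1}$ is exactly~\Cref{eq:levelwise_ba}, and summing over levels and using the usual Cauchy product argument yields $\|E * F\| \leq \|E\| \cdot \|F\|$.

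For the remaining two items I would use the pointwise estimates already in place. For $\delta$, the levelwise operator norm of $\delta_{\gr{n,k}}$ was computed to equal $\sqrt{2}$; this descends to the quotient $T_1^{\gr{n}}(\V) = \Pf_n^{\perp}$ by the orthogonal projection inequality, so $\delta$ restricted to $T_1^{\gr{\leq n}}(\V)$ satisfies $\|\delta(E)\| \leq \sqrt{2}\, \|E\|$, which gives continuity (boundedness, not necessarily shortness, is all the definition requires). For the actions, the levelwise estimate~\Cref{eq:levelwise_short_action} gives $\|a^{\gr{m}} \gtd E^{\gr{k}}\|_{T_1} \leq \|a^{\gr{m}}\|_{T_0} \cdot \|E^{\gr{k}}\|_{T_1}$, and an identical argument handles the right action $\ltd$; summing and using the Cauchy product inequality one more time gives $\|a \gtd E\| \leq \|a\| \cdot \|E\|$ and the symmetric bound for $\ltd$, so both actions are short.

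The only mild subtlety I anticipate is the passage from levelwise Hilbert submultiplicativity to global $\ell^1$ submultiplicativity, which is purely bookkeeping: $\|E*F\| = \sum_{\ell} \bigl\| \sum_{m+k=\ell} E^{\gr{m}} * F^{\gr{k}} \bigr\| \leq \sum_{\ell}\sum_{m+k=\ell}\|E^{\gr{m}}\|\cdot\|F^{\gr{k}}\| = \|E\|\cdot\|F\|$, with products of total degree exceeding $n$ simply dropped. No deep obstruction arises because all the hard work, namely the levelwise Hilbert inequality for the quotient by the Peiffer ideal, has already been done in~\Cref{sssec:norms}; this lemma is essentially a corollary.
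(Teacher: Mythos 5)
Your proposal is correct and amounts to the same argument the paper has in mind: the paper does not spell out a proof but simply asserts the lemma as a summary of the levelwise estimates established in~\Cref{sssec:norms} (namely~\Cref{eq:levelwise_ba}, \Cref{eq:levelwise_short_action}, and the computation $\|\delta_{\gr{n,k}}\|_{\op}=\sqrt{2}$), and your $\ell^1$-of-levelwise-Hilbert-norms choice together with the Cauchy-product bookkeeping is exactly what is needed to turn those levelwise estimates into the global Banach-algebra, continuity, and shortness statements.
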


We can also consider the truncation of Kapranov's crossed module of Lie algebras $\cmg^{\sab}$. Following Kapranov in~\cite[Section 2.2C]{kapranov_membranes_2015}, we define the \emph{$\fg_0$-lower central series of $\fg_1^{\sab}$} as
\begin{align}\nonumber
    \LCS_2(\fg_0, \fg_1^{\sab}) \coloneqq \fg_1^{\sab} \andd \LCS_r(\fg_0, \fg_1^{\sab}) \coloneqq [\fg_0, \LCS_{r-1}(\fg_0, \fg_1^{\sab})].
\end{align}
By definition, the elements of $\LCS_n(\fg_0, \fg_1^{\sab})$ are all the elements of at least level $n$.
Then, we define the $n$-truncated nilpotent Lie algebra $\fg_1^{\gr{\leq n}}$ by
\begin{align}\nonumber
    \fg_1^{\gr{\leq n}} \coloneqq \fg_1^{\sab}/\LCS_{n+1}(\fg_0, \fg_1^{\sab}).
\end{align}
We define the \emph{level $n$ truncation of $\cmg^{\sab}$} to be 
\begin{align} \label{eq:truncated_dcm}
    \cmg^{\gr{\leq n}} \coloneqq (\delta: \fg_1^{\gr{\leq n}} \to \fg_0^{\gr{\leq n}}, \gt),
\end{align}
where $\fg_0^{\gr{\leq n}}$ is the truncation of $\fg_0$ defined in~\Cref{eq:truncated_fg0}.
Then, we can embed this crossed module of Lie algebras into $\Lie(\bT^{\gr{\leq n}})$ by the composition
\begin{align}\nonumber
    \cmg^{\gr{\leq n}} \xhookrightarrow{\iota} \Lie(\bT(\V)) \xrightarrow{\Lie(\pi^{\gr{n}})} \Lie(\bT^{\gr{\leq n}}(\V)),
\end{align}
where $\iota$ is the inclusion from~\Cref{prop:inclusion_kapranov}, and $\pi^{\gr{n}}$ is the truncation morphism. We can now define a crossed module of group-like elements. Let $\hT_1^{\gr{\leq n}, 0}, \hT_1^{\gr{\leq n}, 1} \subset \hT_1^{\gr{\leq n}}$ denote the elements with constant term $0$ and $1$ respectively. We define the truncated exponential map $\exp_*: \hT_1^{\gr{\leq n},0} \to \hT_1^{\gr{\leq n}, 1}$ and truncated logarithm map $\log_*: \hT_1^{\gr{\leq n},1} \to \hT_1^{\gr{\leq n},0}$ by 
\begin{align}\nonumber
    \exp_*(E) \coloneqq \sum_{k=0}^{\lfloor n/2 \rfloor} \frac{E^{*k}}{k!} \andd \log_*(1 + F) \coloneqq \sum_{k=0}^{\lfloor n/2 \rfloor} (-1)^{k+1} \frac{F^{*k}}{k}
\end{align}
respectively. We take the sum up to $\lfloor n/2 \rfloor$ since $E$ and $F$ are of degree at least $2$. 
Note that both the exponential and the logarithm are continuous with respect to the norms on $\hT_1$.

\begin{lemma} \label{eq:cm_truncated_group_like}
    We define the \emph{level $n$ group-like elements} to be 
    \begin{align}\nonumber
        G_1^{\gr{\leq n}} \coloneqq \{ \exp_*(E) \in \hT_1^{\gr{\leq n}} \, : \, E \in \fg_1^{\gr{\leq n}} \} \andd G_0^{\gr{\leq n}} \coloneqq \{ \exp(x) \in T_0^{\gr{\leq n}} \, : \, x \in \fg_0^{\gr{\leq n}} \}.
    \end{align}
    The boundary map of $\bT^{\gr{\leq n}}$ and the action of the invertible elements from~\Cref{prop:cma_invertible} induce a group homomorphism $\delta: G_1^{\gr{\leq n}} \to G_0^{\gr{\leq n}}$ and a group action $\gt$ of $G_0^{\gr{\leq n}}$ on $G_1^{\gr{\leq n}}$ of the form
    \begin{align} \label{eq:grplike_cm_prop}
        \delta(\exp_*(E)) = \exp(\delta E) \andd \exp(x) \gt \exp_*(E) = \exp_*\left(\sum_{k=0}^n \frac{\ad_x^k(E)}{k!}\right)
    \end{align}
    for $x \in \fg_0^{\gr{\leq n}}$ and $E \in \fg_1^{\gr{\leq n}}$, where $\ad_x(E) = [x,E]$.
    Then
    \begin{align} \label{eq:truncated_gcm}
        \cmG^{\gr{\leq n}} = (\delta: G_1^{\gr{\leq n}} \to G_0^{\gr{\leq n}}, \gt)
    \end{align}
    is a crossed module of Lie groups.
\end{lemma}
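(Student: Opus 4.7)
The plan is to leverage the embedding $\iota = (\iota_0, \iota_1): \cmg^{\gr{\leq n}} \hookrightarrow \Lie(\bT^{\gr{\leq n}}(\V))$ obtained by composing \Cref{prop:inclusion_kapranov} with the truncation morphism $\pi^{\gr{n}}$, together with the fact that the truncated algebras $T_0^{\gr{\leq n}}$ and $\hT_1^{\gr{\leq n}}$ are nilpotent in positive degree. Once we identify $\fg_0^{\gr{\leq n}}$ and $\fg_1^{\gr{\leq n}}$ as Lie subalgebras of $\Lie(T_0^{\gr{\leq n}})$ and $\Lie(\hT_1^{\gr{\leq n}})$ respectively, the group structure, the boundary formula, and the action formula are all consequences of standard Baker--Campbell--Hausdorff and Hadamard-lemma identities in nilpotent associative algebras, while the crossed module axioms descend from \Cref{prop:cma_invertible} applied to $\bT^{\gr{\leq n}}(\V)$.

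First I would show that $G_0^{\gr{\leq n}}$ and $G_1^{\gr{\leq n}}$ are Lie groups. For $G_0^{\gr{\leq n}}$ this is classical: $\fg_0^{\gr{\leq n}}$ is a nilpotent Lie subalgebra of the nilpotent augmentation ideal $T_0^{\gr{\leq n},0}$, so $\exp$ and $\log$ are mutually inverse polynomial bijections and the BCH formula (finite by nilpotency) defines a Lie group product on $\exp(\fg_0^{\gr{\leq n}})$ that coincides with $\cdot$. For $G_1^{\gr{\leq n}}$ I would argue in parallel: the truncated product $*$ is nilpotent in positive degree (because $E * F$ has level $\geq k_E + k_F$ and levels above $n$ vanish), and via $\iota_1$ the Lie algebra $\fg_1^{\gr{\leq n}}$ sits as a Lie subalgebra of $\Lie(\hT_1^{\gr{\leq n}})$ with bracket $[E,F]_1 = E*F - F*E$. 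Hence BCH with respect to $*$ terminates and gives a group structure on $\exp_*(\fg_1^{\gr{\leq n}}) = G_1^{\gr{\leq n}}$, with inverse $\exp_*(E)^{-1} = \exp_*(-E)$.

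Second I would verify the two explicit formulas in \eqref{eq:grplike_cm_prop}. The first Peiffer identity of $\bT^{\gr{\leq n}}(\V)$ gives $\delta(E * F) = \delta(\delta(E) \gtd F) = \delta(E) \cdot \delta(F)$, so $\delta$ is an algebra morphism from $(\hT_1^{\gr{\leq n}}, *)$ to $T_0^{\gr{\leq n}}$, and therefore intertwines the (truncated) exponential series, yielding $\delta(\exp_*(E)) = \exp(\delta E)$. For the action, \Cref{prop:cma_invertible} tells us that $\exp(x) \gt \exp_*(E) = \exp(x) \gtd \exp_*(E) \ltd \exp(-x)$ in $\hT_1^{\gr{\leq n}, \units}$. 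The bimodule axioms, combined with $(a \gtd E) \ltd b = a \gtd (E \ltd b)$, let us commute $\gtd$ past $\ltd$ and expand conjugation via the Hadamard lemma: for $\ad_x(E) := x \gtd E - E \ltd x$ one obtains $\exp(x) \gtd E \ltd \exp(-x) = \sum_{k \geq 0} \ad_x^k(E)/k!$, where the series terminates once $k > n-2$ by degree. Combined with the fact that $\gt$ acts by algebra automorphisms (again \Cref{prop:cma_invertible}), this gives the stated formula.

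Finally, the two crossed module axioms (CM1) and (CM2) are inherited. By \Cref{prop:cma_invertible}, $(\widehat{\bT^{\gr{\leq n}}(\V)})^\units$ is a group crossed module, so the two Peiffer relations hold for arbitrary invertible elements; since $G_0^{\gr{\leq n}} \subset T_0^{\gr{\leq n}, \units}$ and $G_1^{\gr{\leq n}} \subset \hT_1^{\gr{\leq n}, \units}$ are subgroups closed under $\delta$ and $\gt$ by the two formulas just verified, the identities restrict. The main obstacle I anticipate is the BCH step for $G_1^{\gr{\leq n}}$: one must be careful that the image of the truncated section $\fg_1^{\gr{\leq n}} \hookrightarrow \Lie(\hT_1^{\gr{\leq n}})$ really is closed under the commutator induced by $*$, which is precisely what $\iota$ being a morphism of crossed modules of Lie algebras (\Cref{prop:inclusion_kapranov}) provides once one quotients by $\LCS_{n+1}(\fg_0, \fg_1^{\sab})$ on the source and truncates $T_1$ at level $n$ on the target; checking that these two truncations are compatible is the only nontrivial bookkeeping.
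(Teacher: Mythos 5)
Your proposal is correct and follows essentially the same route as the paper: you derive the boundary formula from $\delta$ being an algebra morphism, the action formula by expanding the conjugation $\exp(x)\gtd(\cdot)\ltd\exp(-x)$ (the Hadamard lemma in your framing, a direct binomial expansion in the paper's), and deduce the crossed-module axioms by restriction from \Cref{prop:cma_invertible}. The only thing you add beyond the paper's terse argument is the explicit Baker--Campbell--Hausdorff step verifying that $G_1^{\gr{\leq n}}=\exp_*(\fg_1^{\gr{\leq n}})$ is actually closed under $*$, which the paper leaves implicit; this is a reasonable detail to make visible, and your justification via \Cref{prop:inclusion_kapranov} and compatibility of the two truncations is the right one.
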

\begin{proof}
    The identity $\delta(\exp_*(E)) = \exp(\delta E)$ holds since $\delta$ is an algebra morphism. For the action, we have
    \begin{align}\nonumber
        \exp(x) \gt \exp_*(E) = \exp_* \left( \exp(x) \gt E\right)
    \end{align}
    For the action, we note that
    \begin{align}\nonumber
        \ad^k_x(F) = \sum_{i=0}^k (-1)^i \binom{k}{i} x^{\cdot(k-i)} \cdot E \cdot x^{\cdot i}.
    \end{align}
    Then, by direct computation, we have
    \begin{align}\nonumber
        \exp(x) \gt E &= \left(\sum_{k=0}^n \frac{x^{\cdot k}}{k!}\right) \cdot E \cdot \left(\sum_{k=0}^n \frac{x^{\cdot k}}{k!} \right) = \sum_{k=0}^n \sum_{i=0}^k (-1)^i \frac{x^{\cdot(k-i)} \cdot E \cdot x^{\cdot i}}{(k-i)! \, k!} = \sum_{k=0}^n \frac{\ad_x^k(E)}{k!}.
    \end{align}
    Thus, both the boundary map and action are well-defined on group-like elements. Then, $\cmG^{\gr{\leq n}}$ is a crossed module by the same argument as~\Cref{prop:cma_invertible}. 
\end{proof}

\begin{remark} \label{rem:universal_enveloping_group_like}
    A natural question at this point is whether the terminology \emph{group-like elements} for $G_1$ is appropriate. One can see that $T_1(\V)$ is \emph{not} the universal enveloping algebra of $\fg_1^{\sab}$. In particular, using Kapranov's decomposition~\Cref{eq:kapranov_decomposition}, the universal enveloping algebra of $\fg_1^{\sab}$ is
    \begin{align}\nonumber
        U(\fg_1^{\sab}) \cong U([\fg_0, \fg_0]) \oplus S(\fg_1^{\sab,\ker}),
    \end{align}
    where $S$ denotes the symmetric algebra here, since $\fg_1^{\sab,\ker}$ is an abelian Lie algebra. Then, given an element $\ell \in \fg_1^{\sab,\ker}$, one can see that
    \begin{align}\nonumber
        \ell * \ell = 0 \text{ (in $T_1(\V)$) } \andd \ell \bullet \ell \neq 0 \text{ (in $U(\fg_1^{\sab})$)},
    \end{align}
    where $\bullet$ denotes the product in $U(\fg_1^{\sab})$. When we consider the exponential of elements $\ell, \ell'\in \fg_1^{\sab,\ker}$, we note that $\exp_\bullet(\ell)$ is simply the usual commutative exponential in the symmetric algebra $S(\fg_1^{\sab,\ker})$; furthermore, we have
    \begin{align}\nonumber
        \exp_*(\ell) * \exp_*(\ell') &= (1 + \ell) * (1 + \ell') = (1 + (\ell + \ell')) = \exp_*(\ell + \ell')\\
        \exp_\bullet(\ell) \bullet \exp_\bullet(\ell') &= \exp_\bullet(\ell + \ell').
    \end{align}
    Thus, the group-like elements as defined in~\Cref{eq:cm_truncated_group_like} contain the same information as the group-like elements of $U(\fg_1^{\sab})$, and indeed have the same algebraic structure.
\end{remark}

We consider two lemmas that relate the group-like elements with the decomposition of $T_1$.

\begin{lemma}
    Let $s: L \to \fg_1^{\sab}$ be a linear section. Then, $G_1^{\gr{\leq n}}$ is closed and included in the subspace
    \begin{align}\nonumber
        G_1^{\gr{\leq n}} \subset T_1^{s, \gr{\leq n}} \oplus T_1^{\ker, \gr{\leq n}}.
    \end{align}
\end{lemma}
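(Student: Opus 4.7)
The plan is to prove the two assertions---closedness and the containment in $T_1^{s, \gr{\leq n}} \oplus T_1^{\ker, \gr{\leq n}}$---separately, leveraging the truncated nilpotent setting and the multiplicative vanishing from \Cref{prop:T1_decomposition}.

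For closedness, the key observation is that on $\hT_1^{\gr{\leq n}}$ the series defining $\exp_*$ and $\log_*$ terminate after finitely many terms and are therefore continuous (indeed polynomial) maps. The formal power series identity $\log \circ \exp = \id$ then forces $\log_*(\exp_*(E)) = E$ for every $E \in \fg_1^{\gr{\leq n}}$. I would then write
\begin{align*}
    G_1^{\gr{\leq n}} = \hT_1^{\gr{\leq n}, 1} \cap \log_*^{-1}\!\left(\fg_1^{\gr{\leq n}}\right),
\end{align*}
and note that $\hT_1^{\gr{\leq n}, 1}$ is closed as the preimage of $\{1\}$ under the continuous augmentation map, while $\fg_1^{\gr{\leq n}}$ is a linear subspace of a finite-dimensional space and hence closed. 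Its continuous preimage under $\log_*$ is therefore closed, establishing the first claim.

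For the containment, I would decompose any $E \in \fg_1^{\gr{\leq n}}$ via Kapranov's decomposition $\fg_1^{\sab} = \hs([\fg_0, \fg_0]) \oplus \fg_1^{\sab, \ker}$ recalled in \Cref{ssec:relationship_kapranov}. Under the Lie algebra inclusion $\iota_1 : \fg_1^{\sab} \hookrightarrow \Lie(T_1(\V))$, the first summand lands in $T_1^s$: by construction $\iota_1 \circ s = \ts|_L$ on the generators $L$, and since $\iota_1$ sends Lie brackets to $*$-commutators while $T_1^s$ is closed under $*$ (hence under commutators) by \Cref{prop:T1_decomposition}, an induction on bracket length yields $\iota_1(\hs([\fg_0, \fg_0])) \subset T_1^s$. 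The second summand lands in $T_1^{\ker}$ since $\iota = (\iota_0, \iota_1)$ is a morphism of crossed modules intertwining the boundary maps. Writing $\iota_1(E) = E_s + E_{\ker}$ with $E_s \in T_1^{s, \gr{\leq n}}$ and $E_{\ker} \in T_1^{\ker, \gr{\leq n}}$, the vanishings $T_1^s * T_1^{\ker} = T_1^{\ker} * T_1^s = T_1^{\ker} * T_1^{\ker} = 0$ from \Cref{prop:T1_decomposition} give $\iota_1(E)^{*k} = E_s^{*k}$ for every $k \geq 2$, so
\begin{align*}
    \exp_*(E) = 1 + E_{\ker} + \sum_{k=1}^{\lfloor n/2 \rfloor} \frac{E_s^{*k}}{k!},
\end{align*}
where the sum lies in $T_1^{s, \gr{\leq n}}$ and $E_{\ker}$ lies in $T_1^{\ker, \gr{\leq n}}$, as required (the stated inclusion is to be read inside $\hT_1^{\gr{\leq n}}$, identifying $T_1^{s, \gr{\leq n}} \oplus T_1^{\ker, \gr{\leq n}}$ with its image under the natural augmentation).

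I do not anticipate significant obstacles. The only point requiring a touch of care is the verification that $\iota_1 \circ \hs$ actually lands in $T_1^s$ rather than merely in $T_1^s \oplus T_1^{\ker}$; this is precisely where the closure of $T_1^s$ under $*$ (which stems from the construction of $\ts$ via the universal property of $U([\fg_0, \fg_0]) \cong T_0(L)$) enters the argument.
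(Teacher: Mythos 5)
Your proof is correct, and the closedness argument is the same as the paper's, just spelled out in a little more detail. The containment argument, however, takes a genuinely different route. The paper works directly at the level of $G_1^{\gr{\leq n}}$ and $G_0^{\gr{\leq n}}$: for $E \in G_1^{\gr{\leq n}}$ it observes that $\delta(E) = \exp(\delta\log_*(E)) \in U^{\gr{\leq n}}([\fg_0, \fg_0])$ (since $\delta(\fg_1^{\gr{\leq n}}) \subset [\fg_0, \fg_0]$), applies the algebra section to get $\ts \circ \delta(E) \in T_1^{s, \gr{\leq n}}$, and then notes $E - \ts\circ\delta(E) \in \ker\delta = T_1^{\ker, \gr{\leq n}}$ because $\delta \circ \ts = \id$. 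That is a three-line splitting that never opens up the exponential. You instead descend to the Lie algebra level, invoke Kapranov's decomposition $\fg_1^{\sab} = \hs([\fg_0, \fg_0]) \oplus \fg_1^{\sab,\ker}$, prove by induction on bracket length that $\iota_1 \circ \hs$ lands in $T_1^s$, and then recompute $\exp_*$ term by term using the multiplicative vanishings from \Cref{prop:T1_decomposition}. This buys you an explicit formula $\exp_*(E) = 1 + E_{\ker} + \sum_k E_s^{*k}/k!$, which is more transparent and self-contained, at the cost of some length; note also that your bracket-length induction is really re-deriving the identity $\iota_1 \circ \hs = \ts|_{[\fg_0, \fg_0]}$ (two Lie morphisms from a free Lie algebra agreeing on generators), which the paper silently encodes in the single relation $\delta \circ \ts = \id$. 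Both proofs are valid; the paper's is shorter because it splits $E$ by applying $\delta$ first rather than decomposing $\log_*(E)$ first.
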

\begin{proof}
    We have $\fg_1^{\gr{\leq n}} \subset T_1^{\gr{\leq n}}$ because it is a finite dimensional linear subspace of $T_1$. Then, because the logarithm $\log_*$ is continuous, $G_1^{\gr{\leq n}}$ is closed. 
    Because $\delta(E) \in G_0^{\gr{\leq n}} \subset U^{\gr{\leq n}}([\fg_0, \fg_0])$. Then, since $E - \ts \circ \delta(E) \in T_1^{\ker, \gr{\leq n}}$, we have our desired result.
\end{proof}

\begin{lemma} \label{lem:section_preserves_grplike}
    Let $s: L \to \fg_1^{\sab}$ be a linear section and $\ts: U([\fg_0, \fg_0]) \to T_1$ be the associated algebra section. Then, we have
    \begin{align}\nonumber
        \ts(G_0^{\gr{\leq n}} \cap U^{\gr{\leq n}}([\fg_0, \fg_0])) \subset G_1^{\gr{\leq n}}.
    \end{align}
\end{lemma}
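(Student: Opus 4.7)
The plan is to unpack the definition of $G_0^{\gr{\leq n}}$, pull an element back through the logarithm, and then push the resulting Lie algebra element forward using the fact that the algebra section $\ts$ restricts to (essentially) the Lie algebra section $\hs$ on $[\fg_0, \fg_0]$. Concretely, fix $g \in G_0^{\gr{\leq n}} \cap U^{\gr{\leq n}}([\fg_0, \fg_0])$ and write $g = \exp(x)$ with $x \in \fg_0^{\gr{\leq n}}$. Applying $\log$ (which preserves the subalgebra $U^{\gr{\leq n}}([\fg_0, \fg_0])$) we get $x \in \fg_0^{\gr{\leq n}} \cap U^{\gr{\leq n}}([\fg_0,\fg_0])$. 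Since the primitive elements of $U([\fg_0, \fg_0])$ (with respect to the standard coproduct on $T_0(\V)$) are exactly $[\fg_0, \fg_0]$, this intersection equals $[\fg_0, \fg_0]^{\gr{\leq n}}$.

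Next, the plan is to use the algebra morphism property of $\ts: T_0(L) \to T_1(\V)$ to move the exponential inside. Because elements of $[\fg_0, \fg_0]$ have degree at least $2$, $x^{\cdot k}$ has degree at least $2k$, so at the truncated level the series $\exp(x) = \sum_{k=0}^{\lfloor n/2\rfloor} x^{\cdot k}/k!$ is finite and matches the definition of $\exp_*$. Since $\ts$ is an algebra morphism and is graded (so it descends to truncations), we get
\begin{align}\nonumber
\ts(g) \;=\; \ts(\exp(x)) \;=\; \sum_{k=0}^{\lfloor n/2\rfloor} \frac{\ts(x)^{*k}}{k!} \;=\; \exp_*(\ts(x)).
\end{align}

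The remaining — and technically central — step is to verify that $\ts(x) \in \fg_1^{\gr{\leq n}}$ so that $\exp_*(\ts(x)) \in G_1^{\gr{\leq n}}$ by definition. The key observation is that on generators $\ell \in L$, the map $\ts$ agrees with $\iota_1 \circ \hs$ by construction ($\ts_0 = \iota_1 \circ s$ and $\hs|_L = s$). Since $x$ is a nested commutator in the generators $L \subset [\fg_0, \fg_0]$, and since $\ts$ converts associative commutators in $T_0(L)$ into commutators in $T_1(\V)$ (which coincide with $\iota_1$-images of Lie brackets in $\fg_1^{\sab}$, because $\iota_1$ was shown to be a Lie algebra embedding in \Cref{prop:inclusion_kapranov}), an induction on bracket depth yields the identity $\ts|_{[\fg_0,\fg_0]} = \iota_1 \circ \hs$. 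As $\hs$ is graded and $x$ has degree $\leq n$, this puts $\ts(x)$ in $\iota_1(\fg_1^{\gr{\leq n}}) \subset T_1^{\gr{\leq n}}(\V)$.

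The main obstacle is exactly this reconciliation of the two sections: $\ts$ was built using the universal property of the free \emph{associative} algebra $T_0(L)$, while $\hs$ was built from the universal property of the free \emph{Lie} algebra $[\fg_0, \fg_0]$. Showing they agree on $[\fg_0, \fg_0]$ requires carefully distinguishing the commutator bracket on $\Lie(T_1(\V))$ from the bracket $[\cdot,\cdot]_{\sab}$ on $\fg_1^{\sab}$ and invoking \Cref{prop:inclusion_kapranov} to identify them through $\iota_1$. Once this is in place, the conclusion $\ts(g) = \exp_*(\iota_1 \hs(x)) \in G_1^{\gr{\leq n}}$ is immediate.
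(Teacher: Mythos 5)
Your argument is correct and follows essentially the same route as the paper's: write $g = \exp(x)$ with $x \in [\fg_0,\fg_0]$, use that $\ts$ is an algebra morphism to move the exponential through, and identify $\ts(x)$ with $\hs(x)$ (via $\iota_1$). The paper treats this last identification and the structure of the finite exponential as immediate, whereas you spell them out — the primitive-elements argument to pin down $x \in [\fg_0,\fg_0]$, the grading argument that $\exp$ truncates to $\exp_*$, and the induction on bracket depth showing $\ts|_{[\fg_0,\fg_0]} = \iota_1 \circ \hs$; these are exactly the right justifications, so the extra detail is a clarification rather than a deviation.
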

\begin{proof}
    Let $x \in [\fg_0, \fg_0]$ such that $\exp(x) \in G_0^{\gr{\leq n}} \cap U([\fg_0, \fg_0])$. Then, since $\ts$ is a morphism of algebras, we have $\ts(\exp(x)) = \exp_*(\ts(x)) = \exp_*(\hs(x))$, where $\hs: [\fg_0, \fg_0] \to \fg_1^{\sab}$ is the Lie algebra section induced by $s$.
\end{proof}

By definition, $\cmg^{\gr{\leq n}}$ is the crossed module of Lie algebras associated to $\cmG^{\gr{\leq n}}$. Both of these crossed modules can be embedded into the crossed module of Banach algebras $\bT^{\gr{\leq n}}$; in particular within the subspaces
\begin{align}\nonumber
    \fg_0^{\gr{\leq n}}, G_0^{\gr{\leq n}} \subset U^{\gr{\leq n}}([\fg_0, \fg_0]) \andd \fg_1^{\gr{\leq n}}, G_1^{\gr{\leq n}} \subset T_1^{s, \gr{\leq n}} \oplus T_1^{\ker, \gr{\leq n}}.
\end{align}

\subsubsection{Completion}

In this section, we will consider the completion of the free crossed module $\bT(\V)$. In particular, we define the algebras of formal power series
\begin{align}\nonumber
    T_0\ps{\V} \coloneqq \prod_{n=0}^\infty \V^{\otimes n} \andd T_1\ps{\V} \coloneqq \prod_{n=2}^\infty T_1^{\gr{n}}(\V).
\end{align}
Then, we define the corresponding crossed module of algebras
\begin{align}\nonumber
    \bT\ps{\V} \coloneqq (\cmb: T_1\ps{\V} \to T_0\ps{\V}, \gtd, \ltd),
\end{align}
where the boundary map $\cmb$ and actions $\gtd$ and $\ltd$ are induced from $\bT(\V)$. \medskip

Recall from~\Cref{eq:E0_characterization} and~\Cref{cor:E0_universal} that $E_0(\V)$ is the completion of $T_0(\V)$ equipped with the topology where all extensions $\tcona: T_0(\V) \to A_0$ of continuous linear maps $\cona \in L(\V, A_0)$ into Banach spaces are continuous. We can obtain an analogous locally $m$-convex topology on $\hT_1(\V)$ generated by the norms $\{P_\lambda\}_{\lambda >0}$ defined by
\begin{align} \label{eq:Plambda_norm}
    P_\lambda(E) \coloneqq \|E^{\gr{0}}\| + \sum_{k=2}^\infty \lambda^k \|E^{\gr{k}}\|.
\end{align}
We define the completion of $\hT_1(\V)$ with respect to this topology by
\begin{align}\nonumber
    E_1(\V) = \left\{ E \in \hT_1\ps{\V} \, : \, \sum_{k=2}^\infty \lambda^k \|E^{\gr{k}}\| < \infty \, \text{ for all } \, \lambda > 0 \right\}.
    \end{align}
In particular, we can define a crossed module of locally $m$-convex algebras.

\begin{lemma}
    The structure $\bE(\V) \coloneqq (\delta: E_1(\V) \to E_0(\V), \gtd, \ltd)$ is a crossed module of locally $m$-convex algebras.
\end{lemma}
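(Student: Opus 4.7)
The plan is to verify each piece of the crossed-module-of-locally-$m$-convex-algebras structure on $\bE(\V)$ by extending the corresponding structure from $\bT(\V)$ via the norms $\{p_\lambda\}_{\lambda>0}$ and $\{P_\lambda\}_{\lambda>0}$ given in~\Cref{eq:plambda_norm} and~\Cref{eq:Plambda_norm}. The heart of the argument is that all of the key operations ($*$, $\delta$, $\gtd$, $\ltd$) satisfy \emph{levelwise} submultiplicative bounds (\Cref{eq:levelwise_ba} and~\Cref{eq:levelwise_short_action}, together with $\|\delta_{\gr{n}}\|_{\op}=\sqrt{2}$), and these combine via a Cauchy product estimate to give submultiplicativity with respect to the $P_\lambda$ seminorms. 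The corresponding fact for $E_0(\V)$ has already been established in~\cite{chevyrev_characteristic_2016} and is recalled in~\Cref{cor:E0_universal}, so the task is to carry out the analogous checks on $E_1(\V)$ and across $\delta$.

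First, I would show that $(E_1(\V), *)$ is a locally $m$-convex algebra: for $E,F \in \hT_1(\V)$ with $E*F = \sum_n \sum_{k} E^{\gr{k}} * F^{\gr{n-k}}$, the bound~\Cref{eq:levelwise_ba} gives
\begin{align}\nonumber
    P_\lambda(E*F) \;\leq\; \sum_{n=0}^\infty \lambda^n \sum_{k} \|E^{\gr{k}}\|\cdot\|F^{\gr{n-k}}\| \;=\; P_\lambda(E)\cdot P_\lambda(F),
\end{align}
so each $P_\lambda$ is submultiplicative. This shows the product extends continuously from $\hT_1(\V)$ to $E_1(\V)$, and the characterisation of $E_1(\V)$ as the subspace of $\hT_1\ps{\V}$ on which every $P_\lambda$ is finite mirrors the argument of~\cite[Corollary 2.5]{chevyrev_characteristic_2016}. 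In particular $E_1(\V)$ is complete for the topology generated by $\{P_\lambda\}_{\lambda>0}$, and the product $*$ is jointly continuous.

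Next I would check that $\delta$ and the actions are continuous. Using $\|\delta_{\gr{n}}\|_{\op}=\sqrt{2}$ from~\Cref{sssec:norms}, we obtain $p_\lambda(\delta E) \leq \sqrt{2}\,P_\lambda(E)$, so $\delta: E_1(\V) \to E_0(\V)$ is continuous; multiplicativity of $\delta$ (with the unit convention on $\hA_1$) extends from $\bT(\V)$ by continuity. For the actions, the levelwise bound~\Cref{eq:levelwise_short_action} together with a Cauchy product gives
\begin{align}\nonumber
    P_\lambda(a \gtd E) \;\leq\; p_\lambda(a)\cdot P_\lambda(E), \qquad P_\lambda(E \ltd a) \;\leq\; P_\lambda(E)\cdot p_\lambda(a),
\end{align}
for $a \in T_0(\V)$ and $E \in \hT_1(\V)$, so $\gtd$ and $\ltd$ extend uniquely to jointly continuous bilinear actions of $E_0(\V)$ on $E_1(\V)$.

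Finally, the bimodule axioms and the two Peiffer identities for $\bE(\V)$ all reduce to continuous identities of polynomial type in $\delta$, $*$, $\cdot$, $\gtd$, $\ltd$, which hold on the dense subalgebra $\bT(\V)$ by~\Cref{prop:fxa_construction}. Since both sides of each identity are continuous in every argument with respect to the $p_\lambda$ and $P_\lambda$ topologies (by the estimates above), they extend to $\bE(\V)$. The only mild subtlety will be bookkeeping with the adjoined unit of $\hT_1(\V)$ in the definition of $P_\lambda$; this is the most tedious part but not a genuine obstacle, since the unit merely contributes the constant term and interacts trivially with $\delta$ and the actions by~\Cref{eq:unit_action}. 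Thus $\bE(\V)$ is a crossed module of locally $m$-convex algebras.
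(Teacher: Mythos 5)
Your proof is correct and establishes the same facts the paper needs, but it reaches them by a genuinely different route. The paper's own proof is terse: it cites~\Cref{lem:projective_bimodule} (continuity of the actions on the projective bimodule $\oE_1(\V) = E_0(\V) \hotimes \Lambda^2\V \hotimes E_0(\V)$, which holds essentially by definition of the projective tensor product) together with~\Cref{lem:Plambda_topology} (that the projective tensor topology agrees with the $P_\lambda$ topology), then notes $\delta$ is levelwise bounded. Your approach replaces the tensor-product machinery with direct Cauchy-product estimates against the $P_\lambda$ seminorms, using the levelwise bounds~\Cref{eq:levelwise_ba} and~\Cref{eq:levelwise_short_action}. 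Both are valid. The projective-tensor route is the one the paper needs anyway for the universal property (\Cref{prop:E0_con_universal}), which is proved by exhibiting $\oE_1(\V)$ as a completed projective tensor product, so the paper's proof of this lemma comes essentially for free from that setup. Your route is more self-contained and elementary, and has the side benefit of \emph{directly} verifying the $m$-convexity of $E_1(\V)$ (submultiplicativity of each $P_\lambda$, i.e. $P_\lambda(E*F) \leq P_\lambda(E)\,P_\lambda(F)$), which the paper leaves somewhat implicit in its appeal to the quotient construction.

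One small inaccuracy worth flagging: your closing remark that ``the unit ... interacts trivially with $\delta$ and the actions by~\Cref{eq:unit_action}'' conflates two things. The bimodule actions $\gtd$, $\ltd$ of the crossed module are defined on the non-unital $A_1 = T_1(\V)$ (and its completion), not on $\hA_1$; and~\Cref{eq:unit_action} is the group action $\gt$ of $A_0^{\units}$ on $\hA_1$ by conjugation, which is a different object. In the appendix's version of the construction $E_1(\V)$ is the completion of $T_1(\V)$ as a subset of $T_1\ps{\V}$, so the adjoined unit plays no role at all in the verification; the bookkeeping you anticipate with $\hT_1$ does not actually arise. This does not affect the validity of any of your estimates, since you only ever apply the levelwise bounds to the non-constant components, but the citation is misplaced.
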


Furthermore, the universal 2-connection $(\zeta, Z)$ from~\Cref{prop:univ_2con_alg} can be embedded into a \emph{continuous} universal $2$-connection $\zeta \in L(\V, E_0(\V))$ and $Z \in L(\Lambda^2 \V, E_1(\V))$ valued in $\bE(\V)$. 
It satisfies a similar universal property such that the unique morphism (from~\Cref{prop:univ_2con_alg}) induced by a continuous $2$-connection $(\cona, \conc)$ into a crossed module of Banach algebras is continuous. (Compare this with~\Cref{cor:E0_universal}.) Further details on the construction of $E_1(\V)$, and the proof of the previous and following result are in~\Cref{apxsec:universal_locally_convex}.

\begin{proposition} \label{prop:E0_con_universal}
    Let $\cmA = (\delta^A: A_1 \to A_0, \gtd, \ltd)$ be a crossed module of Banach algebras. Let $(\cona, \conc)$ be a continuous connection valued in $\cmA$. Then, there exist unique continuous morphisms of algebras $\tcona: E_0(\V) \to A_0$ and $\tconc: E_1(\V) \to A_1$ such that
    \begin{align}\nonumber
        \cona = \tcona \circ \zeta \andd \conc = \tconc \circ Z.
    \end{align}
\end{proposition}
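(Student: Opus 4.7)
The plan is to first invoke the algebraic universal property (Proposition~\ref{prop:univ_2con_alg}) to obtain the unique morphism of crossed modules of algebras $(\tcona,\tconc)\colon \bT(\V) \to \cmA$ satisfying $\cona = \tcona\circ \zeta$ and $\conc = \tconc\circ Z$, and then extend both components continuously to the completions. The extension of $\tcona$ to a continuous algebra morphism $E_0(\V)\to A_0$ is immediate from Corollary~\ref{cor:E0_universal}, so the substantive step is to show that $\tconc$ is continuous in the locally $m$-convex topology generated by the seminorms $P_\lambda$ from~\eqref{eq:Plambda_norm}, after which density of $T_1(\V)$ in $E_1(\V)$ produces the desired extension.

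To establish continuity of $\tconc$ I will prove a level-wise estimate $\|\tconc(E^{\gr{n}})\|_{A_1} \leq C^n\,\|E^{\gr{n}}\|$ for a constant $C$ depending only on $\|\cona\|_{\op}$, $\|\conc\|_{\op}$ and $\dim \V$. The key observations are: (i) since $T_1^{\gr{n}}(\V)\cong \Pf_n^{\perp}$ is defined by orthogonal projection in the Hilbert-space structure of Section~\ref{sssec:norms}, every $E^{\gr{n}}$ admits a norm-preserving lift $\tilde E^{\gr{n}}\in \oT_1^{\gr{n}}(\V)$ on which the free-bimodule extension of $\tconc$ agrees with $\tconc(E^{\gr{n}})$; (ii) on any basis vector $e_{q_1}\cdots e_{q_k}\cdot e_{i,j}\cdot e_{q_{k+1}}\cdots e_{q_{n-2}}$ the image has the form $\tcona(a)\gtd \conc(v\wedge w)\ltd \tcona(b)$, whose $A_1$-norm is bounded by $\|\cona\|_{\op}^{n-2}\,\|\conc\|_{\op}$ since the bimodule actions of $\cmA$ are short and $\tcona$ is an algebra morphism; (iii) summing over the $d^{n-2}\binom{d}{2}$ basis vectors and the $n-1$ values of $k$ via Cauchy--Schwarz yields $K_n \leq C^n$ for a suitable $C$.

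Given this bound, for any $\lambda > C$ one obtains
\[
\|\tconc(E)\|_{A_1} \;\leq\; \sum_{n\geq 2} C^n \|E^{\gr{n}}\| \;=\; \sum_{n\geq 2}(C/\lambda)^n\, \lambda^n \|E^{\gr{n}}\| \;\leq\; \kappa\, P_\lambda(E),
\]
where $\kappa = \sum_{n\geq 2}(C/\lambda)^n < \infty$, so $\tconc$ extends uniquely to a continuous linear map $\tconc\colon E_1(\V)\to A_1$. The crossed-module relations (compatibility with $\delta^A$, with the actions $\gtd,\ltd$, and with the product $*$) hold on the dense subspace $T_1(\V)$ and pass to the completion by continuity of every operation on both $\bE(\V)$ and the Banach crossed module $\cmA$; uniqueness of the continuous extensions of $\tcona$ and $\tconc$ follows from density of $T_0(\V),T_1(\V)$ in their completions together with continuity. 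The main technical obstacle is establishing the level-wise bound cleanly in the presence of the Peiffer quotient, for which the orthogonal-complement definition of $T_1^{\gr{n}}$ in Section~\ref{sssec:norms} is crucial: it supplies the norm-minimising lift on which the free-bimodule extension of $\tconc$ can be evaluated without loss.
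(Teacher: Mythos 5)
Your proof is correct, but it takes a genuinely different route from the paper's argument in Appendix~\ref{apxsec:universal_locally_convex}. The paper establishes continuity of $\tconc$ structurally: it recalls from Proposition~\ref{prop:univ_2con_alg} that $\tconc$ factors at the bimodule level as $T_0(\V)\otimes\Lambda^2\V\otimes T_0(\V)\xrightarrow{\tcona\otimes\id\otimes\tcona}A_0\otimes\Lambda^2\V\otimes A_0\xrightarrow{\gtd\,\conc\,\ltd}A_1$, shows via Lemma~\ref{lem:Plambda_topology} that the $P_\lambda$ topology agrees with the projective tensor product topology on $\oT_1(\V)$, and then invokes continuity of quotients of locally convex spaces. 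You instead prove a concrete level-wise estimate $\|\tconc(E^{\gr{n}})\|\leq C^n\|E^{\gr{n}}\|$ by evaluating the free-bimodule lift $\overline{\tconc}$ on the orthonormal basis of $\oT_1^{\gr{n}}(\V)$ (where each basis vector is mapped to something of norm at most $\|\cona\|_\op^{n-2}\|\conc\|_\op$ because $\tcona$ is multiplicative and the $\cmA$-actions are short) and controlling the $\ell^1$-norm of the coefficients via Cauchy--Schwarz against the $\ell^2$-norm and the dimension count $(n-1)d^{n-2}\binom{d}{2}$. The observation that the orthogonal representative in $\Pf_n^\perp$ is a norm-achieving lift for the quotient norm, on which $\overline{\tconc}$ and $\tconc$ agree, is exactly what makes this work. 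Your route is more elementary and gives an explicit constant $C$ in terms of $\|\cona\|_\op$, $\|\conc\|_\op$, and $d$; the paper's route, by working with projective tensor products, avoids tracking dimension-dependent combinatorial factors and more cleanly identifies $E_1(\V)$ as a completed projective bimodule, but requires the intermediate Lemma~\ref{lem:Plambda_topology}. (Two minor cosmetic remarks: the bound $\sum_n a_nb_n\leq(\sum a_n)(\sum b_n)$ you use in the final displayed estimate is valid for nonnegative terms but wasteful --- $\sup_n(C/\lambda)^n=(C/\lambda)^2$ suffices, or simply take $\lambda\geq C$ and $\kappa=1$; and as in the paper, the uniqueness claim for $\tconc$ implicitly requires that $(\tcona,\tconc)$ be a morphism of crossed modules, since $T_1(\V)$ is not generated as an algebra by $\Lambda^2\V$ alone but only under the $T_0(\V)$-bimodule actions.)
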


Similar to the case of paths in~\Cref{eq:G0_def}, we will define the \emph{group-like elements} to be 
\begin{align}\nonumber
    G_1 \coloneqq \left\{ E \in E_1(\V) \, : \, \pi_1^{\gr{\leq n}}(E) \in G_1^{\gr{\leq n}} \right\}.
\end{align}
In particular, we obtain a crossed module of groups,
\begin{align}\nonumber
    \cmG = (\cmb: G_1 \to G_0, \gt),
\end{align}
since all properties hold level-wise by definition.
\medskip

\subsection{The Surface Signature}

In order to simplify notation, we omit $\V$ from the notation for all variants of the crossed modules $\bT$ and $\bE$.
Let $\ucona \in L(\V, E_0)$ and $\uconc \in L(\Lambda^2 \V, E_1)$ be the universal 2-connection from~\Cref{prop:E0_con_universal}. Recall that the path holonomy with respect to $\ucona$ is the path signature
\begin{align}\nonumber
    \sig : C^\infty([0,1], \V) \to G_0.
\end{align}

\subsubsection{Definition of the Surface Signature}
In this section, we consider surface holonomy (\Cref{def:sh}) of a smooth surface $X \in \surfaces^\infty(\V)$ with respect to $Z$. Recall that $Z : \Lambda^2 \V \xhookrightarrow{\cong} T_1^{\gr{2}} \subset E_1$ is the natural inclusion map. Then, evaluating the universal 2-connection $Z \in L(\Lambda^2 \V, e_1)$ on the partial derivatives of $X = (X^1, \ldots, X^d)$ embeds the Jacobian into $E_1$,
\begin{align}\nonumber
    Z\left(\frac{\partial X_{s,t}}{\partial s}, \frac{\partial X_{s,t}}{\partial t}\right) = J_{s,t}(X) \in \Lambda^2\V \subset E_1,
\end{align}
where $J(X) \in \surfaces^\infty(\Lambda^2 \V)$ consists of the Jacobian minors of $X$, defined by
\begin{align}\nonumber
    J_{s,t}(X) = \sum_{i < j} \left( \frac{\partial X^i_{s,t}}{\partial s}\cdot  \frac{\partial X^j_{s,t}}{\partial t} - \frac{\partial X^j_{s,t}}{\partial s}\cdot  \frac{\partial X^i_{s,t}}{\partial t} \right) \, e_i \wedge e_j.
\end{align}

Note that by~\Cref{prop:inclusion_kapranov}, the universal 2-connection is valued in the crossed module of Lie algebras $\cmg$ or in $\cmg^{\gr{\leq n}}$ in the truncated setting, which are embedded in $\bE$ and $\bT^{\gr{\leq n}}$ respectively. In the truncated case, the surface holonomy will be valued in $\cmG^{\gr{\leq n}}$ since this is the associated Lie group of $\cmg^{\gr{\leq n}}$~\cite{martins_surface_2011}.

\begin{definition}
    Let $n \geq 2$. The \emph{$n$-truncated surface signature} is the surface holonomy map with respect to the universal 2-connection valued in $\cmg^{\gr{\leq n}}$. In particular, it is a map
    \begin{align}   \nonumber
        \hssig^{\gr{\leq n}}: \surfaces(\V) \to G_1^{\gr{\leq n}}
    \end{align}
    defined for $X \in C^\infty(Q, \V)$ on a rectangle $Q = [\smp]\times [\tmp] \subset\R^2$ as the solution of the differential equation for (recall that $x^{s,t}$ is the tail path from~\Cref{eq:tail_path})
    \begin{align} \label{eq:truncated_ssig_def}
        \hssig_{s,t}^{\gr{\leq n}}(X): Q \to G_1^{\gr{\leq n}} \quad \text{where} \quad \frac{\partial \hssig_{s,t}^{\gr{\leq n}}(X)}{\partial t} = \hssig_{s,t}^{\gr{\leq n}}(X) * \int_{\smm}^s S(x^{s',t}) \gt J_{s',t}(X) \, ds',
    \end{align}
    with initial condition $\hssig_{s,\tmm}(X) = 1$.
    The $n$-truncated surface signature of $X$ is $\hssig^{\gr{\leq n}}(X) \coloneqq \hssig_{\spp,\tpp}^{\gr{\leq n}}(X)$.
\end{definition}

For $n < m$, we note that $\pi^{\gr{\leq n}}_1(\hssig^{\gr{\leq m}}(X)) = \hssig^{\gr{\leq n}}(X)$ because they satisfy the same differential equation. Then, we can define the untruncated surface signature in the same way. 
\begin{definition}
    The \emph{surface signature} is a map
    \begin{align}   \nonumber
        \hssig:\surfaces(\V) \to G_1 \subset T_1\ps{\V}
    \end{align}
    defined for $X \in C^\infty(Q, \V)$ on a rectangle $Q = [\smp]\times[\tmp]$ as the solution of the differential equation
    \begin{align} \label{eq:ssig_def}
        \hssig_{s,t}(X): Q \to G_1 \quad \text{where} \quad \frac{\partial \hssig_{s,t}(X)}{\partial t} = \hssig_{s,t}(X) * \int_{\smm}^s S(x^{s',t}) \gt J_{s',t}(X) \, ds',
    \end{align}
    with initial condition $\hssig_{s,\tmm}(X) = 1_1$.
    The $n$-truncated surface signature of $X$ is $\hssig(X) \coloneqq \hssig_{\spp,\tpp}(X)$.
\end{definition}
Note that the surface signature is valued in $G_1$, since $\pi^{\gr{\leq n}}_1(\hssig(X)) = \hssig^{\gr{\leq n}}(X)$ for all $n \geq 2$.
It is not immediate that $\hssig(X) \in E_1$, but we will show this in~\Cref{cor:smooth_extension_surface_signature}, and simply state it is valued in $G_1 \subset E_1$ for now. 
Furthermore, since $J_{s,t}(X) \in T_1$, we also have $S(x^{s',t}) \gt J_{s',t}[X] \in T_1$, so it does not contain any unital component. Thus, the only unital component comes from the initial condition, and surface signature has the form $\hssig_{s,t}(X) = (1, \ssig_{s,t}(X))$ for all $(s,t) \in [0,1]^2$, where $\ssig_{s,t}(X) \in T_1$ is the non-unital component of the surface signature. 
Now, we can rewrite this differential equation as an integral equation of the form
\begin{align}\nonumber
    \hssig_{s,t}(X) = 1 + \int_0^t \int_0^s \hssig_{s,t'}(X) *\Big(S(x^{s',t'}) \gt J_{s',t'}[X]\Big) \, ds' dt'
\end{align}
Applying a Picard iteration scheme, we obtain
\begin{align} \label{eq:ssig_picard}
    \hssig_{s,t}(X) = 1 + \sum_{n=1}^\infty \int_{\square^n(\smp)} \int_{\Delta^n(\tmp)} \Big(S(x^{s_1,t_1}) \gt J_{s_1,t_1}[X]\Big) * \ldots * \Big(S(x^{s_n,t_n}) \gt J_{s_n,t_n}[X]\Big) \, d\bs \, d\bt,
\end{align}
where $(s_1, \ldots, s_n) \in \square^n(\smp)$ and $(t_1 < \ldots < t_n) \in \Delta^n(\tmp)$. In contrast to the path signature, the depth of the Picard iteration does not correspond to the level in $T_1$. Indeed, consider the first term of the Picard series for $\hssig(X)$,
\begin{align} \label{eq:first_picard}
    \int_0^1 \int_0^1 S(x^{s,t}) \gt J_{s,t}(X) \, ds \, dt.
\end{align}
While $J_{s,t}(X)$ is a level $2$ element, $S(x^{s,t})$ has nontrivial components at each level. 

\begin{remark}
    We can introduce a separate grading for the Picard iterates by defining for $p \geq 1$
    \begin{align} \label{eq:hssig_picard_grading}
        \hssig^{\pgr{p}}(X) \coloneqq \int_{\square^p(\smp)} \int_{\Delta^p(\tmp)} \Big(S(x^{s_1,t_1}) \gt J_{s_1,t_1}[X]\Big) * \ldots * \Big(S(x^{s_p,t_p}) \gt J_{s_p,t_p}[X]\Big) \, d\bs \, d\bt.
    \end{align}
    In fact, this can be done formally by considering surface holonomy valued in
    \begin{align}\nonumber
        T_1^h \coloneqq \left\{\sum_{p=0}^\infty E^{\pgr{p}} h^p \, : \, E^{\pgr{p}} \in T_1\right\},
    \end{align}
    formal power series in an auxiliary variable $h$ with coefficients in $T_1$, where we embed the Lie algebra in $h$-degree $1$, and treat the action of $T_0$ on $T_1^h$ as an $h$-degree $0$ operation. This is similar to the construction of group-like elements in~\cite[Section 2.4.4]{faria_martins_crossed_2016}, though we will just leave this as a remark and not consider such power series in $h$ here.
\end{remark}

Despite the complexity at higher levels (partially due to the quotienting by the Peiffer ideal $\Pf$), the first two nontrivial levels of the surface signature are fairly straightforward.

\begin{example}[Level 2] \label{ex:level2}
    Level 2 is the lowest nontrivial level of $T_1$, and we note that $T_1^{\gr{2}} \cong \Lambda^2 \V$. For a surface $X \in C^\infty([0,1]^2, \V)$, the surface signature is
    \begin{align}\nonumber
        \hssig^{\gr{2}}(X) = \int_0^1 \int_0^1 J_{s,t}(X) \, ds \,dt,
    \end{align}
    which is the signed area (L\'evy area) of the boundary path. In particular, if we denote the boundary path of $X$ by $\partial X$, this can be computed by the path signature,
    \begin{align}\nonumber
        \hssig^{\gr{2}}(X) = S^{\gr{2}}(\partial X) \in \Lambda^2 \V,
    \end{align}
    which is valued in $\Lambda^2 \V$ since $\partial X$ is a loop.
\end{example}

\begin{example}[Level 3] \label{ex:level3}
    At level 3, we have
    \begin{align}\nonumber
        T_1^{\gr{3}} \cong (V \otimes \Lambda^2 \V) \oplus (\Lambda^2 \V \otimes V),
    \end{align}
    since elements of the Peiffer ideal $\Pf$ are of degree at least $4$, so the quotient does not affect level $3$. For $X \in C^\infty([0,1]^2, \V)$, only the the first Picard iterate in~\Cref{eq:first_picard} contributes to the level $3$ signature. In particular, we note that $S^{\gr{1}}(x^{s,t}) = X_{s,t} - X_{0,0}$, so the level $3$ signature can be explicitly expressed as 
    \begin{align}\nonumber
        \hssig^{\gr{3}}(X) = \int_0^1 \int_0^1 (X_{s,t} - X_{0,0}) \gt J_{s,t}(X) \, ds \, dt \in T_1^{\gr{3}}.
    \end{align}
\end{example}

In general, computation of the surface signature (or even surface holonomy) is more complicated than the path signature. In fact, even for a linear surface, it is difficult to obtain an explicit expression for the full signature. See~\cite[Section 6]{lee_random_2023} for some methods to compute the surface holonomy of a linear surface. Similar to the notion of the surface holonomy \emph{functor} in~\Cref{def:sh_functor}, we define the surface signature functor. 

\begin{definition}
    The \emph{surface signature functor} $\bS: \surfaces^\infty(\V) \to \dg(\cmG)$ is defined for a surface $X \in \surfaces^\infty(\V)$ with boundary paths $x,y,z,w \in \paths^\infty(\V)$ (see the figure after~\Cref{eq:intro_sh_functor}) by
    \begin{align}\nonumber
        \bS(X) \coloneqq \left(S(x), \, S(y), \, S(z), \, S(w), \, \hssig(X)\right).
    \end{align}
\end{definition}
Furthermore, the surface signature is invariant with respect to thin homotopy (as already discussed by Kapranov~\cite{kapranov_membranes_2015}), but we will not discuss this further here (see~\Cref{rem:thin_homotopy}).

As a direct corollary of~\Cref{prop:sh_multiplicative}, the surface signature functor is \emph{multiplicative} (since this holds for all truncations): for appropriately composable $X,Y,Z \in \surfaces^\infty(\V)$, we have
\begin{align}\nonumber
    \bS(X \concat_h Y) = \bS(X) \hmult \bS(Y) \andd \bS(X \concat_v Z) = \bS(X) \vmult \bS(Z).
\end{align}

\medskip
\subsubsection{Preliminary Analytic Properties of the Surface Signature}
We begin our analytic study of the surface signature with a preliminary coarse bound for the level $n$ surface signature. We will use
\begin{align} \label{eq:1d_increment}
    \inc{s_1}{s_2} \coloneqq |s_2 - s_1| \andd \inc{t_1}{t_2} \coloneqq |t_2 - t_1|
\end{align}
to denote 1D increments of the parametrization. 

\begin{proposition} \label{prop:ssig_preliminary_n_bound}
    Let $Q = [\smp]\times [\tmp] \subset \R^2$ be a rectangle and $X \in C^\infty(Q, \V)$. Then,
    \begin{align}\nonumber
        \left\| \hssig^{\gr{n}}(X) \right\| \leq C_n (\inc{\smm}{\spp} + \inc{\tmm}{\tpp})^n
    \end{align}
    where $C_n > 0$ is a constant which depends on $X$ and $n$.
\end{proposition}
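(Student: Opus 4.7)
My plan is to work directly from the Picard expansion~\Cref{eq:ssig_picard} of the surface signature and bound the level-$n$ component of each Picard iterate separately. The key analytic inputs are the submultiplicativity of the norms on $T_1(\V)$ established in~\Cref{eq:levelwise_ba} and~\Cref{eq:levelwise_short_action}, combined with the classical factorial decay of the path signature. A crucial initial observation is the following level-counting fact: since $T_1^{\gr{n_1}} * T_1^{\gr{n_2}} \subset T_1^{\gr{n_1 + n_2}}$ and each factor $F_i \coloneqq S(x^{s_i, t_i}) \gt J_{s_i, t_i}(X)$ lives in levels $\geq 2$ (because $J_{s_i, t_i}(X) \in \Lambda^2 \V \cong T_1^{\gr{2}}$), only Picard iterates of depth $p \leq \lfloor n/2 \rfloor$ contribute to $\hssig^{\gr{n}}(X)$, reducing the sum in~\Cref{eq:ssig_picard} to a finite one at each level.

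Next I would bound the level-$n_i$ component of a single factor $F_i$. Writing the action as conjugation $a \gt E = a \gtd E \ltd a^{-1}$ from~\Cref{eq:unit_action} and applying~\Cref{eq:levelwise_short_action} componentwise gives
\begin{align}\nonumber
\|F_i^{\gr{n_i}}\| \leq \|J_{s_i, t_i}(X)\| \sum_{j + k = n_i - 2} \|S^{\gr{j}}(x^{s_i, t_i})\| \cdot \|(S(x^{s_i, t_i})^{-1})^{\gr{k}}\|.
\end{align}
The classical factorial bound $\|S^{\gr{k}}(x)\| \leq L_x^k/k!$ (and the same for its inverse, which is the signature of the time-reversed path), with $L_{x^{s,t}} \leq M(\inc{\smm}{\spp} + \inc{\tmm}{\tpp})$ for $M \coloneqq \|X\|_{C^1}$, together with the convolution identity $\sum_{j+k=m} \tfrac{1}{j!k!} = \tfrac{2^m}{m!}$, then yields
\begin{align}\nonumber
\|F_i^{\gr{n_i}}\| \leq C_X \, \frac{(2 M (\inc{\smm}{\spp} + \inc{\tmm}{\tpp}))^{n_i - 2}}{(n_i - 2)!}
\end{align}
uniformly in $(s_i, t_i) \in Q$.

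Finally, I would multiply $p$ such factors via submultiplicativity of $*$ and sum over compositions $n_1 + \cdots + n_p = n$ with each $n_i \geq 2$. Substituting $m_i = n_i - 2$ and invoking the multinomial identity $\sum_{m_1 + \cdots + m_p = n - 2p,\, m_i \geq 0} \prod_i \tfrac{1}{m_i!} = \tfrac{p^{n-2p}}{(n-2p)!}$ controls the composition sum. Integrating over the domain $\square^p(\smp) \times \Delta^p(\tmp)$ contributes a volume factor $\inc{\smm}{\spp}^p \inc{\tmm}{\tpp}^p / p!$, and AM--GM in the form $\inc{\smm}{\spp} \inc{\tmm}{\tpp} \leq \tfrac{1}{4}(\inc{\smm}{\spp} + \inc{\tmm}{\tpp})^2$ converts this product of increments into $(\inc{\smm}{\spp} + \inc{\tmm}{\tpp})^{2p}/4^p$. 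Combining with the $(n - 2p)$-th power of $(\inc{\smm}{\spp} + \inc{\tmm}{\tpp})$ coming from the factor bounds yields exactly the desired $(\inc{\smm}{\spp} + \inc{\tmm}{\tpp})^n$, and summing the finitely many $p \in \{1, \ldots, \lfloor n/2 \rfloor\}$ gives a constant $C_n$ depending on $n$ and $\|X\|_{C^1}$. I expect the main obstacle to be the careful level bookkeeping for $\gt$ applied to a group-like element of $G_0$, ensuring that the decomposition of $\|F_i^{\gr{n_i}}\|$ is valid and uniform in $(s, t)$; everything else amounts to standard Picard and AM--GM manipulations.
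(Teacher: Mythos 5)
Your proposal is correct and follows essentially the same route as the paper: bound each level-$n$ Picard iterate by expanding the action of $S(x^{s_i,t_i})$ into conjugation, apply the factorial decay of the path signature and its inverse together with a multinomial estimate, integrate to collect the $1/p!$, and use AM--GM to combine the increment factors into $(\inc{\smm}{\spp}+\inc{\tmm}{\tpp})^n$. The only cosmetic difference is that you organize the index sum in two stages (first the convolution over $j+k=n_i-2$ within each factor, then over compositions $n_1+\cdots+n_p=n$) whereas the paper applies the multinomial theorem once over all $2p$ indices; these are the same computation.
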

\begin{proof}
    We start by considering the level $n$ component of the $p^{th}$ Picard iterate from~\Cref{eq:hssig_picard_grading}. Note that this is only nontrivial when $n \geq 2p$. By expanding the definition, $\hssig^{\pgr{p}, \gr{n}}(X)$ is 
    \begin{align*}
        \sum \int_{\substack{\Delta^n(\tmp)\\ \square^n(\smp)}} \Big(S^{\gr{i_1}}(x^{s_1,t_1}) \cdot J_{s_1,t_1}[X] \cdot S^{-1, \gr{j_1}}(x^{s_1, t_1})\Big) * \ldots * \Big(S^{\gr{i_p}}(x^{s_p,t_p})\cdot J_{s_n,t_n}[X] \cdot S^{-1, \gr{j_p}}(x^{s_p, t_p})\Big) \, d\bs \, d\bt,
    \end{align*}
    where the sum is taken over all indices such that $i_1 + \ldots + i_p + j_1 + \ldots j_p = n - 2p$. 
    Then, using the bounds
    \begin{align}\nonumber
        \|S^{\gr{n}}(x^{s,t})\|, \|S^{-1, \gr{n}}(x^{s,t})\| \leq \frac{L^n(\inc{\smm}{\spp} + \inc{\tmm}{\tpp})^n}{n!} \andd \|J_{s,t}(X)\| \leq 2L^2,
    \end{align}
    and by applying the multinomial theorem, we get
    \begin{align}\nonumber
        \left\| \hssig^{\pgr{p}, \gr{n}}(X) \right\| &\leq \frac{(2pL)^{n-2p}(\inc{\smm}{\spp} + \inc{\tmm}{\tpp})^{n-2p}}{(n-2p)!} \int_{\substack{\Delta^n(\tmp)\\ \square^n(\smp)}} (2L^2)^p d\bs d\bt \\
        & \leq(2L)^n p^{n-2p} \frac{(\inc{\smm}{\spp} + \inc{\tmm}{\tpp})^{n-2p}(\inc{\smm}{\spp} \cdot \inc{\tmm}{\tpp})^p}{(n-2p)! p!} \nonumber
    \end{align}
    Then, summing all non-trivial Picard iterates for level $n$, we get
    \begin{align}\nonumber
        \left\| \hssig^{\gr{n}}(X) \right\| &\leq (2L)^n  \sum_{p=1}^{\lfloor n/2 \rfloor}p^{n-2p} \frac{(\inc{\smm}{\spp} + \inc{\tmm}{\tpp})^{n-2p}(\inc{\smm}{\spp} \cdot \inc{\tmm}{\tpp})^p}{(n-2p)! p!} \\
        & \leq C_n (\inc{\smm}{\spp} + \inc{\tmm}{\tpp})^n\nonumber
    \end{align}
    where $C_n> 0$ is a constant which depends on $L$ and $n$. 
\end{proof}

We emphasize that the constant $C_n$ is not sharp. In fact, $C_n \to \infty$ as $n \to \infty$, and this cannot be used to show that $\hssig(X) \in E_1$ (or even to show it is bounded). One issue is that our naive estimates do not take advantage of cancellations and continuity properties of path signature. We will return to this in~\Cref{cor:smooth_extension_surface_signature}, where we use the uniqueness of extensions to provide a more precise estimate. However, we can still directly show that $\hssig(X)$ is bounded by considering the factorial decay of the Picard iterates.

\begin{proposition}
    Let $Q = [\smp]\times [\tmp] \subset \R^2$ be a rectangle and $X \in C^\infty(Q, \V)$. Then for $p \in \N$, we have
    \begin{align}\nonumber
        \|\hssig^{\pgr{p}}(X)\| \leq \frac{(C\inc{\smm}{\spp} \cdot \inc{\tmm}{\tpp})^p}{p!}
    \end{align}
    for a constant $C> 0$ which depends on $X$.
\end{proposition}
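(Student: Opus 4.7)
The plan is to bound the integrand of the Picard iterate uniformly on $Q$ in an appropriate norm, and then use the fact that the $t$-variables lie in a simplex (producing the $1/p!$) while the $s$-variables lie in a cube (producing only a power). Concretely, I would fix a $P_\lambda$ seminorm from~\Cref{eq:Plambda_norm} on $T_1\ps{\V}$ (and the associated $p_\lambda$ seminorm on $T_0\ps{\V}$) for some $\lambda>0$. The level-wise submultiplicativity in~\Cref{eq:levelwise_ba} and the level-wise short action property in~\Cref{eq:levelwise_short_action} imply that with respect to these seminorms the product $*$ is submultiplicative and the action $\gt$ is short.

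Next I would obtain a uniform bound for the factors of the integrand. Let $L = \sup_{(s,t) \in Q}\|dX_{s,t}\|_{\op}$, which is finite because $X$ is smooth on the compact rectangle $Q$. Then $\|J_{s,t}(X)\| \leq 2L^2$ pointwise. Moreover, the tail path $x^{s,t}$ has length at most $L(\inc{\smm}{\spp} + \inc{\tmm}{\tpp})$, so from the standard factorial decay of the path signature we obtain $p_\lambda(S(x^{s,t})) \leq \exp\bigl(\lambda L(\inc{\smm}{\spp}+\inc{\tmm}{\tpp})\bigr)$, uniformly on $Q$. Combining these bounds with submultiplicativity of $*$ and shortness of $\gt$, I get a constant $C_0>0$ (depending on $X$, $Q$, and $\lambda$) such that
\begin{align*}
    \bigl\|\bigl(S(x^{s_1,t_1}) \gt J_{s_1,t_1}(X)\bigr) * \cdots * \bigl(S(x^{s_p,t_p}) \gt J_{s_p,t_p}(X)\bigr)\bigr\| \leq C_0^p
\end{align*}
for every $(s_i,t_i) \in Q$.

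Finally, integrating this uniform bound over $\square^p(\smp) \times \Delta^p(\tmp)$ and using $\mathrm{vol}(\square^p(\smp)) = \inc{\smm}{\spp}^p$ and $\mathrm{vol}(\Delta^p(\tmp)) = \inc{\tmm}{\tpp}^p/p!$, the triangle inequality applied to~\Cref{eq:hssig_picard_grading} yields
\begin{align*}
    \|\hssig^{\pgr{p}}(X)\| \leq \frac{\bigl(C_0\,\inc{\smm}{\spp}\,\inc{\tmm}{\tpp}\bigr)^p}{p!},
\end{align*}
which is the claimed inequality with $C = C_0$.

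The only non-routine point is verifying that the chosen norm genuinely behaves submultiplicatively under $*$ and shortly under $\gt$; this is already essentially present in~\Cref{eq:levelwise_ba} and~\Cref{eq:levelwise_short_action}, and these level-wise inequalities assemble directly into the needed estimates on $P_\lambda$. Once this point is accepted, the rest is simply a uniform pointwise bound plus volume computations on a cube and a simplex.
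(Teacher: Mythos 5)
Your proof is correct and follows essentially the same route as the paper's: bound $\|J_{s,t}(X)\|$ pointwise, bound $\|S(x^{s,t})\|$ and $\|S^{-1}(x^{s,t})\|$ by the exponential of the tail-path length, use shortness of $\gt$ and submultiplicativity of $*$ to get a uniform constant $C_0$ on the integrand, then integrate to pick up $\inc{\smm}{\spp}^p$ from the cube and $\inc{\tmm}{\tpp}^p/p!$ from the simplex. The only difference is cosmetic — you phrase the estimate in terms of a general $P_\lambda$ seminorm whereas the paper implicitly fixes $\lambda = 1$, and you are slightly more explicit that both $S$ and $S^{-1}$ are needed because $\gt$ involves conjugation — but these do not change the substance of the argument.
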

\begin{proof}
    First, the Jacobian $J_{s,t}(X)$ can be bound by
    \begin{align}\nonumber
        \|J_{s,t}(X)\|^2 &  \leq 2 \left\|\frac{\partial X_{s,t}}{\partial s} \right\|^2 \cdot \left\|\frac{\partial X_{s,t}}{\partial t} \right\|^2 \leq 2 L^4
    \end{align}
    Furthermore, the signature has the bound
    \begin{align}\nonumber
        \|S(x^{s,t})\|, \|S^{-1}(x^{s,t})\| \leq \exp((\inc{\smm}{\spp} + \inc{\tmm}{\tpp})L)
    \end{align}
    for any tail path $x^{s,t}$. Then, by the fact that the action in $\bT$ is short, we have
    \begin{align}\nonumber
        \|S(x^{s,t}) \gt J_{s,t}(X)\| \leq \|S(x^{s,t})\| \cdot \|J_{s,t}(X)\| \cdot \|S^{-1}(x^{s,t})\| \leq 2L^2 \exp((\inc{\smm}{\spp} + \inc{\tmm}{\tpp})L)^2.
    \end{align}
    Let $C = 2L^2 \exp((\inc{\smm}{\spp} + \inc{\tmm}{\tpp})L)^2$. Then by the definition of the $p^{th}$ Picard iterate in~\Cref{eq:ssig_picard}, we have
    \begin{align}\nonumber
        \|\hssig^{\pgr{p}}(X)\| &\leq \int_{\square^n(\smp)} \int_{\Delta^n(\tmp)} \left\|\Big(S(x^{s_1,t_1}) \gt J_{s_1,t_1}[X]\Big) * \ldots * \Big(S(x^{s_n,t_n}) \gt J_{s_n,t_n}[X]\Big)\right\| \, d\bs \, d\bt \\
        & \leq \int_{\square^n(\smp)} \int_{\Delta^n(\tmp)}  C^n \, d\bs \, d\bt \nonumber\\
        & \leq \frac{(C\inc{\smm}{\spp} \cdot \inc{\tmm}{\tpp})^n}{n!}.\nonumber
    \end{align}
\end{proof}

\subsection{Universal Property}
Next, we will consider the universal property of the surface signature, which reflects the corresponding property of the path signature in~\Cref{thm:universal_path_signature}. Suppose $(\cona, \conc)$ is a continuous 2-connection valued in a crossed module of Banach algebras $\cmA$. Now let $\tcona: T_0(\V) \to A_0$ and $\tconc : T_1(\V) \to A_1$ be the unique maps from~\Cref{prop:univ_2con_alg} which form a morphism between crossed modules $(\tcona, \tconc): \bT(\V) \to \cmA$ such that $\cona = \tcona \circ \ucona$ and $\conc = \tconc \circ \uconc$.
Suppose $v_1, \ldots, v_k \in T_0$ and $E_1, \ldots, E_k \in T_1$. Then, since $(\tcona, \tconc)$ is a morphism of crossed modules of algebras, we have
\begin{align} \label{eq:tconc_factoring_property}
    \tconc\Big( (v_1 \gt E_1) *_T \ldots  *_T (v_k \gt E_k)\Big) &= \tconc(v_1 \gt E_1) *_A \ldots *_A \tconc(v_k \gt E_k)\\
    & = \Big(\tcona(v_1) \gt \tconc(E_1)\Big) *_A \ldots *_A \Big(\tcona(v_k) \gt \tconc(E_k)\Big).
\end{align}

\begin{theorem}\label{thm:surface_signature_universal}
    Let $(\cona, \conc)$ be a continuous 2-connection valued in a crossed module of Banach algebras $\cmA$. Let $(\tcona, \tconc): \bE \to \cmA$ be the unique continuous morphism of crossed modules from~\Cref{prop:E0_con_universal}. Then, the surface holonomy map $\hH^{\cona, \conc}: \surfaces^\infty(\V) \to A_1$ from~\Cref{def:sh} satisfies
    \begin{align}\nonumber
        \hH^{\cona, \conc}(X) = \tconc\left(\hssig(X)\right).
    \end{align}
\end{theorem}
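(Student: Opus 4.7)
The plan is to mirror the proof of Theorem~\ref{thm:universal_path_signature}, expanding the surface signature into its Picard series and pushing the morphism $\tconc$ through the series using the factoring identity~\eqref{eq:tconc_factoring_property}. Both $\hH^{\cona,\conc}(X)$ and $\tconc(\hssig(X))$ should arise as the same Picard expansion, once the universal connection $\ucona$ is replaced by $\cona$ and the universal 2-connection $\uconc$ by $\conc$.

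Concretely, I would first apply the Picard expansion~\eqref{eq:ssig_picard} to $\hssig(X)$. By Proposition~\ref{prop:E0_con_universal}, $\tconc \colon E_1(\V) \to A_1$ is a continuous algebra morphism, which lets us commute $\tconc$ with both the Riemann integrals and the infinite sum, reducing the problem to evaluating $\tconc$ on each integrand of the form $(S(x^{s_1,t_1}) \gt J_{s_1,t_1}(X)) * \ldots * (S(x^{s_n,t_n}) \gt J_{s_n,t_n}(X))$. The factoring identity~\eqref{eq:tconc_factoring_property} rewrites this as a product (in $A_1$) of terms $\tcona(S(x^{s_i,t_i})) \gt \tconc(J_{s_i,t_i}(X))$. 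Applying Theorem~\ref{thm:universal_path_signature} gives $\tcona \circ S = F^\cona$, while $\tconc \circ \uconc = \conc$ by the construction of $\tconc$; together these transform each integrand into exactly the integrand appearing in the Picard expansion of $\hH^{\cona,\conc}(X)$. The initial term $1 \in \hssig(X)$ is sent to the identity in $\hA_1^\units$, matching the initial datum of the holonomy equation, and summing these equalities recovers the theorem.

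The main obstacle is justifying the interchange of $\tconc$ with the infinite sum and with the integrals. The factorial decay estimate for the Picard iterates established in the preceding subsection ensures that the series~\eqref{eq:ssig_picard} converges in every seminorm $P_\lambda$ on $E_1(\V)$, and the continuity of $\tconc$ then transfers this to convergence in the Banach norm on $A_1$; the interchange at each fixed $n$ is routine since each Picard iterate lives in a finite-dimensional subspace on which $\tconc$ is bounded. A clean alternative that sidesteps the convergence bookkeeping is to argue by ODE uniqueness: apply $\tconc$ to the integral form of~\eqref{eq:ssig_def}, push it through the action and the inner $s'$-integral by continuity, and observe via the same factoring identity that $\tconc(\hssig_{s,t}(X))$ satisfies precisely the defining ODE of $\hH^{\cona,\conc}_{s,t}(X)$ with the same initial condition $1_1$.
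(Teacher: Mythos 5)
Your proposal is correct and follows essentially the same approach as the paper: both proofs expand in Picard iterates and use the factoring identity~\eqref{eq:tconc_factoring_property} together with Theorem~\ref{thm:universal_path_signature} and the fact that $\tconc$ restricts to $\conc$ on $\Lambda^2\V$ to identify the two series term by term. You merely run the manipulation from $\tconc(\hssig(X))$ toward $\hH^{\cona,\conc}(X)$ rather than the reverse, and you are a bit more explicit than the paper about justifying the interchange of $\tconc$ with the infinite sum and the integrals (and your alternative ODE-uniqueness argument is also sound, but not what the paper does).
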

\begin{proof}
    Similar to the case of the surface signature, we can use the Picard iteration to define the surface holonomy $\hH^{\cona, \conc}$ by
    \begin{align}\nonumber
        \hH^{\cona, \conc}(X) &= 1 + \sum_{n=1}^\infty \int_{\substack{\Delta^n(\tmp)\\ \square^n(\smp)}} \Big( F^\cona(x^{s_1, t_1}) \gt \gamma(J_{s_1, t_1}(X))\Big) * \ldots * \Big( F^\cona(x^{s_n, t_n}) \gt \gamma(J_{s_n, t_n}(X))\Big) \, d\bs \, d\bt.
    \end{align}
    Then, using the universal property of the path signature in~\Cref{thm:universal_path_signature}, the fact that $\tconc = \conc$ restricted to $\Lambda^2 \V \subset T_1$, and the property in~\Cref{eq:tconc_factoring_property} that $\tconc$ forms a morphism of crossed modules, we obtain
    \begin{align}\nonumber
        \hH^{\cona, \conc}(X) &=1+ \sum_{n=1}^\infty \int_{\substack{\Delta^n(\tmp)\\ \square^n(\smp)}} \Big( \tcona(S(x^{s_1, t_1})) \gt \tconc(J_{s_1, t_1}(X))\Big) * \ldots * \Big( \tcona(S(x^{s_n, t_n})) \gt \tconc(J_{s_n, t_n}(X))\Big) \, d\bs \, d\bt \\
        & = 1 + \sum_{n=1}^\infty \tconc\left(\int_{\substack{\Delta^n(\tmp)\\ \square^n(\smp)}}  \Big(S(x^{s_1,t_1}) \gt J_{s_1,t_1}[X]\Big) * \ldots * \Big(S(x^{s_n,t_n}) \gt J_{s_n,t_n}[X]\Big) \, d\bs \, d\bt \right)\nonumber\\
        & = \tconc(\hssig(X)).\nonumber
    \end{align}
\end{proof}

\section{Rough Surfaces}

We will now go beyond the smooth setting, and consider surface holonomy of $\rho$-H\"older surfaces for $\rho \in (0,1]$. In particular, we introduce the notion of a \emph{rough surface}. Analogous to the case of paths, when surfaces become sufficiently irregular, we no longer have an appropriate notion of integration~\cite{zust_integration_2011}, so surface holonomy, including the surface signature, is no longer well-defined. However, a key insight in~\cite{lyons_differential_1998} is that one can still compute the path signature of an irregular path, provided that some lower level signature terms are postulated as additional data. This enriched path is called a \emph{rough path}, and can be viewed as a path in the truncated tensor algebra. One can then consider path holonomy of irregular paths via the universal property of the path signature. Our goal is to generalize these results to the case of surfaces. We define rough surfaces as surfaces valued in the truncated crossed module of group like elements, $\cmG^{\gr{\leq n}}$, where some lower level surface signature terms are postulated. We generalize the extension theorem from~\cite{lyons_differential_1998} to show that the surface signature of rough surfaces is well-defined, and can be used to compute the surface holonomy of rough surfaces with respect to continuous $2$-connections. \medskip

This section will use fractional factorials and binomial coefficients, defined by
\begin{align}\nonumber
    \ffact{n}{\rho} \coloneqq \left( n\rho\right)! = \Gamma\left(n\rho + 1\right) \andd \fbinom{n}{k}{\rho} \coloneqq \frac{\ffact{n}{\rho}}{\ffact{k}{\rho} \cdot \ffact{n-k}{\rho}},
\end{align}
where $\Gamma$ is the Gamma function.
Furthermore, we will regularly use the following neoclassical inequality, which was introduced in the development of rough paths~\cite{lyons_differential_1998} and further improved in~\cite{hara_fractional_2010}.
\begin{lemma}{\cite{hara_fractional_2010,lyons_differential_1998}}
    For any $p \in (0,1]$, $n \in \N$ and $s, t \geq 0$, 
    \begin{align} \label{eq:neoclassical}
        \rho \sum_{i=0}^n \frac{s^{i\rho} t^{(n-i)\rho}}{\ffact{i}{\rho} \ffact{n-i}{\rho}} \leq \frac{(s+t)^{n\rho}}{\ffact{n}{\rho}}.
    \end{align}
\end{lemma}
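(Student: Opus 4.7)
\smallskip

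\noindent\textbf{Proof plan.} The plan is to first normalize the inequality by homogeneity, then reduce to a one-variable estimate that can be attacked via Beta/Gamma function representations.

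\smallskip

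\noindent\emph{Step 1 (Reduction by scaling).} Both sides are homogeneous of degree $n\rho$ in $(s,t)$, so after setting $u = s/(s+t) \in [0,1]$ and factoring $(s+t)^{n\rho}/\ffact{n}{\rho}$ out of both sides, the inequality becomes
\[
    \varphi_n(u) \;:=\; \rho \sum_{i=0}^n \fbinom{n}{i}{\rho}\, u^{i\rho}(1-u)^{(n-i)\rho} \;\leq\; 1 \qquad (u \in [0,1]).
\]
This is the form I would actually try to prove.

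\smallskip

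\noindent\emph{Step 2 (Trivial cases).} At the endpoints only one term survives, giving $\varphi_n(0) = \varphi_n(1) = \rho \leq 1$. For $\rho = 1$ the generalized binomial coefficient reduces to the classical one and $\varphi_n(u) = \rho\,(u + (1-u))^n = 1$ by the binomial theorem, with equality. This handles the boundary of the parameter region, and we may henceforth assume $\rho \in (0,1)$ and $u \in (0,1)$.

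\smallskip

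\noindent\emph{Step 3 (Integral representation).} The main step is to represent the fractional binomial coefficients through the Beta function identity $B(a,b) = \Gamma(a)\Gamma(b)/\Gamma(a+b)$, which yields
\[
    \fbinom{n}{i}{\rho} \;=\; \frac{1}{(n\rho+1)\, B(i\rho+1,(n-i)\rho+1)} \;=\; \frac{1}{(n\rho+1)} \cdot \frac{1}{\int_0^1 v^{i\rho}(1-v)^{(n-i)\rho}\,dv}.
\]
Substituting this into $\varphi_n$ and interchanging the finite sum with the Beta integral converts the discrete sum into an integral whose integrand can be bounded using the concavity of $x \mapsto x^\rho$ for $\rho \leq 1$, together with the elementary inequality $u^\rho(1-v)^\rho + (1-u)^\rho v^\rho \geq$ appropriate convex combinations. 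An equivalent approach is to recognize the two-variable generating function
\[
    \sum_{n=0}^\infty \varphi_n(u)\, x^{n\rho} \;=\; \rho\, E_\rho\!\left((ux)^\rho\right)\, E_\rho\!\left(((1-u)x)^\rho\right)
\]
where $E_\rho$ is the Mittag-Leffler function, and compare it termwise with $E_\rho(x^\rho) = \sum_n x^{n\rho}/\ffact{n}{\rho}$ via the product formula for Mittag-Leffler series.

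\smallskip

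\noindent\emph{Main obstacle.} The essential difficulty is that for $\rho < 1$ there is no Pascal identity available, so the estimate cannot be done termwise or by a clean induction on $n$. The original argument of Lyons obtained only the weaker constant $\rho^2$ in place of $\rho$; obtaining the sharp constant $\rho$ (as refined by Hara--Hino) requires a careful analysis of the boundary contributions near $u = 0$ and $u = 1$, which is exactly where the extremes of $\varphi_n$ approach the bound. Because this lemma is invoked as a known external input, I would ultimately defer the fine constant to the cited references \cite{lyons_differential_1998,hara_fractional_2010} rather than reprove it from scratch.
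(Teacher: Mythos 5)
The paper itself provides no proof of this lemma: it is stated with the citation \cite{hara_fractional_2010,lyons_differential_1998} attached and used as an external black-box result. Your final move — deferring the sharp constant to Lyons and Hara--Hino — is therefore exactly what the paper does, and is appropriate. Your Step 1 normalization to $\varphi_n(u)\leq 1$ and the boundary/endpoint checks in Step 2 are correct (as is the Beta-integral identity for $\fbinom{n}{i}{\rho}$).

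One technical slip worth flagging in Step 3: the generating-function identity as you wrote it is incorrect. Since $\varphi_n(u) = \rho\,\ffact{n}{\rho}\sum_{i=0}^n \frac{u^{i\rho}(1-u)^{(n-i)\rho}}{\ffact{i}{\rho}\ffact{n-i}{\rho}}$, multiplying the two Mittag--Leffler series actually yields
\begin{align}
\nonumber
\rho\, E_\rho\bigl((ux)^\rho\bigr)\,E_\rho\bigl(((1-u)x)^\rho\bigr) \;=\; \sum_{n=0}^\infty \frac{\varphi_n(u)}{\ffact{n}{\rho}}\, x^{n\rho},
\end{align}
not $\sum_n \varphi_n(u)\,x^{n\rho}$. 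Comparing this termwise with $E_\rho(x^\rho)=\sum_n x^{n\rho}/\ffact{n}{\rho}$ is then circular — it is just a restatement of $\varphi_n(u)\leq 1$ for each $n$, not an independent route. So as written, the Mittag--Leffler approach does not make progress; the actual work in Hara--Hino is a direct two-variable estimate of $\varphi_n$. Since you correctly recognize the obstacle and defer to the references, the overall treatment matches the paper's, but the sketch in Step 3 should not be presented as if it were a viable alternate proof.
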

As a direct corollary, we have the following generalization of the sums of binomial coefficients. 
\begin{corollary} \label{eq:binomial_sum_bound}
    For any $p \in (0,1]$, $n \in \N$, we have
    \begin{align} \label{eq:genbinom_sum}
        \sum_{i=0}^n \fbinom{n}{i}{\rho} = \sum_{i=0}^n \frac{\ffact{n}{\rho}}{\ffact{i}{\rho} \ffact{n-i}{\rho}} \leq \frac{2^{n\rho}}{\rho}.
    \end{align}
\end{corollary}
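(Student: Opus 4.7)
The statement is a direct specialization of the neoclassical inequality~\eqref{eq:neoclassical}, so the plan is essentially a one-line calculation: set $s=t=1$ in~\eqref{eq:neoclassical}, then multiply through by $\ffact{n}{\rho}/\rho$.

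More precisely, I would take $s = t = 1$ in the neoclassical inequality to obtain
\[
    \rho \sum_{i=0}^n \frac{1}{\ffact{i}{\rho} \ffact{n-i}{\rho}} \leq \frac{(1+1)^{n\rho}}{\ffact{n}{\rho}} = \frac{2^{n\rho}}{\ffact{n}{\rho}}.
\]
Multiplying both sides by $\ffact{n}{\rho}/\rho$ (which is positive, since $\rho \in (0,1]$ and $\ffact{n}{\rho} = \Gamma(n\rho + 1) > 0$) yields
\[
    \sum_{i=0}^n \frac{\ffact{n}{\rho}}{\ffact{i}{\rho} \ffact{n-i}{\rho}} \leq \frac{2^{n\rho}}{\rho},
\]
which is exactly the claim, using the definition of the fractional binomial coefficient $\fbinom{n}{i}{\rho}$.

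There is no real obstacle here; the only minor point worth noting is that when $\rho = 1$ the inequality reduces to the classical identity $\sum_{i=0}^n \binom{n}{i} = 2^n$ (with the factor $1/\rho = 1$), so the bound is tight in that limiting case. For $\rho < 1$ the factor $1/\rho$ on the right is an artifact of the neoclassical inequality and is not sharp, but this loss is irrelevant for later applications, which only require polynomial-in-$n$ prefactors to be absorbed by the subfactorial decay $1/\ffact{n}{\rho}$.
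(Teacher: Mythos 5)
Your proof is correct and is exactly the intended argument: the paper presents the bound as an immediate consequence of the neoclassical inequality and gives no explicit proof, and your specialization $s=t=1$ followed by clearing the factor $\rho/\ffact{n}{\rho}$ is the standard one-line derivation. Nothing further is needed.
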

Finally, we primarily work with paths and surfaces parametrized on $[0,1]$ and $[0,1]^2$ respectively to simplify notation. However, all arguments hold for arbitrary rectangular domains.

\begin{remark}
    Throughout this section, we will only consider the case of rough paths and surfaces valued in the truncated group like elements $G_0^{\gr{\leq n}}$ and $\cmG^{\gr{\leq n}}$; in other words, the \emph{weakly geometric} setting. We will always assume that these objects are embedded into their corresponding (crossed module of) Banach algebras,
    \begin{align}\nonumber
        G_0^{\gr{\leq n}} \subset T_0^{\gr{\leq n}} \andd \cmG^{\gr{\leq n}} \subset \bT^{\gr{\leq n}},
    \end{align}
    and are thus equipped with the corresponding norms. 
\end{remark}

\subsection{Rough Paths on Surfaces}
In this section, we will primarily consider $\rho$-H\"older paths (\Cref{eq:intro_holder_paths}) and surfaces (\Cref{eq:intro_holder_surfaces}).
In the case of paths, there is an intermediate regime of $\rho \in (\frac{1}{2}, 1]$, where Young integration~\cite{young_inequality_1936} can be used to compute the path signature of $\rho$-H\"older paths. In order to go beyond this, we must define the notion of a rough path~\cite{lyons_differential_1998}. We only provide a brief introduction here, and refer the reader to the standard references~\cite{lyons_system_2007,lyons_differential_2007,friz_multidimensional_2010,friz_course_2020} for further details. We use $\omega : \Delta^2 \to \R$, defined by 
\begin{align}\nonumber
    \ctr{}{\smp} \coloneqq C_\omega \inc{\smm}{\spp}
\end{align}
for some constant $C_\omega > 0$, as a H\"older control function.

\begin{definition}
    Let $n \in \N$. A \emph{multiplicative functional} is a map $\rx: \Delta^2 \to G_0^{\gr{\leq n}}$ such that for all $s_1 < s_2 < s_3$, 
    \begin{align}\nonumber
        \rx_{s_1, s_2} \cdot \rx_{s_2, s_3} = \rx_{s_1, s_3}.
    \end{align}
    Let $\rho \in (0, 1]$. A functional $\rx: \Delta^2 \to G_0^{\gr{\leq n}}$ is \emph{$\rho$-H\"older} if
    \begin{align} \label{eq:rp_regularity}
        \|\rx_{s_1, s_2}^{\gr{k}}\| \leq \frac{\ctr{\rho}{\smp}}{\beta \ffact{k}{\rho}}
    \end{align}
    for all $(s_1, s_2) \in \Delta^2$ and $k \in [n]$, where $\beta > 0$ is a sufficiently large constant. 
\end{definition}

Here, the constant $\beta > 0$ is a fixed constant which depends only on the regularity $\rho$. For the rough path extension, we require
\begin{align} \label{eq:rp_beta}
    \beta > \beta_{\RP} \coloneqq \frac{2}{\rho^2} \left( 1 + \sum_{r=3}^\infty \left(\frac{2}{r-2}\right)^{\rho(\lfloor \rho^{-1} \rfloor + 1)}\right).
\end{align}

The Lyons' extension theorem~\cite{lyons_differential_1998} shows that given a multiplicative functional $\rx : \Delta^2 \to T_0^{\gr{\leq n}}$ with the regularity bounds~\Cref{eq:rp_regularity} for all levels $k \in [n]$, we can extend it to a multiplicative function valued in $T_0^{\gr{\leq n+1}}$. Furthermore, if the multiplicative functional $\rx$ is valued in $G_0^{\gr{\leq n}}$, then the extension is also valued in $G_0^{\gr{\leq n+1}}$~\cite[Corollary 3.9]{cass_integration_2016}.

\begin{theorem}[Lyons' Extension Theorem~\cite{lyons_differential_1998,cass_integration_2016}] \label{thm:path_extension}
    Let $\rho \in (0,1]$ and let $n \geq \left\lfloor\frac{1}{\rho}\right\rfloor$. Suppose $\rx: \Delta \to G_0^{\gr{\leq n}}$ is a multiplicative functional which satisfies~\Cref{eq:rp_regularity} for each $k \in [n]$.
    Then, there exists a unique multiplicative function $\trx: \Delta \to G_0^{\gr{\leq n+1}}$ such that $\trx^{\gr{k}} = \rx^{\gr{k}}$ for all $k \in [n]$ and satisfies~\Cref{eq:rp_regularity} for each $k \in [n+1]$. 
\end{theorem}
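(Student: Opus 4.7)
Suppose $\trx$ and $\hat{\rx}$ are two such extensions, and set $\phi_{s,t} := \trx^{\gr{n+1}}_{s,t} - \hat{\rx}^{\gr{n+1}}_{s,t}$. Expanding both versions of the multiplicativity identity $\rx_{s_1,s_3} = \rx_{s_1,s_2}\cdot\rx_{s_2,s_3}$ at level $n+1$ in $T_0^{\gr{\leq n+1}}$ and subtracting, every cross term $\rx^{\gr{i}}_{s_1,s_2}\cdot\rx^{\gr{n+1-i}}_{s_2,s_3}$ with $1\leq i\leq n$ cancels because the two extensions agree at levels $\leq n$; only the $i=0$ and $i=n+1$ terms contribute, yielding the additive cocycle
\[
    \phi_{s_1,s_3} = \phi_{s_1,s_2} + \phi_{s_2,s_3}.
\]
Both $\trx$ and $\hat{\rx}$ obey the level-$(n+1)$ bound~\Cref{eq:rp_regularity}, so $\|\phi_{s,t}\| \leq 2\ctr{\rho(n+1)}{s,t}/(\beta\,\ffact{n+1}{\rho})$. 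Summing $\phi$ over a partition of $[s,t]$ of mesh $\delta$ and using additivity gives $\|\phi_{s,t}\| \lesssim \delta^{\rho(n+1)-1}\,\omega_{s,t} \to 0$, since $\rho(n+1)>1$ by the hypothesis $n\geq\lfloor\rho^{-1}\rfloor$.

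\textbf{Construction via partition products.} Embed $\rx$ into $T_0^{\gr{\leq n+1}}$ by setting its level-$(n+1)$ component to zero, and call the result $\rx^*$. For any partition $D=(s=u_0<\ldots<u_m=t)$, define
\[
    \rx^D_{s,t} := \rx^*_{u_0,u_1}\cdot\rx^*_{u_1,u_2}\cdots\rx^*_{u_{m-1},u_m} \in T_0^{\gr{\leq n+1}}.
\]
Multiplicativity of $\rx$ at levels $\leq n$ forces $\rx^{D,\gr{k}}_{s,t} = \rx^{\gr{k}}_{s,t}$ for every $k\leq n$ independently of $D$, so all $D$-dependence lives at level $n+1$. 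The core single-point-removal estimate is as follows: for $D'=D\setminus\{u_i\}$ with $0<i<m$, direct expansion of $P_L\cdot(\rx^*_{u_{i-1},u_i}\cdot\rx^*_{u_i,u_{i+1}} - \rx^*_{u_{i-1},u_{i+1}})\cdot P_R$ at level $n+1$ reduces to the middle factor (since the side products have trivial level-$0$ component and the middle vanishes at levels $\leq n$ by multiplicativity of $\rx$ and at level $n+1$ by the vanishing of $\rx^{*,\gr{n+1}}$), giving
\[
    \big(\rx^D_{s,t}-\rx^{D'}_{s,t}\big)^{\gr{n+1}} \;=\; \sum_{j=1}^{n}\rx^{\gr{j}}_{u_{i-1},u_i}\cdot\rx^{\gr{n+1-j}}_{u_i,u_{i+1}}.
\]
Applying~\Cref{eq:rp_regularity} level by level and then the neoclassical inequality~\Cref{eq:neoclassical} (with $s=\omega_{u_{i-1},u_i}$, $t=\omega_{u_i,u_{i+1}}$, combined via additivity $\omega_{u_{i-1},u_i}+\omega_{u_i,u_{i+1}}=\omega_{u_{i-1},u_{i+1}}$) yields
\[
    \big\|\big(\rx^D_{s,t}-\rx^{D'}_{s,t}\big)^{\gr{n+1}}\big\| \;\leq\; \frac{1}{\beta^2\,\rho}\cdot\frac{\ctr{\rho(n+1)}{u_{i-1},u_{i+1}}}{\ffact{n+1}{\rho}}.
\]

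\textbf{Iteration, limit, and main obstacle.} A pigeonhole argument on the $m-1$ interior points of $D$ produces some $i$ with $\omega_{u_{i-1},u_{i+1}}\leq 2\omega_{s,t}/(m-1)$; iteratively removing the cheapest interior point until only the endpoints remain and summing the resulting geometric-type series $\sum_{k\geq 1}(2/k)^{\rho(n+1)}$, which converges since $\rho(n+1)>1$, produces the uniform bound $\|\rx^{D,\gr{n+1}}_{s,t}\|\leq \ctr{\rho(n+1)}{s,t}/(\beta\,\ffact{n+1}{\rho})$, provided $\beta$ exceeds the explicit threshold $\beta_{\RP}$ of~\Cref{eq:rp_beta}. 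The same estimate applied to $D\cup D'$ against $D$ and $D'$ shows $\{\rx^D_{s,t}\}$ is Cauchy as $|D|\to 0$; define $\trx_{s,t}$ as the limit. It inherits the regularity bound, and multiplicativity $\trx_{s_1,s_3}=\trx_{s_1,s_2}\cdot\trx_{s_2,s_3}$ follows from $\rx^{D_1\cup D_2}=\rx^{D_1}\cdot\rx^{D_2}$ and continuity of the product. That $\trx_{s,t}$ in fact lies in $G_0^{\gr{\leq n+1}}$ follows either by rerunning the construction with a group-like lift (via $\exp$ of the canonical section $\fg_0^{\gr{\leq n}}\hookrightarrow\fg_0^{\gr{\leq n+1}}$) and invoking uniqueness, or directly by~\cite{cass_integration_2016}. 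The main obstacle is quantitative rather than conceptual: the composite factor $\beta^{-2}\cdot(\text{neoclassical constant})\cdot(\text{telescoping constant})$ must reabsorb into $\beta^{-1}$, which is precisely what forces the explicit form of $\beta_{\RP}$ in~\Cref{eq:rp_beta}; once that bookkeeping is carried out on the single-point-removal estimate, every other step is a routine limit.
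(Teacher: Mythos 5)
The paper does not supply a proof of this statement: it is cited directly to Lyons (1998) and Cass--Litterer (2016) and used as a black box. So there is no in-paper argument to compare against, and what you have written should be judged on its own terms as a reconstruction of the classical proof.

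Your reconstruction is correct and follows Lyons' original strategy closely: uniqueness via the additive-cocycle trick at level $n+1$ killed by the superadditivity $\rho(n+1)>1$; existence by embedding $\rx$ with a trivial top level, forming partition products $\rx^D$, controlling a single point-removal by the product expansion at level $n+1$ plus the neoclassical inequality, and then pigeonholing on the cheapest interior point to telescope down to the trivial partition. The key identity $(\rx^D-\rx^{D'})^{\gr{n+1}}=\sum_{j=1}^n \rx^{\gr{j}}_{u_{i-1},u_i}\cdot\rx^{\gr{n+1-j}}_{u_i,u_{i+1}}$, and the reason the side factors $P_L,P_R$ drop out (the middle has nonzero content only at level $n+1$, so only the level-$0$ parts of $P_L,P_R$, which equal $1$, contribute), are both stated correctly. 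Your identification of where $\beta_{\RP}$ enters --- absorbing the factor $\beta^{-2}\cdot\rho^{-1}\cdot\sum_k(2/k)^{\rho(n+1)}$ back into $\beta^{-1}$ --- is exactly the role of the threshold in~\Cref{eq:rp_beta}.

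One step is stated a bit too loosely to be a complete argument: the Cauchy claim. Comparing $\rx^D$ and $\rx^{D'}$ via the common refinement $D\cup D'$ is the right idea, but "the same estimate" (the pigeonhole/telescope to the trivial partition) does not directly give smallness of $\|\rx^{D\cup D'}-\rx^D\|$ as $|D|\to 0$, because the cheapest interior point of $D\cup D'$ may belong to $D$ and so cannot be removed. The standard fix is to decompose $[s,t]$ into the subintervals of $D$, write $\rx^{D\cup D'}_{s,t}-\rx^D_{s,t}$ as a telescoping sum replacing $\rx^*_{u_{i-1},u_i}$ by $\rx^{(D\cup D')\cap[u_{i-1},u_i]}_{u_{i-1},u_i}$ one factor at a time, observe that at level $n+1$ each such replacement contributes exactly $\rx^{(D\cup D')\cap[u_{i-1},u_i],\gr{n+1}}_{u_{i-1},u_i}$, and then apply the already-established uniform bound on each subinterval and sum, picking up $\max_i\omega_{u_{i-1},u_i}^{(n+1)\rho-1}\cdot\omega_{s,t}\to 0$. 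Everything else --- multiplicativity of the limit and membership in $G_0^{\gr{\leq n+1}}$ via the group-like section or the Cass--Litterer argument --- is fine.
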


Furthermore, this extension is continuous in the following sense.

\begin{theorem}[Continuity of Extension Theorem~\cite{lyons_differential_1998}] \label{thm:path_ext_orig_cont}
    Let $\rho \in (0,1]$ and let $n \geq \left\lfloor\frac{1}{\rho}\right\rfloor$. Suppose $\rx, \ry : \Delta \to T_0^{\gr{\leq n}}$ are two multiplicative functionals which satisfy~\Cref{eq:rp_regularity} for each $k \in [n]$ with respect to a common control $\omega$. If there exists some $\epsilon > 0$ such that 
    \begin{align}\nonumber
        \|\rx^{\gr{k}}_{s_1, s_2} - \ry^{\gr{k}}_{s_1, s_2}\| \leq \epsilon \frac{\ctr{k\rho}{s_1, s_2}}{\beta \ffact{k}{\rho}}
    \end{align}
    for all $k \in [n]$ and $(s_1, s_2) \in \Delta^2$, then this condition also holds for the extensions $\trx$ and $\try$ at level $k = n+1$. 
\end{theorem}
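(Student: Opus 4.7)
The plan is to mimic the proof of the original extension theorem (\Cref{thm:path_extension}) while tracking the difference $\rx - \ry$ through the same construction, in parallel. The key observation is that the algorithm that produces $\trx^{\gr{n+1}}$ from $\rx^{\gr{\leq n}}$ is a fixed recipe (an iterated product indexed by partitions), so applying it to both $\rx$ and $\ry$ simultaneously makes the difference amenable to a telescoping estimate.

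First I would recall the standard construction: for a partition $\mathcal{P} = \{s_1 = u_0 < u_1 < \cdots < u_N = s_2\}$ of $[s_1, s_2]$, define the partition product
\[
    \rx^{\mathcal{P}}_{s_1, s_2} \coloneqq \rx_{u_0, u_1} \cdot \rx_{u_1, u_2} \cdots \rx_{u_{N-1}, u_N} \in T_0^{\gr{\leq n+1}},
\]
where the (undefined) level $n+1$ components of the one-interval factors are taken to be $0$. The proof of \Cref{thm:path_extension} shows that $\pi_0^{\gr{n+1}}(\rx^{\mathcal{P}}_{s_1, s_2}) \to \trx^{\gr{n+1}}_{s_1, s_2}$ as $|\mathcal{P}| \to 0$; the convergence rate comes from bounding the change under inserting a midpoint via multiplicativity and then applying the neoclassical inequality \Cref{eq:neoclassical}, with the constant $\beta_{\RP}$ of \Cref{eq:rp_beta} tuned to absorb the resulting combinatorial factors.

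Second, I would apply the same construction to $\ry$ on the same partition and expand the difference by the non-commutative telescoping identity
\[
    \rx_{u_0, u_1} \cdots \rx_{u_{N-1}, u_N} - \ry_{u_0, u_1} \cdots \ry_{u_{N-1}, u_N} = \sum_{j=1}^{N} \rx_{u_0, u_1} \cdots \rx_{u_{j-2}, u_{j-1}} \cdot (\rx_{u_{j-1}, u_j} - \ry_{u_{j-1}, u_j}) \cdot \ry_{u_j, u_{j+1}} \cdots \ry_{u_{N-1}, u_N}.
\]
Projecting to level $n+1$ and organizing terms by the tuple $(k_1, \ldots)$ of tensor levels summing to $n+1$, each resulting summand contains exactly one difference factor $\rx^{\gr{k}}_{u,v} - \ry^{\gr{k}}_{u,v}$, bounded by $\epsilon \ctr{k\rho}{u,v}/(\beta \ffact{k}{\rho})$, while every other factor is controlled by the regularity \Cref{eq:rp_regularity}. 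Thus each term acquires exactly one linear factor of $\epsilon$ relative to what the original extension proof produces when it bounds $\|\pi_0^{\gr{n+1}}(\rx^{\mathcal{P}}_{s_1, s_2})\|$. Running the same neoclassical-inequality step as in the original induction on refinements therefore yields
\[
    \|\pi_0^{\gr{n+1}}(\rx^{\mathcal{P}}_{s_1,s_2} - \ry^{\mathcal{P}}_{s_1,s_2})\| \leq \epsilon \frac{\ctr{(n+1)\rho}{s_1,s_2}}{\beta \ffact{n+1}{\rho}}
\]
uniformly in $\mathcal{P}$, and passing to the limit $|\mathcal{P}| \to 0$ transfers this bound to $\trx^{\gr{n+1}} - \try^{\gr{n+1}}$.

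The main obstacle is bookkeeping: one must verify that the telescoping produces \emph{exactly} one factor of $\epsilon$ per summand (not higher powers), and that after collecting terms the combinatorial sum is controlled by the same constant $\beta_{\RP}$ used in the original extension theorem, rather than a larger one that would force a non-sharp bound. The point is that replacing one $\omega^{k\rho}/\ffact{k}{\rho}$ by $\epsilon \cdot \omega^{k\rho}/\ffact{k}{\rho}$ does not change the $\omega$-structure of the estimate, so the neoclassical step and the geometric series controlling the dyadic refinement both go through unchanged. Apart from this careful accounting, the argument is a direct parallel of the induction that establishes \Cref{thm:path_extension}.
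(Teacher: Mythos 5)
The paper does not prove Theorem~\ref{thm:path_ext_orig_cont} itself: it is cited from \cite{lyons_differential_1998}. The relevant comparison is with the paper's proof of the closely related Theorem~\ref{thm:path_extension_cont} in Appendix~E, which bounds the difference of almost-multiplicativity errors
\[
\Bigl\| \sum_{q=1}^n \bx^{\gr{q}}_{s_1, s_2} \cdot \bx^{\gr{n+1-q}}_{s_2, s_3} - \by^{\gr{q}}_{s_1, s_2} \cdot \by^{\gr{n+1-q}}_{s_2, s_3} \Bigr\|
\]
using a one-step insert-and-subtract together with the neoclassical inequality, and then invokes ``the usual Young trick to prove a maximal inequality'' as in \cite[Theorem 3.7]{lyons_differential_2007}.

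Your telescoping of the full $N$-factor partition product has a genuine gap. After using multiplicativity at levels $\leq n$ to collapse the prefix and suffix, the $j$-th telescoping term at level $n+1$ reduces to
\[
\sum_{\substack{a + k + b = n+1 \\ k \geq 1}} \rx^{\gr{a}}_{u_0, u_{j-1}} \cdot \bigl(\rx^{\gr{k}}_{u_{j-1}, u_j} - \ry^{\gr{k}}_{u_{j-1}, u_j}\bigr) \cdot \ry^{\gr{b}}_{u_j, u_N},
\]
and the triangle inequality gives $\sum_j \omega(u_0,u_{j-1})^{a\rho}\,\omega(u_{j-1},u_j)^{k\rho}\,\omega(u_j,u_N)^{b\rho}$ up to constants. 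For $k$ with $k\rho < 1$ (which occurs whenever $\rho < 1$, since $1 \leq k \leq n$ and $n \geq \lfloor 1/\rho \rfloor$), this sum over $j$ diverges like $N^{1-k\rho}$ as the mesh $|\mathcal{P}| \to 0$ — e.g.\ for the uniform partition of $[s_1,s_2]$ into $N$ pieces, the boundary factors do not supply enough decay. So the displayed claim that $\|\pi_0^{\gr{n+1}}(\rx^{\mathcal{P}} - \ry^{\mathcal{P}})\| \leq \epsilon\,\ctr{(n+1)\rho}{s_1,s_2}/(\beta\, \ffact{n+1}{\rho})$ ``uniformly in $\mathcal{P}$'' cannot be obtained by absolute-value estimation of your telescoping sum; the estimate overshoots precisely because it ignores the cancellations that make $\rx^{\mathcal{P}}$ and $\ry^{\mathcal{P}}$ converge in the first place.

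The repair is to telescope in the refinement rather than in the $\rx\!\to\!\ry$ replacement. Set $\Delta^{\mathcal{P}} \coloneqq \pi^{\gr{n+1}}(\rx^{\mathcal{P}} - \ry^{\mathcal{P}})$, note $\Delta^{\{s_1,s_2\}} = 0$, and estimate the change when one point $u_j$ is removed from $\mathcal{P}$: this change equals the difference of the two almost-multiplicativity errors on $[u_{j-1}, u_j] \cup [u_j, u_{j+1}]$, and a \emph{two-term} insert-and-subtract bounds it by $\epsilon$ times the control, exactly as in the paper's proof of Theorem~\ref{thm:path_extension_cont}. Then the Young trick — always removing the point of smallest mesh — yields a geometric-type series in the number of points because $(n+1)\rho > 1$, giving the uniform bound on $\Delta^{\mathcal{P}}$ and hence on the limit. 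Your intuition that ``each term acquires exactly one linear factor of $\epsilon$'' is correct, but this observation must be applied to the removal increments $\Delta^{\mathcal{P}} - \Delta^{\mathcal{P}\setminus\{u_j\}}$ rather than to the $N$-fold product decomposition.
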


Rough paths are then defined to be multiplicative functionals defined up to the level required by the extension theorem.

\begin{definition}
    Let $\rho \in (0,1]$ and $n = \lfloor 1/\rho\rfloor$.  A \emph{(weakly geometric) $\rho$-rough path} is a $\rho$-H\"older multiplicative functional $\rx: \Delta^2 \to G_0^{\gr{\leq n}}$ valued in the group-like elements. The space of (weakly geometric) $\rho$-rough paths is denoted $\RP^\rho$. The $\rho$-H\"older metric on $\rho$-rough paths $\rx, \ry \in \RP^\rho$ is defined by
    \begin{align}\nonumber
        d_\rho(\rx, \ry) = \max_{k \in [n]} \sup_{s_1, s_2 \in \Delta^2} \frac{\|\rx^{\gr{k}}_{s_1, s_2} - \ry^{\gr{k}}_{t_1, t_2}\|}{\inc{s_1}{s_2}^{k\rho}}
    \end{align}
\end{definition}

The extension theorem allows us to compute the signature of $\rho$-rough paths by extending the multiplicative functional one level at a time, and leads to the following corollary.

\begin{corollary}
    The path signature $S: \RP^\rho \to G_0$ defined by $S(\rx) = \trx_{0,1}$, where $\trx: \Delta^2 \to G_0$ is the infinite extension, is continuous. 
\end{corollary}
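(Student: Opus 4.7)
The plan is to apply the continuity version of the extension theorem (\Cref{thm:path_ext_orig_cont}) inductively, and then sum the resulting level-wise bounds against the seminorms $p_\lambda$ defining the locally $m$-convex topology on $E_0(\V)$. Continuity of $S : (\RP^\rho, d_\rho) \to G_0 \subset E_0(\V)$ reduces, for every $\lambda > 0$ and any pair $\rx, \ry \in \RP^\rho$, to showing that $p_\lambda(S(\rx) - S(\ry)) \to 0$ as $d_\rho(\rx, \ry) \to 0$.

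First I would fix rough paths $\rx, \ry$, set $\epsilon := d_\rho(\rx, \ry)$, and choose a common H\"older control $\omega_{s_1,s_2} = C_\omega \inc{s_1}{s_2}$ valid for both (take $C_\omega$ to be the maximum of the two individual constants). For each level $k \in \{1, \ldots, \lfloor \rho^{-1} \rfloor\}$, the definition of $d_\rho$ gives $\|\rx^{\gr{k}}_{s_1,s_2} - \ry^{\gr{k}}_{s_1,s_2}\| \leq \epsilon\,\inc{s_1}{s_2}^{k\rho}$, which after multiplying and dividing by $\ctr{k\rho}{s_1,s_2}/(\beta \ffact{k}{\rho})$ is absorbed into the form
\begin{equation*}
    \|\rx^{\gr{k}}_{s_1,s_2} - \ry^{\gr{k}}_{s_1,s_2}\| \leq (C_k \epsilon) \, \frac{\ctr{k\rho}{s_1,s_2}}{\beta \ffact{k}{\rho}},
\end{equation*}
for a constant $C_k$ depending only on $k$, $\rho$, $\beta$, and $C_\omega$. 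Taking $\epsilon' := \epsilon \cdot \max_k C_k$ puts the hypothesis of \Cref{thm:path_ext_orig_cont} in force at every level $k \in \{1, \ldots, \lfloor\rho^{-1}\rfloor\}$. Applying that theorem once propagates the same $\epsilon'$-estimate to level $\lfloor \rho^{-1} \rfloor + 1$; iterating indefinitely yields, for every $k \geq 0$,
\begin{equation*}
    \|\trx^{\gr{k}}_{0,1} - \try^{\gr{k}}_{0,1}\| \leq \epsilon' \, \frac{\ctr{k\rho}{0,1}}{\beta \ffact{k}{\rho}}.
\end{equation*}

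This factorial decay is exactly the summability condition characterizing $E_0(\V)$ from~\Cref{eq:E0_characterization}. Summing the level-wise bound against $\lambda^k$ produces
\begin{equation*}
    p_\lambda(S(\rx) - S(\ry)) \;=\; \sum_{k=0}^\infty \lambda^k \|\trx^{\gr{k}}_{0,1} - \try^{\gr{k}}_{0,1}\| \;\leq\; \frac{\epsilon'}{\beta} \sum_{k=0}^\infty \frac{(\lambda C_\omega^\rho)^k}{\ffact{k}{\rho}},
\end{equation*}
which is a convergent Mittag-Leffler–type series for every $\lambda > 0$, and therefore tends to $0$ as $\epsilon \to 0$. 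Since $\lambda$ was arbitrary, $S$ is continuous into $E_0(\V)$.

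The main technical subtlety is the inductive step: one must verify that the constant in the $\epsilon$-bound does not degrade when pushed through a single application of the extension, which is precisely guaranteed by \Cref{thm:path_ext_orig_cont} once $\beta$ is chosen to satisfy the threshold~\Cref{eq:rp_beta}. The other care needed is coercing $\rx$ and $\ry$ to share a common control; this is standard and costs only a harmless rescaling of constants. Both points are routine given the hypotheses already in place, so the argument reduces to the two pieces above: iterated application of the continuity extension followed by summation of the resulting factorially-decaying differences.
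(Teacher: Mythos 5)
Your proposal is correct and follows essentially the same route as the paper: use the extension theorem to get the factorial-decay level bounds, invoke the continuity-of-extensions theorem (\Cref{thm:path_ext_orig_cont}) to propagate closeness to all levels, and sum against $\lambda^k$ to control $p_\lambda$. You simply spell out the details that the paper's two-line proof leaves implicit — in particular the rescaling to a common control and the absorption of the $d_\rho$ bound into the $\epsilon\,\ctr{k\rho}{s_1,s_2}/(\beta\ffact{k}{\rho})$ hypothesis form — so there is no gap.
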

\begin{proof}
    First, we can show that the signature is bounded and valued in $E_0$. Let $\lambda > 0$, and by the extension theorem in~\Cref{thm:path_extension}, we have
    \begin{align}\nonumber
        p_\lambda(S(\rx)) = \sum_{k=0}^\infty \frac{\lambda^k \ctr{k\rho}{0,1}}{\beta \ffact{k}{\rho}} < \infty,
    \end{align}
    where $p_\lambda$ is the norm in~\Cref{eq:plambda_norm}.
    Then, by the continuity of extensions in~\Cref{thm:path_ext_orig_cont}, this is continuous.
\end{proof}

\subsection{H\"older Double Group Functionals} \label{ssec:holder_dgf}

Consider a smooth path $x \in C^\infty([0,1], \V)$. By computing the path signature of $x$ over all subintervals $[\smp] \subset [0,1]$, we obtain a multiplicative functional $\rx : \Delta^2 \to G_0$ defined by
\begin{align}\nonumber
    \rx_{\smp} \coloneqq S(x|_{[\smp]}).
\end{align}
Now, consider a smooth surface $X \in C^\infty([0,1]^2, \V)$. We wish to define an analogous notion of a 2-dimensional multiplicative functional which encodes the surface signature of $X$ over all subrectangles $Q = [\smp] \times [\tmp] \subset [0,1]^2$, which can be parametrized by $(\smp; \tmp) \in \Delta^2 \times \Delta^2$. In particular, we can define a map $\rX: \Delta^2 \times \Delta^2 \to G_1$ by
\begin{align}\nonumber
    \rX_{\smp; \tmp} \coloneqq \hssig(X|_{Q}).
\end{align}
However, in order to define horizontal and vertical compositions, we must retain the information about the path signature of the boundary edges of this subrectangle. We can achieve this by instead defining a map $\rrX : \Delta^2 \times \Delta^2 \to \dg(\cmG)$ into the double group $\dg(\cmG)$ by the surface signature functor in~\Cref{def:sh_functor},
\begin{align}\nonumber
    \rrX_{\smp; \tmp} \coloneqq \bS(X|_{Q}) =  \left( S(\bdy_b X|_E), \, S(\bdy_r X|_E), \, S(\bdy_u X|_E), \, S(\bdy_l X|_E), \, \hssig(X|_{E})\right).
\end{align}
Then, we note that for all $s_1 < s_2 < s_3$ and $t_1 < t_2 < t_3$, the horizontal and vertical compositions,
\begin{align}\nonumber
    \rrX_{s_1, s_2; t_1, t_2} \hmult \rrX_{s_2, s_3; t_1, t_2} \andd \rrX_{s_1, s_2; t_1, t_2} \vmult \rrX_{s_1, s_2; t_2, t_3}
\end{align}
are well-defined (since the appropriate boundaries coincide by definition) using~\Cref{eq:dg_hmult} and~\Cref{eq:dg_vmult}.
In practice, it is convenient to the horizontal paths, vertical paths, and the surface elements separately, and this leads to our notion of a double group functional. 

\begin{definition}
    Let $n \in \N$. Let $\cmG^{\gr{\leq n}}$ be the crossed module of truncated group-like elements (\Cref{eq:cm_truncated_group_like}).  We define the sets of \emph{horizontal path functionals} and \emph{vertical path functionals} valued in $\cmG^{\gr{\leq n}}$ by
    \begin{align}\nonumber
        \HPF(G_0^{\gr{\leq n}}) \coloneqq \{ \rx^h : \Delta^2 \times [0,1] \to G_0^{\gr{\leq n}} \} \andd \VPF(G_0^{\gr{\leq n}}) \coloneqq \{ \rx^v : [0,1] \times \Delta^2 \to G_0^{\gr{\leq n}}\},
    \end{align}
    and the set of \emph{surface functionals} valued in $G_1^{\gr{\leq n}}$ by
    \begin{align}\nonumber
        \SF(G_1^{\gr{\leq n}}) \coloneqq \{ \rX: \Delta^2 \times \Delta^2 \to G_1^{\gr{\leq n}}\}.
    \end{align}
    A \emph{double group functional} $\rrX = (\rx^h, \rx^v, \rX)$ valued in $\cmG^{\gr{\leq n}}$ consists of three maps
    \begin{align}\nonumber
        \rx^h \in \HPF(G_1^{\gr{\leq n}}) \quad \rx^v \in \VPF(G_0^{\gr{\leq n}}), \quad \rX \in \SF(G_1^{\gr{\leq n}})
    \end{align}
    such that
    \begin{enumerate}
        \item the path functionals are \emph{path-multiplicative}
    \begin{align}\nonumber
        \rx^h_{s_1, s_2;t} \cdot \rx^h_{s_2, s_3; t} = \rx^h_{s_1, s_3; t} \andd \rx^v_{s; t_1, t_2} \cdot \rx^v_{s; t_2, t_3} = \rx^v_{s;t_1, t_3}
    \end{align}
    for all $s,t \in [0,1]$, $s_1 < s_2 < s_3$, and $t_1 < t_2 < t_3$; and
    \item $\rrX$ satisfies the \emph{boundary condition}
    \begin{align} \label{eq:rrX_boundary_condition}
        \cmb(\rX_{s_1,s_2; t_1, t_2}) = \rx^h_{s_1, s_2;t_1} \cdot \rx^v_{s_2; t_1, t_2} \cdot \rx^{-h}_{s_1; t_1, t_2} \cdot \rx^{-v}_{s_1;t_1, t_2}
    \end{align}
    for all $s_1 < s_2$ and $t_1 < t_2$. 
    \end{enumerate}
    We denote the space of double group functionals valued in $\cmG^{\gr{\leq n}}$ by $\DGF(\cmG^{\gr{\leq n}})$. 
    We use the notation $\rx^{-h}$ and $\rx^{-v}$ for group inverses in $H_0$ to simplify notation; in particular,
    \begin{align}\nonumber
        \rx^{-h}_{s_1, s_2;t} \coloneqq (\rx^h_{s_1, s_2; t})^{-1} \andd \rx^{-v}_{t;s_1, s_2} \coloneqq (\rx^v_{s; t_1, t_2})^{-1}.
    \end{align}
    In particular, this structure is equivalent to defining $\rrX: \Delta^2 \times \Delta^2 \to \dg(\cmG^{\gr{\leq n}})$ as
    \begin{align} \label{eq:rrX_in_double_group}
        \rrX_{s_1, s_2;t_1, t_2} \coloneqq \left(\rx^h_{s_1, s_2;t_1},\, \rx^v_{s_2; t_1, t_2}, \, \rx^{h}_{s_1, s_2; t_2}, \, \rx^{-v}_{s_1;t_1, t_2}, \, \rX_{s_1,s_2; t_1, t_2}\right).
    \end{align}
    Finally, $\rrX$ is \emph{horizontally} and \emph{vertically multiplicative} if 
    \begin{align} \label{eq:dgf_def_multiplicative}
    \rrX_{s_1, s_2; t_1, t_2} \mult_h \rrX_{s_2, s_3; t_1, t_2} = \rrX_{s_1, s_3; t_1, t_2} \andd \rrX_{s_1, s_2; t_1, t_2} \mult_v \rrX_{s_1, s_2; t_2, t_3} = \rrX_{s_1, s_2; t_1, t_3}. 
    \end{align}
\end{definition}

\begin{figure}[!h]
    \includegraphics[width=\linewidth]{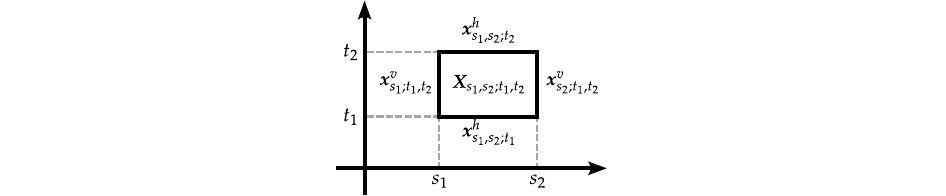}
\end{figure}  

\begin{remark}
    Let $\rrX$ be a double group functional. While horizontal and vertical compositions are only well-defined for squares in the double group, we will use an abuse of notation and write
    \begin{align}\nonumber
        \rX_{s_1, s_2; t_1, t_2} \hmult \rX_{s_2, s_3; t_1, t_2} &\coloneqq (\rx^h_{s_1, s_2; t_1} \gt \rX_{s_2, s_3; t_1, t_2}) * \rX_{s_1, s_2; t_1, t_2} \\
        \rX_{s_1, s_2; t_1, t_2} \vmult \rX_{s_1, s_2; t_2, t_3} &\coloneqq \rX_{s_1, s_2; t_1, t_2} * (\rx^v_{s_1; t_1, t_2} \gt \rX_{s_1, s_2; t_2, t_3})\nonumber
    \end{align}
    to denote the surface component of horizontal and vertical compositions, where the underlying squares are implicit.
\end{remark}

We emphasize that double group functionals (not necessarily multiplicative) satisfy the interchange law by definition since they satisfy the boundary condition in~\Cref{eq:rrX_boundary_condition}, and are thus valued in the double group $\dg(\cmG^{\gr{\leq n}})$ as in~\Cref{eq:rrX_in_double_group}.

\begin{lemma} \label{lem:dgf_interchange}
    A double group functional $\rrX \in \DGF(\cmG^{\gr{\leq n}})$ satisfies the interchange law
    \begin{align*}
        (\rrX_{s_1, s_2; t_1, t_2} \hmult \rrX_{s_2, s_3; t_1, t_2}) \vmult (\rrX_{s_1, s_2; t_2, t_3} \hmult \rrX_{s_2, s_3; t_2, t_3}) = (\rrX_{s_1, s_2; t_1, t_2} \vmult \rrX_{s_1, s_2; t_2, t_3}) \hmult (\rrX_{s_2, s_3; t_1, t_2} \vmult \rrX_{s_2, s_3; t_2, t_3}).
    \end{align*}
\end{lemma}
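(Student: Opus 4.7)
The plan is to reduce this to the interchange law \ref{eq:dg_interchange} already established for the double group $\dg(\cmG^{\gr{\leq n}})$. By definition, a double group functional $\rrX$ assembles, via \ref{eq:rrX_in_double_group}, into a map $\rrX : \Delta^2 \times \Delta^2 \to \dg(\cmG^{\gr{\leq n}})$, since the boundary condition \ref{eq:rrX_boundary_condition} is precisely the condition required for the tuple to lie in $\dg(\cmG^{\gr{\leq n}})$. So once I verify that all four squares
\[
A = \rrX_{s_1,s_2;t_1,t_2},\quad B = \rrX_{s_2,s_3;t_1,t_2},\quad C = \rrX_{s_1,s_2;t_2,t_3},\quad D = \rrX_{s_2,s_3;t_2,t_3}
\]
are composable in the appropriate ways, the identity becomes an immediate instance of the double-group interchange law applied to $A,B,C,D$.

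So the main step is to check composability, which is entirely a consequence of the path-multiplicativity of the horizontal and vertical path components $\rx^h$ and $\rx^v$. First I would record the four horizontal compositions $A \hmult B$ and $C \hmult D$: the right edge of $A$ is $\rx^v_{s_2;t_1,t_2}$ which matches the left edge of $B$, and analogously for $C \hmult D$. Then for the two vertical compositions $A \vmult C$ and $B \vmult D$ the top edge of $A$ is $\rx^h_{s_1,s_2;t_2}$, which equals the bottom edge of $C$, and similarly for $B \vmult D$. Finally, for the outer compositions, path-multiplicativity gives
\begin{align*}
\rx^h_{s_1,s_2;t_2}\cdot \rx^h_{s_2,s_3;t_2} &= \rx^h_{s_1,s_3;t_2},\\
\rx^v_{s_2;t_1,t_2}\cdot \rx^v_{s_2;t_2,t_3} &= \rx^v_{s_2;t_1,t_3},
\end{align*}
so the top edge of $A \hmult B$ equals the bottom edge of $C \hmult D$, and the right edge of $A \vmult C$ equals the left edge of $B \vmult D$. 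This shows that both sides of the claimed identity are defined inside $\dg(\cmG^{\gr{\leq n}})$.

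With composability in hand, the identity is just \ref{eq:dg_interchange} applied to the four squares $A, B, C, D$ in $\dg(\cmG^{\gr{\leq n}})$. I do not expect any serious obstacle here: all the nontrivial content (the first and second Peiffer relations, which make the horizontal and vertical compositions well-defined and force the interchange law) has already been packaged into $\dg(\cmG^{\gr{\leq n}})$ itself, and the double-group functional framework was set up precisely so that this reduction is immediate. The only thing worth being careful about is that the unpacked statement of the lemma concerns the surface components under the abuse of notation $\rX \hmult \rX$ and $\rX \vmult \rX$ introduced in the remark preceding the lemma, but since that abuse just names the $G_1^{\gr{\leq n}}$-component of the full double-group composition, the equality of surface components follows from the equality of the full squares.
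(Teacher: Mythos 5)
Your proof is correct and matches the paper's own approach, which is essentially just the one-sentence remark preceding the lemma: since a double group functional satisfies the boundary condition \eqref{eq:rrX_boundary_condition}, it assembles via \eqref{eq:rrX_in_double_group} into a map valued in $\dg(\cmG^{\gr{\leq n}})$, and the interchange law is then inherited directly from \eqref{eq:dg_interchange}. The paper does not even write out a separate proof. Your added verification of composability via path-multiplicativity is a sensible thing to spell out, though strictly speaking the horizontal and vertical composability of the four inner squares already holds by construction of the functional (the relevant edges are literally the same element of $G_0^{\gr{\leq n}}$, no path-multiplicativity needed); path-multiplicativity is only needed to match the outer edges of the two composed columns/rows, which you correctly identified.
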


Next, we will define a notion of $\rho$-H\"older regularity for a double group functional $\rrX \in \DGF(\cmG^{\gr{\leq n}})$, where we take $n > \frac{2}{\rho}$, which is required for the surface extension theorem in~\Cref{thm:surface_extension}. The motivating idea is that a $\rho$-H\"older double group functional $\rrX$ should represent the surface signature of a $\rho$-H\"older surface $X \in C^\rho([0,1]^2, \V)$ restricted to subrectangles $Q \subset [0,1]^2$. We will consider conditions on the path components, $\rx^h$ and $\rx^v$, and the surface component $\rX$ separately. Furthermore, we will motivate these regularity conditions by taking $\rho \in (\frac{2}{3}, 1]$. This is the regime where Z\"ust integration of differential forms is still well-defined~\cite{zust_integration_2011} (see~\Cref{thm:zust}), which corresponds to the Young regime of rough paths.\medskip 

We consider the horizontal path component $\rx^h$, but our discussion also applies to the vertical path component $\rx^v$. For a fixed $t \in [0,1]$, the path component $\rx^h_{\cdot, \cdot; t} : \Delta^2 \to G_0$ is a multiplicative functional, which would represent the path signature of the horizontal path $X_{\cdot, t} \in C^\rho([0,1], \V)$ through the surface. Therefore, we impose the \emph{path regularity conditions}
\begin{align} \label{eq:motivation_path_regularity}
    \|\rx^{h, \gr{k}}_{s_1, s_2; t}\| \leq \frac{\ctr{k\rho}{s_1, s_2}}{\beta \ffact{k}{\rho}} \andd \|\rx^{v, \gr{k}}_{s; t_1, t_2}\| \leq \frac{\ctr{k\rho}{t_1, t_2}}{\beta \ffact{k}{\rho}},
\end{align}
which coincides with the regularity condition for $\rho$-H\"older multiplicative functionals. However, this does not impose any regularity conditions between horizontal paths at different $t$ values. We obtain such a condition by considering the $\rho$-H\"older regularity of the underlying surface $X$. We define the \emph{2D rectangular increment} of $X$ by
\begin{align} \label{eq:2d_increment}
    \square_{s_1, s_2; t_1, t_2}[X] \coloneqq X_{s_1, t_1} - X_{s_1, t_2} - X_{s_2, t_1} + X_{s_2, t_2}.
\end{align}

\begin{lemma} \label{lem:holder_2d_increment}
    Let $X \in C^\rho([0,1]^2, \V)$, and let $(s_1, s_2), (t_1, t_2) \in \Delta^2$. Then,
    \begin{align}
        \|\square_{s_1, s_2; t_1, t_2}[X]\| \leq 2\|X\|_{\rho} \inc{s_1}{s_2}^{\rho/2} \inc{t_1}{t_2}^{\rho/2}.
    \end{align}
\end{lemma}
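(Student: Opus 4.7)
The plan is to bound the 2D increment in two different ways using the triangle inequality, then take the geometric mean to obtain the stated mixed bound. The key observation is that the rectangular increment can be grouped either as a difference of vertical differences or as a difference of horizontal differences, each of which is controlled purely by one of the two side lengths via the $\rho$-H\"older norm.

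First, I would write
\begin{align*}
    \square_{s_1, s_2; t_1, t_2}[X] = (X_{s_1, t_1} - X_{s_1, t_2}) - (X_{s_2, t_1} - X_{s_2, t_2}),
\end{align*}
apply the triangle inequality, and use the $\rho$-H\"older bound on each difference (noting $|(s_i, t_1) - (s_i, t_2)| = \inc{t_1}{t_2}$) to obtain $\|\square_{s_1, s_2; t_1, t_2}[X]\| \leq 2 \|X\|_\rho \inc{t_1}{t_2}^\rho$. Symmetrically, grouping by rows instead of columns gives $\|\square_{s_1, s_2; t_1, t_2}[X]\| \leq 2 \|X\|_\rho \inc{s_1}{s_2}^\rho$.

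Now I would interpolate by writing $\|\square\| = \|\square\|^{1/2} \cdot \|\square\|^{1/2}$ and applying the two bounds to each factor, yielding
\begin{align*}
    \|\square_{s_1, s_2; t_1, t_2}[X]\| \leq (2\|X\|_\rho)^{1/2} \inc{s_1}{s_2}^{\rho/2} \cdot (2\|X\|_\rho)^{1/2} \inc{t_1}{t_2}^{\rho/2} = 2 \|X\|_\rho \inc{s_1}{s_2}^{\rho/2} \inc{t_1}{t_2}^{\rho/2},
\end{align*}
which is the claimed estimate. There is no real obstacle here; the argument is entirely elementary, and the only thing to be slightly careful about is that the interpolation via geometric mean preserves the constant $2$ rather than yielding $\sqrt{2}\cdot \sqrt{2} = 2$ times the norm.
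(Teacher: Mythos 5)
Your proof is correct. The paper's argument takes a slightly different route: it assumes without loss of generality that $|s_1-s_2| \leq |t_1-t_2|$, groups the increment as a difference of horizontal differences to get $\|\square\| \leq 2\|X\|_\rho \inc{s_1}{s_2}^\rho$, and then uses monotonicity to replace half of the exponent, writing $\inc{s_1}{s_2}^\rho = \inc{s_1}{s_2}^{\rho/2}\inc{s_1}{s_2}^{\rho/2} \leq \inc{s_1}{s_2}^{\rho/2}\inc{t_1}{t_2}^{\rho/2}$. You instead derive both one-sided bounds and interpolate by taking the geometric mean of $\|\square\|$ with itself, which avoids the case split entirely. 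The two arguments are equivalent in content — both ultimately rest on $\min(a^\rho, b^\rho) \leq a^{\rho/2}b^{\rho/2}$ — and both produce the constant $2$; yours is marginally cleaner in that it is symmetric and requires no WLOG, while the paper's is marginally shorter in that it only needs one of the two decompositions.
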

\begin{proof}
    Without loss of generality, suppose $|s_1 - s_2| \leq |t_1 - t_2|$. Then,
    \begin{align}\nonumber
        \|\square_{s_1, s_2; t_1, t_2}[X]\| & \leq \|X_{s_1, t_1} - X_{s_2, t_1}\| + \|X_{s_2, t_2} - X_{s_1, t_2}\| \leq 2\|X\|_{\rho} \inc{s_1}{s_2}^\rho \leq  2\|X\|_{\rho} \inc{s_1}{s_2}^{\rho/2} \inc{t_1}{t_2}^{\rho/2}.
    \end{align}
\end{proof}

Throughout the remainder of this article, we will extensively use $\orho \coloneqq \rho/2$ to simplify notation. This suggests the regularity condition
\begin{align} \label{eq:level1_path_cont_condition}
    \|\rx^{h, \gr{1}}_{s_1, s_2; t_1} - \rx^{h, \gr{1}}_{s_1, s_2; t_2}\| \leq \frac{\ctr{\orho}{s_1, s_2}\ctr{\orho}{t_1, t_2}}{\beta \ffact{1}{\orho} \ffact{1}{\orho}}.
\end{align}
The next step would be to use the continuity of extensions from~\Cref{thm:path_ext_orig_cont} to motivate a regularity condition for higher levels. However, the $\orho$-regularity of the $s$ dimension in~\Cref{eq:level1_path_cont_condition} does not correspond $\rho$-regularity of the horizontal paths in~\Cref{eq:motivation_path_regularity}, and thus we cannot directly apply~\Cref{thm:path_ext_orig_cont}. Despite this, we prove a modified continuity theorem which is more suitable for paths through $\rho$-H\"older surfaces. The proof is similar to the original continuity theorem~\cite{lyons_differential_1998}, and is provided in~\Cref{apxsec:additional_surface_extension}.

\begin{theorem} \label{thm:path_extension_cont}
    Let $\rho \in (0,1]$ and let $n + \frac{1}{2} > \frac{1}{\rho}$ (or equivalently $2n +1 > \frac{1}{\orho}$). Suppose $\bx, \by: \Delta \to G_0^{\gr{\leq n}}$ are multiplicative functionals which satisfies for each $(\smp) \in \Delta$ and $k \in [n]$
    \begin{align}\nonumber
        \|\bx^{\gr{k}}_{\smp}\|, \|\by^{\gr{k}}_{\smp}\| \leq \frac{\ctr{k\rho}{\smp}}{\beta\ffact{k}{\rho}}.
    \end{align}
    Furthermore, we assume that for all $k \in [n]$, we have
    \begin{align}\nonumber
        \|\bx^{\gr{k}}_{\smp} - \by^{\gr{k}}_{\smp}\| \leq \epsilon \frac{\ctr{(2k-1)\orho}{\smp}}{\beta\ffact{2k-1}{\orho}}.
    \end{align}
    Then, the multiplicative extension $\tbx, \tby : \Delta \to G_0^{\gr{\leq n+1}}$ from~\Cref{thm:path_extension} satisfy
    \begin{align}\nonumber
        \|\tbx^{\gr{n+1}}_{\smp} - \tby^{\gr{n+1}}_{\smp}\|\leq \epsilon \frac{\ctr{(2n+1)\orho}{\smp}}{\beta\ffact{2n+1}{\orho}}.
    \end{align}
\end{theorem}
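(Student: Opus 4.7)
The plan is to mirror Lyons' classical proof of the continuity theorem~\Cref{thm:path_ext_orig_cont}, adapted to the asymmetric scale $\orho = \rho/2$. Recall from the proof of~\Cref{thm:path_extension} that $\tbx^{\gr{n+1}}_{\smp}$ arises as the limit, over partitions $\pi = \{\smm = t_0 < \cdots < t_m = \spp\}$ of $[\smp]$, of approximations $\bx^{\gr{n+1}}_\pi$ built by iterated application of Chen's identity with the initialization $\bx^{\gr{n+1}}_{t_{i-1}, t_i} = 0$ on adjacent pairs; the same procedure applied to $\by$ produces $\by^{\gr{n+1}}_\pi \to \tby^{\gr{n+1}}_{\smp}$. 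On the trivial partition $\pi_0 = \{\smm, \spp\}$ both approximations vanish, so it suffices to track the cumulative change of $\bx^{\gr{n+1}}_\pi - \by^{\gr{n+1}}_\pi$ as $\pi$ is refined from $\pi_0$ to fineness $|\pi| \to 0$.

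The central single-step estimate comes from the \emph{drop-a-point} formula: removing an interior point $t_j$ from $\pi$ to form $\pi'$ changes $\bx^{\gr{n+1}}_\pi$ by exactly $\sum_{k=1}^n \bx^{\gr{k}}_{t_{j-1}, t_j} \cdot \bx^{\gr{n+1-k}}_{t_j, t_{j+1}}$ (a direct consequence of Chen's relation), and analogously for $\by$. Hence
$$\bigl(\bx^{\gr{n+1}}_\pi - \by^{\gr{n+1}}_\pi\bigr) - \bigl(\bx^{\gr{n+1}}_{\pi'} - \by^{\gr{n+1}}_{\pi'}\bigr) = \sum_{k=1}^n \Bigl[\bx^{\gr{k}}_{t_{j-1}, t_j} \cdot \bx^{\gr{n+1-k}}_{t_j, t_{j+1}} - \by^{\gr{k}}_{t_{j-1}, t_j} \cdot \by^{\gr{n+1-k}}_{t_j, t_{j+1}}\Bigr].$$
I then expand each summand as $(\bx^{\gr{k}} - \by^{\gr{k}}) \cdot \bx^{\gr{n+1-k}} + \by^{\gr{k}} \cdot (\bx^{\gr{n+1-k}} - \by^{\gr{n+1-k}})$, applying the closeness hypothesis to the differenced factor (exponent $(2k-1)\orho$ or $(2n-2k+1)\orho$) and the $\rho$-H\"older regularity bound to the remaining factor (exponent $2k\orho = k\rho$ or $(2n+2-2k)\orho$). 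The crucial arithmetic is that in \emph{both} configurations the combined $\orho$-exponent equals $(2n+1)\orho$, independent of $k$. Summing over $k \in \{1, \ldots, n\}$ exhausts respectively the even- and odd-index splits of $(2n+1)\orho$, so two applications of the neoclassical inequality~\Cref{eq:neoclassical} at parameter $\orho$, together with subadditivity $\omega_{t_{j-1}, t_j} + \omega_{t_j, t_{j+1}} = \omega_{t_{j-1}, t_{j+1}}$, yield
$$\Bigl\|\bigl(\bx^{\gr{n+1}}_\pi - \by^{\gr{n+1}}_\pi\bigr) - \bigl(\bx^{\gr{n+1}}_{\pi'} - \by^{\gr{n+1}}_{\pi'}\bigr)\Bigr\| \le \frac{2\epsilon}{\beta^2 \orho} \cdot \frac{\ctr{(2n+1)\orho}{t_{j-1}, t_{j+1}}}{\ffact{2n+1}{\orho}}.$$

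To close, I invoke the standard pigeonhole step: any partition of size $m$ has an interior point $t_j$ with $\omega_{t_{j-1}, t_{j+1}} \le 2\omega_{\smp}/(m-1)$. Iteratively removing such points from an arbitrary $\pi$ down to $\pi_0$ and summing the single-step bounds produces
$$\bigl\|\bx^{\gr{n+1}}_\pi - \by^{\gr{n+1}}_\pi\bigr\| \le \frac{2\epsilon \cdot 2^{(2n+1)\orho}}{\beta^2 \orho} \cdot \frac{\ctr{(2n+1)\orho}{\smp}}{\ffact{2n+1}{\orho}} \sum_{r=1}^\infty \frac{1}{r^{(2n+1)\orho}},$$
where the series converges \emph{precisely} because $(2n+1)\orho > 1$, which is the hypothesis $n + \tfrac{1}{2} > \tfrac{1}{\rho}$. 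Letting $|\pi| \to 0$ the left side tends to $\|\tbx^{\gr{n+1}}_{\smp} - \tby^{\gr{n+1}}_{\smp}\|$, and the multiplicative prefactor is absorbed into $1/\beta$ under the standing assumption $\beta > \beta_{\RP}$ from~\Cref{eq:rp_beta}.

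The main obstacle is the mismatch between the regularity exponent $2k\orho = k\rho$ and the closeness exponent $(2k-1)\orho$: they differ by half a unit of $\rho$, so the hypotheses are not directly compatible with the original continuity theorem~\Cref{thm:path_ext_orig_cont}, which requires both scales to coincide. The workaround is to perform the entire Lyons argument at the finer $\orho$-scale, where every exponent is an integer multiple of $\orho$; the arithmetic then cooperates so that one regularity factor paired with one closeness factor always yields combined exponent $(2n+1)\orho$, the $k$-sum splits cleanly into even- and odd-index sub-sums each controlled by a single instance of the neoclassical inequality at parameter $\orho$, and convergence of the pigeonhole series is exactly the hypothesis $(2n+1)\orho > 1$.
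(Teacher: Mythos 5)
Your proposal is correct and tracks the paper's own proof essentially step by step: isolate the Chen-type error term $\sum_{q=1}^n \bx^{\gr{q}}\cdot\bx^{\gr{n+1-q}} - \by^{\gr{q}}\cdot\by^{\gr{n+1-q}}$, expand it by inserting the cross term, pair one closeness bound (odd $\orho$-exponent) with one regularity bound (even $\orho$-exponent) so that the combined exponent is always $(2n+1)\orho$, control the sum by the neoclassical inequality at parameter $\orho$, and close with the standard Lyons maximal inequality / pigeonhole argument (which the paper simply cites as ``the usual Young trick''). Your concluding paragraph also correctly identifies the reason the original~\Cref{thm:path_ext_orig_cont} cannot be applied directly and why passing to the $\orho$-scale resolves it — this matches the paper's motivation for the modified statement. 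The only cosmetic divergence: the paper reindexes the two sub-sums (over $q'=2q-1$ odd and $q'=2q$ even) into a single sum $\sum_{q'=1}^{2n}$ and applies the neoclassical inequality once, whereas you keep the even/odd splits separate and invoke neoclassical twice; both routes succeed because the full $q'$-sum dominates each sub-sum term-by-term, but the paper's reindexing is slightly tighter on constants.
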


Because we have assumed that $\rho > \frac{2}{3}$, we have $\frac{1}{\rho} < \frac{3}{2}$, and thus we can apply~\Cref{thm:path_extension_cont} to the extensions of $\rx^{h, \gr{1}}_{s_1, s_2; t_1}$ and $\rx^{h, \gr{1}}_{s_1, s_2; t_2}$, where $\epsilon = \ctr{\orho}{t_1, t_2}$. In particular, we will impose the \emph{path continuity conditions}
\begin{align} \label{eq:motivation_path_continuity}
    \|\rx^{h, \gr{k}}_{s_1, s_2; t_1} - \rx^{h, \gr{k}}_{s_1, s_2; t_2}\| \leq \frac{\ctr{(2k-1)\orho}{s_1, s_2}\ctr{\orho}{t_1, t_2}}{\beta \ffact{2k-1}{\orho} \ffact{1}{\orho}} \andd \|\rx^{v, \gr{k}}_{s_1; t_1, t_2} - \rx^{v, \gr{k}}_{s_2; t_1, t_2}\| \leq \frac{\ctr{\orho}{s_1, s_2}\ctr{(2k-1)\orho}{t_1, t_2}}{\beta  \ffact{1}{\orho} \ffact{2k-1}{\orho}}
\end{align}

\begin{definition} \label{def:holder_path_functionals}
    Let $\rho \in (0,1]$, $\orho = \frac{\rho}{2}$ and $n \in \N$. We define \emph{$\rho$-H\"older horizontal and vertical path functionals} to be path functionals $\rx^h \in \HPF(G_0^{\gr{\leq n}})$ and $\rx^v \in \HPF(G_0^{\gr{\leq n}})$ which satisfy for all $k \in [n]$ the \emph{path regularity conditions},
    \begin{align} \label{eq:path_regularity}
        \left\|\rx^{h, \gr{k}}_{s_1, s_2; t}\right\| \leq \frac{\ctr{k\rho}{s_1, s_2}}{\beta \ffact{k}{\rho}} \andd \left\|\rx^{v, \gr{k}}_{s; t_1, t_2}\right\| \leq \frac{\ctr{k\rho}{t_1, t_2}}{\beta \ffact{k}{\rho}},
    \end{align}
    and the \emph{path continuity conditions},
    \begin{align} \label{eq:path_continuity}
        \left\|\rx^{h, \gr{k}}_{s_1, s_2; t_1} - \rx^{h, \gr{k}}_{s_1, s_2; t_2}\right\| \leq \frac{\ctr{(2k-1)\orho}{s_1, s_2}\ctr{\orho}{t_1, t_2}}{\beta \ffact{2k-1}{\orho} \ffact{1}{\orho}} \andd \left\|\rx^{v, \gr{k}}_{s_1; t_1, t_2} - \rx^{v, \gr{k}}_{s_2; t_1, t_2}\right\| \leq \frac{\ctr{\orho}{s_1, s_2}\ctr{(2k-1)\orho}{t_1, t_2}}{\beta  \ffact{1}{\orho} \ffact{2k-1}{\orho}}.
    \end{align}
    The sets of $\rho$-H\"older horizontal and vertical path functionals are denoted $\HPF^\rho(G_0^{\gr{\leq n}})$ and $\VPF^\rho(G_0^{\gr{\leq n}})$ respectively.
\end{definition}

Next, we will move on to the surface component $\rX$. The level $2$ surface signature of $X = (X^1, \ldots, X^d)$ restricted to a rectangle $Q \subset [0,1]^2$ can be defined as an integral of a differential 2-form, in order to avoid Jacobians and derivatives. In particular, the $e_i \wedge e_j$ component of $\hssig^{\gr{2}}(X|_Q)$
\begin{align}\nonumber
    (\hssig^{\gr{2}}(X))^{i,j} = \int_Q dX^i_{s,t} \wedge dX^j_{s,t},
\end{align}
which is the signed area of the surface projected onto the $(e_i, e_j)$ plane.
Thus, we can apply the Z\"ust integral in~\Cref{thm:zust} to obtain the preliminary bound
\begin{align}\nonumber
    \|\hssig^{\gr{2}}(X|_Q)\| \leq C \diam(Q)^{2\rho},
\end{align}
where $C> 0$ is a constant which depends on $X$ and $\rho$. However, this bound does not take into account the $s$ and $t$ parameters independently. In particular, while $\|\hssig^{\gr{2}}(X|_Q)\| \to 0$ as $|\tpp - \tmm| \to 0$, since $\hssig^{\gr{2}}(X|_Q)$ represents the signed area, $\diam(Q) \not\to 0$ as $|\tpp - \tmm| \to 0$. In order to obtain a more precise bound, we use the fact that $\hssig^{\gr{2}}(X|_Q) = S^{\gr{2}}(\partial X|_Q)$ is exactly the level 2 signature of the boundary, since $\partial X|_Q$ is a loop. We will use a combination of the path regularity and path continuity conditions of $\rx^h$ and $\rx^v$ to obtain a bound for the signature of the boundary path $\partial X|_Q$ in general. 

\begin{lemma} \label{lem:boundary_signature_bound}
    Suppose $\rx^h \in \HPF^\rho(G_0^{\gr{\leq n}})$ and $\rx^v \in \VPF^\rho(G_0^{\gr{\leq n}})$. Then, for any rectangle $[s_1, s_2]\times[t_1, t_2] \subset [0,1]^2$ and any $k \in [n]$, we have
    \begin{align} \label{eq:boundary_signature_bound}
        \left\|(\rx^h_{s_1, s_2; t_1} \cdot \rx^v_{s_2; t_1, t_2} \cdot \rx^{-h}_{s_1, s_2; t_2} \cdot \rx^{-v}_{s_1; t_1, t_2})^{\gr{k}}\right\|  \leq \frac{2^{(2k-1)\orho}}{\beta} \sum_{q=1}^{2k-1} \frac{\ctr{q\orho}{s_1, s_2}\, \ctr{(2k-q)\orho}{t_1, t_2}}{\ffact{q}{\orho} \ffact{2k-q}{\orho}} = \frac{1}{2\beta} \CTR{k}{\orho}{s_1, s_2; t_1, t_2}.
    \end{align}
\end{lemma}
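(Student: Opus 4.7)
Denote $a = \rx^h_{s_1, s_2; t_1}$, $b = \rx^v_{s_2; t_1, t_2}$, $c = \rx^h_{s_1, s_2; t_2}$, and $d = \rx^v_{s_1; t_1, t_2}$, so that the quantity to estimate at level $k$ is $P = a b c^{-1} d^{-1}$. The plan is to split $P$ into three algebraically natural pieces,
\begin{align*}
P \; = \; (a-c)\,b\,c^{-1}\,d^{-1} \; + \; c\,(b-d)\,c^{-1}\,d^{-1} \; + \; c\,d\,c^{-1}\,d^{-1},
\end{align*}
and bound each piece at level $k$ separately. The first two pieces each carry a small factor $(a-c)$ or $(b-d)$ controlled by the path continuity bounds in~\Cref{eq:path_continuity}, while the third piece is a pure commutator of two group-like elements and must be handled on its own.

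For the first piece, I expand $((a-c)\,b\,c^{-1}\,d^{-1})^{\gr{k}}$ as a sum over four-tuples $(k_1, k_2, k_3, k_4)$ with $k_1+k_2+k_3+k_4 = k$ and $k_1 \geq 1$. Combining the path continuity bound for $(a-c)^{\gr{k_1}}$ with the path regularity bounds in~\Cref{eq:path_regularity} for the remaining factors, each term carries an $s$-exponent equal to the odd integer $q = 2k_1 - 1 + 2k_3$ and a $t$-exponent equal to the odd integer $2k - q = 1 + 2k_2 + 2k_4$. The second piece is handled symmetrically, again producing contributions indexed by odd $q$. Thus these two pieces together cover exactly the odd values $q \in \{1, 3, \ldots, 2k-1\}$ in the target sum.

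For the commutator piece, I use the identity $c\,d\,c^{-1}\,d^{-1} = 1 + (cd - dc)\,c^{-1}\,d^{-1}$, which follows from $d c c^{-1} d^{-1} = 1$. A levelwise expansion of $(cd - dc)^{\gr{\ell}}$ shows that all ``pure $c$'' and ``pure $d$'' contributions cancel, so the surviving terms require both the $c$-factor and the $d$-factor to have strictly positive level; in particular $(cd - dc)^{\gr{\ell}}$ vanishes for $\ell \in \{0, 1\}$. Applying the path regularity bounds to all four factors then produces contributions indexed by $s$-exponent $q = 2(k_1+k_3)$ and $t$-exponent $2k - q = 2(k_2 + k_4)$ with $k_1, k_2 \geq 1$; that is, $q$ ranges over the even values $\{2, 4, \ldots, 2k-2\}$. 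Together with the odd-$q$ contributions from the first two pieces, this covers the full range $q \in \{1, 2, \ldots, 2k-1\}$.

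The final step is to reorganize the four-tuple sums into the single index $q$ and collapse the resulting fractional factorials using the neoclassical inequality~\Cref{eq:neoclassical} (or its corollary~\Cref{eq:binomial_sum_bound}) applied separately in the $s$- and $t$-variables; this converts the auxiliary compositional sums into products of generalized factorials $\ffact{q}{\orho}\ffact{2k-q}{\orho}$ and supplies the factor $2^{(2k-1)\orho}$ in the target bound. I expect the main obstacle to be the combinatorial bookkeeping in the commutator piece: one must carefully track the restriction $k_1, k_2 \geq 1$ inherited from the vanishing of $(cd - dc)^{\gr{\ell}}$ at small $\ell$, and verify that the corresponding double sum over $(k_1, k_3)$ and $(k_2, k_4)$ collapses precisely to $\ctr{q\orho}{s_1, s_2}\,\ctr{(2k-q)\orho}{t_1, t_2}/(\ffact{q}{\orho}\ffact{2k-q}{\orho})$ after the neoclassical inequality is applied in each direction.
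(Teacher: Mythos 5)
Your telescoping decomposition $P = (a-c)\,b\,c^{-1}\,d^{-1} + c\,(b-d)\,c^{-1}\,d^{-1} + c\,d\,c^{-1}\,d^{-1}$ is a genuinely different (and algebraically valid) route from the paper's, which instead splits the level-$k$ convolution sum by isolating the sub-sums in which the vertical levels are all zero, giving $(a\,c^{-1})^{\gr{k}}$, or the horizontal levels are all zero, giving $(b\,d^{-1})^{\gr{k}}$, plus a remainder with $i+m\geq 1$ and $j+\ell\geq 1$. Your observations about odd and even $q$ and the vanishing of $(cd-dc)^{\gr{\ell}}$ for $\ell\leq 1$ are all correct, and your commutator piece is sound and close in spirit to the paper's mixed piece. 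The bookkeeping for your pieces 1 and 2, however, does not close. The path continuity bound for $(a-c)^{\gr{k_1}}$ carries a fixed $\ffact{1}{\orho}$ in its denominator and a fixed $\ctr{\orho}{t_1,t_2}$ in its numerator. After expanding and grouping by the $s$-exponent $q\orho$ with $q = 2(k_1+k_3)-1$, the $s$-side compositional sum compresses cleanly via~\Cref{eq:binomial_sum_bound} to $\lesssim 2^{q\orho}/(\orho\ffact{q}{\orho})$; but the $t$-side sum over $k_2+k_4 = (2k-q-1)/2$ only compresses to $\lesssim 2^{(2k-q-1)\orho}/\bigl(\orho\,\ffact{1}{\orho}\,\ffact{2k-q-1}{\orho}\bigr)$, not to $1/\ffact{2k-q}{\orho}$ as the target requires. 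The ratio between these is $\fbinom{2k-q}{1}{\orho}$, which grows without bound as $2k-q\to\infty$ and therefore cannot be absorbed into $\beta$. Concretely, in the regime $\ctr{}{s_1,s_2}\ll\ctr{}{t_1,t_2}$ your piece 1 at $q=1$ exceeds the target's $q=1$ contribution by a factor on the order of $\fbinom{2k-1}{1}{\orho}$, which diverges with $k$.

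The paper avoids this by applying the algebraic identity for $(a\,c^{-1})^{\gr{k}}$ (via~\cite[Lemma 7.48]{friz_multidimensional_2010}) to the whole product: every term in its internal expansion shares the same $s$-exponent $(2k-1)\orho$, so the entire horizontal piece lands at the single extreme value $q=2k-1$, where the target denominator $\ffact{2k-1}{\orho}\ffact{1}{\orho}$ accommodates the continuity's $\ffact{1}{\orho}$ exactly, and the neoclassical bound is applied only within the $s$-direction (never across the fixed continuity unit). Symmetrically, $(b\,d^{-1})^{\gr{k}}$ lands at $q=1$, and the mixed piece fills in the even $q$'s without using continuity at all. To repair your argument, isolate $(a\,c^{-1})^{\gr{k}}$ and $(b\,d^{-1})^{\gr{k}}$ as whole products rather than factoring $(a-c)$ alongside the remaining three factors; this confines the $\ffact{1}{\orho}$ to the $q$-values where the target already carries it.
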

\begin{proof}
    By multiplying, we obtain
    \begin{align} \label{eq:boundary_reg_main_decomp}
        \Big( \rx^h_{s_1, s_2;t_1} \cdot \,&\rx^v_{s_2; t_1, t_2} \cdot \rx^{-h}_{s_1, s_2; t_2} \cdot \rx^{-v}_{s_1;t_1, t_2} \Big)^{\gr{k}}\\
        &= \left(\rx^h_{s_1, s_2; t_1} \cdot \rx^{-h}_{s_1, s_2; t_1}\right)^{\gr{k}} + \left( \rx^v_{s_2; t_1, t_2} \cdot \rx^{-v}_{s_1; t_1, t_2}\right)^{\gr{k}} + \sum_{\substack{i+j+m+\ell = k\\ i + m \geq 1, \, j + \ell \geq 1}} \rx^{h, \gr{i}}_{s_1, s_2;t_1} \cdot \rx^{v, \gr{j}}_{s_2; t_1, t_2} \cdot \rx^{-h, \gr{m}}_{s_1, s_2; t_2} \cdot \rx^{-v, \gr{\ell}}_{s_1;t_1, t_2}. \nonumber
    \end{align}
    We begin by bounding the first term. By~\cite[Lemma 7.48]{friz_multidimensional_2010}, we have
    \begin{align} \label{eq:boundary_reg_horizontal_decomp}
        \left\| \left(\rx^h_{s_1, s_2; t_1} \cdot \rx^{-h}_{s_1, s_2; t_1}\right)^{\gr{k}}\right\| & \leq \sum_{q=1}^{k-1} \left\|\rx^{h, \gr{q}}_{s_1, s_2; t_1} - \rx^{h, \gr{q}}_{s_1, s_2; t_1} \right\| \cdot \left\|\rx^{-h, \gr{k-q}}_{s_1, s_2; t_1}\right\| + \left\|\rx^{h, \gr{k}}_{s_1, s_2; t_1} - \rx^{h, \gr{k}}_{s_1, s_2; t_1} \right\|
    \end{align}
    By applying the path regularity and continuity conditions, we obtain 
    \begin{align}\nonumber
        \left\| \left(\rx^h_{s_1, s_2; t_1} \cdot \rx^{-h}_{s_1, s_2; t_1}\right)^{\gr{k}}\right\| &\leq \frac{1}{\beta^2} \sum_{q=1}^{k} \frac{\ctr{(2q-1)\orho}{s_1, s_2} \ctr{\orho}{t_1, t_2}}{\ffact{2q-1}{\orho} \ffact{1}{\orho}} \frac{\ctr{(2k-2q)\orho}{s_1, s_2}}{\ffact{2k-2q}{\orho}} \\
        & \leq \frac{2^{(2k-1)\orho}}{\orho\beta^2}\frac{\ctr{(2k-1)\orho}{s_1, s_2} \ctr{\orho}{t_1, t_2}}{\ffact{2k-1}{\orho} \ffact{1}{\orho}}, \label{eq:boundary_reg_bound1}
    \end{align}
    where we use~\Cref{eq:binomial_sum_bound}.
    We can repeat the same argument for the second term in~\Cref{eq:boundary_reg_main_decomp} to get
    \begin{align}\label{eq:boundary_reg_bound2}
        \left\| \left( \rx^v_{s_2; t_1, t_2} \cdot \rx^{-v}_{s_1; t_1, t_2}\right)^{\gr{k}}\right\| \leq \frac{2^{(2k-1)\orho}}{\orho\beta^2}\frac{\ctr{\orho}{s_1, s_2} \ctr{(2k-1)\orho}{t_1, t_2}}{ \ffact{1}{\orho}\ffact{2k-1}{\orho} }.
    \end{align}
    Now, we reindex the final sum in~\Cref{eq:boundary_reg_main_decomp} and use the path regularity conditions to get
    \begin{align}\nonumber
        \Bigg\|\sum_{q=1}^{k-1} \sum_{i=0}^q \sum_{j=0}^{k-q} \rx^{h, \gr{i}}_{s_1, s_2;t_1} \cdot \rx^{v, \gr{j}}_{s_2; t_1, t_2} \cdot \rx^{-h, \gr{q-i}}_{s_1, s_2; t_2} \cdot \rx^{-v, \gr{k-q-j}}_{s_1;t_1, t_2}\Bigg\|  &\leq \frac{1}{\beta^4}\sum_{q=1}^{k-1} \sum_{i=0}^q \sum_{j=0}^{k-q} \frac{\ctr{2q\orho}{s_1, s_2} \, \ctr{(2k-2q)\orho}{t_1, t_2}}{\ffact{2i}{\orho} \, \ffact{2q-2i}{\orho}\, \ffact{2j}{\orho} \,\ffact{2k-2q-2j}{\orho}} \\
        & \leq \frac{2^{2k\orho}}{\orho^2\beta^4} \sum_{q=1}^{k-1} \frac{\ctr{2q\orho}{s_1, s_2} \, \ctr{(2k-2q)\orho}{t_1, t_2}}{\ffact{2q}{\orho} \ffact{2k-2q}{\orho}} \nonumber\\
        & \leq \frac{2^{(2k-1)\orho}}{\beta} \sum_{q=1}^{k-1} \frac{\ctr{2q\orho}{s_1, s_2} \, \ctr{(2k-2q)\orho}{t_1, t_2}}{\ffact{2q}{\orho} \ffact{2k-2q}{\orho}}, \label{eq:boundary_reg_bound3}
    \end{align}
    where we use~\Cref{eq:binomial_sum_bound} twice (summing over $i$ and $j$), as well as the fact that $\beta$ satisfies~\Cref{eq:rs_beta} ($\beta > \beta_{\RP}$ suffices here). Then, combining~\Cref{eq:boundary_reg_main_decomp},~\Cref{eq:boundary_reg_bound1},~\Cref{eq:boundary_reg_bound2},~\Cref{eq:boundary_reg_bound3}, we obtain
    \begin{align}\nonumber
         \left\| \left( \rx^h_{s_1, s_2;t_1} \cdot \rx^v_{s_2; t_1, t_2} \cdot \rx^{-h}_{s_1, s_2; t_2} \cdot \rx^{-v}_{s_1;t_1, t_2} \right)^{\gr{k}}\right\| \leq \frac{2^{(2k-1)\orho}}{\beta} \sum_{q=1}^{2k-1} \frac{\ctr{q\orho}{s_1, s_2}\, \ctr{(2k-q)\orho}{t_1, t_2}}{\ffact{q}{\orho} \ffact{2m-q}{\orho}}.
    \end{align}
\end{proof}

This implies that for a surface $X \in C^\rho([0,1]^2, \V)$, the level $2$ surface component $\rX^{\gr{2}}$ satisfies the bound in~\Cref{eq:boundary_signature_bound} (aside from a constant factor due to the difference in norms between $\Lambda^2 \V$ and the antisymmetric elements in $\V^{\otimes 2}$). For higher levels, a double group functional must satisfy the boundary condition in~\Cref{eq:rrX_boundary_condition} and because $\delta$ is a bounded graded linear map, we expect the surface component $\rX$ to satisfy a similar regularity condition. We denote this polynomial-type regularity by
\begin{align}\nonumber
    \CTR{k}{\orho}{s_1, s_2; t_1, t_2} \coloneqq 2^{2k\orho}\sum_{q=1}^{2k-1} \frac{\ctr{q\orho}{s_1, s_2}\, \ctr{(2k-q)\orho}{t_1, t_2}}{\ffact{q}{\orho} \ffact{2k-q}{\orho}}.
\end{align}
Note that the sum is taken from $q=1$ to $q=2k-1$, so $\CTR{k}{\orho}{s_1, s_2; t_1, t_2} \to 0$ as either $|s_1 - s_2| \to 0$ or $|t_1 - t_2| \to 0$. 
Using this regularity condition for the surface component, we define the notion of a $\rho$-H\"older double group functional.

\begin{definition} \label{def:holder_dgf}
    Let $\rho \in (0,1]$, $\orho = \frac{\rho}{2}$ and $n \in \N$. We define a \emph{$\rho$-H\"older surface functional} to be a surface functional $\rX \in \SF(G_1^{\gr{\leq n}})$ which satisfy for all $k \in [n]$ the \emph{surface regularity condition},
    \begin{align} \label{eq:surface_regularity}
        \left\|\rX^{\gr{k}}_{s_1, s_2; t_1, t_2}\right\| \leq \frac{2^{2k\orho}}{\beta} \sum_{q=1}^{2k-1} \frac{\ctr{q\orho}{s_1, s_2}\, \ctr{(2k-q)\orho}{t_1, t_2}}{\ffact{q}{\orho} \ffact{2k-q}{\orho}} = \frac{1}{\beta} \CTRL^k_\orho(s_1, s_2; t_1, t_2).
    \end{align}
    The set of $\rho$-H\"older surface functionals is denoted $\SF(G_1^{\gr{\leq n}})$.
    A \emph{$\rho$-H\"older double group functional} is a double group functional $\rrX = (\rx^h, \rx^v, \rX) \in \DGF(\cmG^{\gr{\leq n}})$ such that 
    \begin{align}\nonumber
    \rx \in \HPF^\rho(G_0^{\gr{\leq n}}), \quad \rx^v \in\VPF^\rho(G_0^{\gr{\leq n}})\andd\rX \in \SF(G_1^{\gr{\leq n}})
    \end{align}
    We denote the set of $\rho$-H\"older double group functionals valued in $\cmG^{\gr{\leq n}}$ by $\DGF^\rho(\cmG^{\gr{\leq n}})$.
\end{definition}

Through our discussion, we have shown that for a surface $X \in C^\rho([0,1]^2, \V)$ with $\rho \in (\frac{2}{3},1]$, we can directly obtain a level $2$ $\rho$-H\"older double group functional by computing path signatures. 

\begin{proposition} \label{prop:dgf_zust_case}
    Let $X \in C^\rho([0,1]^2, \V)$ with $\rho \in (\frac{2}{3},1]$. Define $\rrX = (\rx^h, \rx^v, \rX) \in \DGF(\cmG^{\gr{\leq 2}})$ with path components
    \begin{align}\nonumber
        \rx^{h, \gr{1}}_{s_1, s_2; t} \coloneqq X_{s_2,t} - X_{s_1, t}, &\quad \rx^{h, \gr{2}}_{s_1, s_2; t} \coloneqq S^{\gr{2}}(X|_{[s_1, s_2]\times\{t\}}), \\
        \rx^{v, \gr{1}}_{s;t_1, t_2} \coloneqq X_{s,t_2} - X_{s, t_1}, &\quad \rx^{v, \gr{2}}_{s; t_1, t_2} \coloneqq S^{\gr{2}}(X|_{\{s\} \times [t_1, t_2]}),\nonumber
    \end{align}
    and surface component
    \begin{align}\nonumber
        \rX^{\gr{2}}_{s_1, s_2; t_1, t_2} \coloneqq (\rx^h_{s_1, s_2; t_1} \cdot \rx^v_{s_2; t_1, t_2} \cdot \rx^{-h}_{s_1, s_2; t_2} \cdot \rx^{-v}_{s_1; t_1, t_2})^{\gr{2}}.
    \end{align}
    Then $\rrX$ is a $\rho$-H\"older multiplicative double group functional. 
\end{proposition}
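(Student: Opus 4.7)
The statement amounts to verifying three things: that $\rrX$ is a double group functional (i.e., $\rx^h$ and $\rx^v$ are path-multiplicative and the boundary condition~\Cref{eq:rrX_boundary_condition} holds), that it is $\rho$-H\"older in the sense of~\Cref{def:holder_dgf}, and that it is horizontally and vertically multiplicative in the sense of~\Cref{eq:dgf_def_multiplicative}. Path-multiplicativity of $\rx^h$ and $\rx^v$ is Chen's identity for the path signature applied to horizontal and vertical slices of $X$, which is meaningful in the Young regime $\rho > 1/2$. The path regularity bounds~\Cref{eq:path_regularity} at level $1$ are immediate from $\rx^{h,\gr{1}}_{s_1,s_2;t} = X_{s_2,t} - X_{s_1,t}$ and the $\rho$-H\"older bound on $X$; at level $2$ they follow from the standard Young estimate $\|S^{\gr{2}}(\gamma)\| \lesssim \|\gamma\|_\rho^2 \, |s_2-s_1|^{2\rho}$ applied to the horizontal and vertical slices. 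Choosing $C_\omega$ sufficiently large absorbs the universal constants and the factor $\beta \, \ffact{k}{\rho}$.

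For the path continuity conditions~\Cref{eq:path_continuity}, the level $1$ estimate is precisely the $2$D rectangular increment bound from~\Cref{lem:holder_2d_increment}. At level $2$, I would apply~\Cref{thm:path_extension_cont} to the pair of level $1$ multiplicative functionals $\rx^{h,\gr{1}}_{\cdot,\cdot;t_1}$ and $\rx^{h,\gr{1}}_{\cdot,\cdot;t_2}$ with $n=1$ and $\epsilon \asymp \ctr{\orho}{t_1,t_2}/\ffact{1}{\orho}$; the hypothesis $n + \tfrac{1}{2} > \tfrac{1}{\rho}$ is equivalent to $\rho > 2/3$, which is our standing assumption. In the Young regime the unique extension to level $2$ coincides with the second iterated Young integral, hence equals $\rx^{h,\gr{2}}$, so the continuity estimate from~\Cref{thm:path_extension_cont} is exactly the desired bound. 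The vertical case is handled symmetrically.

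The boundary condition~\Cref{eq:rrX_boundary_condition} is built into the definition of $\rX^{\gr{2}}$: the level $2$ of the loop signature $\rx^h \cdot \rx^v \cdot \rx^{-h} \cdot \rx^{-v}$ has vanishing level $1$ part (by telescoping the four $V$-valued increments), so its symmetric component $\tfrac{1}{2}(S^{\gr{1}})^{\otimes 2}$ also vanishes, placing it in the antisymmetric subspace of $V^{\otimes 2}$, canonically identified with $\Lambda^2 \V \cong T_1^{\gr{2}}$; under $\delta(v \wedge w) = v \cdot w - w \cdot v$ one recovers the original level $2$ loop signature, as required. The surface regularity~\Cref{eq:surface_regularity} at $k=2$ is then immediate from~\Cref{lem:boundary_signature_bound}, whose hypotheses coincide with the path regularity and continuity just established.

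Finally, for horizontal and vertical multiplicativity, the path components are handled by path-multiplicativity. For the surface component at level $2$, the crossed module action $\gt$ of $G_0$ on $G_1^{\gr{\leq 2}}$ produces only terms of level $\geq 3$ from a level $2$ input, and the product $*$ on $T_1^{\gr{2}} \cong \Lambda^2 \V$ lands at level $4$; hence composition on $\dg(\cmG^{\gr{\leq 2}})$ reduces at level $2$ to abelian addition, and the required identity becomes $\rX^{\gr{2}}_{s_1,s_3;t_1,t_2} = \rX^{\gr{2}}_{s_1,s_2;t_1,t_2} + \rX^{\gr{2}}_{s_2,s_3;t_1,t_2}$. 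This identity follows from Chen's identity: insert $\rx^{-v}_{s_2;t_1,t_2} \cdot \rx^v_{s_2;t_1,t_2} = 1$ into the big boundary loop to decompose it as $p_1^{\mathrm{left}} \cdot \gamma_{\mathrm{right}} \cdot (p_1^{\mathrm{left}})^{-1} \cdot \gamma_{\mathrm{left}}$, and then use the elementary fact that $(a \cdot b \cdot a^{-1})^{\gr{2}} = b^{\gr{2}}$ whenever $b$ is a loop. The vertical case is symmetric. The chief technical obstacle in the whole argument is the level $2$ continuity step, which is precisely what forces the restriction $\rho > 2/3$.
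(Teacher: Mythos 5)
Your proof is correct and follows essentially the same approach as the paper. The paper's own proof defers the verification of the $\rho$-H\"older conditions to the preceding discussion and only spells out multiplicativity; you have accurately reconstructed that discussion, invoking the same three ingredients the paper relies on: \Cref{lem:holder_2d_increment} for the level-$1$ path continuity, \Cref{thm:path_extension_cont} with $n=1$ (legal precisely because $\rho > 2/3$, and using that in the Young regime the level-$2$ extension is the iterated Young integral) for the level-$2$ path continuity, and \Cref{lem:boundary_signature_bound} for the surface regularity. For multiplicativity, the paper's argument is a compressed version of yours: its claim that ``the $*$ product of $G_1^{\gr{2}}$ corresponds to the truncated tensor product for level $\leq 2$'' is exactly what you make explicit by observing that at level $2$ the action is trivial and $\hmult$ reduces to addition, after which both the paper and you conclude via Chen's identity on the big boundary loop (you insert $\rx^{-v}_{s_2;t_1,t_2}\cdot\rx^{v}_{s_2;t_1,t_2}$ to factor it as a conjugate times the left loop and use $(a\cdot b\cdot a^{-1})^{\gr{2}} = b^{\gr{2}}$ for loops $b$; the paper simply writes the big loop out and asserts the level-$2$ equality). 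One cosmetic point: when you identify the antisymmetric level-$2$ loop signature with an element of $\Lambda^2\V\cong T_1^{\gr{2}}$, the paper notes (just after \Cref{lem:boundary_signature_bound}) that this incurs a constant factor from the difference between the $\Lambda^2\V$ norm and the antisymmetric subspace norm in $\V^{\otimes 2}$; this is harmless since it is absorbed into $C_\omega$, but it is worth flagging.
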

\begin{proof}
    The fact that $\rrX$ is $\rho$-H\"older is given by the above discussion, and it remains to show that $\rrX$ is multiplicative. Indeed, since $\rX$ is defined as the $2$-truncated path signature of the boundary loop, this follows by multiplicativity of the path signature,
    \begin{align*}
        \left(\rX_{s_1, s_2; t_1, t_2} \hmult \rX_{s_2, s_3; t_1, t_2}\right)^{\gr{2}} &= \left((\rx^h_{s_1, s_2; t_1} \gt \rX_{s_2, s_3; t_1, t_2}) * \rX_{s_1, s_2; t_1, t_2}\right)^{\gr{2}} \\
        & = \left( \rx^h_{s_1, s_2; t_1} \cdot \rx^h_{s_2, s_3; t_1} \cdot \rx^v_{s_3; t_1, t_2} \cdot \rx^{-h}_{s_2, s_3; t_2}  \cdot \rx^{-h}_{s_1, s_2; t_2} \cdot \rx^{-v}_{s_1; t_1, t_2} \right)^{\gr{2}} \\
        & = \rX_{s_1, s_3; t_1, t_2}^{\gr{2}},
    \end{align*}
    since the $*$ product of $G_1^{\gr{2}}$ corresponds to the truncated tensor product for level $\leq 2$. The same argument can be made for vertical compositions, and thus $\rrX$ is multiplicative. 
\end{proof}

In the remainder of this section, we will prove an extension theorem which will show that this multiplicative functional can be extended to a multiplicative functional $\trrX \in \DGF(\cmG)$; in other words, we can compute the surface signature for $\rho$-H\"older surfaces when $ \rho \in (\frac{2}{3}, 1]$.

\subsection{Surface Extension Theorem and the Young /Z\"ust Regime} \label{ssec:surface_extension_young_zust}
Our main goal is to show the following generalization to the extension theorem for rough paths~\cite{lyons_differential_1998}.

\begin{theorem} \label{thm:surface_extension}
    Let $\rho \in (0,1]$ and $n \geq \left\lfloor \frac{2}{\rho} \right\rfloor$. Suppose $\rrX \in \DGF^\rho(\cmG^{\gr{\leq n}})$ is a multiplicative double group functional. Then, there exists a multiplicative double group functional $\trrX \in \DGF^\rho(\cmG^{\gr{\leq n+1}})$ such that $\trrX^{\gr{k}} = \rrX^{\gr{k}}$ for all $k \in [n]$. Furthermore, $\trrX$ is unique in the sense that if $\rrY = (\ry^h, \ry^v, \rY) \in \DGF(\cmG^{\gr{\leq n+1}})$ is another double group functional such that $\rrY^{\gr{k}} = \rrX^{\gr{k}}$ for all $k \in [n]$, and 
    \begin{align}\nonumber
        \|\rY^{\gr{n+1}}_{s_1, s_2; t_1, t_2}\| \leq C (\inc{s_1}{s_2} + \inc{t_1}{t_2})^{\theta}
    \end{align}
    for a constant $C>0$ and any $\theta > 1$, then $\trrX = \rrY$. 
\end{theorem}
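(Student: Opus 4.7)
The plan is to argue by induction on the level, building on the classical Lyons extension (\Cref{thm:path_extension}) for the path components and a 2D nonabelian sewing argument for the surface component. Given $\rrX = (\rx^h, \rx^v, \rX) \in \DGF^\rho(\cmG^{\gr{\leq n}})$, the extension $\trrX = (\trx^h, \trx^v, \trX)$ must agree with $\rrX$ up to level $n$, so only the level $(n+1)$ components need to be constructed. I would first extend the path components: for each fixed $t$, apply \Cref{thm:path_extension} to $\rx^h_{\cdot,\cdot;t}$ to define $\trx^{h,\gr{n+1}}_{\cdot,\cdot;t}$ satisfying the regularity condition \eqref{eq:path_regularity}, and analogously for $\rx^v$. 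To promote this to the path continuity condition \eqref{eq:path_continuity} at level $n+1$, apply the modified continuity result \Cref{thm:path_extension_cont} to the pair $(\rx^h_{\cdot,\cdot;t_1}, \rx^h_{\cdot,\cdot;t_2})$ with $\epsilon = \ctr{\orho}{t_1,t_2}$, and symmetrically for the vertical family.

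The heart of the argument is the construction of $\trX^{\gr{n+1}}$. For a rectangle $Q = [s_1,s_2] \times [t_1,t_2]$ and a partition $\mathcal{P}$ of $Q$ into subrectangles $\{Q_{ij}\}$, define an approximant $\trX^{\mathcal{P}}_Q$ as the composite of $\rX^{\gr{\leq n}}$ over the cells of $\mathcal{P}$ using the horizontal and vertical compositions in $\dg(\cmG^{\gr{\leq n+1}})$; by the interchange law (\Cref{lem:dgf_interchange}) the order of composition is immaterial. I would show that the level $(n+1)$ projection of these approximants is Cauchy as the mesh tends to zero. The one-cell-refinement estimate is controlled using the decomposition $T_1^{\gr{\leq n+1}} = T_1^{s,\gr{\leq n+1}} \oplus T_1^{\ker,\gr{\leq n+1}}$ from \Cref{prop:T1_decomposition}: the section $\ts$ identifies the $\im(\delta)$-component with a quantity already determined by the extended boundary loop $\trx^h \cdot \trx^v \cdot \trx^{-h} \cdot \trx^{-v}$ at level $n+1$, while the complementary $\ker(\delta)$-part obeys an abelian sewing estimate. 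Combining the neoclassical inequality \eqref{eq:neoclassical} with the surface regularity bound \eqref{eq:surface_regularity} applied to the subcells yields a refinement error of the form $\CTR{n+1}{\orho}{s_1,s_2;t_1,t_2}$ times a factor tending to $0$ with the mesh, which gives the Cauchy property. Define $\trX^{\gr{n+1}}_{s_1,s_2;t_1,t_2}$ as the limit. The boundary identity \eqref{eq:rrX_boundary_condition} at level $n+1$ follows by applying $\delta$ to the approximants and passing to the limit; horizontal and vertical multiplicativity follow from the compatibility of the composition law with refinement; and the bound \eqref{eq:surface_regularity} at level $n+1$ is preserved in the limit.

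For uniqueness, classical Lyons uniqueness (\Cref{thm:path_extension}) forces $\trx^h = \ry^h$ and $\trx^v = \ry^v$, so $D \coloneqq \trX^{\gr{n+1}} - \rY^{\gr{n+1}}$ lies in $T_1^{\ker,\gr{n+1}}$ by the shared boundary condition. Both $\trX$ and $\rY$ are horizontally and vertically multiplicative, so $D$ is additive under horizontal and vertical partitioning of a rectangle (since in the kernel the crossed-module action and the product $*$ trivialize by the second Peiffer identity, cf.\ \eqref{eq:T1s_mult_prop}). The hypothesis $\|\rY^{\gr{n+1}}\| \lesssim (\inc{s_1}{s_2} + \inc{t_1}{t_2})^\theta$ with $\theta > 1$, combined with $(n+1)\rho > 1$, lets one refine any rectangle into $N^2$ subrectangles and bound $\|D\|$ by $N^{2-2\theta}$-decaying sums, forcing $D \equiv 0$.

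The principal difficulty will be the convergence analysis in the second step. Unlike the 1D setting, a 2D partition admits no canonical one-step refinement that is simultaneously compatible with horizontal and vertical composition, and the nonabelian product $*$ on $T_1$ interacts nontrivially with both the action $\gt$ and the Peiffer ideal. Obtaining sharp Cauchy bounds requires tracking how a single cell-refinement perturbs each graded component, splitting the perturbation into a $\ts$-image controlled by the $(n+1)$-level boundary loop (which is finite by the first step and \Cref{lem:boundary_signature_bound}) and a kernel piece governed by an abelian multiparameter sewing estimate, and then summing over all cells using the neoclassical inequality to recover the control function $\CTRL^{n+1}_\orho$.
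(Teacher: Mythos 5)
Your plan follows essentially the same route as the paper's proof: extend the path components via Lyons' extension theorem and the modified continuity result; exploit the decomposition $T_1 = T_1^s \oplus T_1^{\ker}$ from \Cref{prop:T1_decomposition}; use the algebra section $\ts$ to pin down the $T_1^s$-component by the extended boundary loop; and do a sewing argument for the $T_1^{\ker}$-component. The paper's Steps 1--5 (\Cref{ssec:surface_extension_proof}) are all visible in your outline, so the conceptual match is genuine. However, two issues prevent this from being a complete proof.

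First, the part you correctly flag as the principal difficulty is also the part you leave unresolved. You propose approximants $\trX^{\cP}_Q$ built by composing $\rX^{\gr{\leq n}}$ over cells, but you never say what the level-$(n+1)$ component of each cell is before composing, and this choice is exactly what drives the bounds. Setting it to zero (the naive ``trivial initial extension'') produces almost-multiplicativity constants that grow with $n$ and therefore cannot yield the factorial-decay bound needed for the infinite extension; the paper records this failure in \Cref{rem:almost_mult_algebra_section}. The correct choice is $\ts$ applied to the level-$(n+1)$ boundary loop for \emph{each cell} (\Cref{eq:rX_pathwise_extension}), which makes the $T_1^s$-component exactly multiplicative (\Cref{lem:pathwise_ext_delta_multiplicative}) so that the entire obstruction lives in $T_1^{\ker}$. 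Your phrase ``the complementary $\ker(\delta)$-part obeys an abelian sewing estimate'' is also a little misleading: the error term in \Cref{eq:almost_hmult_element} is the $T_1^{\ker}$-projection of $\orx^h_{s_1,s_2;t_1}\gt\orX_{s_2,s_3;t_1,t_2}$, a quantity that is not itself in the kernel --- the action $\gt$ does \emph{not} preserve $T_1^s$, and that mixing is the source of the error. The paper then has to define a concrete refinement scheme (dyadic partitions plus the intermediate grid multiplications of \Cref{def:intermediate_subdivision}) precisely because, as you say, 2D partitions admit no canonical single-cell refinement; a ``mesh $\to 0$ over arbitrary partitions'' argument needs either nesting or an auxiliary comparison lemma like \Cref{lem:arbitrary_partition_bound}, and you supply neither.

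Second, your uniqueness estimate is arithmetically off. With the hypothesis $\|\rY^{\gr{n+1}}\| \leq C(\inc{s_1}{s_2}+\inc{t_1}{t_2})^{\theta}$ as stated, refining into $N\times N$ dyadic subcells replaces each increment sum by $N^{-1}$ of the original, so the $N^2$ cells contribute $N^2\cdot N^{-\theta}=N^{2-\theta}$, not $N^{2-2\theta}$. This is why \Cref{lem:uniqueness} assumes $\theta>2$. The product-form exponent $N^{2-2\theta}$ would arise only from a hypothesis of the form $\inc{s_1}{s_2}^\theta\,\inc{t_1}{t_2}^\theta$, which is not what you are given. (The requirement $\theta>2$ is still satisfied in every application since the pathwise extension gives $\theta=(n+1)\rho>2$, so this does not break the downstream corollaries, but your stated decay rate is wrong.)
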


Analogous to the case of rough paths, we require the constant $\beta > 0$ in the double group functional regularity conditions to be fixed (depending only on the regularity $\rho$). In particular, we define
\begin{align}\label{eq:rs_beta}
    \beta_{\RS} \coloneqq \frac{3}{2} \left( \sum_{m=0}^\infty 4^{m(1 - (\lfloor 2\rho^{-1}\rfloor+1)\rho)}\right),
\end{align}
and require
\begin{align} \label{eq:beta}
    \beta > \max\{\beta_{\RP}, \beta_{\RS}\}.
\end{align}
where $\beta_{\RP}$ is defined in~\Cref{eq:rp_beta}. \medskip

\subsubsection{Extension in the Smooth and Z\"ust Regime} 
Before we prove the extension theorem, we discuss some immediate consequences. 

\begin{corollary} \label{cor:zust_extension}
    Let $X \in C^\rho([0,1]^2, \V)$ with $\rho \in (\frac{2}{3},1]$. Define $\rrX = (\rx^h, \rx^v, \rX) \in \DGF^\rho(\cmG^{\gr{\leq 2}})$ as in~\Cref{prop:dgf_zust_case}. Then, there exists a unique extension to a multiplicative double group functional $\trrX \in \DGF^\rho(\cmG)$.
\end{corollary}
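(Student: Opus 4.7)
The plan is to deduce this statement as an immediate consequence of the surface extension theorem (\Cref{thm:surface_extension}) applied iteratively, together with the construction of $\rrX$ at level $\leq 2$ from \Cref{prop:dgf_zust_case}. The first step is to verify that the hypotheses of \Cref{thm:surface_extension} are met with $n = 2$: since $\rho \in (\tfrac{2}{3}, 1]$, we have $\lfloor 2\rho^{-1}\rfloor = 2$, and \Cref{prop:dgf_zust_case} gives that $\rrX \in \DGF^\rho(\cmG^{\gr{\leq 2}})$ is a $\rho$-H\"older multiplicative double group functional. Hence, \Cref{thm:surface_extension} produces a unique $\rho$-H\"older multiplicative extension $\rrX^{(3)} \in \DGF^\rho(\cmG^{\gr{\leq 3}})$ with $(\rrX^{(3)})^{\gr{k}} = \rrX^{\gr{k}}$ for $k \in [2]$.

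Next, I would iterate this construction: having constructed $\rrX^{(n+1)} \in \DGF^\rho(\cmG^{\gr{\leq n+1}})$ which is $\rho$-H\"older and multiplicative, a further application of \Cref{thm:surface_extension} (whose hypotheses are precisely that we have a $\rho$-H\"older multiplicative double group functional up to some level $n' \geq \lfloor 2\rho^{-1} \rfloor$) yields $\rrX^{(n+2)} \in \DGF^\rho(\cmG^{\gr{\leq n+2}})$ extending it uniquely at the next level. One then defines $\trrX$ by collating these compatible extensions: the path components $\trx^{h,\gr{k}}$ and $\trx^{v,\gr{k}}$ and the surface component $\trX^{\gr{k}}$ are determined level by level by $\trrX^{\gr{k}} = (\rrX^{(k)})^{\gr{k}}$, yielding a functional valued in $\cmG$. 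Multiplicativity and the boundary condition hold level-wise by construction, so $\trrX \in \DGF^\rho(\cmG)$.

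For uniqueness, suppose $\trrX'$ were another $\rho$-H\"older multiplicative extension in $\DGF^\rho(\cmG)$ with $(\trrX')^{\gr{k}} = \rrX^{\gr{k}}$ for $k \in [2]$. Its truncation to level $n+1$ is a $\rho$-H\"older multiplicative double group functional agreeing with $\trrX^{\gr{\leq n}}$ below, and the $\rho$-H\"older regularity at level $n+1$ implies the polynomial decay hypothesis $\|(\trX')^{\gr{n+1}}_{s_1,s_2;t_1,t_2}\| \leq C(\inc{s_1}{s_2} + \inc{t_1}{t_2})^{(n+1)\rho}$, with $(n+1)\rho > 1$ for all $n \geq 2$. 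The uniqueness clause of \Cref{thm:surface_extension} then forces $(\trrX')^{\gr{n+1}} = \trrX^{\gr{n+1}}$, and inducting upward through all levels yields $\trrX' = \trrX$.

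The main (minor) obstacle is simply verifying the bookkeeping: that successive invocations of \Cref{thm:surface_extension} produce extensions that are consistent below the new level (guaranteed by the uniqueness clause at each stage) and that the polynomial bound required by the uniqueness clause follows automatically from the $\rho$-H\"older surface regularity, given $(n+1)\rho > 1$ whenever $n \geq \lfloor 2\rho^{-1}\rfloor$. No novel analytic work is needed beyond \Cref{thm:surface_extension} itself.
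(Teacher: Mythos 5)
Your proposal is correct and takes essentially the same approach the paper uses implicitly: the paper states this corollary immediately after the surface extension theorem as a ``direct consequence,'' and your iterated-application argument with the level-by-level compatibility and collation is precisely the omitted verification. One small technical note: in the uniqueness step you invoke the threshold $\theta > 1$ as it appears in the statement of \Cref{thm:surface_extension}, but the lemma that actually establishes uniqueness (\Cref{lem:uniqueness}) is proved under the stronger hypothesis $\theta > 2$. Your argument is unaffected by this discrepancy in the paper, since the exponent you extract from the $\rho$-H\"older surface regularity condition via the neoclassical inequality is $\theta = (n+1)\rho$, and $n \geq \lfloor 2\rho^{-1}\rfloor$ forces $n+1 > 2\rho^{-1}$, hence $(n+1)\rho > 2$, so the stronger condition is met at every level of the induction.
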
 

In the smooth setting, we show that this extension is given by the surface signature. 
Note that the condition imposed on $\rrY$ is weaker than the $\rho$-H\"older surface regularity condition in~\Cref{eq:surface_regularity}. Indeed, suppose $\trrX \in \DGF(\cmG^{\gr{\leq n+1}})$ is the proposed extension from the theorem. Then, the $\rho$-H\"older surface regularity condition can be bound by using the neoclassical inequality in~\Cref{eq:neoclassical},
\begin{align} \label{eq:surface_regularity_neoclassical}
    \|\trrX^{\gr{n+1}}_{s_1, s_2; t_1, t_2}\| \leq \frac{2^{2(n+1)\orho}}{\beta} \sum_{q=1}^{2n+1} \frac{\ctr{q\orho}{s_1, s_2}\, \ctr{(2n+1-q)\orho}{t_1, t_2}}{\ffact{q}{\orho} \ffact{2n+1-q}{\orho}} \leq \frac{2^{2(n+1)\orho}}{\orho \beta} \frac{(\ctr{}{s_1, s_2} + \ctr{}{t_1, t_1})^{2(n+1)\orho}}{\ffact{2(n+1)}{\orho}},
\end{align}
where $2(n+1)\orho = (n+1)\rho > 1$ due to the hypothesis that $n \geq \lfloor \frac{2}{\rho} \rfloor$. This allows us to show that the extension recovers the surface signature in the smooth setting.

\begin{corollary} \label{cor:smooth_extension_surface_signature}
    Let $X \in C^\infty([0,1]^2,\V)$ be a smooth surface. Define a double group functional $\rrY = (\ry^h, \ry^v, \rY) \in \DGF(\cmG)$ by 
    \begin{align}\nonumber
        \ry^h_{s_1, s_2; t} \coloneqq S(X|_{[s_1, s_2] \times\{t\}}), \quad \ry^v_{s; t_1, t_2} \coloneqq S(X|_{\{s\} \times [t_1, t_2]}) \andd \rY_{s_1, s_2; t_1, t_2} = \hssig(X|_{[s_1, s_2] \times [t_1, t_2]}).
    \end{align}
    Then, $\rrY = \trrX$, where $\trrX \in \DGF^\rho(\cmG)$ is the unique extension to $X$ from~\Cref{cor:zust_extension}. In particular, this implies that the surface signature satisfies the bound
    \begin{align}\nonumber
        \left\|\hssig^{\gr{k}}(X|_{[s_1, s_2]\times[t_1, t_2]})\right\| \leq \frac{(\ctr{}{s_1, s_2} + \ctr{}{t_1, t_2})^{k}}{\beta k!}
    \end{align}
    so $\hssig(X) \in E_1$ for all $X \in \surfaces^\infty(\V)$. 
\end{corollary}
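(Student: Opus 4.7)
The plan is to identify $\rrY$ as the unique multiplicative extension $\trrX$ of the level-$2$ $\rho$-H\"older datum from~\Cref{prop:dgf_zust_case} (applied with $\rho=1$), and then read off the factorial bound from the surface regularity condition built into the extension. Concretely, I would verify that $\rrY$ is itself a multiplicative double group functional, check that it agrees with $\rrX$ at levels $\leq 2$ and satisfies the mild polynomial decay required by the uniqueness clause of~\Cref{thm:surface_extension}, and finally invoke uniqueness level by level.

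For the first part, the path components $\ry^h$ and $\ry^v$ are path-multiplicative by Chen's identity for the path signature, while horizontal and vertical multiplicativity of the surface component $\rY$ is exactly~\Cref{prop:sh_multiplicative} applied to surface holonomy with respect to the universal 2-connection $(\ucona,\uconc)$. The boundary condition $\delta(\hssig(X|_Q)) = S(\bdy(X|_Q))$ of~\Cref{eq:rrX_boundary_condition} is not a separate calculation but simply the fact that the surface signature functor $\bS$ from~\Cref{def:sh_functor} lands in $\dg(\cmG)$ by construction. Agreement with $\rrX$ at low levels is immediate: at level $1$ both sides reduce to $X_{s_2,t}-X_{s_1,t}$ and its vertical analogue, and at level $2$ the path components coincide by definition, while~\Cref{ex:level2} identifies
\begin{align*}
    \hssig^{\gr{2}}(X|_Q) = S^{\gr{2}}(\bdy(X|_Q)) = \rX^{\gr{2}}_{s_1,s_2;t_1,t_2}.
\end{align*}

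To apply the uniqueness part of~\Cref{thm:surface_extension} at each inductive step $n \mapsto n+1$, starting from $n=2$ via~\Cref{cor:zust_extension}, one needs only the crude bound $\|\rY^{\gr{n+1}}_{s_1,s_2;t_1,t_2}\| \leq C(\inc{s_1}{s_2}+\inc{t_1}{t_2})^\theta$ for some $\theta > 1$. This is delivered by the preliminary smooth estimate~\Cref{prop:ssig_preliminary_n_bound} with $\theta = n+1 \geq 3$, and is the one genuine analytic input from the smooth setting; notably, the level-dependent constant $C_{n+1}$ there is harmless because uniqueness only sees the exponent, not the constant. Iterating yields $\rrY = \trrX$ at every level. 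The claimed factorial bound on $\hssig^{\gr{k}}(X|_Q) = \trrX^{\gr{k}}_{s_1,s_2;t_1,t_2}$ then follows from the surface regularity condition~\Cref{eq:surface_regularity} combined with the neoclassical inequality as in~\Cref{eq:surface_regularity_neoclassical}, specialized to $\rho=1$ (so $\orho = 1/2$ and $\ffact{2k}{\orho} = k!$), with the remaining numerical constants absorbed into the control constant $C_\omega$. Summing the resulting factorially decaying series against any $\lambda > 0$ confirms $\hssig(X) \in E_1$ via the characterization~\Cref{eq:E0_characterization}. The main (mild) obstacle is therefore purely bookkeeping: making sure the crude preliminary estimate feeds the uniqueness hypothesis at every level, and matching the neoclassical constants to the bound as stated.
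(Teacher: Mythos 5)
Your proposal follows essentially the same route as the paper: start from the level-$2$ datum of \Cref{prop:dgf_zust_case}, induct using the uniqueness clause of \Cref{thm:surface_extension} with the crude polynomial bound from \Cref{prop:ssig_preliminary_n_bound}, and read off the factorial decay from the surface regularity condition via the neoclassical inequality. You spell out the multiplicativity and boundary-condition checks (via \Cref{prop:sh_multiplicative} and the fact that $\bS$ lands in $\dg(\cmG)$) which the paper takes for granted, but the argument structure and the key inputs are identical.
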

\begin{proof}
    Let $\rrX \in \DGF^\rho(\cmG^{\gr{\leq 2}})$ be the double group functional from~\Cref{prop:dgf_zust_case}. Then, by definition, we have $\trrX^{\gr{\leq 2}} = \rrX = \rrY^{\gr{\leq 2}}$. Now, we continue by induction and suppose that $\trrX^{\gr{\leq k}} = \rrY^{\gr{\leq k}}$ for all $k \in [n]$. In particular, this implies that $\rrY^{\gr{\leq k}} \in \DGF^\rho(\cmG^{\gr{\leq n}})$. Furthermore, $\rrY^{\gr{\leq n+1}}$ is multiplicative by~\Cref{prop:sh_multiplicative}. Then, by~\Cref{prop:ssig_preliminary_n_bound}, there exists a constant $C> 0$ such that
    \begin{align}\nonumber
        \left\|\rY^{\gr{n+1}}_{s_1, s_2; t_1, t_2}\right\| = \left\|\hssig^{\gr{n+1}}(X|_{[s_1, s_2] \times [t_1, t_2]})\right\| \leq C (\inc{s_1}{s_2} + \inc{t_1}{t_2})^{n+1}.
    \end{align}
    Thus, by the uniqueness statement of~\Cref{thm:surface_extension}, $\trrX^{\gr{n+1}} = \rrY^{\gr{n+1}}$. Because $\rrY$ satisfies the surface regularity condition, we have
    \begin{align}\nonumber
        \left\|\hssig^{\gr{k}}(X|_{[s_1, s_2]\times[t_1, t_2]})\right\| & \leq \frac{2^{k}}{\beta} \sum_{q=1}^{2k-1} \frac{\ctr{q/2}{s_1, s_2}\, \ctr{(2k-q)/2}{t_1, t_2}}{(\frac{q}{2})! (\frac{2k-q}{2})!} \leq \frac{2^{k+1}(\ctr{}{s_1, s_2} + \ctr{}{t_1, t_2})^{k}}{\beta k!}.
    \end{align}
\end{proof}

These results imply that for all $\rho \in (\frac{2}{3}, 1]$, we can directly compute the surface signature of a surface $X \in C^\rho([0,1]^2, \V)$ without any additional data. In fact, this boundary of $\rho > \frac{2}{3}$ is sharp when considering $\rho$-H\"older surfaces. Suppose $\rho \in (\frac{1}{2}, \frac{2}{3}]$. In this regime, we can still define a double group functional $\rrX \in \DGF^\rho(\cmG^{\gr{\leq 2}})$ as in~\Cref{prop:dgf_zust_case}, as this is defined purely via path signatures, which can be computed for $\rho > \frac{1}{2}$. However, following~\Cref{ex:level3} the level 3 surface signature of $X = (X^1, \ldots, X^d)$ is defined coordinate-wise for the $e_i \cdot (e_j \wedge e_k)$ component as
\begin{align}\nonumber
    \langle e_i \cdot (e_j \wedge e_k), \hssig^{\gr{3}}(X) \rangle = \int_{[0,1]^2} (X^i_{s,t} - X^i_{0,0}) \, dX^j_{s,t} \wedge dX^k_{s,t}.
\end{align}
We note that $X^i_{s,t} - X^i_{0,0}$ is $\rho$-H\"older, and therefore this integral does not satisfy the conditions of the Z\"ust integral from~\Cref{thm:zust}, since $3\rho < 2$. Furthermore, Z\"ust shows that there does not exist a continuous extension of the integration map to this regime~\cite[Section 3.2]{zust_integration_2011}. \medskip

Thus far, we have not considered surfaces which are equipped with additional regularity conditions on 2D rectangular increments. In particular, we say that $X \in C^\rho([0,1]^2, \V)$ is \emph{2D rectangular $\rho$-H\"older} if
\begin{align} \label{eq:2d_rectangular_holder_bound}
    \|X\|_{\square, \rho} \coloneqq \sup_{(s_1, s_2; t_1, t_2) \in \Delta^2 \times \Delta^2} \frac{\|\square_{s_1, s_2; t_1, t_2}[X]\|}{|s_1 - s_2|^\rho \, |t_1 - t_2|^{\rho}} < \infty,
\end{align}
where $\square[X]$ is the 2D increment defined in~\Cref{eq:2d_increment}.
We denote the space of 2D rectangular $\rho$-H\"older surfaces on a rectangle $T = [\smp] \times [\tmp] \subset \R^2$ by
\begin{align} \label{eq:2d_rectangular_holder_space}
    C^{\square, \rho}(T, \V) \coloneqq \left\{ X \in C(T, \V) \, : \, \|X\|_\rho, \|X\|_{\square, \rho} < \infty\right\}.
\end{align}
This is a strictly stronger condition than the rectangular $\rho/2$-H\"older condition in~\Cref{lem:holder_2d_increment} which is implied by classical H\"older regularity. 
Considering this additional 2D regularity is common when studying surfaces, for instance in the generalization of the Young integral~\cite{towghi_multidimensional_2002} to 2D and the multi-dimensional sewing lemma in~\cite{harang_extension_2021}, where it is shown that integrals of the type
\begin{align} \label{eq:increment_integral}
    \int_{[0,1]^2} f_{s,t} dg_{s,t} = \lim_{|\ppart| \to 0}\sum f_{s_i, t_j} (\square_{s_i, s_{i+1}; t_j, t_{j+1}}[g]),
\end{align}
which we call \emph{2D increment integrals}, where $\ppart$ is a partition of $[0,1]^2$, are well-defined for $f,g \in C^{\square,\rho}([0,1]^2, \R)$. 

\begin{theorem}[\cite{harang_extension_2021}] \label{thm:towghi_young}
    Let $T = [\smp] \times[\tmp] \subset \R^2$ be a rectangle. Suppose $f \in C^{\square, \rho_f}(T, L(V, W))$ and $g \in C^{\square, \rho_g}(T, V)$ where $\overline{\rho} = \rho_f + \rho_g > 1$. Then, the integral $\int_T f \, dg \in W$ is well defined and satisfies
    \begin{align}\nonumber
        \left|\int_T f_{s,t} \, dg_{s,t}\right| \leq C \left(\inc{\smm}{\spp}^{2\rho} \inc{\tmm}{\tpp}^\rho + \inc{\smm}{\spp}^{\rho} \inc{\tmm}{\tpp}^{2\rho}\right),
    \end{align}
    where $C> 0$ is a constant that depends on $f$ and $g$.
\end{theorem}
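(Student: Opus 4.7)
The plan is to realise $\int_T f\,dg$ as a limit of 2D Riemann sums built from the rectangular increments of $g$, and control the limit via a two-dimensional sewing (Young partition) argument. Concretely, for a grid partition $\ppart = \{\smm = s_0 < \cdots < s_n = \spp\} \times \{\tmm = t_0 < \cdots < t_m = \tpp\}$ of $T$, I would set
\begin{align*}
    I(\ppart) \coloneqq \sum_{i,j} f_{s_i, t_j}\bigl(\square_{s_i, s_{i+1}; t_j, t_{j+1}}[g]\bigr),
\end{align*}
as in~\Cref{eq:increment_integral}, and show that these sums form a Cauchy net as $|\ppart| \to 0$. The natural tool is the sewing viewpoint: define the germ $\Xi([s_1,s_2]\times[t_1,t_2]) \coloneqq f_{s_1, t_1}(\square_{s_1, s_2; t_1, t_2}[g])$ and verify that it admits a unique additive extension with the claimed bound.

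The key estimate is the sewing defect. For a rectangle $R = [s_1, s_2]\times[t_1, t_2]$ split by inserting $s^* \in (s_1, s_2)$ and $t^* \in (t_1, t_2)$ into four subrectangles $R_{11}, R_{12}, R_{21}, R_{22}$, the additivity $\square_R[g] = \sum_{ij} \square_{R_{ij}}[g]$ makes most of the cross terms telescope and the defect collapses to
\begin{align*}
    \Xi(R) - \sum_{ij} \Xi(R_{ij}) = (f_{s_1, t_1} - f_{s^*, t_1})(\square_{R_{21}}[g]) + (f_{s_1, t_1} - f_{s_1, t^*})(\square_{R_{12}}[g]) + (f_{s_1, t_1} - f_{s^*, t^*})(\square_{R_{22}}[g]).
\end{align*}
Applying the classical H\"older regularity $\|f\|_{\rho_f}$ to the $f$-differences and the 2D rectangular H\"older regularity $\|g\|_{\square, \rho_g}$ (from~\Cref{eq:2d_rectangular_holder_bound}) to the $\square[g]$-terms produces a sewing defect of order $|s_2 - s_1|^{\rho_f + \rho_g}|t_2 - t_1|^{\rho_g} + |s_2 - s_1|^{\rho_g}|t_2 - t_1|^{\rho_f + \rho_g}$, whose total exponent exceeds $1$ in at least one factor precisely because $\overline{\rho} = \rho_f + \rho_g > 1$.

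The main obstacle is the Cauchy argument in 2D: unlike the 1D Young case, a partition cannot be coarsened by removing a single pigeonhole-small point because rows and columns are rigidly coupled. I would handle this by iterating a one-dimensional greedy reduction along each axis in turn; that is, treating $f$ and $g$ as $\rho$-H\"older curves valued in $C^\rho$-regular function spaces, apply the classical 1D Young estimate to collapse inner column subdivisions, then repeat on the row partition. Equivalently, one invokes a 2D sewing lemma directly, where the supermultiplicative control $\ctr{\overline{\rho}}{s_1, s_2}\ctr{\rho_g}{t_1, t_2}+\ctr{\rho_g}{s_1, s_2}\ctr{\overline{\rho}}{t_1, t_2}$ drives convergence. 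Finally, applying the resulting estimate to $T$ itself (with the trivial partition of cardinality one, then refining) yields the asserted bound with constant $C = C(\rho_f, \rho_g)\|f\|_{\rho_f}\|g\|_{\square, \rho_g}$.
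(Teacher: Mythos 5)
This statement is cited from~\cite{harang_extension_2021} and not proved in the paper, so there is no author proof to compare against; I will assess the sketch on its own terms.

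The sketch has a genuine gap in the exponent bookkeeping. You bound the four-fold splitting defect using only the \emph{one-dimensional} H\"older regularity of $f$ (explicitly, ``applying the classical H\"older regularity $\|f\|_{\rho_f}$ to the $f$-differences''), which gives the defect a control of order
\begin{align*}
    |s_2 - s_1|^{\rho_f + \rho_g}\,|t_2 - t_1|^{\rho_g} \;+\; |s_2 - s_1|^{\rho_g}\,|t_2 - t_1|^{\rho_f + \rho_g}.
\end{align*}
Run the dyadic convergence argument: going from the level-$m$ to the level-$(m+1)$ partition of $T$ involves $4^m$ such splits, each with sides $\sim 2^{-m}$, so the level-$m$ contribution is of order $4^m \cdot 2^{-m(\rho_f + 2\rho_g)} = 2^{m(2 - \rho_f - 2\rho_g)}$. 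This is summable only when $\rho_f + 2\rho_g > 2$; with $\rho_f = \rho_g = \rho$ that is $\rho > \tfrac{2}{3}$, the Z\"ust regime of~\Cref{thm:zust}, not the Young regime $\rho > \tfrac{1}{2}$. So the sewing net you built does not converge under the stated hypothesis $\overline{\rho} = \rho_f + \rho_g > 1$.

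The fix is to use the 2D rectangular regularity $\|f\|_{\square, \rho_f}$ that is actually assumed (you assume $f \in C^{\square, \rho_f}$ but never invoke the $\square$-part). The correct defect to control in the Harang--Tindel / Towghi argument is not the simultaneous four-fold split but the \emph{iterated mixed} defect $\delta^1_{s^*}\delta^2_{t^*}\Xi$, which collapses algebraically to a product of rectangular increments:
\begin{align*}
    \delta^1_{s^*}\delta^2_{t^*}\Xi([s_1, s_2]\times[t_1, t_2]) = -\,\square_{s_1, s^*; t_1, t^*}[f]\bigl(\square_{s^*, s_2; t^*, t_2}[g]\bigr),
\end{align*}
and is bounded by $\|f\|_{\square, \rho_f}\|g\|_{\square, \rho_g}\,|s_2 - s_1|^{\rho_f + \rho_g}|t_2 - t_1|^{\rho_f + \rho_g}$. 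Now \emph{both} exponents exceed $1$ precisely when $\overline{\rho} > 1$, and the 2D sewing converges. Your fallback idea of iterating 1D Young along each axis is closer to the right mechanism, but it faces the same issue: to see that $t \mapsto g_{\cdot, t}$ is a $\rho_g$-H\"older curve in the Banach space $C^{\rho_g}([s_1, s_2], \V)$ (and likewise for $f$), the $t$-increments of the $s$-H\"older seminorm are precisely $\square$-increments, so the $\square$-regularity of both $f$ and $g$ is unavoidable. Without it the iterated Young argument also only gives a control in the Z\"ust regime.
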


Then, by making a slight modification to the path continuity condition, we can define the extension of a surface in the Young regime. We use the method developed in~\cite[Section 7.1]{lee_random_2023} to reformulate surface holonomy in terms of 2D increment integrals in order to extend~\Cref{prop:dgf_zust_case} to obtain a level $3$ \emph{rectangular} $\rho$-H\"older double group functional for a surface $X\in C^{\square, \rho}([0,1]^2, \V)$ when $\rho \in (\frac{1}{2}, 1]$. We discuss the modifications and prove the following result in~\Cref{apxsec:surface_extension_rectangular}.

\begin{proposition} \label{prop:young_lifting}
    Let $X \in C^{\square, \rho}([0,1]^2, \V)$ with $\rho \in (\frac{1}{2}, 1]$. The \emph{area process of $X$} is the function $A(X): [0,1]^2 \to \Lambda^2 \V$ defined by
    \begin{align}\nonumber
        A_{s,t}(X) = S^{\gr{2}}\left(\partial (X|_{[0,s]\times[0,t]})\right),
    \end{align}
    where $\partial (X|_{[0,s]\times[0,t]})$ is the boundary path of $X$ restricted to $[0,s] \times[0,t]$. Note that this is valued in $\Lambda^2 \V$ since this path is a loop. Define $\rrX = (\rx^h, \rx^v, \rrX) \in \DGF(\cmG^{\gr{\leq 3}})$ where levels $1$ and $2$ are defined as in~\Cref{prop:dgf_zust_case}, level $3$ path components are defined by the path signature,
    \begin{align}\nonumber
        \rx^{h, \gr{3}}_{s_1, s_2; t} \coloneqq S^{\gr{3}}(X|_{[s_1, s_2] \times\{t\}}) \andd \rx^{v, \gr{3}}_{s; t_1, t_2} \coloneqq S^{\gr{3}}(X|_{\{s\} \times[t_1, t_2]}),
    \end{align}
    and the level $3$ surface component is given by
    \begin{align}\nonumber
       \rX^{\gr{3}}_{s_1, s_2; t_1, t_2} &= \int_{[0,1]^2} (X_{s,t} - X_{0,0}) \gt dA_{s,t}(X) 
    \end{align}
    Then, $\rrX$ is a rectangular $\rho$-H\"older multiplicative double group functional, $\rrX \in \DGF^{\square, \rho}(\cmG^{\gr{ \leq 3}})$. In particular, there exists a unique extension from $\rrX$ to a multiplicative double group funtional $\trrX \in \DGF^{\square,\rho}(\cmG)$. 
\end{proposition}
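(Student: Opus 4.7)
The plan is to verify the hypotheses of a rectangular version of the surface extension theorem (\Cref{thm:surface_extension}), which is proved along the same lines in \Cref{apxsec:surface_extension_rectangular} with the rectangular regularity condition \eqref{eq:2d_rectangular_holder_bound} replacing the classical H\"older one. Concretely, I would show that (i) $\rrX$ satisfies appropriate rectangular path regularity, continuity, and surface regularity estimates at truncation level $3$, and (ii) $\rrX$ is horizontally and vertically multiplicative. Uniqueness and existence of the infinite extension $\trrX \in \DGF^{\square,\rho}(\cmG)$ are then immediate from the rectangular extension result.

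For (i), the assumption $\rho > \tfrac{1}{2}$ places us in the Young regime for paths, so the horizontal and vertical signatures $\rx^{h,\gr{k}}$ and $\rx^{v,\gr{k}}$ for $k \leq 3$ are well defined via Young integration and satisfy the path regularity condition \Cref{eq:path_regularity} by the rough path extension theorem (\Cref{thm:path_extension}). The modified (rectangular) path continuity condition is where the additional bound $\|X\|_{\square,\rho}$ enters: the difference path $s \mapsto X_{s,t_1} - X_{s,t_2}$ has $\rho$-H\"older norm controlled by $\|X\|_{\square,\rho}\inc{t_1}{t_2}^\rho$, and propagating this through the continuity estimate for the Young signature yields the required bounds at levels $k=1,2,3$. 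For the surface component, level $2$ already follows from \Cref{prop:dgf_zust_case} via \Cref{lem:boundary_signature_bound}. The level $3$ size estimate is precisely the situation addressed by Towghi--Harang: the integrand $X - X_{0,0}$ is rectangular $\rho$-H\"older, the integrator $A(X)$ is rectangular $2\rho$-H\"older as the level-$2$ signature of a loop bounding a sub-rectangle, and $\rho + 2\rho > 1$ so \Cref{thm:towghi_young} applies and produces the surface regularity bound \Cref{eq:surface_regularity} at $k=3$.

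For (ii), multiplicativity at levels $1$ and $2$ is inherited from \Cref{prop:dgf_zust_case}. Multiplicativity at level $3$ is, I expect, the main obstacle: one must verify that splitting the rectangle $[s_1,s_3]\times[t_1,t_2]$ transforms the 2D increment integral defining $\rX^{\gr{3}}$ into the combination $(\rX_{s_1,s_2;t_1,t_2} \hmult \rX_{s_2,s_3;t_1,t_2})^{\gr{3}}$ dictated by \Cref{eq:dg_hmult}, and analogously for vertical composition. The strategy is to establish the identity first in the smooth case, where \Cref{prop:sh_multiplicative} gives multiplicativity of the full surface signature functor; the reformulation of the level-$3$ smooth surface signature as a 2D increment integral against the area process (following the approach of \cite{lee_random_2023} used in \Cref{ex:level3}) identifies $\rX^{\gr{3}}$ with $\hssig^{\gr{3}}$, thereby transferring multiplicativity to this representation. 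Approximating $X$ by mollifications $X^\varepsilon$ and passing to the limit via continuity of Towghi--Harang integration in the rectangular H\"older topology extends the identity to all $X \in C^{\square,\rho}([0,1]^2,\V)$. Once (i) and (ii) are in place, the rectangular extension theorem produces the unique $\trrX \in \DGF^{\square,\rho}(\cmG)$, completing the argument.
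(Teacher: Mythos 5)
Your proposal follows essentially the same route as the paper: verify the rectangular regularity/continuity conditions at level $3$, apply the Towghi--Harang integral (\Cref{thm:towghi_young}) for the surface bound, establish multiplicativity by identifying the increment integral with the smooth level-$3$ surface signature and passing to the limit from smooth approximants, and then invoke the rectangular version of the extension theorem. Two small corrections, however, are worth flagging.

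First, the area process $A(X)$ is \emph{not} rectangular $2\rho$-H\"older; its $2$D rectangular increment is exactly $\rX^{\gr{2}}_{s_1,s_2;t_1,t_2}$, which by \Cref{lem:boundary_signature_bound_rectangular} satisfies $\|\square_{s_1,s_2;t_1,t_2}[A(X)]\| \lesssim \inc{s_1}{s_2}^\rho\inc{t_1}{t_2}^\rho$, i.e., $A(X)\in C^{\square,\rho}$. The exponent sum in the Towghi--Harang hypothesis is therefore $\rho_f + \rho_g = \rho + \rho = 2\rho > 1$, not $\rho + 2\rho$; the conclusion survives because $\rho > \frac12$ is precisely what is assumed, but the bookkeeping as stated would mislead a reader into thinking the Young regime extends to $\rho > \frac13$. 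Second, for the level-$2$ surface component in the rectangular setting you should invoke \Cref{lem:boundary_signature_bound_rectangular} rather than \Cref{lem:boundary_signature_bound}: the latter is tailored to the non-rectangular ($\orho = \rho/2$) continuity conditions of the Z\"ust regime, whereas here the rectangular path continuity condition \eqref{eq:rectangular_path_continuity_condition} is in force and the rectangular variant is what furnishes the correct surface regularity bound (and, in Step~1 of the paper's proof, the rectangular H\"older regularity of $A(X)$ itself).
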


\begin{remark} \label{rem:sewing_with_rectangular_regularity}
    We emphasize that this result does not imply that the level $3$ functional $\rrX$ is uniquely determined by the underlying surface $X$. The result only states that the lift from $\rrX$ to $\trrX$ is unique. The construction of the level $3$ functional here is natural in the sense that this reduces to the surface signature in the case of smooth surfaces. The reason is that the extension theorem used to prove this proposition (which is essentially the same as~\Cref{thm:surface_extension}) does not use the additional rectangular regularity. It remains open whether there is an analogue of~\Cref{thm:surface_extension} which directly takes advantage of this regularity. A similar problem was noticed in~\cite{chevyrev_multiplicative_2024}.
\end{remark}

\subsection{Proof of Surface Extension Theorem} \label{ssec:surface_extension_proof}

In this section, we prove~\Cref{thm:surface_extension} and begin with an outline of the proof. Suppose $\rrX \in \DGF^\rho(\cmG^{\leq n})$ is the original multiplicative double group functional.

\begin{enumerate}
    \item \textbf{Pathwise Extension.} We begin by defining a \emph{pathwise extension} $\orrX  = (\orx^h, \orx^v, \orX) \in \DGF^\rho(\cmG^{\gr{\leq n+1}})$ by using the same idea as the lift from level $1$ to $2$ in the Z\"ust regime (\Cref{prop:dgf_zust_case}). In particular, we define $\orx^h$ and $\orx^v$ via the path extension theorem (\Cref{thm:path_extension}), which combined with~\Cref{thm:path_extension_cont} provides the path regularity and continuity condition. The surface component $\orX$ is defined by applying the algebra section $\ts: U([\fg_0, \fg_0]) \to T_1$ to the signature of boundary paths.
    \item \textbf{Almost Multiplicativity.} While $\orrX \in \DGF^\rho(\cmG^{\gr{\leq n+1}})$, it is not a \emph{multiplicative} double group functional. However, we can show that it is \emph{almost multiplicative} in the sense that
    \begin{align}\nonumber
        \|\orX_{s_1, s_2; t_1, t_2} \hmult \orX_{s_2, s_3; t_1, t_1} - \orX_{s_1, s_3; t_1, t_2}\| &\leq \frac{1}{\beta}\CTR{n+1}{\orho}{s_1, s_3; t_1, t_2} \\
        \|\orX_{s_1, s_2; t_1, t_2} \vmult \orX_{s_1, s_2; t_2, t_3} - \orX_{s_1, s_2; t_1, t_3}\| &\leq \frac{1}{\beta}\CTR{n+1}{\orho}{s_1, s_2; t_1, t_3}.\nonumber
    \end{align}
    \item \textbf{Dyadic Limit.} Denote the \emph{dyadic rationals in $[0,1]$} by 
    \begin{align} \label{eq:dyadic}
        \dya \coloneqq \left\{ \frac{i}{2^m} \in [0,1] \, : \, i \in 0, \ldots, 2^m, m \in \N \right\},
    \end{align}
    and the \emph{dyadic 2-simplex} by
    \begin{align} \label{eq:dyadic_simplex}
        \Delta^2_\dya \coloneqq \{ (s,t) \in \Delta^2 \, : \, s, t \in \dya\}. 
    \end{align}
    Given a dyadic rectangle $Q = [s_1, s_2] \times [t_1, t_2]$ where $s_i, t_i \in \dya$, we consider 2D dyadic subdivisions of $Q$. We horizontally and vertically multiply together the squares in $\orX$ corresponding to these dyadic subdivisions. By leveraging the almost multiplicativity property, we show that the limit exists as subdivisions become finer to define $\trrX$. This dyadic limit is similar to the construction in~\cite{yekutieli_nonabelian_2015}.
    \item \textbf{Multiplicativity and Continuous Extension.} We then show that the limit $\trrX$ is both multiplicative and uniformly continuous on dyadic rectangles. We can then define the continuous extension of $\trrX$ to all (nondyadic) subrectangles of $[0,1]^2$, and show that multiplicativity is preserved.
    \item \textbf{Uniqueness.} Finally, we show the uniqueness property by once again considering grid multiplication of $\trrX$ with respect to dyadic subdivisions, iteratively replacing $\trrX$ with $\rrY$, and then showing that the difference tends to $0$ as subdivisions become finer. 
\end{enumerate}

Throughout this section, we will fix a section $s: L \to \fg_1^{\sab}$ such that its induced algebra section $\ts: U([\fg_0, \fg_0]) \to T_1$ has the property $\|\ts(x)\| \leq \|x\|$. An explicit section with the property is discussed in~\Cref{apxsec:algebra_section}.  \medskip

\subsubsection{Pathwise Extension}
We begin by performing an initial \emph{pathwise extension} of $\rrX$ by using the extension theorem for rough paths. We define a double group functional $\orrX \in \dg(\cmG^{\gr{\leq n+1}})$. For a fixed $t \in [0,1]$, we define $\orx^{h}_{\cdot, \cdot; t} : \Delta^2 \to G_0^{\gr{\leq n+1}}$ to be the unique extension of $\rx^h_{\cdot, \cdot; t} : \Delta^2 \to G_0^{\gr{\leq n}}$ from~\Cref{thm:path_extension}. Similarly, for fixed $s \in [0,1]$, we define $\orx^v_{s; \cdot,\cdot}: \Delta^2 \to G_0^{\gr{\leq n+1}}$ to be the unique extension of $\rx^v_{s; \cdot, \cdot}: \Delta^2 \to G_0^{\gr{\leq n}}$. Then, we define $\orX : \Delta^2 \times \Delta^2 \to G_1^{\gr{\leq n+1}}$ by setting $\orX^{\gr{k}} = \rX^{\gr{k}}$ for $k \in [n]$, and for level $n+1$, we define
\begin{align} \label{eq:rX_pathwise_extension}
    \orX^{\gr{n+1}}_{s_1, s_2; t_1, t_2} \coloneqq \ts\left( \orx^h_{s_1, s_2;t_1} \cdot \orx^v_{s_2; t_1, t_2} \cdot \orx^{-h}_{s_1, s_2; t_2} \cdot \orx^{-v}_{s_1;t_1, t_2} \right)^{\gr{n+1}} \in T_1^{s, \gr{\leq n+1}} \subset T_1^{\gr{\leq n+1}}.
\end{align}

\begin{lemma} \label{lem:pathwise_ext_dg_regularity}
    The pathwise extension $\orrX$ is $\rho$-H\"older, so $\orrX \in \DGF^\rho(\cmG^{\gr{\leq n+1}})$. Furthermore, we have
    \begin{align} \label{eq:pathwise_ext_interior_bound}
        \|\orX^{\gr{n+1}}_{s_1, s_2; t_1, t_2}\|  \leq \frac{1}{2\beta} \CTR{n+1}{\orho}{s_1, s_2; t_1, t_2}.
    \end{align}
\end{lemma}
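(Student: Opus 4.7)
The plan is to verify three properties of $\orrX$ separately: (i) that it is a valid double group functional valued in $\cmG^{\gr{\leq n+1}}$, (ii) that the path components $\orx^h,\orx^v$ still satisfy the path regularity and continuity conditions, and (iii) the bound on $\orX^{\gr{n+1}}$, from which the surface regularity condition follows. For (i), the path-multiplicativity of the extended path functionals is immediate from~\Cref{thm:path_extension}. The boundary condition~\Cref{eq:rrX_boundary_condition} holds at levels $k\in[n]$ since $\orrX^{\gr{\leq n}}=\rrX^{\gr{\leq n}}$. At level $n+1$, the boundary loop $\orx^h_{s_1,s_2;t_1}\cdot\orx^v_{s_2;t_1,t_2}\cdot\orx^{-h}_{s_1,s_2;t_2}\cdot\orx^{-v}_{s_1;t_1,t_2}$ is the signature of a closed path, so its log-signature lies in $[\fg_0,\fg_0]$; hence the loop itself lies in $G_0^{\gr{\leq n+1}}\cap U^{\gr{\leq n+1}}([\fg_0,\fg_0])$. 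By~\Cref{lem:section_preserves_grplike}, $\ts$ maps this into $G_1^{\gr{\leq n+1}}$, and since $\delta\circ\ts=\id$, applying $\delta$ recovers the boundary loop exactly, verifying~\Cref{eq:rrX_boundary_condition} at level $n+1$ and confirming $\orX$ is group-like.

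For (ii), the path regularity condition~\Cref{eq:path_regularity} at level $n+1$ for $\orx^h$ and $\orx^v$ is precisely the content of Lyons' extension theorem (\Cref{thm:path_extension}), applied for each fixed $t$ (respectively $s$). The path continuity condition~\Cref{eq:path_continuity} at level $n+1$ is where~\Cref{thm:path_extension_cont} comes in: for fixed $t_1<t_2$, apply the theorem to $\bx=\rx^h_{\cdot,\cdot;t_1}$ and $\by=\rx^h_{\cdot,\cdot;t_2}$ with $\epsilon=\ctr{\orho}{t_1,t_2}/\ffact{1}{\orho}$, which is admissible precisely because the hypothesis $n\geq\lfloor 2/\rho\rfloor$ yields $(2n+1)\orho > 1$; the conclusion then gives the required level-$(n+1)$ continuity bound for $\orx^h$, and symmetrically for $\orx^v$.

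For (iii), with the path regularity and continuity conditions now established for $\orx^h,\orx^v$ at all levels up to $n+1$,~\Cref{lem:boundary_signature_bound} applies directly at level $k=n+1$, yielding
\begin{align*}
\left\|\bigl(\orx^h_{s_1,s_2;t_1}\cdot\orx^v_{s_2;t_1,t_2}\cdot\orx^{-h}_{s_1,s_2;t_2}\cdot\orx^{-v}_{s_1;t_1,t_2}\bigr)^{\gr{n+1}}\right\|\leq\frac{1}{2\beta}\CTR{n+1}{\orho}{s_1,s_2;t_1,t_2}.
\end{align*}
Combining this with the chosen section $\ts$ satisfying $\|\ts(x)\|\leq\|x\|$ (from~\Cref{apxsec:algebra_section}) and the definition~\Cref{eq:rX_pathwise_extension} yields~\Cref{eq:pathwise_ext_interior_bound}, which in turn implies the surface regularity condition~\Cref{eq:surface_regularity} at level $n+1$ (noting $\frac{1}{2\beta}\CTR{n+1}{\orho}{\cdot}\leq\frac{1}{\beta}\CTRL^{n+1}_\orho(\cdot)$). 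At lower levels, surface regularity is inherited from $\rrX\in\DGF^\rho(\cmG^{\gr{\leq n}})$.

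The main obstacle will be verifying the boundary condition at level $n+1$ — specifically, that the level-$(n+1)$ component of the boundary loop signature lies in $U^{\gr{n+1}}([\fg_0,\fg_0])$ so that the section $\ts$ can be applied meaningfully. This requires the classical observation that log-signatures of loops lie in the derived subalgebra of $\fg_0$, which follows from the vanishing of the level-$1$ term of a closed-path signature together with the decomposition $\fg_0=\V\oplus[\fg_0,\fg_0]$. Everything else is a relatively mechanical invocation of the path extension and continuity theorems together with the already-proven~\Cref{lem:boundary_signature_bound}.
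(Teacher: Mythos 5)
The bulk of your proposal tracks the paper's proof: path regularity and continuity for $\orx^h,\orx^v$ from \Cref{thm:path_extension} and \Cref{thm:path_extension_cont}, and the bound \Cref{eq:pathwise_ext_interior_bound} from \Cref{lem:boundary_signature_bound} combined with $\|\ts\|_{\op}\leq 1$. That part is fine.

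However, your step (i) has a genuine gap in the claim that $\orX$ is group-like. You show correctly that the boundary loop lies in $G_0^{\gr{\leq n+1}}\cap U^{\gr{\leq n+1}}([\fg_0,\fg_0])$ and that $\ts$ of it lies in $G_1^{\gr{\leq n+1}}$. But $\orX$ is \emph{not} $\ts(\text{boundary loop})$: by the definition in \Cref{eq:rX_pathwise_extension}, only the level-$(n+1)$ component of $\orX$ is defined via $\ts$, while the components at levels $k\leq n$ are $\orX^{\gr{k}}=\rX^{\gr{k}}$, which generally differ from $\ts(\text{boundary loop})^{\gr{k}}$. Knowing that $\ts(\text{boundary loop})\in G_1^{\gr{\leq n+1}}$ therefore says nothing directly about $\orX$. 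The paper closes this by writing $B\coloneqq\orX-\ts\circ\delta(\orX)\in T_1^{\ker,\gr{\leq n}}$ (since $\delta(\orX)$ equals the boundary loop by the boundary condition at lower levels, the difference lies in $\ker\delta$ and vanishes at level $n+1$), then noting $B$ sits in the center so $\widehat{B}=(1,B)\in G_1^{\gr{\leq n+1}}$, and finally factoring $\orX=\widehat{B}*\ts\circ\delta(\orX)$ with both factors group-like. Your argument skips this decomposition entirely, which is precisely the nontrivial part of showing $\orX$ is valued in $\cmG^{\gr{\leq n+1}}$. The observation you flag at the end as the ``main obstacle'' (that the boundary loop log lies in $[\fg_0,\fg_0]$) is indeed needed, but is the easier half; the actual obstacle is reconciling $\orX$ at levels $\leq n$ with $\ts\circ\delta(\orX)$, which you do not address.

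A smaller imprecision: you describe the boundary loop as ``the signature of a closed path.'' The path functionals $\orx^h,\orx^v$ are abstract multiplicative functionals, not signatures of any actual path. The vanishing of the level-$1$ term of the boundary loop — which is what puts its $\log$ in $[\fg_0,\fg_0]$ — follows from the boundary condition \Cref{eq:rrX_boundary_condition} satisfied by $\rrX$ at lower levels, not from any geometric-loop property.
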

\begin{proof}
    By the path extension theroem (\Cref{thm:path_extension}), and the modified continuity of extension (\Cref{thm:path_extension_cont}), the path components $\orx^h$ and $\orx^v$ satisfy the path regularity and continuity conditions. Next, by~\Cref{lem:boundary_signature_bound} and the fact that $\|\ts\|_{\op} \leq 1$, the surface component $\orX^{\gr{n+1}}$ satisfies~\Cref{eq:pathwise_ext_interior_bound}, and in particular the surface regularity condition. It remains to show that $\orX$ is valued in $\cmG^{\gr{\leq n+1}}$. Because we assume that the original path components $\rx^h$ and $\rx^v$ are valued in $\cmG^{\gr{\leq n}}$, the path extensions $\orx^h$ and $\orx^v$ are valued in $\cmG^{\gr{\leq n+1}}$~\cite[Corollary 3.9]{cass_integration_2016}. For the surface component, we note that
    \begin{align}\nonumber
        \delta(\orX_{s_1, s_2; t_1, t_2}) = \orx^h_{s_1, s_2;t_1} \cdot \orx^v_{s_2; t_1, t_2} \cdot \orx^{-h}_{s_1, s_2; t_2} \cdot \orx^{-v}_{s_1;t_1, t_2} \in \cmG^{\gr{\leq n+1}}.
    \end{align}
    Then, considering the difference between $\orX$ and $\ts \circ \delta(\orX)$, we get
    \begin{align}\nonumber
        \orX_{s_1, s_2; t_1, t_2} - \ts\circ \delta(\orX_{s_1, s_2; t_1, t_2}) = B \in T_1^{\ker, \gr{\leq n}}.
    \end{align}
    Because the difference $B$ is at most degree $n$, and is in the center of $T_1^{\gr{\leq n+1}}$, we have $\widehat{B} = (1, B) \in \cmG^{\gr{\leq n+1}}$. Furthermore, by~\Cref{lem:section_preserves_grplike}, $\ts\circ \delta(\orX_{s_1, s_2; t_1, t_2}) \in \cmG^{\gr{\leq n+1}}$. Then, we have
    \begin{align}\nonumber
        \orX_{s_1, s_2; t_1, t_2} = B + \ts\circ \delta(\orX_{s_1, s_2; t_1, t_2}) = \widehat{B} * (\ts\circ \delta(\orX_{s_1, s_2; t_1, t_2})) \in \cmG^{\gr{\leq n+1}}. 
    \end{align}
\end{proof}

\subsubsection{Almost Multiplicativity}
While the pathwise extension is a double group functional, it is not multiplicative. Let $s_1 < s_2 < s_3$ and $t_1 < t_2$, and consider
\begin{align}\nonumber
    \orX_{s_1, s_2; t_1, t_2} \hmult \orX_{s_2, s_3; t_1, t_2} = (\orx^h_{s_1, s_2; t_1} \gt \orX_{s_2, s_3; t_1, t_2}) * \orX_{s_1, s_2; t_1, t_2}.
\end{align}
We will express $(\orx^h_{s_1, s_2; t_1} \gt \orX_{s_2, s_3; t_1, t_2})$ and $\orX_{s_1, s_2; t_1, t_2}$ in terms of the decomposition $T_1^s \oplus T_1^{\ker}$ as
\begin{align}\nonumber
    (\orx^h_{s_1, s_2; t_1} \gt \orX_{s_2, s_3; t_1, t_2})^{\gr{k}} = (F^{s,\gr{k}}, F^{\ker, \gr{k}}) \andd \orX^{\gr{k}}_{s_1, s_2; t_1, t_2}  = (E^{s, \gr{k}}, E^{\ker, \gr{k}}).
\end{align}
In particular, by definition of the decomposition, we have
\begin{align} \label{eq:almost_mult_EF}
    F^{s,\gr{k}} = \ts \circ \delta(\orx^h_{s_1, s_2; t_1} \gt \orX_{s_2, s_3; t_1, t_2})^{\gr{k}} \andd E^{s,\gr{k}} = \ts \circ\delta(\orX^{\gr{k}}_{s_1, s_2; t_1, t_2}).
\end{align}
Furthermore, while $E^{\ker, \gr{n+1}} = 0$ by definition of the pathwise extension, $F^{\ker, \gr{n+1}} \neq 0$ in general, since the action does not preserve the $T_1^s$ subspace. Then, by the multiplicative properties from~\Cref{prop:T1_decomposition}, we have
\begin{align}\nonumber
    \left(\orX_{s_1, s_2; t_1, t_2} \hmult \orX_{s_2, s_3; t_1, t_2}\right)^{\gr{n+1}} = \left( F^{s,\gr{n+1}} + E^{s, \gr{n+1}} + \sum_{q=2}^{n-1} F^{s, \gr{q}} * E^{s, \gr{n+1-q}}, \, F^{\ker, \gr{n+1}}\right).
\end{align}
Then, since $\ts$ is a morphism of algebras, we have
\begin{align}\nonumber
    F^{s,\gr{n+1}} &+ E^{s, \gr{n+1}} + \sum_{q=2}^{n-1} F^{s, \gr{q}} * E^{s, \gr{n+1-q}} \\
    &= \ts\left( \delta(\orx^h_{s_1, s_2; t_1} \gt \orX_{s_2, s_3; t_1, t_2})^{\gr{n+1}} + \delta(\orX^{\gr{n+1}}_{s_1, s_2; t_1, t_2}) + \sum_{q=2}^{n-1}\delta(\orx^h_{s_1, s_2; t_1} \gt \orX_{s_2, s_3; t_1, t_2})^{\gr{q}} \cdot \delta(\orX^{\gr{n+1-q}}_{s_1, s_2; t_1, t_2}) \right) \nonumber\\
    & = \ts\left( \orx^h_{s_1, s_2; t_1} \cdot  \delta(\orX_{s_2, s_3; t_1, t_2}) \cdot \orx^{-h}_{s_1, s_2; t_1} \cdot \delta(\orX_{s_1, s_2; t_1, t_2}) \right)^{\gr{n+1}}\nonumber\\
    & = \ts\left( \orx^{h}_{s_1, s_2; t_1} \cdot(\orx^h_{s_2, s_3;t_1} \cdot \orx^v_{s_3; t_1, t_2} \cdot \orx^{-h}_{s_2, s_3; t_2} \cdot \orx^{-v}_{s_2;t_1, t_2}) \cdot \orx^{-h}_{s_1, s_2; t_1} \cdot (\orx^h_{s_1, s_2;t_1} \cdot \orx^v_{s_2; t_1, t_2} \cdot \orx^{-h}_{s_1, s_2; t_2} \cdot \orx^{-v}_{s_1;t_1, t_2}) \right)^{\gr{n+1}} \nonumber\\
    & = \ts\left( \orx^{h}_{s_1, s_2; t_1} \cdot\orx^h_{s_2, s_3;t_1}\cdot \orx^v_{s_3; t_1, t_2} \cdot \orx^{-h}_{s_2, s_3; t_2}\cdot \orx^{-h}_{s_1, s_2; t_2} \cdot \orx^{-v}_{s_1;t_1, t_2} \right)^{\gr{n+1}} \nonumber\\
    & = \orX^{\gr{n+1}}_{s_1, s_3; t_1, t_2}.\nonumber
\end{align}
Here, we use the first Peiffer identity in the second equality, the fact that $\orX$ is a double group function in the third identity, we cancel out the inverses in the fourth line, and finally the last line is by definition of $\orX$. In particular, we have proved the following, where the vertical case can be proved using analogous computations.

\begin{lemma} \label{lem:pathwise_ext_delta_multiplicative}
    Suppose $\orrX$ is the pathwise extension of $\rrX$. Then $\orrX$ is multiplicative in the $T_1^s$ component. In particular, we have
    \begin{align}\nonumber
        \delta (\orX_{s_1, s_2; t_1, t_2} \hmult \orX_{s_2, s_3; t_1, t_2}) = \delta(\orX_{s_1, s_3; t_1, t_2}) \andd \delta (\orX_{s_1, s_2; t_1, t_2} \vmult \orX_{s_1, s_2; t_2, t_3}) = \delta(\orX_{s_1, s_2; t_1, t_3}).
    \end{align}
\end{lemma}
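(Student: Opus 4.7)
The plan is to split the verification by level. For levels $k \in [n]$, by construction of the pathwise extension we have $\orX^{\gr{k}} = \rX^{\gr{k}}$ and the boundary path components of $\orrX$ coincide with those of $\rrX$ at those levels, so multiplicativity of the original functional $\rrX$ yields $\orX \hmult \orX = \orX$ and $\orX \vmult \orX = \orX$ at levels $\leq n$, whence $\delta$ of both sides agree trivially. The content of the lemma is therefore at the top level $n+1$.

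For the horizontal identity at level $n+1$, I would follow the computation already laid out before the statement: decompose both $\orx^h_{s_1,s_2;t_1} \gt \orX_{s_2,s_3;t_1,t_2}$ and $\orX_{s_1,s_2;t_1,t_2}$ along $T_1 = T_1^s \oplus T_1^{\ker}$ using~\Cref{prop:T1_decomposition}. Since $\orX^{\gr{n+1}}_{\cdot;\cdot}$ sits in $T_1^s$ by construction~\eqref{eq:rX_pathwise_extension} and the multiplicative properties~\eqref{eq:T1s_mult_prop} force $T_1^{\ker} * T_1 = T_1 * T_1^{\ker} = 0$, the $T_1^s$ component of the composition at level $n+1$ is a sum of three terms: the $T_1^s$ pieces of each factor and the cross product $\sum_q F^{s,\gr{q}} * E^{s,\gr{n+1-q}}$. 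Using that $\ts$ is an algebra morphism, these combine into $\ts$ applied to the product of the boundary paths, i.e.\ $\ts(\delta(\orx^h_{s_1,s_2;t_1} \gt \orX_{s_2,s_3;t_1,t_2}) \cdot \delta(\orX_{s_1,s_2;t_1,t_2}))^{\gr{n+1}}$. The first Peiffer identity turns $\delta(\orx^h_{s_1,s_2;t_1} \gt \orX_{s_2,s_3;t_1,t_2})$ into $\orx^h_{s_1,s_2;t_1} \cdot \delta(\orX_{s_2,s_3;t_1,t_2}) \cdot \orx^{-h}_{s_1,s_2;t_1}$; then expanding both $\delta(\orX_{s_2,s_3;t_1,t_2})$ and $\delta(\orX_{s_1,s_2;t_1,t_2})$ via the boundary condition~\eqref{eq:rrX_boundary_condition}, the interior factors $\orx^{-h}_{s_1,s_2;t_1} \cdot \orx^h_{s_1,s_2;t_1}$ and $\orx^{-v}_{s_2;t_1,t_2} \cdot \orx^v_{s_2;t_1,t_2}$ cancel, leaving exactly $\delta(\orX_{s_1,s_3;t_1,t_2})$ inside $\ts$. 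Thus the $T_1^s$ component equals $\orX^{\gr{n+1}}_{s_1,s_3;t_1,t_2}$, and the remaining $T_1^{\ker}$ component $F^{\ker,\gr{n+1}}$ is annihilated by $\delta$, yielding the horizontal identity.

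The vertical identity is proved by the exact same strategy applied to $\orX_{s_1,s_2;t_1,t_2} * (\orx^v_{s_1;t_1,t_2} \gt \orX_{s_1,s_2;t_2,t_3})$, using the first Peiffer identity on the left action, the boundary condition to expand both $\delta$ factors, and cancellation of the internal vertical paths $\orx^v_{s_1;t_1,t_2} \cdot \orx^{-v}_{s_1;t_1,t_2}$. No new ingredients are required. There is no substantive obstacle: the argument is a bookkeeping exercise in the Peiffer-ideal quotient, and the decomposition $T_1^s \oplus T_1^{\ker}$ together with the annihilation property of~\eqref{eq:T1s_mult_prop} is precisely what makes the algebraic manipulation carried out via the section $\ts$ legitimate despite $\ts$ only being an algebra morphism into $T_1^s$ rather than onto all of $T_1$.
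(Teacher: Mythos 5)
Your proof is correct and follows essentially the same route as the paper: decompose along $T_1 = T_1^s \oplus T_1^{\ker}$, note that the kernel cross-terms vanish by~\Cref{eq:T1s_mult_prop}, use that $\ts$ is an algebra morphism to pull everything back to a product of boundaries, then apply the first Peiffer identity and the boundary condition~\eqref{eq:rrX_boundary_condition} so that the interior factors cancel and path-multiplicativity reassembles $\delta(\orX_{s_1,s_3;t_1,t_2})$. Your explicit dispatch of the levels $k \leq n$ via multiplicativity of the original $\rrX$ is a point the paper leaves implicit, and your identification of the final discrepancy as the $T_1^{\ker}$-valued element $F^{\ker,\gr{n+1}}$ killed by $\delta$ is exactly the paper's conclusion.
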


Thus, this implies that the obstruction to multiplicativity is entirely contained in $T_1^{\ker}$,
\begin{align} \label{eq:almost_hmult_element}
    \orX_{s_1, s_2; t_1, t_2} \hmult \orX_{s_2, s_3; t_1, t_2} - \orX_{s_1, s_3; t_1, t_2} = F^{\ker, \gr{n+1}} = (\orx^h_{s_1, s_2; t_1} \gt \orX_{s_2, s_3; t_1, t_2})^{\gr{n+1}} -\ts \circ \delta(\orx^h_{s_1, s_2; t_1} \gt \orX_{s_2, s_3; t_1, t_2})^{\gr{n+1}}.
\end{align}
We can perform the same argument to obtain the analogous identity for vertical composition,
\begin{align} \label{eq:almost_vmult_element}
    \orX_{s_1, s_2; t_1, t_2} \vmult \orX_{s_1, s_2; t_2, t_3} - \orX_{s_1, s_2; t_1, t_3} = (\orx^v_{s_1; t_1, t_2} \gt \orX_{s_1, s_2; t_2, t_3})^{\gr{n+1}} -\ts \circ \delta(\orx^v_{s_1; t_1, t_2} \gt \orX_{s_1, s_2; t_2, t_3})^{\gr{n+1}}.
\end{align}
In fact, we can show that $\orX$ is \emph{almost multiplicative} in the following sense.

\begin{proposition} \label{prop:almost_multiplicative}
    Suppose $\orrX$ is the pathwise extension of $\rrX$. Then, $\orrX$ is \emph{almost horizontally multiplicative}, such that
    \begin{align}\nonumber
        \left\|\left(\orX_{s_1, s_2; t_1, t_2} \hmult \orX_{s_2, s_3; t_1, t_2} - \orX_{s_1, s_3; t_1, t_2}\right)^{\gr{n+1}}\right\| \leq  \frac{1}{2\beta} \CTR{n+1}{\orho}{s_1, s_3; t_1, t_2}
    \end{align}
    for all $s_1 < s_2 < s_3$ and $t_1 < t_2$; and is \emph{almost vertically multiplicative}, such that
    \begin{align}\nonumber
        \left\|\left(\orX_{s_1, s_2; t_1, t_2} \vmult \orX_{s_1, s_2; t_2, t_3} - \orX_{s_1, s_2; t_1, t_3}\right)^{\gr{n+1}}\right\|\leq  \frac{1}{2\beta} \CTR{n+1}{\orho}{s_1, s_2; t_1, t_3}.
    \end{align}
\end{proposition}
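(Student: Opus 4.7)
By the horizontal/vertical symmetry of the setup, it suffices to establish the first inequality; the second follows by the same argument with $s$ and $t$ (and $\orx^h$ replaced by $\orx^v$) interchanged. The starting point is the explicit formula \Cref{eq:almost_hmult_element}, which identifies the obstruction as the kernel projection $(\id - \ts\circ\delta)$ applied to the level-$(n+1)$ part of the action $\orx^h_{s_1,s_2;t_1}\gt \orX_{s_2,s_3;t_1,t_2}$. Since $\|\ts\|_{\op}\le 1$ (by the explicit section chosen in \Cref{apxsec:algebra_section}) and $\|\delta\|_{\op}\le \sqrt{2}$ (\Cref{sssec:norms}), it is enough to bound $\|(\orx^h_{s_1,s_2;t_1}\gt \orX_{s_2,s_3;t_1,t_2})^{\gr{n+1}}\|$ up to the harmless universal constant $1+\sqrt{2}$.

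The next step is to expand the conjugation action using $a\gt E = a\cdot E\cdot a^{-1}$ together with the shortness of the bilateral $T_0$-action on $T_1$ (\Cref{eq:levelwise_short_action}) to obtain
\begin{align}\nonumber
\bigl\|(\orx^h\gt \orX)^{\gr{n+1}}\bigr\| \le \sum_{\substack{i+j+k=n+1\\ j\ge 2}} \|\orx^{h,\gr{i}}_{s_1,s_2;t_1}\|\cdot\|\orX^{\gr{j}}_{s_2,s_3;t_1,t_2}\|\cdot\|\orx^{-h,\gr{k}}_{s_1,s_2;t_1}\|.
\end{align}
Each $\|\orx^{\pm h,\gr{i}}_{s_1,s_2;t_1}\|$ is controlled by the path regularity \Cref{eq:path_regularity} (valid for the pathwise extension by \Cref{lem:pathwise_ext_dg_regularity}), and $\|\orX^{\gr{j}}_{s_2,s_3;t_1,t_2}\|$ by the surface regularity \Cref{eq:pathwise_ext_interior_bound}. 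Expanding $\CTR{j}{\orho}{s_2,s_3;t_1,t_2}$ as its defining sum over $q=1,\ldots,2j-1$ of $\ctr{q\orho}{s_2,s_3}\ctr{(2j-q)\orho}{t_1,t_2}/(\ffact{q}{\orho}\ffact{2j-q}{\orho})$ turns the right-hand side into a quadruple sum with three $s$-increments (from $\orx^{h,\gr{i}}$, $\orx^{-h,\gr{k}}$, and the $q$-factor) and one $t$-increment.

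The crux is reorganizing this expression into the form $\frac{1}{2\beta}\CTR{n+1}{\orho}{s_1,s_3;t_1,t_2}$. I would introduce the composite index $p = 2i+q+2k$, so that $p + (2j-q) = 2(n+1)$, and apply the neoclassical inequality \Cref{eq:neoclassical} twice in its single-term form $\ctr{a\orho}{s_1,s_2}\ctr{b\orho}{s_2,s_3}/(\ffact{a}{\orho}\ffact{b}{\orho}) \le \orho^{-1}\ctr{(a+b)\orho}{s_1,s_3}/\ffact{a+b}{\orho}$: first to collapse $(\ctr{2i\orho}{s_1,s_2},\ctr{q\orho}{s_2,s_3})$ into $\ctr{(2i+q)\orho}{s_1,s_3}/\ffact{2i+q}{\orho}$, and then, after the trivial monotonicity $\ctr{2k\orho}{s_1,s_2}\le \ctr{2k\orho}{s_1,s_3}$, to collapse again into $\ctr{p\orho}{s_1,s_3}/\ffact{p}{\orho}$. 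The remaining sum over $p$ and $r = 2j-q$ then matches the defining expression of $\CTR{n+1}{\orho}{s_1,s_3;t_1,t_2}$, and together with the overall factor $\beta^{-3}$, the constants $2^{2j\orho}$, $\orho^{-2}$, and $(1+\sqrt{2})$, the assumption $\beta > \beta_{\RS}$ from \Cref{eq:beta} absorbs the combinatorial constant and yields the claimed bound $\tfrac{1}{2\beta}\CTR{n+1}{\orho}{s_1,s_3;t_1,t_2}$. The main obstacle is the final bookkeeping: tracking factorials, the $\orho^{-1}$ losses from each use of the neoclassical inequality, and the fact that after the identifications some indices are overcounted, so one must verify that the overall constant indeed fits inside the margin afforded by the choice of $\beta$.
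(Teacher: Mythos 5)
Your high-level decomposition matches the paper's: start from \Cref{eq:almost_hmult_element}, use $\|\ts\|_{\op}\le 1$ and $\|\delta\|_{\op}\le\sqrt{2}$ to reduce to bounding $\|(\orx^h_{s_1,s_2;t_1}\gt\orX_{s_2,s_3;t_1,t_2})^{\gr{n+1}}\|$, expand the conjugation, substitute the regularity estimates, and reassemble with the neoclassical inequality. The gap is in the recombination step. You propose to collapse the three $s$-increments term-by-term and then reindex by $(p,r)=(2i+q+2k,\,2j-q)$, but this introduces two losses that the $n$-independent constant $\beta$ of \Cref{eq:beta} cannot absorb. First, after the monotonicity step both remaining increments sit on the \emph{same} interval $[s_1,s_3]$, so the cross-interval single-term neoclassical form you quoted does not apply; the relevant same-interval statement is $\frac{1}{\ffact{a}{\orho}\ffact{b}{\orho}}\le\frac{2^{(a+b)\orho}}{\orho\,\ffact{a+b}{\orho}}$, which contributes an additional per-term factor $2^{p\orho}$ with $p$ as large as $2n+1$. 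Second, the reindexing is many-to-one: for a fixed pair $(p,r)$ with $p+r=2(n+1)$, every admissible $j$ and every split $(i,k)$ of $n+1-j$ contributes, so the multiplicity grows quadratically in $n$ for small $r$. Tracking both effects, the coefficient multiplying the $(p,r)$-summand grows like $2^{p\orho}$ up to $\orho$-dependent constants, which for $p$ close to $2n+1$ is exponential in $n$ and therefore cannot be hidden in the fixed $\beta$.

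The paper sidesteps both problems by applying the bounds in summed form and in the opposite order. For fixed total path degree $k$ it first sums over the split $i+(k-i)=k$, where both increments live on $[s_1,s_2]$, using \Cref{eq:binomial_sum_bound} exactly once to produce a single factor $2^{2k\orho}/\orho$; only then does it insert $\|\orX^{\gr{n+1-k}}_{s_2,s_3;t_1,t_2}\|\le\tfrac{1}{\beta}\CTR{n+1-k}{\orho}{s_2,s_3;t_1,t_2}$, and the product $2^{2k\orho}\cdot 2^{2(n+1-k)\orho}=2^{2(n+1)\orho}$ is the $k$-independent prefactor already present in $\CTR{n+1}{\orho}{s_1,s_3;t_1,t_2}$. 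Finally, for each fixed $t$-degree it applies the \emph{summed} neoclassical inequality over $k$; that split lies between the genuinely distinct intervals $[s_1,s_2]$ and $[s_2,s_3]$ and costs only one further factor of $\orho^{-1}$. This gives a clean $n$-independent bound of the form $\frac{3}{\orho^2\beta^3}\CTR{n+1}{\orho}{s_1,s_3;t_1,t_2}$, which is what permits the inductive claim with the same $\beta$ at every level.
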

\begin{proof}
    We will prove the horizontal case, and the vertical case follows using similar arguments. We use the above computation in~\Cref{eq:almost_hmult_element} for the obstruction to horizontal multiplicativity. We note that $\|\ts \circ \delta\|_{\op} \leq \sqrt{2}$, so that
    \begin{align} \label{eq:almost_mult_initial_bound_short_ts}
        \left\|\left(\orX_{s_1, s_2; t_1, t_2} \hmult \orX_{s_2, s_3; t_1, t_2} - \orX_{s_1, s_3; t_1, t_2}\right)^{\gr{n+1}}\right\| \leq 3 \left\| (\orx^h_{s_1, s_2; t_1} \gt \orX_{s_2, s_3; t_1, t_2})^{\gr{n+1}}\right\|.
    \end{align}
    Then, by the definition of the action, we have
    \begin{align}\nonumber
        (\orx^h_{s_1, s_2; t_1} \gt \orX_{s_2, s_3; t_1, t_2})^{\gr{n+1}} = \sum_{k=0}^{n-1} \sum_{i=0}^k \orx^{h, \gr{i}}_{s_1, s_2; t_1} \cdot \orX^{\gr{n+1-k}}_{s_2, s_3; t_1, t_2} \cdot \orx^{-h, \gr{k-i}}_{s_1, s_2; t_1},
    \end{align}
    where we note that $(\orx^{h}_{s_1, s_2; t_1} \cdot\orx^{-h}_{s_1, s_2; t_1})^{\gr{n+1}} = 0$, so the sum in $k$ only goes up to $k = n-1$. Then, using the path regularity conditions from~\Cref{eq:path_regularity}, 
    \begin{align}\nonumber
        \left\| (\orx^h_{s_1, s_2; t_1} \gt \orX_{s_2, s_3; t_1, t_2})^{\gr{n+1}} \right\| &\leq  \sum_{k=0}^{n-1} \sum_{i=0}^k \|\orx^{h, \gr{i}}_{s_1, s_2; t_1}\| \cdot \|\orX^{\gr{n+1-k}}_{s_2, s_3; t_1, t_2}\| \cdot \|\orx^{-h, \gr{k-i}}_{s_1, s_2; t_1}\| \\
        & \leq \frac{1}{\orho \beta^2}\sum_{k=0}^{n-1} \sum_{i=0}^k \frac{\ctr{2k\orho}{s_1, s_2}}{\ffact{2i}{\orho}{\ffact{2k-2i}{\orho}}} \|\orX^{\gr{n+1-k}}_{s_2, s_3; t_1, t_2}\| \nonumber\\
        & \leq \frac{1}{\orho\beta^2} \sum_{k=0}^{n-1} 2^{2k\orho} \frac{\ctr{2k\orho}{s_1, s_2}}{\ffact{2k}{\orho}} \|\orX^{\gr{n+1-k}}_{s_2, s_3; t_1, t_2}\|, \label{eq:am_prelim_bound}
    \end{align}
    where we use~\Cref{eq:binomial_sum_bound} with respect to the sum over $i$ in the final step. Then, using the surface regularity in~\Cref{eq:surface_regularity} including at level $n+1$ by~\Cref{lem:pathwise_ext_dg_regularity}, we get
    \begin{align}\nonumber
        \left\| (\orx^h_{s_1, s_2; t_1} \gt \orX_{s_2, s_3; t_1, t_2})^{\gr{n+1}} \right\| & \leq \frac{1}{\orho \beta^3} \sum_{k=0}^{n-1} \sum_{q=1}^{2n-2k +1} 2^{2k\orho} \frac{\ctr{2k\orho}{s_1, s_2}}{\ffact{2k}{\orho}} \, \cdot \,  2^{2(n+1-k)\orho}  \frac{\ctr{(2(n-k+1)-q)\orho}{s_2, s_3} \ctr{q\orho}{t_1, t_2}}{\ffact{2(n-k+1)-q}{\orho} \ffact{q}{\orho}}  \\
        & = \frac{2^{2(n+1)\orho}}{\orho\beta^3} \sum_{q=1}^{2n+1} \sum_{k=0}^{k_{n,q}} \frac{\ctr{2k\orho}{s_1, s_2} \ctr{(2(n-k+1)-q)\orho}{s_2, s_3}}{\ffact{2k}{\orho}\ffact{2(n-k+1)-q}{\orho}} \, \cdot \, \frac{\ctr{q\orho}{t_1, t_2}}{\ffact{q}{\orho}} \nonumber\\
        & = \frac{2^{2(n+1)\orho}}{\orho^2\beta^3} \sum_{q=1}^{2n+1} \frac{\ctr{(2n+2-q)\orho}{s_1, s_3}}{\ffact{2n+2-q}{\orho}}\, \cdot \, \frac{\ctr{q\orho}{t_1, t_2}}{\ffact{q}{\orho}}\nonumber
    \end{align}
    where we change the order of summation in the second line, and the summation over $k$ is taken up to $k_{n,q} = \min\{ n-1, \lfloor n - \frac{q-1}{2}\rfloor\}$. We use the neoclassical inequality~\Cref{eq:neoclassical} in the last line, and for $\beta$ sufficiently large (\Cref{eq:beta}), we have our desired result. 
\end{proof}

\begin{remark} \label{rem:almost_mult_algebra_section}
    This is the step in which defining the initial pathwise extension via the algebra section $\ts$ is crucial. In particular, it allows us to simplify the the obstruction to multiplicativity in~\Cref{eq:almost_mult_initial_bound_short_ts}. An alternative method would be to simply define a \emph{trivial} initial extension $\rX^{\gr{n+1}} = 0$, as is done in the case of rough paths. However, here the obstruction to multiplicativity would be 
    \begin{align}\nonumber
        \left(\orX_{s_1, s_2; t_1, t_2} \hmult \orX_{s_2, s_3; t_1, t_2} - \orX_{s_1, s_3; t_1, t_2}\right)^{\gr{n+1}}\leq \left((\orx^h_{s_1, s_2; t_1} \gt \orX_{s_2, s_3; t_1, t_2}) * \rX_{s_1, s_2; t_1, t_2}\right)^{\gr{n+1}}.
    \end{align}
    In this case, one can still prove an almost multiplicativity bound
    \begin{align}\nonumber
        \left\|\left(\orX_{s_1, s_2; t_1, t_2} \hmult \orX_{s_2, s_3; t_1, t_2} - \orX_{s_1, s_3; t_1, t_2}\right)^{\gr{n+1}}\right\| \leq C_{n+1} \CTR{n+1}{\orho}(s_1, s_3; t_1, t_2),
    \end{align}
    but the constant $C_{n+1} > 0$ grows with respect to $n$, and we would not be able to show that the lift to infinite levels is bounded as in~\Cref{cor:surface_sig_rs_cont}. 
\end{remark}

\subsubsection{Dyadic Partitions and Grid Multiplication}
Recall the dyadic rationals $\dya$ and the dyadic simplex $\Delta_\dya^2$ from~\Cref{eq:dyadic,eq:dyadic_simplex} respectively.

\begin{definition}
    A \emph{dyadic double group functional valued in $\cmG^{\gr{\leq n}}$} $\rrY = (\ry^h, \ry^v, \rY)$ consists of three maps
    \begin{align}\nonumber
        \ry^h : \Delta^2_\dya \times \dya \to G_0^{\gr{\leq n}}, \quad \ry^v : \dya \times \Delta^2_\dya \to G_0^{\gr{\leq n}}, \andd \rY : \Delta^2_\dya \times \Delta^2_\dya \to G_1^{\gr{\leq n}}
    \end{align}
    which is path-multiplicative and satisfies the boundary condition in~\Cref{eq:rrX_boundary_condition}. We say it is \emph{multiplicative} if~\Cref{eq:dgf_def_multiplicative} holds for all dyadic rectangles. We denote the set of dyadic double group functionals by $\DGF_\dya(\cmH)$.
\end{definition}

For $m \in \N$ and $(\smm, \spp; \tmm, \tpp) \in \Delta_\dya \times \Delta_\dya$, define the \emph{$m^{th}$ dyadic partition of $[\smp] \times [\tmp]$} by
\begin{align} \label{eq:regular_dyadic_partition}
    \dpart_m(\smm, \spp; \tmm, \tpp) \coloneqq \left\{ (s_i, t_j) \in \dya^2 \, : \, s_i = \smm + \frac{(\spp - \smm)i}{2^m}, \, t_j = \tmm + \frac{(\tpp - \tmm)j}{2^m}, i, j \in [2^m]_0 \right\}.
\end{align}
Fix a dyadic rectangle $(\smm, \spp; \tmm, \tpp) \in \Delta_\dya \times \Delta_\dya$, and we define a grid multiplication operation recursively as follows. This recursive multiplication is similar to the one in~\cite{yekutieli_nonabelian_2015}.

\begin{definition}
    Let $(\smm, \spp; \tmm, \tpp) \in \Delta_\dya \times \Delta_\dya$ be a dyadic rectangle, and $\rrX$ be a double group functional. We define \emph{grid multiplication} operations on $\rrX$ recursively as follows. The $0^{th}$ grid multiplication operator is given by 
    \begin{align}\nonumber
        \gridmult^0_{\smm, \spp; \tmm, \tpp}[\rrX] = \orrX_{\smm, \spp; \tmm, \tpp}.
    \end{align}
    Then, we recursively define the grid multiplication by multiplying horizontally first, and then vertically. Let $\sss = \frac{\smm + \spp}{2}$ and $\tss = \frac{\tmm + \tpp}{2}$. Then, the $m^{th}$ grid multiplication operation is
    \begin{align} \label{eq:gridmult_def_recursive}
        \gridmult^m_{\smm, \spp; \tmm, \tpp}[\rrX] = (\gridmult^{m-1}_{\smm, \sss; \tmm, \tss}[\rrX] \hmult \gridmult^{m-1}_{\sss, \spp; \tmm, \tss}[\rrX]) \vmult (\gridmult^{m-1}_{\smm, \sss; \tss, \tpp}[\rrX] \hmult \gridmult^{m-1}_{\sss, \spp; \tss, \tpp}[\rrX]).
    \end{align}
    In particular, we note that $\gridmult^m[\rrX]$ is a double group functional for all $m \in \N_0$. 
\end{definition}
The following figure shows the order of horizontal and vertical multiplications in the case of $\gridmult^2[\orrX]$, where red lines denote the boundaries on which multiplication is occuring.
\begin{figure}[!h]
    \includegraphics[width=\linewidth]{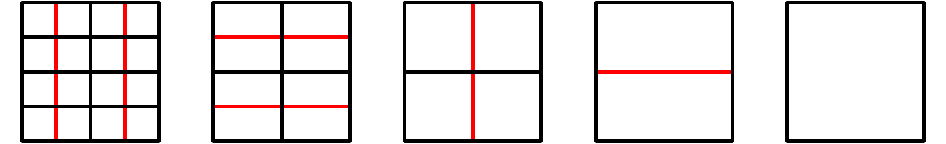}
\end{figure}  

Note that because double group functionals satisfy the boundary condition in~\Cref{eq:rrX_boundary_condition}, the interchange law holds. Thus, these grid multiplication operators are equivalent to multiplying vertically first and then horizontally (or a completely different ordering that doesn't come from this recursive procedure). However, it is convenient to have a specific ordering, and we will use the explicit ordering specified in~\Cref{eq:gridmult_def_recursive} as a way to multiply together elements from the dyadic partition $\dpart_m(\smp; \tmp)$.
Furthermore, we will require intermediate grid multiplications in between $\gridmult^{m-1}_{\smm, \spp; \tmm, \tpp}[\orrX]$ and $\gridmult^m_{\smm, \spp; \tmm, \tpp}[\orrX]$. We define a more general operation which will be used in the proof of~\Cref{lem:uniqueness} to show uniqueness.

\begin{definition} \label{def:intermediate_subdivision}
    Let $\rrX$ and $\rrY$ be double group functionals with the same path components, $\rx^h = \ry^h$ and $\rx^v = \ry^v$. Let
    \begin{align}\nonumber
        \gridmult^{0,0}_{\smm, \spp; \tmm, \tpp}[\rrX,\rrY] \coloneqq \gridmult^0_{\smm, \spp; \tmm, \tpp}[\rrX] \andd \gridmult^{0,1}_{\smm, \spp; \tmm, \tpp}[\rrX, \rrY] = \gridmult^1_{\smm, \spp; \tmm, \tpp}[\rrY].
    \end{align}
    Then, for $m \in \N$, $a \in 0,1,2,3$, and $u \in [4^m]_0$, define the function
    \begin{align} \label{eq:intermediate_index}
        \intind_{m,a}(i) \coloneqq \begin{cases} 
            0 & : u \leq 4^{m-1}a \\
            u - 4^{m-1}a & : 4^{m-1}a < u < 4^{m-1}(a+1) \\
            4^{m-1} & :  i \geq 4^{m-1}(a+1).
        \end{cases}
    \end{align}
    Then, we define
    \begin{align} \label{eq:intermediate_subdivision}
        \gridmult^{m,u}_{\smm, \spp; \tmm, \tpp}[\rrX, \rrY] \coloneqq (\gridmult^{m-1, \intind_{m,0}(u)}_{\smm, \sss; \tmm, \tss}[\rrX, \rrY] \hmult \gridmult^{m-1, \intind_{m,1}(u)}_{\sss, \spp; \tmm, \tss}[\rrX, \rrY]) \vmult (\gridmult^{m-1, \intind_{m,2}(u)}_{\smm, \sss; \tss, \tpp}[\rrX, \rrY] \hmult \gridmult^{m-1, \intind_{m,3}(u)}_{\sss, \spp; \tss, \tpp}[\rrX, \rrY]).
    \end{align}
\end{definition}
This operation has the property that
\begin{align}\nonumber
    \gridmult^{m,0}_{\smp, \tmp}[\rrX, \rrY] = \gridmult^m_{\smp; \tmp}[\rrX] \andd \gridmult^{m,4^m}_{\smp, \tmp}[\rrX, \rrY] = \gridmult^m_{\smp; \tmp}[\rrY]
\end{align}
Returning to the setting of the path extension, we set $\rrX = \orrX$ and $\rrY = \gridmult^1[\orrX]$, and denote
\begin{align}\nonumber
    \gridmult^{m,u}_{\smm, \spp; \tmm, \tpp}[\orrX] \coloneqq \gridmult^{m,u}_{\smm, \spp; \tmm, \tpp}\left[\orrX, \gridmult^1[\orrX]\right]
\end{align}
to simplify notation. In this case, we have
\begin{align}\nonumber
    \gridmult^{m,0}_{\smm, \spp; \tmm, \tpp}[\orrX] = \gridmult^{m}_{\smm, \spp; \tmm, \tpp}[\orrX] \andd \gridmult^{m,4^m}_{\smm, \spp; \tmm, \tpp}[\orrX] = \gridmult^{m}_{\smm, \spp; \tmm, \tpp}[\gridmult^1[\orrX]] = \gridmult^{m+1}_{\smm, \spp; \tmm, \tpp}[\orrX].
\end{align}
The general idea is that $\gridmult^{m,0}_{\smm, \spp; \tmm, \tpp}[\orrX]$ corresponds to the usual dyadic partition of the rectangle in to $4^m$ squares. Then, we consecutively subdivide each of the $4^m$ squares in $\dpart_m$, and multiply them in horizontal-then-vertical ordering.

\begin{figure}[!h]
    \includegraphics[width=\linewidth]{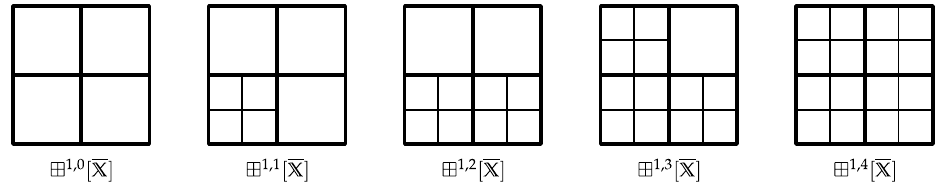}
\end{figure}

\subsubsection{Dyadic Limit}
In this section, we show that $\lim_{m \to \infty} \gridmult^m_{\smm, \spp; \tmm, \tpp}[\orrX]$ converges. First, we denote the components of $\gridmult^m[\orrX] \in \DGF(\cmG^{\gr{\leq n+1}})$ for $(\smp;\tmp) \in \Delta^2 \times \Delta^2$ by
\begin{align}\nonumber
    \gridmult^m_{\smp;\tmp}[\orrX] = \left( \orx^h_{\smp; \tmm}, \orx^v_{\spp; \tmp},\orx^h_{\smp; \tpp}, \orx^v_{\smm; \tmp}, \gridmult^m_{\smp;\tmp}[\orX]\right).
\end{align}
We note the path components are the same as $\orrX$ since it is path-multiplicative.
In particular, $\gridmult^m[\orrX] : \Delta^2 \times \Delta^2 \to \dg(\bT)$ is a double group function, where the boundary condition in~\Cref{eq:rrX_boundary_condition} holds by~\Cref{lem:pathwise_ext_delta_multiplicative}.
We will need the preliminary lemma.

\begin{lemma} \label{lem:mult_bound_reduction}
    Let $\orrX, \orrY$ be two different extensions of $\rrX$ such that the path functions coincide; in particular $\orrX^{\gr{k}} = \orrY^{\gr{k}} = \rrX^{\gr{k}}$ for all $k \in [n]$, $\orx^{h, \gr{k}} = \ory^{h, \gr{k}}$ for all $k \in [n+1]$ and $\orx^{v, \gr{k}} = \ory^{v, \gr{k}}$ for all $k \in [n+1]$. Then,
    \begin{align}\nonumber
        \|\orX_{s_1, s_2; t_1, t_2} \hmult \orX_{s_2, s_3; t_1, t_2} - \orY_{s_1, s_2; t_1, t_2} \hmult \orX_{s_2, s_3; t_1, t_2} \| \leq \|(\orX_{s_1, s_2; t_1, t_2} - \orY_{s_1, s_2; t_1, t_2})^{\gr{n+1}}\|.
    \end{align}
    The analogous bounds hold for the opposite ordering and for vertical compositions.
\end{lemma}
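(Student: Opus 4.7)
The plan is to expand both compositions explicitly, factor out the common action factor, and exploit how the grading of the truncated algebra forces all higher-order product terms to vanish.

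First I would unfold the horizontal compositions using the definition
$E \hmult F = (e \gt F) * E$ on the surface component, where $e$ denotes the bottom-edge path component. Since the hypothesis gives $\orx^h_{s_1,s_2;t_1} = \ory^h_{s_1,s_2;t_1}$, the action factor $\orx^h_{s_1,s_2;t_1} \gt \orX_{s_2,s_3;t_1,t_2}$ is identical for both terms, so the difference on the left-hand side rewrites (cleanly, with no path-component contribution) as
\begin{align*}
    \left(\orx^h_{s_1,s_2;t_1} \gt \orX_{s_2,s_3;t_1,t_2}\right) * \left(\orX_{s_1,s_2;t_1,t_2} - \orY_{s_1,s_2;t_1,t_2}\right).
\end{align*}

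Next I would use the hypothesis that $\orrX$ and $\orrY$ both extend the same $\rrX$, so their surface components agree at all levels $k \in [n]$; in particular, $\orX_{s_1,s_2;t_1,t_2} - \orY_{s_1,s_2;t_1,t_2}$ is concentrated at level $n+1$, i.e.\ equals $(\orX_{s_1,s_2;t_1,t_2} - \orY_{s_1,s_2;t_1,t_2})^{\gr{n+1}}$. Writing $\orx^h_{s_1,s_2;t_1} \gt \orX_{s_2,s_3;t_1,t_2} = (1,F)$ with $F \in T_1^{\gr{\leq n+1}}$ (whose components live at levels $\geq 2$, since the action preserves the grading), and setting $E = (\orX - \orY)^{\gr{n+1}}$, the product in $\hT_1^{\gr{\leq n+1}}$ is
\begin{align*}
    (1,F) * (0,E) = (0,\, E + F * E).
\end{align*}
Since products add levels and $F*E$ lives at levels $\geq n+3$, it is killed by the truncation, leaving exactly $(0,E)$ in the truncated algebra. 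Taking norms yields equality (hence the stated inequality) between the left-hand side and $\|(\orX_{s_1,s_2;t_1,t_2} - \orY_{s_1,s_2;t_1,t_2})^{\gr{n+1}}\|$.

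For the opposite horizontal ordering $\orX_L \hmult \orY_R - \orX_L \hmult \orX_R$ and for the two vertical orderings, I would repeat the same argument: factor out the common factor after using the coincidence of path components, isolate the level-$(n+1)$ difference of surface components, and observe that multiplication by a unital element whose non-unital part starts at level $2$ preserves the level-$(n+1)$ piece exactly in the truncated algebra. The only additional remark required is that the conjugation action $\orx^h \gtd (\cdot) \ltd (\orx^h)^{-1}$ restricted to a single graded piece $T_1^{\gr{n+1}}$ is the identity plus strictly higher-order corrections (which are again truncated away), so $(\orx^h \gt E)^{\gr{n+1}} = E^{\gr{n+1}}$ in $\hT_1^{\gr{\leq n+1}}$. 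There is no real obstacle here — the content is entirely in the bookkeeping of grading and the observation that the truncation eliminates every cross term that could contribute beyond level $n+1$.
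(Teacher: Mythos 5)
Your proof is correct and takes essentially the same approach as the paper: both factor the common conjugation factor $\orx^h_{s_1,s_2;t_1}\gt\orX_{s_2,s_3;t_1,t_2}$ out of the difference and then use the grading of the truncated algebra $\hT_1^{\gr{\leq n+1}}$ to observe that the only surviving level-$(n+1)$ term is the difference $(\orX - \orY)^{\gr{n+1}}$ itself. Your closing remark — that the action of a group-like element (which is $1 +$ higher order) restricted to the top graded piece is the identity in the truncation — is the right observation for the opposite-ordering and vertical cases, which the paper leaves implicit.
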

\begin{proof}
    By definition of the horizontal product, and the fact that the path functions coincide, we have
    \begin{align}\nonumber
        \orX_{s_1, s_2; t_1, t_2} \hmult \orX_{s_2, s_3; t_1, t_2} &- \orY_{s_1, s_2; t_1, t_2} \hmult \orX_{s_2, s_3; t_1, t_2} \\
        &= \Big(\orx^h_{s_1, s_2; t_1, t_2} \cdot \orX_{s_2, s_3; t_1, t_2} \cdot \orx^{-h}_{s_1, s_2; t_1, t_2} \cdot (\orX_{s_1, s_2; t_1, t_2} - \orY_{s_1, s_2; t_1, t_2})\Big)^{\gr{n+1}}\nonumber\\
        & = (\orX_{s_1, s_2; t_1, t_2} - \orY_{s_1, s_2; t_1, t_2})^{\gr{n+1}}.\nonumber
    \end{align}
\end{proof}

Now, we will show that the limit of grid multiplications is well defined. 
\begin{proposition} \label{prop:first_sewing}
    Fix a dyadic rectangle $(\smm, \spp; \tmm, \tpp) \in \Delta_\dya \times \Delta_\dya$.  Then,
    \begin{align}\nonumber
        \trX_{\smm, \spp; \tmm, \tpp} \coloneqq \lim_{m \to \infty} \gridmult^m_{\smp; \tmp}[\orX]
    \end{align}
    is well defined and satisfies $\trX^{\gr{k}} = \rX^{\gr{k}}$ for $k \in [n]$, $\trx^h = \orx^h$, and $\trx^v = \orx^v$. Furthermore, $\trrX \in \DGF^{\rho}_\dya(\cmG^{\gr{\leq n+1}})$.
\end{proposition}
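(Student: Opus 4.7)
The plan is to prove convergence of $\gridmult^m_{\smp;\tmp}[\orX]^{\gr{n+1}}$ by a telescoping argument over the intermediate subdivisions of~\Cref{def:intermediate_subdivision}, to bound each increment via the almost multiplicativity of~\Cref{prop:almost_multiplicative} combined with~\Cref{lem:mult_bound_reduction}, and to sum the resulting series using the choice of $\beta$ in~\Cref{eq:beta}.

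\textbf{Step 1 (Path components and lower levels).} Grid multiplication leaves the path functionals untouched, so immediately $\trx^h = \orx^h$ and $\trx^v = \orx^v$. For the surface component at levels $k \in [n]$, the level-$k$ output of a horizontal or vertical product in $\dg(\cmG^{\gr{\leq n+1}})$ depends only on the path functionals and the surface data at levels $\leq k$. Since $\rrX$ is multiplicative on $\cmG^{\gr{\leq n}}$, an induction on the depth of~\Cref{eq:gridmult_def_recursive} shows $\gridmult^m_{\smp;\tmp}[\orX]^{\gr{k}} = \rX^{\gr{k}}$ for every $k \in [n]$ and every $m$, reducing the task to proving convergence at level $n+1$.

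\textbf{Step 2 (Telescope and single-square bound).} Setting $\rrY = \gridmult^1[\orrX]$ in~\Cref{def:intermediate_subdivision}, I would write
$$\gridmult^{m+1}_{\smp;\tmp}[\orX]^{\gr{n+1}} - \gridmult^m_{\smp;\tmp}[\orX]^{\gr{n+1}} = \sum_{u=0}^{4^m - 1}\left(\gridmult^{m,u+1}_{\smp;\tmp}[\orX]^{\gr{n+1}} - \gridmult^{m,u}_{\smp;\tmp}[\orX]^{\gr{n+1}}\right),$$
so that each successive increment corresponds to replacing $\orX$ on exactly one subsquare $Q_u = [\smpi{u}] \times [\tmpi{u}] \in \dpart_m(\smp;\tmp)$ by $\gridmult^1_{\smpi{u};\tmpi{u}}[\orX]$. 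Iteratively applying~\Cref{lem:mult_bound_reduction} up through the nested horizontal and vertical composites of~\Cref{eq:intermediate_subdivision} (valid precisely because Step~1 ensures that all surrounding path components and lower-level surface data are unchanged between $\gridmult^{m,u}$ and $\gridmult^{m,u+1}$) bounds each summand by $\|\gridmult^1_{\smpi{u};\tmpi{u}}[\orX]^{\gr{n+1}} - \orX^{\gr{n+1}}_{\smpi{u};\tmpi{u}}\|$. Expanding $\gridmult^1$ on $Q_u$ as the $\vmult$ of two $\hmult$-composites of its four dyadic quarters and invoking~\Cref{prop:almost_multiplicative} three times gives
$$\|\gridmult^1_{\smpi{u};\tmpi{u}}[\orX]^{\gr{n+1}} - \orX^{\gr{n+1}}_{\smpi{u};\tmpi{u}}\| \leq \frac{3}{2\beta}\CTR{n+1}{\orho}{\smpi{u};\tmpi{u}}.$$

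\textbf{Step 3 (Summation and closure).} Each $Q_u$ has sides of length $2^{-m}(\spp-\smm)$ and $2^{-m}(\tpp-\tmm)$, so term-by-term in the definition of $\CTR{n+1}{\orho}{\cdot}$ one obtains $\CTR{n+1}{\orho}{\smpi{u};\tmpi{u}} \leq 2^{-m(n+1)\rho}\CTR{n+1}{\orho}{\smp;\tmp}$. Summing over $u$ and then over $m$, the full telescope is bounded by
$$\frac{3}{2\beta}\CTR{n+1}{\orho}{\smp;\tmp}\sum_{m=0}^\infty 4^m \cdot 2^{-m(n+1)\rho}.$$
The series converges because $(n+1)\rho > 2$ follows from $n \geq \lfloor 2/\rho\rfloor$, and the defining inequality $\beta > \beta_{\RS}$ in~\Cref{eq:beta} ensures the total does not exceed $\frac{1}{2\beta}\CTR{n+1}{\orho}{\smp;\tmp}$. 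Combined with~\Cref{lem:pathwise_ext_dg_regularity}, this simultaneously shows that $\{\gridmult^m[\orX]^{\gr{n+1}}\}$ is Cauchy and that the limit satisfies the surface regularity bound $\|\trX^{\gr{n+1}}_{\smp;\tmp}\| \leq \frac{1}{\beta}\CTR{n+1}{\orho}{\smp;\tmp}$. Since $G_1^{\gr{\leq n+1}}$ is closed in $T_1^{\gr{\leq n+1}}$, the limit $\trX_{\smp;\tmp}$ remains group-like; together with the path regularity and continuity inherited from $\orrX$ this yields $\trrX \in \DGF^\rho_\dya(\cmG^{\gr{\leq n+1}})$.

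The main technical obstacle lies in Step~2: the careful unwinding of~\Cref{eq:intermediate_subdivision} to reduce each telescoping increment to a single-square almost-multiplicativity estimate, ensuring at every nesting depth that the "outer" data through which~\Cref{lem:mult_bound_reduction} is pushed is genuinely unchanged. This is where Step~1 is essential, and where the specific order chosen in~\Cref{def:intermediate_subdivision} pays off.
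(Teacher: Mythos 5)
Your proposal follows the same strategy as the paper's proof: telescope over the intermediate subdivisions, push each replacement through the nested composites via \Cref{lem:mult_bound_reduction}, bound the resulting single-square increment by almost multiplicativity, rescale using the homogeneity $\CTR{n+1}{\orho}{\smpi{u};\tmpi{u}} = 2^{-m(n+1)\rho}\CTR{n+1}{\orho}{\smp;\tmp}$, sum the geometric series, and conclude Cauchy plus closedness of $G_1^{\gr{\leq n+1}}$. Steps 1 and 2 match the paper, and your observation that $(n+1)\rho > 2$ drives convergence is correct.

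There is, however, a genuine constant-bookkeeping gap in Step 3 that prevents the argument from closing as written. You bound the single-square defect by $\frac{3}{2\beta}\CTR{n+1}{\orho}{\smpi{u};\tmpi{u}}$ and claim the cumulative telescope is $\leq \frac{1}{2\beta}\CTR{n+1}{\orho}{\smp;\tmp}$. After the rescaling and sum over $u$, this requires
\begin{align*}
\frac{3}{2\beta}\left(\sum_{m=0}^\infty 4^m\,2^{-m(n+1)\rho}\right) \leq \frac{1}{2\beta},
\end{align*}
i.e.\ $3\sum_{m\geq 0} 2^{m(2-(n+1)\rho)} \leq 1$, which is impossible: the $m=0$ term alone is $3$, and the $\beta$'s cancel so increasing $\beta$ cannot rescue it. The resolution, which the paper uses implicitly, is that the almost-multiplicativity estimate must carry an extra power of $\beta$: the internal computation in the proof of \Cref{prop:almost_multiplicative} actually gives a bound of order $\beta^{-3}\CTR{n+1}{\orho}{\cdot}$ (equivalently $\frac{1}{\beta^2}\CTR{\cdot}{\cdot}{\cdot}$ for $\beta$ sufficiently large), not merely $\frac{1}{2\beta}\CTR{\cdot}{\cdot}{\cdot}$, so that the telescoping constant is $O(\beta^{-2})$ and the geometric series can be absorbed into a single factor of $\beta$ against the target $\frac{1}{2\beta}\CTR{\cdot}{\cdot}{\cdot}$. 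The paper's own chain passes from $\frac{1}{2\beta}$ in the statement of \Cref{prop:almost_multiplicative} to a $\frac{3\cdot 2^{2(n+1)\orho}}{\beta^2}$ coefficient in the single-square bound — you need to invoke that sharper form rather than the statement verbatim, or the choice of $\beta_{\RS}$ in \Cref{eq:rs_beta} never enters and the sum cannot be controlled. Separately, a minor omission: you should note that the limit remains a double group functional (i.e.\ it satisfies the boundary condition \Cref{eq:rrX_boundary_condition}); this follows from continuity of $\delta$ together with \Cref{lem:pathwise_ext_delta_multiplicative}, which guarantees $\delta$-multiplicativity at every finite stage of the grid multiplication.
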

\begin{proof}
    In this proof, we will fix a dyadic rectangle $(\smp; \tmp) \in \Delta_\dya^2 \times \Delta_\dya^2$ and omit the subscripts in the grid multiplication operation to simplify notation, ie. $\gridmult^m = \gridmult^m_{\smp; \tmp}$. First, since $\orrX^{(k)} = \rrX^{(k)}$ for all $k \in [n]$, and the fact that $\rrX$ is a multiplicative double group function, we have
    \begin{align}\nonumber
        \trrX^{\gr{k}}_{\smp; \tmp} = \lim_{m \to \infty}\left(\gridmult^m[\orrX]\right)^{\gr{k}} = \lim_{m \to \infty} \left(\gridmult^m[\rrX]\right)^{\gr{k}} = \rrX^{\gr{k}}_{\smp, \tmp}.
    \end{align} 
    \medskip

    Thus, it remains to consider level $n+1$. 
    We will show that for any $m \in \N$, we have
    \begin{align} \label{eq:maximal_inequality}
        \|(\gridmult^m[\orX])^{\gr{n+1}}\| \leq \frac{1}{\beta} \CTR{n+1}{\orho}{\smp;\tmp}.
    \end{align}
    This will be done by bounding consecutive subdivisions, and using the intermediate subivisions from~\Cref{eq:intermediate_subdivision}, we have
    \begin{align} \label{eq:max_ineq_intermediate_subdivisions}
        \|(\gridmult^{m+1}[\orX])^{\gr{n+1}} - (\gridmult^{m}[\orX])^{\gr{n+1}}\| \leq \sum_{u = 1}^{4^k} \left\|(\gridmult^{m, u}[\orX])^{\gr{n+1}} - (\gridmult^{m, u-1}[\orX])^{\gr{n+1}} \right\|.
    \end{align}
    Because $\gridmult^{m, u}[\orX]$ and $\gridmult^{m, u-1}[\orX]$ differ by further subdividing a certain subsquare $[s_{i-1}, s_i] \times [t_{j-1}, t_j] \in \dpart_m$, we have 
    \begin{align} \label{eq:consec_intermediate_subdivision}
        \left\|(\gridmult^{m, u}[\orX])^{\gr{n+1}} - (\gridmult^{m, u-1}[\orX])^{\gr{n+1}} \right\| = \left\|(\gridmult^1_{s_{i-1}, s_i; t_{j-1}, t_j}[\orX])^{\gr{n+1}} - \orX^{\gr{n+1}}_{s_{i-1}, s_i; t_{j-1}, t_j}\right\|.
    \end{align}
    Let $s = \frac{s_{i-1} + s_i}{2}$ and $t = \frac{t_{j-1} + t_j}{2}$. Then, using the definition of $(\gridmult^1_{s_{i-1}, s_i; t_{j-1}, t_j}[\orX])^{\gr{n+1}}$, we get
    \begin{align}\nonumber
        \Big\|(&\gridmult^1_{s_{i-1}, s_i; t_{j-1}, t_j}[\orX])^{\gr{n+1}} - \orX^{\gr{n+1}}_{s_{i-1}, s_i; t_{j-1}, t_j}\Big\| \\
        &= \left\|(\orX_{s_{i-1}, s; t_{j-1}, t} \hmult \orX_{s, s_i; t_{j-1}, t}) \vmult (\orX_{s_{i-1}, s; t, t_j} \hmult \orX_{s, s_i; t, t_j}) - \orX_{s_{i-1}, s_i; t_{j-1}, t}\vmult (\orX_{s_{i-1}, s; t, t_j} \hmult \orX_{s, s_i; t, t_j})\right\|\\
        & \hspace{10pt} + \left\|\orX_{s_{i-1}, s_i; t_{j-1}, t}\vmult (\orX_{s_{i-1}, s; t, t_j} \hmult \orX_{s, s_i; t, t_j}) - \orX_{s_{i-1}, s_i; t_{j-1}, t}\vmult \orX_{s_{i-1}, s_i; t, t_j} \right\|\nonumber\\
        & \hspace{10pt} + \left\|\orX_{s_{i-1}, s_i; t_{j-1}, t}\vmult \orX_{s_{i-1}, s_i; t, t_j} - \orX^{\gr{n+1}}_{s_{i-1}, s_i; t_{j-1}, t_j}\right\|\nonumber
    \end{align}
    Then, applying~\Cref{lem:mult_bound_reduction} to each of these three terms, we obtain
    \begin{align} \label{eq:consec_intermediate_subdivision2}
        \left\|(\gridmult^1_{s_{i-1}, s_i; t_{j-1}, t_j}[\orX])^{\gr{n+1}} - \orX^{\gr{n+1}}_{s_{i-1}, s_i; t_{j-1}, t_j}\right\| \leq & \left\|\left(\orX_{s_{i-1}, s; t_{j-1}, t} \hmult \orX_{s, s_i; t_{j-1}, t} - \orX_{s_{i-1}, s_i; t_{j-1}, t}\right)^{\gr{n+1}}\right\| \\
        & + \left\|\left(\orX_{s_{i-1}, s; t, t_j} \hmult \orX_{s, s_i; t, t_j} - \orX_{s_{i-1}, s_i; t, t_j}\right)^{\gr{n+1}}\right\| \nonumber\\
        & + \left\|\left(\orX_{s_1, s_2; t_1, t_2} \vmult \orX_{s_1, s_2; t_2, t_3} - \orX_{s_1, s_2; t_1, t_3}\right)^{\gr{n+1}}\right\|\nonumber
    \end{align}
    Here, the first two terms on the right correspond to the almost multiplicative error terms from the two horizontal products, and the third term corresponds to the almost multiplicative error term from the vertical product. Next, we apply the almost multiplicativity bounds from~\Cref{prop:almost_multiplicative} to all three terms, and using $\ctr{}{t_{j-1}, t}, \,\ctr{}{t, t_{j}} \leq \ctr{}{t_{j-1}, t_j}$ for the first two, we have
    \begin{align}\nonumber
        \left\|(\gridmult^1_{s_{i-1}, s_i; t_{j-1}, t_j}[\orX])^{\gr{n+1}} - \orX^{\gr{n+1}}_{s_{i-1}, s_i; t_{j-1}, t_j}\right\| \leq \frac{3\cdot 2^{2(n+1)\orho}}{\beta^2} \sum_{q=1}^{2n+1} \frac{\ctr{q\orho}{s_{i-1}, s_i} \ctr{(2n+2-q)\orho}{t_{j-1}, t_j}}{\ffact{q}{\orho} \ffact{2n+2-q}{\orho}}
    \end{align}
    Next, because the $s_i, t_j \in \cD_m$ are in the $m^{th}$ dyadic partition, we have
    \begin{align*}
        \ctr{}{s_{i-1}, s_i} = 2^{-m} \ctr{}{\smm, \spp} \andd \ctr{}{t_{j-1}, t_j} = 2^{-m} \ctr{}{\tmm, \tpp}.
    \end{align*}
    Then, substituting this into the above, we obtain
    \begin{align}\nonumber
        \left\|(\gridmult^1_{s_{i-1}, s_i; t_{j-1}, t_j}[\orX])^{\gr{n+1}} - \orX^{\gr{n+1}}_{s_{i-1}, s_i; t_{j-1}, t_j}\right\| \leq \left(2^{-2(n+1)m\orho}\right)\frac{3\cdot 2^{2(n+1)\orho}}{\beta^2} \sum_{q=1}^{2n+1} \frac{\ctr{q\orho}{\smp} \ctr{(2n+2-q)\orho}{\tmp}}{\ffact{q}{\orho} \ffact{2n+2-q}{\orho}}
    \end{align}
    Summing over all the intermediate subdivisions in~\Cref{eq:max_ineq_intermediate_subdivisions}, we get
    \begin{align} \label{eq:max_ineq_consecutive}
        \left\|(\gridmult^{m+1}[\orX])^{\gr{n+1}} - (\gridmult^{m}[\orX])^{\gr{n+1}}\right\| \leq \left(2^{2m(1-(n+1)\orho)}\right)\frac{3\cdot 2^{2(n+1)\orho}}{\beta^2} \sum_{q=1}^{2n+1} \frac{\ctr{q\orho}{\smp} \ctr{(2n+2-q)\orho}{\tmp}}{\ffact{q}{\orho} \ffact{2n+2-q}{\orho}}.
    \end{align}
    Using a telescoping sum, we have 
    \begin{align}\nonumber
        \left\|(\gridmult^{m+1}[\orX])^{\gr{n+1}} - \orX^{\gr{n+1}}_{\smp;\tmp}\right\| \leq \left(\sum_{m = 0}^\infty 2^{2m(1-(n+1)\orho)}\right) \frac{3\cdot 2^{2(n+1)\orho}}{\beta^2} \sum_{q=1}^{2n+1} \frac{\ctr{q\orho}{\smp} \ctr{(2n+2-q)\orho}{\tmp}}{\ffact{q}{\orho} \ffact{2n+2-q}{\orho}}.
    \end{align}
    Now, using the fact that $(n+1)\orho > 1$ by definition, the infinite sum above converges. If we take $\beta > 0$ to be such that
    \begin{align}\nonumber
        \beta \geq \left(\sum_{m = 0}^\infty 2^{2m(1-(n+1)\orho)}\right) \frac{3}{2}, 
    \end{align}
    we get
    \begin{align} \label{eq:max_ineq_subdivision_max_bound}
        \left\|(\gridmult^{m+1}[\orX])^{\gr{n+1}} - \orX^{\gr{n+1}}_{\smp;\tmp}\right\| \leq \frac{2^{(2n+1)\orho}}{\beta}  \sum_{q=1}^{2n+1} \frac{\ctr{q\orho}{\smp} \ctr{(2n+2-q)\orho}{\tmp}}{\ffact{q}{\orho} \ffact{2n+2-q}{\orho}} = \frac{1}{2\beta}\CTR{n+1}{\orho}{\smp;\tmp}
    \end{align}
    for any $m \in \N$. Finally, combining this with the bound for $\|\orX^{\gr{n+1}}_{\smp;\tmp}\|$ from~\Cref{eq:pathwise_ext_interior_bound}, we get our desired maximal inequality,
    \begin{align} \label{eq:max_ineq_final}
        \left\|(\gridmult^{m+1}[\orX])^{\gr{n+1}}\right\| \leq \frac{2^{(2n+2)\orho}}{\beta}  \sum_{q=1}^{2n+1} \frac{\ctr{q\orho}{\smp} \ctr{(2n+2-q)\orho}{\tmp}}{\ffact{q}{\orho} \ffact{2n+2-q}{\orho}} = \frac{1}{\beta}\CTR{n+1}{\orho}{\smp;\tmp}
    \end{align}

    Next, we note that this is Cauchy. Indeed, for any $m \in \N$ and any $M > m$, we can sum the bound in~\Cref{eq:max_ineq_consecutive} to obtain
    \begin{align}\nonumber
        \|(\gridmult^{M}[\orX])^{\gr{n+1}} - (\gridmult^{m}[\orX])^{\gr{n+1}}\| \leq \left(\sum_{m' = m}^\infty 2^{2m(1-(n+1)\orho)}\right) \frac{3\cdot 2^{2(n+1)\orho}}{\beta^2} \sum_{q=1}^{2n+1} \frac{\ctr{q\orho}{\smp} \ctr{(2n+2-q)\orho}{\tmp}}{\ffact{q}{\orho} \ffact{2n+2-q}{\orho}}.
    \end{align}
    which converges to $0$ as $m \to \infty$. Thus, the sequence $(\gridmult^m[\orX])^{\gr{n+1}}$ is Cauchy in $m$, and since the the underlying space $G_1^{\gr{\leq n+1}}$ is a closed subset of a complete space, it converges to $\trX^{\gr{n+1}}_{\smp;\tmp} \coloneqq \lim_{m \to \infty}(\gridmult^m[\orX])^{\gr{n+1}}$. Finally, by~\Cref{eq:max_ineq_final}, this satisfies the surface regularity condition
    \begin{align}\nonumber
        \|\trX^{\gr{n+1}}_{\smp;\tmp}\| \leq  \frac{1}{\beta}\CTR{n+1}{\orho}{\smp;\tmp}.
    \end{align}

    Thus, we define the extension $\trrX = (\trx^h, \trx^v, \trX)$ on the dyadic rationals 
    \begin{align}\nonumber
        \trx^h: \Delta^2_\dya \times \dya \to T_0^{\gr{\leq n+1}}, \quad \trx^v : \dya \times \Delta^2_\dya\to T_0^{\gr{\leq n+1}}, \andd \trX: \Delta^2_\dya \times \Delta^2_\dya \to T_1^{\gr{\leq n+1}}
    \end{align}
    In particular, since the boundary map $\delta$ is already compatible with horizontal and vertical compositions of $\orrX$ by~\Cref{lem:pathwise_ext_delta_multiplicative}, the limit $\trrX$ is a double group function.
\end{proof}

\subsubsection{Multiplicativity}

In this section, we will show that $\trrX$ is multiplicative on the dyadic rectangles where it is defined. We begin by showing that multiplicativity with respect to dyadic partitions is immediate.

\begin{lemma} \label{lem:grid_multiplicative}
    For any $(\smp; \tmp) \in \Delta_\dya \times \Delta_\dya$, we have
    \begin{align}\nonumber
        \gridmult^m_{\smp;\tmp}[\trrX] = \trrX_{\smp; \tmp}.
    \end{align}
\end{lemma}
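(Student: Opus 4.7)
The plan is to exploit the fact that $\trrX$ is a pointwise limit of grid multiplications of $\orrX$, and then use the interchange law to reorganize iterated grid multiplications into a single finer one.

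First, I would fix a dyadic rectangle $(\smp;\tmp)$ and, for each dyadic subrectangle $Q \subset [\smp]\times[\tmp]$, recall from~\Cref{prop:first_sewing} that
\begin{align*}
    \trX_Q = \lim_{N \to \infty} \gridmult^N_Q[\orX],
\end{align*}
while $\trx^h$ and $\trx^v$ agree with the unchanged path components $\orx^h, \orx^v$. Since $\gridmult^m_{\smp;\tmp}[\,\cdot\,]$ applied to a double group functional is a fixed finite composition of $4^m$ horizontal and vertical multiplications in $\dg(\cmG^{\gr{\leq n+1}})$ (polynomial in the $G_0$ and $G_1$ components), these operations are continuous, so one can pull the limit outside: defining the dyadic double group functional $\rrX^{(N)}$ by $\rrX^{(N)}_Q \coloneqq \gridmult^N_Q[\orrX]$ for dyadic $Q$, we obtain
\begin{align*}
    \gridmult^m_{\smp;\tmp}[\trrX] = \lim_{N \to \infty} \gridmult^m_{\smp;\tmp}\bigl[\rrX^{(N)}\bigr].
\end{align*}

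The next step, which I expect to be the main obstacle, is the identity
\begin{align*}
    \gridmult^m_{\smp;\tmp}\bigl[\rrX^{(N)}\bigr] = \gridmult^{m+N}_{\smp;\tmp}[\orrX].
\end{align*}
Intuitively, $\rrX^{(N)}_{Q_{ij}}$ is already a grid composition of $4^N$ sub-squares of $\orrX$ indexed by the $N$-th dyadic refinement of the subrectangle $Q_{ij}$ of the $m$-th dyadic partition of $(\smp;\tmp)$; so iterating a further level-$m$ composition on top produces a composition of all $4^{m+N}$ finest sub-squares, which is exactly the $(m+N)$-th dyadic grid multiplication. However, the recursive convention in~\Cref{eq:gridmult_def_recursive} imposes a specific horizontal-then-vertical ordering, and the two iterated procedures give different parenthesizations. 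I would prove the identity by induction on $m$ and reorganize the compositions using associativity of $\hmult$ and $\vmult$ together with the interchange law from~\Cref{lem:dgf_interchange}, which holds for every double group functional since $\rrX^{(N)}$ satisfies the boundary condition~\Cref{eq:rrX_boundary_condition}; an explicit check of the $m=1$ base case (rewriting the four quadrants of the $(N+1)$-st subdivision as a horizontal-then-vertical composition of the four quadrants of the level-$1$ subdivision, each further refined by $\gridmult^N$) followed by the inductive step closes this point.

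Finally, combining these two steps gives
\begin{align*}
    \gridmult^m_{\smp;\tmp}[\trrX] = \lim_{N \to \infty} \gridmult^{m+N}_{\smp;\tmp}[\orrX] = \trrX_{\smp;\tmp},
\end{align*}
where the last equality is the defining limit in~\Cref{prop:first_sewing} (the cofinal subsequence $(m+N)_{N\in\N}$ converges to the same limit). Path components match trivially because path multiplicativity of $\orx^h, \orx^v$ under dyadic subdivision is preserved under the limit defining $\trrX$.
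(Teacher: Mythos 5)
Your proposal is correct and follows essentially the same route as the paper: replace $\trrX$ by its defining limit, pull the limit through the finite composition $\gridmult^m$, and invoke the semigroup identity $\gridmult^m[\gridmult^k[\orrX]] = \gridmult^{m+k}[\orrX]$. The paper states this last identity without comment, whereas you flag it as the main point and sketch a justification via the interchange law; this is a reasonable elaboration, not a different argument.
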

\begin{proof}
    By definition of $\trrX$, we have
    \begin{align}\nonumber
        \gridmult^m_{\smp;\tmp}[\trrX] = \lim_{k \to \infty} \gridmult^m_{\smp;\tmp}[\gridmult^k[\orrX]] = \lim_{k \to \infty} \gridmult^{k+m}_{\smp;\tmp}[\orrX] = \trrX_{\smp;\tmp}.
    \end{align}
\end{proof}

We will require a preliminary lemma a bound for the grid multiplication with respect to an arbitrary partition (still made up of dyadics). Let $(\smp; \tmp) \in \Delta_\dya \times \Delta_\dya$ be a dyadic rectangle. Let
\begin{align} \label{eq:arb_dpart}
    \ppart = \{\smm = s_0 < \ldots < s_a = \spp\} \times \{\tmm = t_0 < \ldots < t_b = \tpp\} \subset \dya^2
\end{align}
be an arbitrary partition made up of dyadic rationals. Then, we let $\gridmult_{\ppart}[\trrX]$ denote the grid multiplication with respect to this partition. This is well-defined since $\trrX$ is a double group function, and is thus independent of the ordering due to the interchange law. For instance, we have $\gridmult_{\dpart_m(\smp;\tmp)}[\trrX] = \gridmult^m_{\smp;\tmp}[\trrX]$, where $\dpart_m(\smp;\tmp)$ is the regular dyadic partition defined in~\Cref{eq:regular_dyadic_partition}. 
We will only require a loose bound here, and proof is provided in~\Cref{apxsec:additional_surface_extension}.

\begin{lemma} \label{lem:arbitrary_partition_bound}
    Let $\ppart$ be an arbitrary partition of the dyadic rectangle $(\smp;\tmp) \in \Delta_\dya \times \Delta_\dya$ made up of dyadic rationals, of the form given in~\Cref{eq:arb_dpart}. Then
    \begin{align}\nonumber
        \left\| \gridmult_{\ppart}[\trX] - \trX_{\smp;\tmp}\right\| \leq \frac{ab}{\beta} \CTR{n+1}{\orho}{\smp;\tmp}.
    \end{align}
\end{lemma}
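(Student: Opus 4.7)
Since by the hypothesis of \Cref{thm:surface_extension} the original functional $\rrX$ is multiplicative on levels $[n]$, and $\trrX^{\gr{k}} = \rrX^{\gr{k}}$ for $k \in [n]$, the grid product and $\trrX_{\smp;\tmp}$ agree at every level $k \leq n$. Therefore the entire discrepancy is concentrated at level $n+1$, and it suffices to estimate $\|(\gridmult_\ppart[\trX])^{\gr{n+1}} - \trX^{\gr{n+1}}_{\smp;\tmp}\|$.

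Next, I would carry out the grid multiplication in the horizontal-then-vertical order permitted by the interchange law (\Cref{lem:dgf_interchange}). For each row $j$, I would expand
$R_j = \trX_{s_0, s_1; t_{j-1}, t_j} \hmult \cdots \hmult \trX_{s_{a-1}, s_a; t_{j-1}, t_j}$
using the explicit double-group formula $S \hmult S' = (\ldots, (x \gt E')*E)$. At level $n+1$, $R_j^{\gr{n+1}}$ decomposes as a sum of $a$ ``main'' pieces, one for each $i \in [a]$, of the form $\bigl(\trx^h_{s_0, s_{i-1}; t_{j-1}} \gt \trX^{\gr{n+1}}_{s_{i-1}, s_i; t_{j-1}, t_j}\bigr)$, together with cross pieces of the form $(x \gt \trX^{\gr{k}}) * \trX^{\gr{n+1-k}}$ for $2 \leq k \leq n-1$. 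Each main piece is controlled by the same Peiffer-style estimate used in the proof of \Cref{prop:almost_multiplicative}, and the cross pieces are bounded using the path regularity of $\trx^h$, the surface regularity of $\trrX$ at all levels, the neoclassical inequality (\Cref{eq:neoclassical}), and \Cref{eq:binomial_sum_bound}. Using the monotonicity $\CTR{k}{\orho}{s_{i-1}, s_i; t_{j-1}, t_j} \leq \CTR{k}{\orho}{\smp;\tmp}$ (since shrinking the rectangle only decreases each summand of $\CTRL$), every contribution to $R_j^{\gr{n+1}}$ is bounded by $\tfrac{C}{\beta}\CTR{n+1}{\orho}{\smp;\tmp}$ with constants absorbed into $\beta$ via \Cref{eq:beta}.

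Finally, I would apply the same expansion to the vertical composition $\gridmult_\ppart[\trX] = R_1 \vmult \cdots \vmult R_b$ using $S \vmult S' = (\ldots, E * (w \gt E'))$, which incurs another multiplicative factor of $b$. Combining these bounds via a triangle inequality with $\trX^{\gr{n+1}}_{\smp;\tmp}$, itself bounded by $\tfrac{1}{\beta}\CTR{n+1}{\orho}{\smp;\tmp}$ from \Cref{prop:first_sewing}, yields the claim $\|\gridmult_\ppart[\trX] - \trX_{\smp;\tmp}\| \leq \tfrac{ab}{\beta}\CTR{n+1}{\orho}{\smp;\tmp}$. The main obstacle is to carefully track the cross terms arising from iterated $*$-products across the $ab$ subrectangles; the saving grace is that the target factor $ab$ is deliberately loose rather than sharp in each individual $\CTR{n+1}{\orho}{\cdot}$, so crude triangle-inequality estimates suffice and no delicate cancellation between the cross terms is required.
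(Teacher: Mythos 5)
Your approach has a genuine gap. You propose to expand the entire horizontal product $R_j = \trX_{s_0,s_1;\cdot}\hmult\cdots\hmult\trX_{s_{a-1},s_a;\cdot}$ at level $n+1$ and bound every term by $\tfrac{C}{\beta}\CTR{n+1}{\orho}{\smp;\tmp}$ via monotonicity, then collect constants. But the number of nontrivial terms in that expansion is not $O(a)$: since each surface component $\trX$ enters the $*$-product with constant term $1$ plus a remainder of degree $\geq 2$, the level $n+1$ part of $R_j$ is a sum over subsets $S\subset[a]$ with $|S|\leq\lfloor(n+1)/2\rfloor$ and over compositions of $n+1$ into $|S|$ parts each $\geq 2$. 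The number of such terms grows like $a^{\lfloor(n+1)/2\rfloor}$ (and likewise in $b$ for the vertical products), which for $n\geq\lfloor 2/\rho\rfloor$ can be of arbitrarily high polynomial degree. No amount of slack in the constant $C$ (which cannot be absorbed into the fixed $\beta$ anyway) turns a degree-$\lfloor(n+1)/2\rfloor$ polynomial in $a$ into the linear factor $a$ appearing in the claim. The further claim that only pairs $(x\gt\trX^{\gr{k}})*\trX^{\gr{n+1-k}}$ appear as cross terms is also incorrect for $n\geq 5$; triples and higher products of surface components contribute.

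The missing idea is a telescoping argument. The paper does not expand the grid product; instead it modifies it one subrectangle at a time, first reducing each row's horizontal string $\bighmult{i}{}\trX_{s_{i-1},s_i;t_{j-1},t_j}$ to $\trX_{\smp;t_{j-1},t_j}$ in $a-1$ steps, and then reducing the vertical string to $\trX_{\smp;\tmp}$ in $b-1$ steps. The crucial device is \Cref{lem:mult_bound_reduction}: replacing a single factor $\orX$ by $\orY$ (with identical path components) changes the $\hmult$- or $\vmult$-product only by the level-$(n+1)$ difference of the two factors, because every other factor in the $*$-expansion cancels. Each of the $(a-1)b + (b-1) < ab$ replacement steps then produces an error controlled by the almost-multiplicativity bound of \Cref{prop:almost_multiplicative}, $\|\orX_{\cdot}\hmult\orX_{\cdot}-\orX_{\cdot}\|\leq\tfrac{1}{2\beta}\CTR{n+1}{\orho}{\cdot}\leq\tfrac{1}{2\beta}\CTR{n+1}{\orho}{\smp;\tmp}$ after monotonicity. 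Summing over the $<ab$ steps gives exactly $\tfrac{ab}{\beta}\CTR{n+1}{\orho}{\smp;\tmp}$. This step-by-step replacement is what converts what you are treating as a combinatorial explosion into a linear count; a direct expansion plus triangle inequality cannot reproduce it.
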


Finally, we are ready to prove that $\trrX$ is multiplicative.

\begin{proposition} \label{prop:dya_multiplicative}
    The double group functional $\trrX: \Delta_\dya \times \Delta_\dya \to \dg(\cmG^{\leq n+1})$ is multiplicative. In particular, for $\smm < \sss< \spp\in \dya$ and $\tmm < \tss < \tpp \in \dya$, we have
    \begin{align}\nonumber
        \trrX_{\smm, \sss; \tmp} \hmult \trrX_{\sss, \spp; \tmp} = \trrX_{\smp; \tmp} \andd \trrX_{\smp; \tmm, \tss} \vmult \trrX_{\smp; \tss, \tpp} = \trrX_{\smp; \tmp}.
    \end{align}
\end{proposition}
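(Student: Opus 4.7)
Since $\trrX^{\gr{k}} = \rrX^{\gr{k}}$ for all $k\in[n]$ and $\rrX$ is a multiplicative double group functional by hypothesis, the identities hold automatically at levels $1,\ldots,n$; moreover the path components $\trx^h,\trx^v$ are path-multiplicative by construction, being the level-$(n+1)$ path extensions via~\Cref{thm:path_extension} of the path-multiplicative $\rx^h,\rx^v$. It therefore suffices to prove the level-$(n+1)$ identity
\begin{align*}
    \bigl(\trX_{\smm,\sss;\tmp}\hmult\trX_{\sss,\spp;\tmp}\bigr)^{\gr{n+1}} = \trX_{\smp;\tmp}^{\gr{n+1}},
\end{align*}
the vertical analogue being obtained by an identical argument with $s$ and $t$ swapped.

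The plan is to realize both sides as limits of grid multiplications of the pathwise extension $\orrX$ on partitions of $(\smp;\tmp)$ whose mesh tends to zero, and then to show that the two limits coincide by comparison to a common refinement. Using the continuity of the horizontal product in $\dg(\cmG^{\gr{\leq n+1}})$ together with the defining limit from~\Cref{prop:first_sewing},
\begin{align*}
    \trX_{\smm,\sss;\tmp}\hmult\trX_{\sss,\spp;\tmp}
    &= \lim_{k\to\infty}\bigl(\gridmult^k_{\smm,\sss;\tmp}[\orrX]\hmult\gridmult^k_{\sss,\spp;\tmp}[\orrX]\bigr) \eqqcolon \lim_k M_k,\\
    \trX_{\smp;\tmp} &= \lim_{k\to\infty}\gridmult^k_{\smp;\tmp}[\orrX] \eqqcolon \lim_k N_k.
\end{align*}
Here $M_k$ is the grid multiplication of $\orrX$ over the non-uniform partition $\ppart_k^M\coloneqq\dpart_k(\smm,\sss;\tmp)\cup\dpart_k(\sss,\spp;\tmp)$, with $2^{k+1}$ columns split at $\sss$ and $2^k$ rows, whereas $N_k$ uses the regular dyadic partition $\dpart_k(\smp;\tmp)$; both have mesh $O(2^{-k})$.

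To show $(M_k-N_k)^{\gr{n+1}}\to 0$, I will pick a common refinement $\ppart_k^\star$ of $\ppart_k^M$ and $\dpart_k(\smp;\tmp)$ (still with $O(2^k)$ columns and $O(2^k)$ rows of mesh $O(2^{-k})$) and interpolate between $M_k$ (respectively $N_k$) and $\gridmult_{\ppart_k^\star}[\orrX]$ by iterated intermediate subdivisions in the spirit of~\Cref{def:intermediate_subdivision}, each step further subdividing a single cell of the coarser partition. By~\Cref{lem:mult_bound_reduction} together with the maximal inequality~\Cref{eq:max_ineq_subdivision_max_bound} from the proof of~\Cref{prop:first_sewing}, each such single-cell refinement contributes at level $n+1$ an error bounded by $\tfrac{1}{2\beta}\CTR{n+1}{\orho}{\text{cell}}$, which on cells of side $O(2^{-k})$ is of order $2^{-k(2n+2)\orho}$. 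Summing over the $O(2^{2k})$ cells yields a total error of order $2^{2k(1-(n+1)\orho)}$, which vanishes as $k\to\infty$ since $(n+1)\orho>1$ under the hypothesis $n\geq\lfloor 2/\rho\rfloor$.

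The main obstacle is that $\ppart_k^M$ and $\dpart_k(\smp;\tmp)$ are not simultaneously dyadic in a common scale, so the telescoping estimate~\Cref{eq:max_ineq_consecutive} of~\Cref{prop:first_sewing} must be re-run with non-uniform, cell-dependent side lengths rather than the clean factor $2^{-m}$. The cancellations still function because the neoclassical structure of $\CTR{n+1}{\orho}{\cdot}$ and the strict inequality $(n+1)\orho>1$ reproduce the geometric-series convergence of that proposition, provided $\beta$ satisfies~\Cref{eq:beta}. Passing to the limit gives $M_k^{\gr{n+1}}$ and $N_k^{\gr{n+1}}$ both equal to $\lim_k\gridmult_{\ppart_k^\star}[\orrX]^{\gr{n+1}}$, yielding the desired horizontal identity; the vertical identity follows by the symmetric argument.
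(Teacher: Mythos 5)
Your approach is correct in its essentials but organizes the argument differently from the paper. The paper works with the already-constructed extension $\trrX$: it fixes the smallest scale $m$ at which $\sss$ appears in $\dpart_m(\smp)$, splits $\dpart_m(\smp;\tmp)$ into left and right sub-partitions $\cA_0,\cB_0$, uses~\Cref{lem:grid_multiplicative} to write $\trX_{\smp;\tmp}=\gridmult_{\cA_0}[\trX]\hmult\gridmult_{\cB_0}[\trX]$ exactly, and then refines by $r$ levels, invoking~\Cref{lem:arbitrary_partition_bound} cell-by-cell to drive the error to zero as $r\to\infty$. You instead stay with the pathwise extension $\orrX$ throughout, realizing both sides of the target identity as $k\to\infty$ limits of grid multiplications over two $k$-scale partitions $\ppart_k^M$ and $\dpart_k(\smp;\tmp)$ of the same rectangle, and compare the two via a common refinement. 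This avoids having to first locate $\sss$ at scale $m$ and is arguably more symmetric, but it requires you to carry an arbitrary-partition estimate for $\orX$ (rather than $\trX$) and to observe that the refinement of each cell has $O(1)$ sub-cells since $\smm,\sss,\spp$ are fixed.

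Two points of imprecision worth flagging. First, you cite~\Cref{eq:max_ineq_subdivision_max_bound} as the single-cell refinement bound, but that estimate is specifically for \emph{dyadic} refinement of a cell (midpoint splits), whereas the interleaving of $\ppart_k^M$ and $\dpart_k(\smp;\tmp)$ produces non-dyadic sub-cells. The correct ingredient is the almost multiplicativity bound of~\Cref{prop:almost_multiplicative} applied split-by-split, or equivalently a version of~\Cref{lem:arbitrary_partition_bound} applied to $\orX$ (which holds by the same proof, since $\orX$ is almost multiplicative by~\Cref{prop:almost_multiplicative}). Second, your remark about ``re-running the telescoping estimate with non-uniform side lengths'' overcomplicates what is needed: there is no telescoping in $m$ here, only a single cell-wise sum of almost-multiplicativity errors, and the uniform $O(2^{-k})$ mesh and $O(4^k)$ cell count immediately give the $2^{2k(1-(n+1)\orho)}$ bound once the cell-wise estimate is in hand. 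With the citation corrected and the cell-wise estimate made explicit, your argument is a valid alternative to the paper's proof.
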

\begin{proof}
    We consider the horizontal case and the vertical case follows by the same argument.

    \begin{figure}[!h]
        \includegraphics[width=\linewidth]{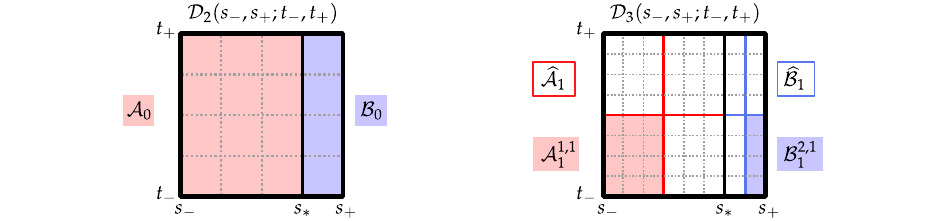}
    \end{figure}  
    
    \textbf{Step 1: Preliminary case.}
    First, since $\smm < \sss < \spp\in \dya$, there exists some $m \in \N$ such that $\sss \in \dpart_m(\smp)$ and we fix $m$ to be the smallest such integer. We decompose the 2D partition $\dpart_m(\smp;\tmp)$ into two 2D partitions 
    \begin{align} \label{eq:dya_mult_initial_partition}
        \cA_0 \coloneqq \dpart_m(\smp, \tmp) \cap [\smm, \sss] \times [\tmp] \andd \cB_0 \coloneqq \dpart_m(\smp; \tmp) \cap [\sss, \spp]\times[\tmp].
    \end{align}
    Then, we note that
    \begin{align}\nonumber
        \trX_{\smp;\tmp} = \gridmult^m_{\smp;\tmp}[\trX] = \gridmult_{\cA_0}[\trX] \hmult \gridmult_{\cB_0}[\trX],
    \end{align}
    where the first equality is due to~\Cref{lem:grid_multiplicative}, and the second equality is due to the interchange law. Note that we cannot immediately conclude that $\gridmult_{\cA_0}[\trX] = \trX_{\smm, s; \tmp}$ and $\tgridmult_{\cB_0}[\trX] = \trX_{s, \spp; \tmp}$ since $\cA_0$ and $\cB_0$ are not dyadic partitions (so we cannot apply~\Cref{lem:grid_multiplicative}). However, we can apply~\Cref{lem:mult_bound_reduction} twice and then~\Cref{lem:arbitrary_partition_bound} to obtain
    \begin{align}\nonumber
        \| \tgridmult_{\cA_0}[\trX] \hmult \tgridmult_{\cB_0}[\trX] - \trX_{\smm, s;\tmp} \hmult \trX_{s, \spp; \tmp}\| &\leq \|\tgridmult_{\cA_0}[\trX] - \trX_{\smm, s;\tmp}\| + \| \tgridmult_{\cB_0}[\trX] - \trX_{s, \spp; \tmp}\|\\
        & \leq 4^m \frac{2^{2(n+1)\orho}}{\beta} \sum_{q=1}^{2n+1} \frac{\ctr{q\orho}{\smp} \ctr{(2n+2-q)\orho}{\tmp}}{\ffact{q}{\orho} \ffact{2n+2-q}{\orho}},\nonumber
    \end{align}
    where $4^m$ is the total number of rectangles in the partition $\dpart_m(\smp;\tmp)$. \medskip

    \textbf{Step 2: Further subdivisions.}
    Now, for any $r \in \N$, define the partitions
    \begin{align}\nonumber
        \hcA_r \coloneqq \dpart_r(\smm, \sss; \tmp) \andd \hcB_r \coloneqq \dpart_r(\sss, \spp; \tmp). 
    \end{align}
    We note that $\hcA_r, \hcB_r \subset \dpart_{m+r}(\smp;\tmp)$. We denote the elements of $\hcA_r$ and $\hcB_r$ by
    \begin{align}\nonumber
        \hcA_r &= \{\smm = a_0 < \ldots < a_{2^r} = \sss\} \times \{\tmm = u_0 < \ldots < u_{2^r} = \tpp\}\\
        \hcB_r &= \{\sss = b_0 < \ldots < b_{2^r} = \spp\} \times \{\tmm = u_0 < \ldots < u_{2^r} = \tpp\}\nonumber
    \end{align}
    Next, we define
    \begin{align}\nonumber
        \cA_r \coloneqq \dpart_{m+r}(\smp, \tmp) \cap [\smm, \sss] \times [\tmp] \andd \cB_r \coloneqq \dpart_{m+r}(\smp; \tmp) \cap [\sss, \spp]\times[\tmp].
    \end{align}
    and for $i, j \in [2^r]$, we define further partitions
    \begin{align}\nonumber
        \cA^{i,j}_r \coloneqq \dpart_{m+r}(\smp, \tmp) \cap [a_{i-1}, a_i] \times [u_{j-1}, u_j] \andd \cB^{i,j}_r \coloneqq \dpart_{m+r}(\smp; \tmp) \cap [b_{i-1}, b_i]\times[u_{j-1}, u_j].
    \end{align}
    By a slight abuse of notation, we let
    \begin{align}\nonumber
        \gridmult_{\hcA_r}\left[ \gridmult_{\cA_r^{i,j}}[\trX]\right]
    \end{align}
    denote a grid multiplication with respect to $\hcA_r$, where the element corresponding to the rectangle $[a_{i-1}, a_i] \times [u_{j-1}, u_j]$ is $ \tgridmult_{\cA_r^{i,j}}[\trX]$. Note that by the interchange law, we have
    \begin{align} \label{eq:dya_mult_subsubpartitions}
        \gridmult_{\cA_r}[\trX] = \gridmult_{\hcA_r}\left[ \gridmult_{\cA_r^{i,j}}[\trX]\right].
    \end{align}
    The analogous statement holds for the $\cB$ partitions. Now, we note that
    \begin{align} \label{eq:dya_mult_decompose_AB}
        \trX_{\smp;\tmp} = \tgridmult^{m+r}_{\smp;\tmp}[\trX] = \tgridmult_{\cA_r}[\trX] \hmult \tgridmult_{\cB_r}[\trX].
    \end{align}
    Thus, we aim to bound
    \begin{align}\nonumber
        \| \gridmult_{\cA_r}[\trX] \hmult \gridmult_{\cB_r}[\trX] - \trX_{\smm, \sss;\tmp} \hmult \trX_{\sss, \spp; \tmp}\| &\leq \|\gridmult_{\cA_r}[\trX] - \trX_{\smm, \sss;\tmp}\| + \| \gridmult_{\cB_r}[\trX] - \trX_{\sss, \spp; \tmp}\|.
    \end{align}
    We begin with the first term on the right hand side. Using~\Cref{eq:dya_mult_subsubpartitions} and the fact that $\hcA_r$ is a regular dyadic partition (and thus we can use~\Cref{lem:grid_multiplicative}), we obtain
    \begin{align}\nonumber
        \left\|\gridmult_{\cA_r}[\trX] - \trX_{\smm, s;\tmp}\right\| = \left\| \gridmult_{\hcA_r}\left[ \gridmult_{\cA_r^{i,j}}[\trrX]\right] - \gridmult_{\hcA_r}[\trX]\right\|.
    \end{align}
    We use the inequality from~\Cref{lem:arbitrary_partition_bound} on each subrectangle $\gridmult_{\cA_r^{i,j}}[\trrX]$, and the fact that $\omega(a_{i-1}, a_i) = 2^{-r} \omega(\smm, \sss)$ and $\omega(u_{j-1}, u_j) = 2^{-r} \omega(\tmp)$ for all $i, j \in [2^r]$, to obtain
    \begin{align}\nonumber
        \left\|\gridmult_{\cA_r}[\trX] - \trX_{\smm, s;\tmp}\right\| &\leq \sum_{i,j=1}^{2^r} 4^m \frac{2^{2(n+1)\orho}}{\beta} \sum_{q=1}^{2n+1} \frac{\ctr{q\orho}{\smm, \sss} \ctr{(2n+2-q)\orho}{\tmp}}{\ffact{q}{\orho} \ffact{2n+2-q}{\orho}}\\
        & \leq 2^{2r(1 - (n+1)\orho)} \left( \frac{4^m}{\beta} \sum_{q=1}^{2n+1} \frac{\ctr{q\orho}{\smm, \sss} \ctr{(2n+2-q)\orho}{\tmp}}{\ffact{q}{\orho} \ffact{2n+2-q}{\orho}} \right).\nonumber
    \end{align}
    Note that the $4^m$ appears in the first line, since this bounds the number of rectangles in each $\cA_r^{i,j}$. 
    Repeating the same argument for the $\cB$ partitions, we obtain
    \begin{align}\nonumber
        \| \gridmult_{\cB_r}[\trX] - \trX_{\sss, \spp; \tmp}\| \leq 2^{2r(1 - (n+1)\orho)} \left( \frac{4^m}{\beta} \sum_{q=1}^{2n+1} \frac{\ctr{q\orho}{\sss,\spp} \ctr{(2n+2-q)\orho}{\tmp}}{\ffact{q}{\orho} \ffact{2n+2-q}{\orho}} \right).
    \end{align}
    Putting this together, we obtain
    \begin{align}\nonumber
        \| \gridmult_{\cA_r}[\trX] \hmult \gridmult_{\cB_r}[\trX] - \trX_{\smm, \sss;\tmp} \hmult \trX_{\sss, \spp; \tmp}\| \leq 2^{2r(1 - (n+1)\orho)}\left( \frac{2\cdot 4^m}{\beta} \sum_{q=1}^{2n+1} \frac{\ctr{q\orho}{\smp} \ctr{(2n+2-q)\orho}{\tmp}}{\ffact{q}{\orho} \ffact{2n+2-q}{\orho}} \right).
    \end{align}
    We note that the term in the parentheses does not depend on $r$, and thus since $(n+1)\orho > 1$, we have
    \begin{align}\nonumber
        \| \gridmult_{\cA_r}[\trX] \hmult \gridmult_{\cB_r}[\trX] - \trX_{\smm, \sss;\tmp} \hmult \trX_{\sss, \spp; \tmp}\| \to 0
    \end{align}
    as $r \to \infty$. Finally, by~\Cref{eq:dya_mult_decompose_AB}, this implies that $\trrX$ is horizontally multiplicative. 
\end{proof}

\subsubsection{Continuity}
Next, we will show that each component of the multiplicative double group function $\trrX: \Delta_\dya \times \Delta_\dya \to \dg(\cmG^{\gr{\leq n+1}})$ is uniformly continuous. This is an immediate corollary of multiplicativity.

\begin{proposition} \label{prop:surface_extension_unif_cont}
    Each component of $\trrX$ is uniformly continuous.
\end{proposition}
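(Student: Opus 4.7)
The plan is to derive uniform continuity of each component of $\trrX$ by combining the multiplicativity established in~\Cref{prop:dya_multiplicative} with the $\rho$-H\"older regularity and continuity bounds (\Cref{eq:path_regularity}, \Cref{eq:path_continuity}, \Cref{eq:surface_regularity}) that $\trrX$ inherits from the pathwise extension via~\Cref{lem:pathwise_ext_dg_regularity} and~\Cref{eq:max_ineq_final}. Since dyadic rationals are dense in $[0,1]$, uniform continuity on the dyadic domain is what is needed to justify extending $\trrX$ continuously to all of $\Delta^2 \times \Delta^2$ in the next step of the proof.

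For the horizontal path component $\trx^h$, I would bound $\|\trx^{h,\gr{k}}_{s_1,s_2;t} - \trx^{h,\gr{k}}_{s_1',s_2';t'}\|$ by inserting an intermediate term $\trx^{h,\gr{k}}_{s_1,s_2;t'}$ and applying the triangle inequality. The difference in the $t$-variable is controlled directly by the path continuity condition~\Cref{eq:path_continuity}, which vanishes as $|t-t'|\to 0$. For the difference in the $s$-variables at fixed $t'$, I would use path-multiplicativity to write, for $s_1 \leq s_1' \leq s_2' \leq s_2$,
\begin{align*}
    \trx^h_{s_1, s_2; t'} = \trx^h_{s_1, s_1'; t'} \cdot \trx^h_{s_1', s_2'; t'} \cdot \trx^h_{s_2', s_2; t'},
\end{align*}
and then expand the difference at level $k$ using the distributivity of the truncated product; each resulting term contains at least one factor of the form $\trx^{h,\gr{q}}_{s_1,s_1';t'}$ or $\trx^{h,\gr{q}}_{s_2',s_2;t'}$ with $q \geq 1$, which by the path regularity condition~\Cref{eq:path_regularity} is $O(|s_1-s_1'|^{q\rho})$ or $O(|s_2-s_2'|^{q\rho})$ respectively. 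An identical argument handles $\trx^v$ with the roles of $s$ and $t$ swapped.

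For the surface component $\trX$, I would again interpolate via an intermediate rectangle and use the horizontal and vertical multiplicativity established in~\Cref{prop:dya_multiplicative}. Concretely, for dyadic $(s_1,s_2;t_1,t_2)$ and $(s_1',s_2';t_1',t_2')$ with $s_1 \leq s_1' \leq s_2' \leq s_2$ and $t_1 \leq t_1' \leq t_2' \leq t_2$, I would decompose
\begin{align*}
    \trrX_{s_1,s_2;t_1,t_2} = \trrX_{s_1,s_1';t_1,t_2} \hmult \trrX_{s_1',s_2';t_1,t_2} \hmult \trrX_{s_2',s_2;t_1,t_2},
\end{align*}
and then further decompose the middle factor vertically into $\trrX_{s_1',s_2';t_1,t_1'} \vmult \trrX_{s_1',s_2';t_1',t_2'} \vmult \trrX_{s_1',s_2';t_2',t_2}$. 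At the level of the surface component, expanding the definitions of $\hmult$ and $\vmult$ (\Cref{eq:dg_hmult}, \Cref{eq:dg_vmult}) expresses $\trX_{s_1,s_2;t_1,t_2} - \trX_{s_1',s_2';t_1',t_2'}$ as a sum whose terms all contain a factor of $\trX^{\gr{j}}$ on a thin rectangle (with either horizontal or vertical side shrinking to zero). The surface regularity condition~\Cref{eq:surface_regularity} ensures $\|\trX^{\gr{j}}_{\cdots}\| \to 0$ as the rectangle degenerates, since $\CTR{j}{\orho}{\cdot;\cdot}$ contains only cross-terms $\ctr{q\orho}{\cdot}\ctr{(2j-q)\orho}{\cdot}$ with $1\leq q \leq 2j-1$; the remaining factors from $\gt$ actions and the surrounding path signatures are bounded uniformly on $[0,1]^2$ by the $\rho$-H\"older path regularity conditions applied to $\trx^h, \trx^v$.

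The main technical obstacle will be bookkeeping the combinatorics when expanding the products $\hmult$ and $\vmult$ at level $k\leq n+1$ and verifying that every resulting summand indeed carries a factor that is forced to vanish as the interpolation rectangles shrink. This reduces to applying the bounds levelwise together with~\Cref{eq:binomial_sum_bound} to sum over the $\rho$-graded binomial decompositions, mirroring the arguments in the proofs of~\Cref{lem:boundary_signature_bound} and~\Cref{prop:almost_multiplicative}. Once this is done, uniform continuity of all three components on $\Delta^2_\dya \times \Delta^2_\dya$ follows, with modulus of continuity depending only on $\rho$, $\beta$, and the control $\omega$.
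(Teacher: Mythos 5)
Your proposal follows essentially the same strategy as the paper's proof: use multiplicativity (path-multiplicativity for $\trx^h, \trx^v$ and \Cref{prop:dya_multiplicative} for $\trX$) to express the difference of two values as a product containing a factor over a thin interval or rectangle, and then invoke the path regularity/continuity conditions and the cross-term structure of $W^k_\orho$ to force that factor to vanish as the interval or rectangle degenerates. The paper perturbs one parameter at a time rather than interpolating both $s$ and $t$ simultaneously, which keeps the bookkeeping of the $\gt$-conjugations lighter, but the underlying argument is the same.
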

\begin{proof}

    First, we consider the path component $\trx^h$. For $t \in \dya$ and $s_1 < s_2 < s_3 \in \dya$, we have
    \begin{align*}
        \|\trx^h_{s_1, s_3; t} - \trx^h_{s_1, s_2; t} \| & = \|\trx^h_{s_1, s_2; t} \cdot \trx^h_{s_2, s_3; t} - \trx^h_{s_1, s_2; t}\| \\
        &\leq \sum_{q=1}^{n+1} \|\trx^{h, \gr{n+1-q}}_{s_1, s_2; t}\| \cdot \|\trx^{h, \gr{q}}_{s_2, s_3;t}\| \\
        & \leq \frac{\ctr{\rho}{s_2, s_3}}{\beta^2} \sum_{q=1}^{n+1} \frac{\ctr{(n+1-q)\rho}{s_1, s_2} \ctr{(q-1)\rho}{s_2, s_3}}{\ffact{n+1-q}{\rho}\ffact{q}{\rho}} \\
        & \leq \ctr{\rho}{s_2, s_3} \left( \frac{1}{\beta^2} \sum_{q=1}^{n+1} \frac{\ctr{n\rho}{0,1} }{\ffact{n+1-q}{\rho}\ffact{q}{\rho}} \right).
    \end{align*}
    A similar argument can be made for the first $s$ variable in $\trx^h$. For the $t$ variable, we note that the path continuity condition implies uniform continuity. Therefore, $\trx^h$ is uniformly continuous in all variables, and the same argument can be made for $\trx^v$. \medskip

    Next, we consider the surface component $\trX: \Delta_\dya \times \Delta_\dya \to G_1^{\gr{\leq n+1}}$. This takes advantage of multiplicativity, like in the proof of continuity for the $s$ variables for the horizontal path component. Let $s_1 < s_2 < s_3 \in \dya$ and fix $t_1 < t_2 \in \dya$. Then, because $\hrrX$ is multiplicative by~\Cref{prop:dya_multiplicative}, we have
    \begin{align}\nonumber
        (\trX_{s_1, s_2; t_1, t_2} \hmult \trX_{s_2, s_3; t_1, t_2} - \trX_{s_1, s_2; t_1, t_2})^{\gr{n+1}} = \sum_{q = 2}^{n+1}(\trx^h_{s_1, s_2; t_1} \gt \trX_{s_2, s_3; t_1, t_2})^{\gr{q}} \cdot \trX^{\gr{n+1-q}}_{s_1, s_2; t_1, t_2},
    \end{align}
    where the sum starts at $q=2$ since the $q=0$ term is cancelled by the $- \trX_{s_1, s_2; t_1, t_2}$ term on the left hand side, and level $q=1$ terms are trivial in $G_1^{\gr{\leq n+1}}$.
    Then, we use the bound in~\Cref{eq:am_prelim_bound} from the proof of almost multiplicativity (\Cref{prop:almost_multiplicative}) to get
    \begin{align}\nonumber
        \| (\trx^h_{s_1, s_2; t_1} \gt \trX_{s_2, s_3; t_1, t_2})^{\gr{q}} \| \leq \frac{1}{\orho\beta^2} \sum_{k=0}^{q-2} 2^{2k\orho} \frac{\ctr{2k\orho}{s_1, s_2}}{\ffact{2k}{\orho}} \|\trX^{\gr{q-k}}_{s_2, s_3; t_1, t_2}\|.
    \end{align}
    Putting the above two equations together, we obtain
    \begin{align}\nonumber
        \| (\trX_{s_1, s_2; t_1, t_2} \hmult \trX_{s_2, s_3; t_1, t_2} - \trX_{s_1, s_2; t_1, t_2})^{\gr{n+1}} \| \leq \frac{2^{2(n+1)\orho}}{\orho\beta^2} \sum_{q = 2}^{n+1} \sum_{k=0}^{q-2}  \frac{\ctr{2k\orho}{s_1, s_2}}{\ffact{2k}{\orho}} \|\trX^{\gr{q-k}}_{s_2, s_3; t_1, t_2}\|\cdot \|\trX^{\gr{n+1-q}}_{s_1, s_2; t_1, t_2}\|.
    \end{align}
    The key observation here is that the degree of $\trX_{s_2, s_3; t_1, t_2}^{\gr{q-k}}$ is always $ q- k \geq 2$. Thus, we can factor out a $\ctr{\orho}{s_2, s_3}$ term from each $\|\trX_{s_2, s_3; t_1, t_2}^{\gr{q-k}}\|$, and bound the remainder using a constant $C > 0$ depending only on $\beta, \rho, n$, and $\omega$,
    \begin{align}\nonumber
        \| (\trX_{s_1, s_2; t_1, t_2} \hmult \trX_{s_2, s_3; t_1, t_2} - \trX_{s_1, s_2; t_1, t_2})^{\gr{n+1}} \| \leq C \ctr{\orho}{s_2, s_3}.
    \end{align}
    Therefore, $\trX^{\gr{n+1}}_{s_1, s_2; t_1, t_2}$ is uniformly continuous in the $s_2$ variable. The other variables can be proven in the same way. 
\end{proof}

\subsubsection{Continuous Extension}
In the previous sections, we have proved that $\trrX$ is a uniformly continuous, multiplicative $\rho$-H\"older double group functional. Because $\Delta^2_\dya$ and $\dya$ are dense in $\Delta^2$ and $[0,1]$ respectively, we can consider the continuous extension of $\trx^h, \trx^v$, and $\trX$ to
\begin{align}\nonumber
    \trx^h : \Delta \times [0,1] \to G_0^{\gr{\leq n+1}}, \quad \trx^v : [0,1] \times \Delta \to G_0^{\gr{\leq n+1}}, \andd \trX: \Delta \times \Delta \to G_1^{\gr{\leq n+1}}
\end{align}
respectively. We will show that this is still a double group function and multiplicative.

\begin{proposition}
    The continuous extension of $\trrX = (\trx^h, \trx^v, \trX)$ is a multiplicative double group function. 
\end{proposition}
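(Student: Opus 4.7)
The plan is to extend $\trrX$ from $\Delta_\dya \times \Delta_\dya$ to $\Delta \times \Delta$ using uniform continuity, and then to transfer both the double group functional (boundary) condition and the multiplicativity relations from the dyadic setting to the limit setting by a standard density argument. The key enabling inputs are \Cref{prop:dya_multiplicative} (dyadic multiplicativity), \Cref{prop:surface_extension_unif_cont} (uniform continuity on dyadics), and the continuity of the structure maps $\delta$, $*$, $\cdot$, and the action $\gt$ on the truncated Banach algebra $\bT^{\gr{\leq n+1}}$.

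First, since $\dya$ is dense in $[0,1]$ and $\Delta_\dya^2$ is dense in $\Delta^2$, and since each component of $\trrX$ is uniformly continuous and valued in a closed subset of a complete metric space, the components admit unique continuous extensions
\[
    \trx^h : \Delta^2 \times [0,1] \to G_0^{\gr{\leq n+1}}, \quad \trx^v : [0,1] \times \Delta^2 \to G_0^{\gr{\leq n+1}}, \quad \trX : \Delta^2 \times \Delta^2 \to G_1^{\gr{\leq n+1}}.
\]
The membership in $G_0^{\gr{\leq n+1}}$ and $G_1^{\gr{\leq n+1}}$ is preserved in the limit because these sets are closed (see \Cref{eq:cm_truncated_group_like} and the surrounding discussion, together with the continuity of $\log$ and $\log_*$).

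Next, to verify that $\trrX$ remains a double group functional on $\Delta^2 \times \Delta^2$, fix $(s_1,s_2;t_1,t_2) \in \Delta^2 \times \Delta^2$ and pick dyadic sequences $s_i^{(k)} \to s_i$ and $t_j^{(k)} \to t_j$ with $(s_1^{(k)}, s_2^{(k)}; t_1^{(k)}, t_2^{(k)}) \in \Delta_\dya^2 \times \Delta_\dya^2$. The boundary condition
\[
    \delta\bigl(\trX_{s_1^{(k)}, s_2^{(k)}; t_1^{(k)}, t_2^{(k)}}\bigr) = \trx^h_{s_1^{(k)}, s_2^{(k)}; t_1^{(k)}} \cdot \trx^v_{s_2^{(k)}; t_1^{(k)}, t_2^{(k)}} \cdot \trx^{-h}_{s_1^{(k)}, s_2^{(k)}; t_2^{(k)}} \cdot \trx^{-v}_{s_1^{(k)}; t_1^{(k)}, t_2^{(k)}}
\]
holds by construction on the dyadic level, and passes to the limit by continuity of $\delta$, of the algebra product, and of the inversion map on $G_0^{\gr{\leq n+1}}$. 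Path-multiplicativity of $\trx^h$ and $\trx^v$ at arbitrary points is inherited in the same way from dyadic path-multiplicativity.

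Finally, for multiplicativity, fix $s_1 < s_2 < s_3$ and $t_1 < t_2$ in $[0,1]$ and approximate each by dyadic sequences. By \Cref{prop:dya_multiplicative},
\[
    \trrX_{s_1^{(k)}, s_2^{(k)}; t_1^{(k)}, t_2^{(k)}} \hmult \trrX_{s_2^{(k)}, s_3^{(k)}; t_1^{(k)}, t_2^{(k)}} = \trrX_{s_1^{(k)}, s_3^{(k)}; t_1^{(k)}, t_2^{(k)}}
\]
for every $k$. Unwinding the definition of $\hmult$ from \Cref{eq:dg_hmult}, each side is a finite composition of continuous operations (the action $\gt$, the products $*$ and $\cdot$, and inversion in $G_0^{\gr{\leq n+1}}$) applied to the components of $\trrX$. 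Passing to the limit yields horizontal multiplicativity at $(s_1,s_2,s_3;t_1,t_2)$; the same argument with vertical sequences gives vertical multiplicativity. The only subtle point is that horizontal/vertical composition in $\dg(\cmG^{\gr{\leq n+1}})$ is only defined when boundaries match, but on each approximating dyadic tuple the boundaries match exactly (since $\trrX$ is already a double group functional there), and the limit boundaries match by the continuous extension just established, so the compositions are legitimate at every stage. This is essentially routine; no single step is an obstacle once uniform continuity and dyadic multiplicativity are in hand.
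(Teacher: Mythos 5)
Your proof is correct and takes essentially the same approach as the paper: extend by uniform continuity and density of the dyadics, then transfer the boundary condition and multiplicativity to the limit via continuity of $\delta$, the products, and the action. Your extra remarks on closedness of the group-like elements and on boundary matching for the compositions are sound but not load-bearing beyond what the paper already implicitly assumes.
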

\begin{proof}
    We must show that the continuous extension still satisfies the boundary condition in~\Cref{eq:rrX_boundary_condition}, the regularity and continuity conditions, and multiplicative property. All of these properties follow by continuity. In particular, let $(\smp; \tmp) \in \Delta \times \Delta$ and suppose $(\smpi{k}; \tmpi{k}) \in \Delta_\dya \times \Delta_\dya$ such that $(\smpi{k}; \tmpi{k}) \to (\smp; \tmp)$ as $k \to \infty$. Then, since $\delta: T_1^{\gr{\leq n+1}} \to T_0^{\gr{\leq n+1}}$ is a continuous function,
    \begin{align}\nonumber
        \delta(\trX_{\smp; \tmp}) &= \lim_{k \to \infty} \delta(\trX_{\smpi{k}; \tmpi{k}})\\
        & = \lim_{k \to \infty} \trx^h_{\smpi{k}; \tmm^k} \cdot \trx^{v}_{\spp^k; \tmpi{k}} \cdot \trx^{-h}_{\smpi{k}; \tpp^k}  \cdot \trx^{-v}_{\smm^k; \tmpi{k}} \nonumber\\
        & = \trx^h_{\smp; \tmm} \cdot \trx^{v}_{\spp; \tmp} \cdot \trx^{-h}_{\smp; \tpp}  \cdot \trx^{-v}_{\smm; \tmp}.\nonumber
    \end{align}
    Then, for multiplicativity, let $s_1< s_2 < s_3 \in [0,1]$ and $(\tmp) \in \Delta$. Once again, let $(s_1^k < s_2^k < s_3^k) \in \dya$ and $(\tmpi{k}) \in \Delta_\dya$ such that $s_i^k \to s_i$ and $(\tmpi{k}) \to (\tmp)$ as $k \to \infty$. Then, since horizontal multiplication is continuous, we have
    \begin{align}\nonumber
        \trX_{s_1, s_2; \tmp} \hmult \trX_{s_2, s_3; \tmp} & = \lim_{k \to \infty} \trX_{s_1^k, s_2^k; \tmpi{k}} \hmult \trX_{s_2^k, s_3^k; \tmpi{k}} \\
        & = \lim_{k \to \infty} \trX_{s_1^k, s_3^k;\tmpi{k}} \nonumber\\
        & = \trX_{s_1, s_3; \tmp}.  \nonumber
    \end{align}
    The same argument holds for the vertical case. 
\end{proof}

\subsubsection{Uniqueness}
The only remaining step to prove~\Cref{thm:surface_extension} is uniqueness.
\begin{lemma} \label{lem:uniqueness}
    Let $\rho \in (0,1]$ and $n \geq \left\lfloor \frac{2}{\rho} \right\rfloor$. Suppose $\rrX \in \DGF^\rho(\cmG^{\gr{\leq n}})$ is a multiplicative double group functional. Suppose $\trrX = (\trx^h, \trx^v, \trX), \trrY = (\try^h, \try^v, \trY) \in \DGF(\cmG^{\gr{\leq n+1}})$ are multiplicative double group functionals which extend $\rrX$, such that $\trrX^{\gr{k}} = \trrY^{\gr{k}} = \rrX^{\gr{k}}$ for all $k \in [n]$. If $\trrX$ and $\trrY$ satisfy the path regularity condition for $k = n+1$ and
    \begin{align}\nonumber
        \|\trX^{\gr{n+1}}_{s_1, s_2; t_1, t_2}\|, \|\trY^{\gr{n+1}}_{s_1, s_2; t_1, t_2}\| \leq C (\inc{s_1}{s_2} + \inc{t_1}{t_2})^{\theta}
    \end{align}
    for some $C> 0$ and $\theta > 2$, then $\trrX = \trrY$. 
\end{lemma}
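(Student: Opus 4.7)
First, I would reduce the uniqueness question to the surface component. Since both $\trx^h$ and $\try^h$ extend $\rx^h$ to level $n+1$, satisfy the path regularity condition, and agree through level $n$, the uniqueness in Lyons' extension theorem (\Cref{thm:path_extension}) forces $\trx^h = \try^h$, and analogously $\trx^v = \try^v$. The boundary condition~\Cref{eq:rrX_boundary_condition} then yields $\delta(\trX^{\gr{n+1}}_{s_1,s_2;t_1,t_2}) = \delta(\trY^{\gr{n+1}}_{s_1,s_2;t_1,t_2})$, so the difference $\trX^{\gr{n+1}} - \trY^{\gr{n+1}}$ lies in the central subspace $T_1^{\ker,\gr{n+1}}$ of \Cref{prop:T1_decomposition}.

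Next, I would fix a dyadic rectangle $(\smp;\tmp) \in \Delta_\dya \times \Delta_\dya$ and use multiplicativity of $\trrX$ and $\trrY$ together with the argument of \Cref{lem:grid_multiplicative} to write $\trX_{\smp;\tmp} = \gridmult^m_{\smp;\tmp}[\trX]$ and $\trY_{\smp;\tmp} = \gridmult^m_{\smp;\tmp}[\trY]$ for every $m \in \N$. Since the path components of $\trrX$ and $\trrY$ coincide, the intermediate grid multiplication $\gridmult^{m,u}_{\smp;\tmp}[\trrX,\trrY]$ from \Cref{def:intermediate_subdivision} is well-defined and interpolates between $\gridmult^m[\trrX]$ at $u=0$ and $\gridmult^m[\trrY]$ at $u=4^m$. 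Telescoping gives
\begin{align*}
\left\|\trX^{\gr{n+1}}_{\smp;\tmp} - \trY^{\gr{n+1}}_{\smp;\tmp}\right\| \leq \sum_{u=1}^{4^m} \left\|\left(\gridmult^{m,u}_{\smp;\tmp}[\trrX,\trrY] - \gridmult^{m,u-1}_{\smp;\tmp}[\trrX,\trrY]\right)^{\gr{n+1}}\right\|.
\end{align*}
Consecutive terms differ by swapping $\trX_{s_i,s_{i+1};t_j,t_{j+1}}$ with $\trY_{s_i,s_{i+1};t_j,t_{j+1}}$ on a single cell of $\dpart_m(\smp;\tmp)$; iterated application of \Cref{lem:mult_bound_reduction} (valid since the path components agree throughout) collapses the effect of this swap at level $n+1$ to exactly the single-cell surface difference. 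By hypothesis this is at most $2C(\inc{s_i}{s_{i+1}} + \inc{t_j}{t_{j+1}})^\theta = 2C \cdot 2^{-m\theta}(\inc{\smm}{\spp}+\inc{\tmm}{\tpp})^\theta$. Summing over the $4^m$ intermediate steps,
\begin{align*}
\left\|\trX^{\gr{n+1}}_{\smp;\tmp} - \trY^{\gr{n+1}}_{\smp;\tmp}\right\| \leq 2C \cdot 2^{m(2-\theta)}(\inc{\smm}{\spp}+\inc{\tmm}{\tpp})^\theta,
\end{align*}
which vanishes as $m\to\infty$ since $\theta > 2$. Density of $\Delta_\dya$ in $\Delta$ together with the uniform continuity established in \Cref{prop:surface_extension_unif_cont} (applied to both extensions) then forces $\trrX = \trrY$ globally.

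The main obstacle is the iterated reduction via \Cref{lem:mult_bound_reduction}: one must verify that swapping a single cell deep inside the recursive definition of $\gridmult^{m,u}$ produces a level-$(n+1)$ error equal to precisely the single-cell surface difference, with no amplification from the surrounding horizontal and vertical products. This ultimately relies on the fact that the swapped difference lies in $T_1^{\ker}$, so $*$-products with neighboring surface elements vanish by the multiplicative properties of \Cref{prop:T1_decomposition}, and on the shortness of the $G_0$-action on $T_1$ (cf.~\Cref{eq:levelwise_short_action}), which ensures that conjugating this central difference through outer layers of the grid product does not inflate its norm.
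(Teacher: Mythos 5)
Your core argument is the same as the paper's: reduce to the surface component via uniqueness of the path extension, then compare $\trrX$ and $\trrY$ on a rectangle by grid-multiplying the $m$th dyadic subdivision, interpolating with the intermediate operations of \Cref{def:intermediate_subdivision}, telescoping, and applying \Cref{lem:mult_bound_reduction} to reduce each consecutive difference to a single-cell surface difference bounded by $2C\cdot 2^{-m\theta}(\inc{\smm}{\spp}+\inc{\tmm}{\tpp})^\theta$, which summed over $4^m$ cells gives $2^{m(2-\theta)}\to 0$.

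The one place you deviate from the paper — restricting to dyadic rectangles and then invoking density plus uniform continuity — is both unnecessary and not fully justified under the hypotheses. The grid multiplication $\gridmult^m_{\smp;\tmp}$ and its intermediate refinements are defined by evaluating the DGF at the subdivision points $\smm + (\spp-\smm)i/2^m$, etc., and since $\trrX,\trrY$ are defined on all of $\Delta^2\times\Delta^2$, nothing forces $\smm,\spp,\tmm,\tpp$ to be dyadic; the paper runs the telescoping argument on an arbitrary rectangle and concludes $\trX_{\smp;\tmp}=\trY_{\smp;\tmp}$ pointwise. Your density step, by contrast, needs continuity of $\trY^{\gr{n+1}}$, which is not established here: \Cref{prop:surface_extension_unif_cont} is proved for the constructed dyadic-limit extension using the full surface regularity condition in~\Cref{eq:surface_regularity}, whereas $\trrY$ in this lemma is only assumed to satisfy the weaker $(\inc{s_1}{s_2}+\inc{t_1}{t_2})^\theta$ bound — this bound does not vanish as, say, $|s_1-s_2|\to 0$ with $|t_1-t_2|$ fixed, so the continuity argument of that proposition does not transfer. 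Dropping the dyadic restriction removes the gap. Your observation that $\trX^{\gr{n+1}}-\trY^{\gr{n+1}}\in T_1^{\ker,\gr{n+1}}$ is correct but is not what makes \Cref{lem:mult_bound_reduction} work; that lemma follows from the simpler fact that the difference is concentrated in top degree $n+1$, so any product with a nontrivial element of $T_1$ (degree $\geq 2$) pushes past degree $n+1$ and drops out of the truncation.
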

\begin{proof}
    Because both $\trrX$ and $\trrY$ satisfy the path regularity condition, we have $\trx^h = \try^h$ and $\trx^v = \try^v$ by the uniqueness of path extensions (\Cref{thm:path_extension}). Now, we consider the surface component. Fix a rectangle $(\smp; \tmp) \in \Delta^2 \times \Delta^2$. Because both $\trrX$ and $\trrY$ are multiplicative, we have
    \begin{align} \label{eq:uniqueness_mult}
        \trrX_{\smp;\tmp} = \gridmult^m_{\smp;\tmp}[\trrX] \andd \trrY_{\smp; \tmp} = \gridmult^m_{\smp;\tmp}[\trrY].
    \end{align}
    Next, we use the intermediate grid multiplication operations from~\Cref{def:intermediate_subdivision}, where we have
    \begin{align}\nonumber
        \gridmult^{m,0}_{\smp; \tmp}[\trrX, \trrY] = \gridmult^m_{\smp; \tmp}[\trrX] \andd \gridmult^{m,4^m}_{\smp; \tmp}[\trrX, \trrY] = \gridmult^m_{\smp; \tmp}[\trrY].
    \end{align}
    Next, note that we have
    \begin{align}\nonumber
        \|\trX^{\gr{n+1}}_{\smp; \tmp} - \trY^{\gr{n+1}}_{\smp;\tmp}\| \leq 2C (\inc{\smm}{\spp} + \inc{\tmm}{\tpp})^{\theta}
    \end{align}
    Then, denoting the vertices of the partition by $(s_i, t_j) \in \cP_m(\smp;\tmp)$, we note that
    \begin{align}\nonumber
        \inc{s_{i-1}}{s_i} + \inc{t_{j-1}}{t_j} = 2^{-m} (\inc{\smm}{\spp} + \inc{\tmm}{\tpp})
    \end{align}
    Then, we obtain
    \begin{align}\nonumber
        \|\gridmult^m_{\smp; \tmp}[\trX] - \gridmult^m_{\smp; \tmp}[\trY]\| &\leq \sum_{u=1}^{4^m} \| \gridmult^{m, u}_{\smp; \tmp}[\trX, \trY] - \gridmult^{m, u-1}_{\smp; \tmp}[\trX, \trY]\| \\
        & \leq \sum_{i=1}^{2^m} \sum_{j=1}^{2^m} \|\trX^{\gr{n+1}}_{s_{i-1}, s_i; t_{j-1}, t_j} - \trY^{\gr{n+1}}_{s_{i-1}, s_i; t_{j-1}, t_j}\|\nonumber\\
        & \leq \sum_{i=1}^{2^m} \sum_{j=1}^{2^m} 2C (\inc{s_{i-1}}{s_i} + \inc{t_{j-1}}{t_j})^{\theta}\nonumber\\
        & \leq 2^{m(2- \theta)}  2C (\inc{\smm}{\spp} + \inc{\tmm}{\tpp})^{\theta}\nonumber
    \end{align}
    Because $\theta > 2$, we have
    \begin{align}\nonumber
        \|\gridmult^m_{\smp; \tmp}[\trX] - \gridmult^m_{\smp; \tmp}[\trY]\| \to 0
    \end{align}
    as $m \to \infty$, and thus by~\Cref{eq:uniqueness_mult}, we have
    \begin{align}\nonumber
        \|\trX_{\smp; \tmp} - \trY_{\smp; \tmp}\| = 0.
    \end{align}
\end{proof}

\subsection{Rough Surfaces}

Motivated by the surface extension theorem, we will define a rough surface in the same way as rough paths.

\begin{definition}
    Let $\rho \in (0,1]$. A \emph{$\rho$-rough surface}\footnote{As this is valued in the group-like elements $\cmG$, this would be the analogue of a \emph{weakly geometric} rough path. However, because we only work with such objects, we will not make this distinction here.} is a $\rho$-H\"older multiplicative double group function $\rrX \in \DGF^\rho(\bT^{\gr{\leq \lfloor2/\rho\rfloor}})$. The space of $\rho$-rough surfaces will be denoted $\RS^\rho$.
\end{definition}

In the remainder of this section, we discuss metrics for the space of rough paths, and show that we can define the surface signature (and surface holonomy) of rough surfaces as a continuous map. \medskip

\subsubsection{Metric Spaces of Rough Surfaces}
We begin by showing that the surface extension is continuous in the following sense, which mimics the statement of~\Cref{thm:path_ext_orig_cont}. The proof largely follows the structure of the surface extension theorem, and we defer this to~\Cref{apxsec:additional_surface_extension}.

\begin{theorem} \label{thm:cont_extension}
    Let $\rrX$ and $\rrY$ be $\rho$-rough surfaces, and let $\trrX$ and $\trrY$ be the unique extension to a double group function on $\bT$. Let $\epsilon > 0$ and suppose for $k \in \lfloor\frac{1}{\orho}\rfloor$, we have
    \begin{align}
        \|\rx^{h, \gr{k}}_{s_1, s_2; t} - \ry^{h, \gr{k}}_{s_1, s_2; t}\| \leq \epsilon \frac{\ctr{2k\orho}{s_1, s_2}}{\beta \ffact{2k}{\orho}}, &\quad \|\rx^{v, \gr{k}}_{s; t_1, t_2} - \ry^{v, \gr{k}}_{s;t_1, t_2}\| \leq \epsilon \frac{\ctr{2k\orho}{t_1, t_2}}{\beta \ffact{2k}{\orho}} \label{eq:ext_cont_path}\\
        \left\|\rx^{h, \gr{k}}_{s_1, s_2; t_2} - \rx^{h, \gr{k}}_{s_1, s_2; t_1} - \ry^{h, \gr{k}}_{s_1, s_2; t_2} - \ry^{h, \gr{k}}_{s_1, s_2; t_1}\right\| &\leq \epsilon \, \frac{\ctr{(2k-1)\orho}{s_1, s_2} \ctr{\orho}{t_1, t_2}}{\beta \ffact{2k-1}{\orho} \ffact{1}{\orho}} \label{eq:ext_cont_path_hmixed} \\
        \left\|\rx^{v, \gr{k}}_{s_2; t_1, t_2} - \rx^{v, \gr{k}}_{s_1; t_1, t_2} - \ry^{v, \gr{k}}_{s_2; t_1, t_2} - \ry^{v, \gr{k}}_{s_1; t_1, t_2}\right\| &\leq \epsilon \, \frac{\ctr{\orho}{s_1, s_2} \ctr{(2k-1)\orho}{t_1, t_2}}{ \beta\ffact{1}{\orho}\ffact{2k-1}{\orho}} \label{eq:ext_cont_path_vmixed}\\
        \andd \|\rX^{\gr{k}}_{s_1, s_2; t_1, t_2} - \rY^{\gr{k}}_{s_1, s_2; t_1, t_2}\| &\leq \epsilon \CTR{k}{\orho}{s_1, s_2; t_1, t_2}.\label{eq:ext_cont_surface}
    \end{align}
    Then, these inequalities hold for the extensions $\trrX$ and $\trrY$ for all $k \in \N$. 
\end{theorem}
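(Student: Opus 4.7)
The plan is to mimic the inductive scheme underlying the surface extension theorem (\Cref{thm:surface_extension}), tracking a uniform $\epsilon$-prefactor through each stage of the construction. The base case $k \leq \lfloor 2/\rho \rfloor$ is exactly the hypothesis of the theorem. Assuming the four inequalities~\eqref{eq:ext_cont_path}--\eqref{eq:ext_cont_surface} hold at all levels $\leq n$, I would verify them at level $n+1$ by repeating the pathwise extension, almost multiplicativity, and dyadic sewing steps of \Cref{ssec:surface_extension_proof}, with every bound carrying a factor of $\epsilon$.

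For the path components, inequality~\eqref{eq:ext_cont_path} at level $n+1$ follows from the classical continuity of path extensions (\Cref{thm:path_ext_orig_cont}) applied fibrewise in $t$ for $\trx^h,\try^h$ and in $s$ for $\trx^v,\try^v$. The mixed inequalities~\eqref{eq:ext_cont_path_hmixed} and~\eqref{eq:ext_cont_path_vmixed} require an upgrade of \Cref{thm:path_extension_cont}: viewing the quadruple difference $\rx^{h,\gr{k}}_{s_1,s_2;t_2} - \rx^{h,\gr{k}}_{s_1,s_2;t_1} - \ry^{h,\gr{k}}_{s_1,s_2;t_2} + \ry^{h,\gr{k}}_{s_1,s_2;t_1}$ as a second-order quantity, one repeats the argument of \Cref{thm:path_extension_cont} applied to the difference of the two relevant Lyons extensions, keeping the closeness factor $\epsilon\cdot\ctr{\orho}{t_1,t_2}/\ffact{1}{\orho}$ isolated throughout the recursion.

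For the surface component I would proceed through the three stages of the extension construction. First, in the pathwise extension $\orX^{\gr{n+1}}$ is defined by applying the algebra section $\ts$ to the level-$(n+1)$ component of the signature of the boundary loop (see~\eqref{eq:rX_pathwise_extension}); combining the level-$(n+1)$ path control just obtained with the argument of \Cref{lem:boundary_signature_bound} yields
\begin{equation*}
\|\orX^{\gr{n+1}}_{s_1,s_2;t_1,t_2} - \oY^{\gr{n+1}}_{s_1,s_2;t_1,t_2}\| \;\leq\; \epsilon\cdot \tfrac{1}{2\beta}\CTR{n+1}{\orho}{s_1,s_2;t_1,t_2}.
\end{equation*}
Next, the almost-multiplicativity computation of \Cref{prop:almost_multiplicative} is rerun for the difference $\bigl(\orX\hmult\orX - \orX\bigr) - \bigl(\oY\hmult\oY - \oY\bigr)$; splitting $\orx^h\gt \orX - \ory^h\gt \oY$ as $\orx^h\gt(\orX-\oY) + (\orx^h-\ory^h)\gt \oY$ and invoking the inductive hypothesis together with the shared path regularity bounds of $\orrX$ and $\orrY$, one obtains the $\epsilon$-scaled version of \Cref{prop:almost_multiplicative}. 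Finally, the dyadic grid construction of \Cref{prop:first_sewing} is linear in the square elements $\orX$, so the $\epsilon$-linearity established in stages one and two propagates through the telescoping geometric-series estimate to deliver~\eqref{eq:ext_cont_surface} at level $n+1$ for $\trX^{\gr{n+1}}$.

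The main obstacle is the second stage, where a single $\epsilon$ must be isolated so that it multiplies only $\CTR{n+1}{\orho}{\cdot}$ and not any additional factor growing with the rectangle size. The splitting above combined with the linearity of the $T_0$-action ensures this is possible, but requires a careful rerun of the estimate in~\eqref{eq:am_prelim_bound}, separately tracking (a) contributions where path regularity gives the polynomial factor and the inductive hypothesis on surface differences supplies the $\epsilon$, and (b) contributions where the inductive hypothesis on path differences supplies the $\epsilon$ and surface regularity (\Cref{lem:pathwise_ext_dg_regularity}) supplies the polynomial factor. Once the $\epsilon$-scaled almost-multiplicativity bound is established, the dyadic-limit and continuity arguments of \Cref{prop:first_sewing,prop:surface_extension_unif_cont} apply verbatim with $\epsilon$ attached to every bound, completing the inductive step and the proof.
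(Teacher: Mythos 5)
Your proposal matches the paper's proof essentially verbatim: the same induction, the same three-stage scheme (pathwise extension via $\ts$, $\epsilon$-scaled almost multiplicativity, telescoping dyadic sewing), and the same recognition that the mixed continuity conditions~\eqref{eq:ext_cont_path_hmixed}--\eqref{eq:ext_cont_path_vmixed} need a modified path-continuity theorem, which the paper supplies as~\Cref{thm:mixed_pathwise_ext_cont}. The splitting you use in the almost-multiplicativity step is exactly what the paper carries out via~\Cref{lem:replacing_stuff}.
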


\begin{remark}
    The H\"older metric on $C^\rho_0([0,1]^2, \V)$ motivates the continuity conditions used in~\Cref{thm:cont_extension}. In particular, suppose $X, Y \in C^\rho([0,1]^2, \V)$ such that $\|X\|_{\rho}, \|Y\|_{\rho} < L$ and $\|X - Y\|_{\rho} < \epsilon L$. Consider the control $\omega(s_1, s_2) = L^{1/\rho}\inc{s_1}{s_2}$. Then,
    \begin{align}\nonumber
        \|\rx^{h, \gr{1}}_{s_1, s_2; t} - \ry^{h, \gr{1}}_{s_1, s_2;t}\| &= \|X_{s_1, t} - X_{s_2, t} - Y_{s_1, t} + Y_{s_2, t}\| \leq \epsilon L \inc{s_1}{s_2}^{\rho} = \epsilon \ctr{\rho}{s_1, s_2}.
    \end{align}
    Furthermore, by using~\Cref{lem:holder_2d_increment}, we have
    \begin{align}\nonumber
        \|\rx^{h, \gr{1}}_{s_1, s_2; t_2} - \rx^{h, \gr{1}}_{s_1, s_2; t_1} - \ry^{h, \gr{1}}_{s_1, s_2; t_2} + \ry^{h, \gr{1}}_{s_1, s_2; t_1}\| & \leq \|\square_{s_1, s_2; t_1, t_2}[X - Y]\| \leq \epsilon L \inc{s_1}{s_2}^{\rho/2} \inc{t_1}{t_2}^{\rho/2} = \epsilon \ctr{\rho/2}{s_1, s_2} \ctr{\rho/2}{t_1, t_2}.
    \end{align}

\end{remark}

We will now define metrics for $\rho$-H\"older double group functionals. We used the 1D H\"older controls $\ctr{}{}$, the polynomial of H\"older 1D controls $W_\orho$ and included factorials in our definition of $\rho$-H\"older double group functionals in order to conveniently apply the neoclassical inequality (\Cref{eq:neoclassical}) in the proof of the surface extension theorem. In accordance with the standard rough path metrics~\cite{lyons_differential_2007,friz_multidimensional_2010,friz_course_2020}, we will define metrics for functionals in terms of increments. In particular, we will need to introduce the 2D mixed polynomial increments,
\begin{align}\nonumber
    \INC{k}{\orho}{s_1, s_2; t_1, t_2} \coloneqq \sum_{q=1}^{2k-1} \inc{s_1}{s_2}^{q\rho}\, \inc{t_1}{t_2}^{(2k-q)\orho}.
\end{align}

\begin{definition}
Let $\rrX, \rrY \in \DGF^\rho(\cmG^{\gr{\leq n}})$. We begin by defining metrics on the path and surface components individually. In particular,
\begin{align*}
    d^h_\rho(\rx^h, \ry^h) &\coloneqq \max_{k\in [n]} \sup_{\Delta^2 \times [0,1]} \frac{\|\rx^{h, \gr{k}}_{s_1, s_2; t} - \ry^{h, \gr{k}}_{s_1, s_2; t}\|}{\inc{s_1}{s_2}^{k\rho}} + \max_{k\in [n]} \sup_{\Delta^2 \times \Delta^2} \frac{\|\rx^{h, \gr{k}}_{s_1, s_2; t_1} - \rx^{h, \gr{k}}_{s_1, s_2; t_2} - \ry^{h, \gr{k}}_{s_1, s_2; t_1} + \ry^{h, \gr{k}}_{s_1, s_2; t_2}\|}{\inc{s_1}{s_2}^{(2k-1)\orho} \inc{t_1}{t_2}^{\orho}} \\
    d^v_\rho(\rx^v, \ry^v) &\coloneqq \max_{k\in [n]} \sup_{\Delta^2 \times [0,1]} \frac{\|\rx^{v, \gr{k}}_{s; t_1, t_2} - \ry^{v, \gr{k}}_{s; t_1, t_2}\|}{\inc{t_1}{t_2}^{k\rho}} + \max_{k\in [n]} \sup_{\Delta \times \Delta^2 } \frac{\|\rx^{v, \gr{k}}_{s_1; t_1, t_2} - \rx^{v, \gr{k}}_{s_2; t_1, t_2} - \ry^{v, \gr{k}}_{s_1; t_1, t_2} + \ry^{v, \gr{k}}_{s_2; t_1, t_2}\|}{\inc{s_1}{s_2}^{(2k-1)\orho} \inc{t_1}{t_2}^{\orho}} \\
    d^s_\rho(\rX, \rY) & \coloneqq \max_{k\in [n]} \sup_{\Delta^2 \times \Delta^2 } \frac{\|\rX^{\gr{k}}_{s_1, s_2; t_1, t_2} - \rY^{\gr{k}}_{s_1, s_2; t_1, t_2}\|}{W^k_{\orho}(s_1, s_2; t_1, t_2)}.
\end{align*}
Then, we \emph{$\rho$-H\"older metric} to be
\begin{align}\nonumber
    d_\rho(\rrX, \rrY) \coloneqq d^h(\rx^h, \ry^h) + d^v(\rx^v, \ry^v) + d^s(\rX, \rY).
\end{align}
\end{definition}
The following result is proved using standard methods and is deferred to~\Cref{apxsec:additional_surface_extension}.

\begin{proposition} \label{prop:rough_surface_complete}
    The metric space $(\RS^\rho, d_\rho)$ is complete.
\end{proposition}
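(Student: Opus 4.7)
The plan is to verify completeness via a standard Cauchy-limit argument, with the main care required in preserving the $\rho$-H\"older regularity bounds across the limit. Let $\{\rrX_n\}_{n \in \N}$ be a Cauchy sequence in $(\RS^\rho, d_\rho)$ with components $\rrX_n = (\rx_n^h, \rx_n^v, \rX_n)$.

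First I would show that the sequence enjoys a \emph{uniform} $\rho$-H\"older control. Any Cauchy sequence is bounded, so $d_\rho(\rrX_n, \rrX_1) \leq M$ for some $M > 0$. Combining this with the finite H\"older seminorms of $\rrX_1 \in \RS^\rho$ (arising from its control $\omega_1 = C_1 \inc{s_1}{s_2}$) via the triangle inequality on each summand defining $d_\rho$ produces a single control $\omega = C \inc{s_1}{s_2}$ such that every $\rrX_n$ satisfies the path regularity, path continuity, and surface regularity bounds of~\Cref{def:holder_path_functionals} and~\Cref{def:holder_dgf} with respect to $\omega$ and the fixed $\beta$.

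Next I would extract the pointwise limit. On $[0,1]^2$ each denominator appearing in $d_\rho$ (such as $\inc{s_1}{s_2}^{k\rho}$ or $\inc{s_1}{s_2}^{(2k-1)\orho}\inc{t_1}{t_2}^{\orho}$) is bounded above by one, so $d_\rho$ dominates the uniform norm on each component at each level $k \in [\lfloor 2/\rho\rfloor]$. Every component $\rx_n^h, \rx_n^v, \rX_n$ is therefore Cauchy in the uniform topology; since $G_0^{\gr{\leq n}} \subset T_0^{\gr{\leq n}}$ and $G_1^{\gr{\leq n}} \subset T_1^{\gr{\leq n}}$ are closed subsets of complete Banach spaces, uniform limits $\rx^h, \rx^v, \rX$ exist, and I set $\rrX \coloneqq (\rx^h, \rx^v, \rX)$.

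Third, I would verify $\rrX \in \RS^\rho$. Because the algebra operations, the boundary map $\cmb$, the action $\gt$, and both $\hmult$ and $\vmult$ are continuous on the truncated crossed module, path-multiplicativity of $\rx^h$ and $\rx^v$, the boundary condition~\Cref{eq:rrX_boundary_condition}, horizontal and vertical multiplicativity~\Cref{eq:dgf_def_multiplicative}, and membership in the closed sets $G_0^{\gr{\leq n}}$ and $G_1^{\gr{\leq n}}$ all pass to the pointwise limit. The uniform H\"older bounds from the first step are pointwise inequalities preserved under pointwise convergence, so $\rrX$ is a $\rho$-H\"older multiplicative double group functional. Finally, convergence $d_\rho(\rrX_n, \rrX) \to 0$ follows by a standard argument: given $\epsilon > 0$, choose $N$ with $d_\rho(\rrX_n, \rrX_m) < \epsilon$ for $n,m \geq N$; each defining ratio appearing in $d_\rho(\rrX_n, \rrX_m)$ is bounded by $\epsilon$ pointwise, and sending $m \to \infty$ through continuity of the norm yields the corresponding bound for $d_\rho(\rrX_n, \rrX)$ after taking suprema and maxima. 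The principal obstacle is the first step, namely obtaining a common control $\omega$ for all $\rrX_n$ at once, since $\DGF^\rho$ is formulated via such controls rather than intrinsic H\"older seminorms; once this uniformity is secured, the remaining steps are routine closure and limiting arguments.
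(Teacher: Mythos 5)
Your proposal is correct and takes essentially the same approach as the paper's proof: establish uniform convergence of each component from $d_\rho$-Cauchy (using that the denominators in $d_\rho$ are bounded on $[0,1]^2$), verify that regularity and continuity bounds, the group-like constraint, the boundary condition, and multiplicativity all pass to the pointwise limit by closedness and continuity of the relevant maps, and then show metric convergence. The one organizational difference is that you extract a single control $\omega$ dominating all elements of the Cauchy sequence up front, whereas the paper derives uniform constants level by level and component by component; these are equivalent since there are only finitely many levels $k \in [\lfloor 2/\rho \rfloor]$ and the $\ffact{k}{\rho}$ factors can be absorbed into a suitably large $C_\omega$.
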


\subsubsection{Surface Holonomy of Rough Surfaces}

Finally, we will show that we can compute surface holonomy of rough surfaces by defining it via the universal property of the surface signature in~\Cref{thm:surface_signature_universal}. 

\begin{corollary} \label{cor:surface_sig_rs_cont}
    Let $\rho \in (0,1]$. We define the \emph{surface signature} $\hssig: \RS^\rho \to G_1$ of a rough surface by
    \begin{align}\nonumber
        \hssig(\rrX) \coloneqq \trrX_{0,1;0,1},
    \end{align}
    where $\trrX = (\trx^h, \trx^v, \trX) \in \DGF^\rho(\cmG)$ is the unique extension to $\cmG$ given by~\Cref{thm:surface_extension}. Furthermore, $\hssig$ is continuous.
\end{corollary}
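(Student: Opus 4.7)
The plan is to verify two things beyond what is already stated: first, that $\trrX_{0,1;0,1}$ actually lies in $G_1 \subset E_1(\V)$ and not merely in the formal power series $T_1\ps{\V}$, and second, that the assignment $\rrX \mapsto \trrX_{0,1;0,1}$ is continuous from $(\RS^\rho,d_\rho)$ into $E_1(\V)$ equipped with the locally $m$-convex topology generated by the seminorms $P_\lambda$ from~\Cref{eq:Plambda_norm}.

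For well-definedness, I would iterate~\Cref{thm:surface_extension}: starting from $\rrX \in \DGF^\rho(\cmG^{\gr{\leq \lfloor 2/\rho\rfloor}})$, each application of the theorem produces the next level of the extension and yields the surface regularity bound at that level. Applied inductively, this produces a multiplicative $\trrX \in \DGF^\rho(\cmG)$ whose level-$k$ surface component satisfies $\|\trX^{\gr{k}}_{0,1;0,1}\| \leq \frac{1}{\beta}\CTR{k}{\orho}{0,1;0,1}$. Bounding $\CTR{k}{\orho}{\cdot}$ via the neoclassical inequality exactly as in~\Cref{eq:surface_regularity_neoclassical} gives the factorial decay
\begin{align*}
\|\trX^{\gr{k}}_{0,1;0,1}\| \leq \frac{2^{2k\orho}}{\orho\beta}\,\frac{(2\omega(0,1))^{2k\orho}}{\ffact{2k}{\orho}}.
\end{align*}
Consequently, for every $\lambda > 0$ the series $\sum_{k\geq 2}\lambda^k \|\trX^{\gr{k}}_{0,1;0,1}\|$ converges, so $\hssig(\rrX) \in E_1(\V)$ by the characterization of $E_1(\V)$ after~\Cref{eq:Plambda_norm}. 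Membership in $G_1$ is immediate since $\pi_1^{\gr{\leq k}}(\hssig(\rrX)) = \trrX^{\gr{k}}_{0,1;0,1} \in G_1^{\gr{\leq k}}$ for every $k$ by the extension theorem.

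For continuity, suppose $\rrX,\rrY \in \RS^\rho$ with $d_\rho(\rrX,\rrY) \leq \epsilon$. Unpacking the definitions of $d^h_\rho$, $d^v_\rho$ and $d^s_\rho$ gives precisely hypotheses~\Cref{eq:ext_cont_path},~\Cref{eq:ext_cont_path_hmixed},~\Cref{eq:ext_cont_path_vmixed} and~\Cref{eq:ext_cont_surface} for all levels $k \leq \lfloor 2/\rho\rfloor$ with a common control $\omega$ depending on the $\rho$-H\"older norms of $\rrX$ and $\rrY$. Applying~\Cref{thm:cont_extension} propagates these estimates to every level, so for all $k \in \N$,
\begin{align*}
\|\trX^{\gr{k}}_{0,1;0,1} - \trY^{\gr{k}}_{0,1;0,1}\| \leq \epsilon \, \CTR{k}{\orho}{0,1;0,1} \leq \epsilon \,\frac{C_1 \cdot C_2^k}{\ffact{2k}{\orho}},
\end{align*}
again by the neoclassical inequality. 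Summing against $\lambda^k$ yields $P_\lambda(\hssig(\rrX) - \hssig(\rrY)) \leq C_\lambda \epsilon$ for each $\lambda>0$, which is exactly continuity of $\hssig$ into $E_1(\V)$.

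The only substantive step is making sure the $\omega$-control used in invoking~\Cref{thm:cont_extension} is uniform on bounded subsets of $(\RS^\rho,d_\rho)$; this is routine because bounded $d_\rho$-balls correspond to uniformly bounded H\"older norms and thus admit a common control $\omega(s_1,s_2) = C \inc{s_1}{s_2}$. Once that is in place, the corollary reduces to a summation argument combining the factorial decay coming from~\Cref{thm:surface_extension} with the level-wise continuity from~\Cref{thm:cont_extension}, both of which have already been proved.
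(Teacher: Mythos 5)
Your proposal is correct and follows essentially the same argument as the paper: use the factorial decay from the surface regularity bound in the extension theorem together with the neoclassical inequality (as in~\Cref{eq:surface_regularity_neoclassical}) to conclude $P_\lambda(\hssig(\rrX)) < \infty$ for every $\lambda$, and then obtain continuity directly from~\Cref{thm:cont_extension}. The only difference is that you spell out explicitly that the hypotheses of~\Cref{thm:cont_extension} can be met uniformly with a common control on $d_\rho$-bounded sets, and that $G_1$-membership follows level-wise; the paper leaves these as implicit, so your version is a slightly more detailed rendering of the same proof.
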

\begin{proof}
    First, we show that the surface signature is indeed bounded, and thus valued in $E_1$ from~\Cref{eq:E0_characterization}. Let $\rrX \in \RS^\rho$, and by applying the neoclassical inequality to the surface regularity given by the extension theorem as is done in~\Cref{eq:surface_regularity_neoclassical}, we have
    \begin{align}\nonumber
        \left\|\hssig^{\gr{k}}(\rrX) \right\| \leq \frac{1}{\orho \beta} \frac{2^{k\rho}(\ctr{}{s_1, s_2} + \ctr{}{t_1, t_1})^{k\rho}}{\ffact{k}{\rho}}.
    \end{align}
    Then, for any $\lambda > 0$, we have
    \begin{align}\nonumber
        P_\lambda(\hssig(\rrX)) \leq 1 + \frac{1}{\orho \beta}\sum_{k=0}^\infty \frac{\lambda^k 2^{k\rho}(\ctr{}{s_1, s_2} + \ctr{}{t_1, t_1})^{k\rho}}{\ffact{k}{\rho}} < \infty,
    \end{align}
    so $\hssig(\rrX) \in E_1$. Then, continuity is immediate from~\Cref{thm:cont_extension}.
\end{proof}

Finally, we can use the surface signature of rough surfaces to define surface holonomy for rough surfaces, where we note that this coincides with surface holonomy of smooth surfaces by~\Cref{thm:surface_signature_universal}.

\begin{theorem} \label{thm:surface_holonomy_rough}
    Let $\rho \in (0,1]$. Let $(\cona, \conc)$ be a continuous $2$-connection valued in a crossed module of Banach algebras $\cmA$. Then, we define the \emph{surface holonomy with respect to $(\cona, \conc)$} for rough surfaces $\hH^{\cona, \conc}: \RS^\rho \to A_1$ by
    \begin{align}\nonumber
        \hH^{\cona, \conc}(\rrX) \coloneqq \tconc \left( \hssig(\rrX)\right),
    \end{align}
    where $(\tcona, \tconc): \bE \to \cmA$ is the unique continous morphism from~\Cref{prop:E0_con_universal}. The map $\hH^{\cona, \conc}$ is continuous. 
\end{theorem}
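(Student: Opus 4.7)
The plan is to observe that $\hH^{\cona,\conc}$ factors as a composition of two already-established continuous maps, and then to verify that the resulting definition agrees with surface holonomy on smooth surfaces so that the notation is consistent.

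First I would verify well-definedness: by \Cref{cor:surface_sig_rs_cont}, for every $\rrX \in \RS^\rho$ the signature $\hssig(\rrX)$ lies in $G_1 \subset E_1(\V)$, so $\tconc(\hssig(\rrX)) \in A_1$ makes sense via the extension furnished by \Cref{prop:E0_con_universal}. Continuity of $\hH^{\cona,\conc}$ then follows by composition. On one hand, \Cref{cor:surface_sig_rs_cont} provides the continuity of
$\hssig \colon (\RS^\rho, d_\rho) \to E_1(\V)$
with respect to the locally $m$-convex topology generated by the seminorms $P_\lambda$ of \eqref{eq:Plambda_norm}. On the other hand, \Cref{prop:E0_con_universal} yields a continuous morphism of crossed modules of algebras $(\tcona,\tconc) \colon \bE \to \cmA$ extending $(\cona,\conc)$; in particular $\tconc \colon E_1(\V) \to A_1$ is continuous. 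Hence $\hH^{\cona,\conc} = \tconc \circ \hssig$ is continuous as a composition of continuous maps.

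Finally, to confirm that this definition restricts to the smooth surface holonomy of \Cref{def:sh} on $X \in \surfaces^\infty(\V)$, I would invoke \Cref{cor:smooth_extension_surface_signature}, which identifies the canonical rough surface associated to $X$ with the surface signature functor $\bS(X)$, and then apply the universal property \Cref{thm:surface_signature_universal}, which gives precisely $\hH^{\cona,\conc}(X) = \tconc(\hssig(X))$. No substantive obstacle arises at this stage: all of the real work lies upstream, in the factorial-decay bound embedded in the surface extension theorem (\Cref{thm:surface_extension}), which is what guarantees $\hssig(\rrX) \in E_1(\V)$ in the first place and thereby permits $\tconc$ to be applied globally. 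Without that bound one could at best define $\hH^{\cona,\conc}$ via local truncations, so the theorem is essentially a corollary of \Cref{cor:surface_sig_rs_cont} combined with \Cref{prop:E0_con_universal}.
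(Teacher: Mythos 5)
Your proof is correct and follows essentially the same route as the paper: establish that $\hssig(\rrX) \in E_1(\V)$ (via \Cref{cor:surface_sig_rs_cont}) so that $\tconc$ applies, and then conclude continuity of $\hH^{\cona,\conc} = \tconc \circ \hssig$ as a composition of the continuous maps from \Cref{cor:surface_sig_rs_cont} and \Cref{prop:E0_con_universal}. The paper's actual proof is exactly this two-line composition argument; your additional paragraph confirming consistency with the smooth case via \Cref{cor:smooth_extension_surface_signature} and \Cref{thm:surface_signature_universal} matches the remark the paper makes just before the theorem statement.
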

\begin{proof}
    Because both the surface signature (\Cref{cor:surface_sig_rs_cont}) and $\tconc$ from the universal property in~\Cref{prop:E0_con_universal} are continuous, the composition is continuous. 
\end{proof}

\section{Conclusion and Outlook}
In this article, we have built upon the work of Kapranov~\cite{kapranov_membranes_2015} to explicitly define the surface signature for smooth surfaces in terms of a free crossed module of associative algebras, which preserves the horizontal and vertical concatenation structures of the space of surfaces. We show that the surface signature is the universal surface holonomy map: surface holonomy with respect to any continuous 2-connections factors through the signature. Going beyond the smooth setting, we introduce the notion of rough surfaces, and prove an extension theorem for rough surfaces, which allows us to define the surface signature (and surface holonomy with respect to continuous 2-connections) for rough surfaces. This work opens up an avenue towards higher dimensional rough analysis, and we highlight some immediate questions which stem from this article. \medskip

\begin{itemize}
    \item \textbf{Sewing with Rectangular Increments.} As discussed in~\Cref{rem:sewing_with_rectangular_regularity}, additional regularity assumptions on rectangular increments are not explicitly used in the surface extension theorem (\Cref{thm:surface_extension}). Is there an analogue of the extension theorem which takes this into account, and allows us to define a unique lift of a surface $X \in C^{\square, \rho}([0,1]^2, \V)$ in the Young regime ($\rho \in (\frac{1}{2},1]$)?
    \item \textbf{Surfaces in Banach Spaces.} This article focuses on the case of surfaces valued in a finite dimensional vector space $\V$. A natural generalization is to consider the analogous constructions for surfaces valued in a Banach space $\V$ (like for rough paths). There are two main places where we explicitly use the finite dimensional or Hilbert structure. First, we use the projection onto the orthogonal complement to define norms for $T_1(\V)$ in~\Cref{sssec:norms}; however, this can be easily rectified by simply considering quotient norms in a Banach space. Second, we use finite-dimensionality to construct an explicit section which induces a short algebra section $\ts$ in~\Cref{apxsec:algebra_section}. While one approach is to consider this construction in the Banach space setting, another is to avoid the algebra section altogether. In principle, one can still perform sewing for Banach valued surfaces though without boundedness guarantees by following~\Cref{rem:almost_mult_algebra_section}.
    \item \textbf{Universality and Characteristicness.} One aspect we did not discuss is the analogue of the shuffle product. From~\Cref{rem:universal_enveloping_group_like}, we see that $T_1(\V)$ is not the universal enveloping algebra of $\fg_1^{\sab}$, but one can embed our notion of group-like elements $G_1$ into the group-like elements of $U(\fg_1^{\sab})$; this amounts to appending all polynomials of elements in the kernel $\fg_1^{\sab, \ker}$. This would provide a Hopf algebra structure, and the analogue of the shuffle can be derived from this. Then, the notions of analytic universality and characteristicness should follow. Note that an analogous modification is considered in~\cite[Section 9]{lee_random_2023} to obtain an algebra structure to prove an analytic universality result, where complex exponentials were used rather than polynomials. 
    \item \textbf{Characterization of Thin Homotopy Classes.} It is a well-known property that the path signature characterizes (rough) paths up to tree-like equivalence~\cite{chen_integration_1958,hambly_uniqueness_2010,boedihardjo_signature_2016}. Surface holonomy enjoys invariance with respect to thin homotopy (see~\Cref{rem:thin_homotopy}). A natural question is whether the surface signature can characterize (rough) surfaces up to thin homotopy equivalence? Note that this question was already asked by Kapranov in the smooth setting~\cite[Question 2.5.6]{kapranov_membranes_2015}.
    \item \textbf{Rough Integration of Differential Forms.} An immediate consequence of rough paths theory is the deterministic rough integration theory. What is the analogue of this for rough surfaces? One direction is the extension of the Z\"ust integral~\cite{zust_integration_2011}, in line with~\cite{stepanov_towards_2021,alberti_integration_2023} and the recent paper~\cite{chandra_rough_2024}. In particular, given a smooth 2-form $\omega$ on $\V$, can we make sense of
    \begin{align}
        \int \rrX^*\omega,
    \end{align}
    where $\rrX^*\omega$ denotes some notion of a pullback along a rough surface $\rrX$?
    \item \textbf{Applications in Machine Learning.} Signature methods are a powerful new class of tools for sequential data in machine learning~\cite{chevyrev_primer_2016,lyons_signature_2022}, and higher dimensional signatures have already been applied to image and texture classification~\cite{zhang_two-dimensional_2022}. In particular, can we develop a surface signature kernel in the same way as the path signature kernel~\cite{kiraly_kernels_2019,salvi_signature_2021,lee_signature_2023}? Can we use surface holonomy in matrix groups~\cite{lee_random_2023} (see~\Cref{apxsec:matrix_sh}) for image data in the same way that path development has been used for time series~\cite{lou_path_2022,lou_pcf-gan_2023-1}?
    \item \textbf{Towards Higher Dimensions.} Kapranov~\cite{kapranov_membranes_2015} also considers universal higher holonomy for higher dimensional maps, and one can consider an extension of the results of this paper in the higher dimensional setting.
\end{itemize}
\clearpage

\appendix

\section{Categorical Structure of Paths and Surfaces} \label{apxsec:categorical_structure}

\subsection{Paths}
We begin by considering the space of all piecewise smooth paths valued in a finite dimensional vector space $\V$ parametrized over all intervals in $\R$, defined by
\begin{align}\nonumber
    \paths(\V) \coloneqq \bigcup_{s_1 \leq s_2} C^\infty([s_1, s_2], \V).
\end{align}
This space of parametrized paths is equipped with a partially defined concatenation operation.
Given two paths $x \in C^\infty([s_1, s_2], \V)$ and $y \in C^\infty([s_2, s_3], \V)$ such that $x_{s_2} = y_{s_2}$, we define the concatenation
\begin{align} \label{eq:path_concat}
    x \concat y \in C^\infty([s_1, s_3], \V) \quad \text{by} \quad (x \concat y)_s \coloneqq \left\{
        \begin{array}{ll}
            x_{s} & : s \in [s_1, s_2] \\
            y_{s} & : s \in [s_2, s_3].
        \end{array}
    \right.
\end{align}
The space of parametrized paths fits into the structure of a \emph{category}.
\begin{definition}
    A \emph{category} $\sC$ consists of 
    \begin{itemize}
        \item a set of \emph{objects} $\sC_0$, and 
        \item a set of \emph{morphisms} (or directed edges) $\sC_1$, equipped with \emph{source} and \emph{target} maps $\bdy_s, \bdy_t: \sC_1 \to \sC_0$. Given a morphism $\bx \in \sC_1$ with source $\bdy_s \bx = a$ and target $\bdy_t \bx = b$, we write $\bx: a \to b$.
    \end{itemize}
    These sets are equipped with additional structure. 
    \begin{itemize}
        \item \textbf{Composition.} For any $a,b,c \in \sC_0$ and morphisms $\bx,\by \in \sC_1$ such that $\bdy_t \bx = \bdy_s \by$, there exists an associative composition operator $\bx \concat \by$ with $\bdy_s (\bx \concat \by) = \bdy_s \bx$ and $\bdy_t (\bx \concat \by) = \bdy_t \by$. 
        \item \textbf{Identity Edges.} For any $a \in \sC_0$, there exists an identity edge $1_a$ which acts as a unit under composition: for $\bx: a \to b$ and $\by : b \to a$ we have $1_a \concat \bx = \bx$ and $\by \concat 1_b = \by$.
    \end{itemize}
\end{definition}

The \emph{category of parametrized smooth paths} $\Sigma^\infty$ is defined by objects and edges
\begin{align}\nonumber
    \Sigma^\infty_0 \coloneqq \R \times \V \andd \Sigma^\infty_1 \coloneqq \paths(\V)
\end{align}
respectively. An object $(s,v) \in \Sigma^\infty_0$ consists of a point $v \in \V$ parametrized by $s \in \R$. The source and target maps are defined for a path $x \in C^\infty([s_1, s_2], \V)$ by
\begin{align}\nonumber
    \bdy_s(x) = (s_1, x_{s_1}) \andd \bdy_t(x) = (s_2, x_{s_2}).
\end{align}
Now, we note that in the definition of parametrized paths in~\Cref{eq:paths}, we allow for paths defined on $[s,s] = \{s\}$, which is simply a parametrized point. These degenerate paths are the identity edges in the category $\Sigma^\infty$.

\subsection{Surfaces}
Next, we consider the space of all piecewise smooth surfaces valued in a finite dimensional vector space $\V$ parametrized over all rectangular domains in $\R^2$, denoted by
\begin{align} \nonumber
    \surfaces(\V) \coloneqq \bigcup_{s_1 \leq s_2, \, t_1 \leq t_2} C^\infty([s_1, s_2] \times [t_1, t_2], \V).
\end{align}
This space of surfaces is equipped with a partially defined horizontal concatenation operation. If $X \in C^\infty([s_1, s_2] \times [t_1, t_2], \V)$ and $Y \in C^\infty([s_2, s_3] \times [t_1, t_2], \V)$ such that $X_{s_2, t} = Y_{s_2, t}$ for all $t \in [t_1, t_2]$, we say that $X$ and $Y$ are \emph{horizontally composable} and we define 
\begin{align} \label{eq:surf_hconcat}
    X \concat_h Y \in C^\infty([s_1, s_3]\times[t_1, t_2], \V) \quad \text{by} \quad (X \concat_h Y)_{s,t} \coloneqq \left\{
        \begin{array}{ll}
            X_{s,t} & : s \in [s_1, s_2] \\
            Y_{s,t} & : s \in [s_2, s_2].
        \end{array}
    \right.
\end{align}
Similarly, if $Z \in C^\infty([s_1, s_2] \times[t_2, t_3])$ such that $X_{s, t_2} = Z_{s, t_2}$ for all $s \in [s_1, s_2]$, we say that $X$ and $Z$ are \emph{vertically composable} and we define
\begin{align}\label{eq:surf_vconcat}
    X \concat_v Z \in C^\infty([s_1, s_2] \times[t_1, t_3], \V) \quad \text{by} \quad (X \concat_v Z)_{s,t} \coloneqq \left\{
        \begin{array}{ll}
            X_{s,t} & : t \in [t_1, t_2] \\
            Z_{s,t} & : t \in [t_2, t_3].
        \end{array}
    \right.
\end{align}
Both horizontal and vertical concatenation are associative by definition when the operations are well-defined. Furthermore, these two operations are compatible via the \emph{interchange law}: for $X, Y, Z, W \in \surfaces(\V)$ which are appropriately composable, in particular with $X,Y,Z$ defined as above, and $W \in C^\infty([s_2, s_3] \times [t_2, t_3], \V)$ such that $W_{s, t_2} = Y_{s,t_2}$ for all $s \in [s_2, s_3]$ and $W_{s_2,t} = Z_{s_2, t}$ for all $t \in [t_2, t_3]$, 
\begin{align}\nonumber
    (X \concat_h Y ) \concat_v (Z \concat_h W) = (X \concat_v Z) \concat_h (Y \concat_v W).
\end{align}

In fact, this structure can be encoded as a double category. 

\begin{definition} \label{def:double_cat}
    A \emph{strict double category} $\bsC$ consists of
    \begin{itemize}
        \item \textbf{a set of objects} $\bsC_0$,
        \item \textbf{a set of horizontal edges} $\bsC_{1,h}$, equipped with \emph{source} and \emph{target} maps $\bdy^h_s, \bdy^h_t : \bsC_{1,h} \to \bsC_0$, and
        \item \textbf{a set of veritcal edges} $\bsC_{1,v}$, equipped with \emph{source} and \emph{target} maps $\bdy^v_s, \bdy^v_t : \bsC_{1,v} \to \bsC_0$, and
        \item \textbf{a set of squares} $\bsC_2$, equipped with left and right boundary maps $\bdy_l, \bdy_r: \bsC_2 \to \bsC_{1,v}$ and bottom and upper boundary maps $\bdy_b, \bdy_u : \bsC_2 \to \bsC_{1,h}$, which satisfy the compatability relations in the figure below.
    \end{itemize}
    The objects, edges and squares satisfy the following conditions.
    \begin{enumerate}
    \item The objects $\bsC_0$ and horizontal edges $\bsC_{1,h}$ have the structure of a category.
    \item The objects $\bsC_0$ and vertical edges $\bsC_{1,h}$ have the structure of a category.
    \item The squares in $\bsC_2$ are equipped with two partial compositions.
    \begin{itemize}
        \item \textbf{Horizontal Composition.} For any $X, X' \in \bsC_2$ such that $\bdy_r X = \bdy_l X'$, there exists a composite $X \concat_h X'$.
        \item \textbf{Vertical Composition.} For any $X,X' \in \bsC_2$ such that $\bdy_u X = \bdy_b X'$, there exists a composite $X \concat_v X'$.
        
        \item \textbf{Interchange Law.} For any $X,Y,Z,W \in \bsC_2$, we have
        \begin{align}\nonumber
            (X \concat_h Y) \concat_v (W \concat_h Z) = (X \concat_v W) \concat_h (Y \concat_v Z),
        \end{align}
        whenever these compositions are well-defined.
    \end{itemize}
    \item Finally, there exist units for both compositions.
    \begin{itemize}
        \item \textbf{Identity Squares.} For any $x \in \bsC_{1,v}$, there exists a \emph{horizontal identity} $1^h_x \in \bsC_2$ and for any $y \in \bsC_{1,h}$, there exists a \emph{vertical identity} $1^v_x \in \bsC_2$ which acts as a unit under composition for horizontal and vertical composition respectively. In particular, for $X,Y \in \bsC_2$ such that $\bdy_l X = \bdy_r Y = x$, we have $1^h_x \concat_h X = X$ and $Y \concat_h 1^h_x = Y$, and same for vertical composition.
    \end{itemize}
\end{enumerate}
\end{definition}

In particular, we define the \emph{double category of parametrized smooth surfaces} $\bSigma^\infty$ with objects, horizontal edges, vertical edges, and squares by
\begin{align}\nonumber
    \bSigma^\infty_0 \coloneqq  \R^2 \times \V , \quad \bSigma^\infty_{1,h} \coloneqq \R \times \paths(\V),\quad \bSigma^\infty_{1,v} \coloneqq \R \times \paths(\V), \andd \bSigma^\infty_2 \coloneqq \surfaces(\V)
\end{align}
respectively.
An object is a parametrized point $(s,t,v) \in \bSigma^\infty_0$ consisting of a point $v \in \V$ in the state space along with its underlying parametrization $(s,t) \in \R^2$.
A horizontal path $(t, x) \in \bSigma^\infty_{1,h}$ consists of a path $x \in C^\infty([s_1, s_2], \V)$, and a number $t \in \R$ which specifies the horizontal line $[s_1, s_2] \times \{t\} \subset \R^2$ on which $x$ is parametrized. Similarly, a vertical path $(s,y) \in \bSigma^\infty_{1,v}$ consists of a path $y \in C^\infty([t_1, t_2], \V)$ and a number $s \in \R$ which specifies the vertical line $\{s\} \times [t_1, t_2]$ on which $y$ is parametrized. We define the horizontal and vertical source and target maps by 
\begin{align}\nonumber
    \bdy^h_s(t,x) \coloneqq (s_1, t, x_{s_1}), \quad \bdy^h_t(x) \coloneqq (s_2, t, x_{s_2}), \quad \bdy^v_s(s,y) \coloneqq (s, t_1, y_{t_1}) \andd \bdy^v_t(s,y) \coloneqq (s,t_2, y_{t_2}).
\end{align}
Furthermore, for a surface $X \in C^\infty([s_1, s_2] \times [t_1, t_2], \V)$, we define the boundary maps by
\begin{align}\nonumber
    \bdy_l(X) \coloneqq (s_1, X_{s_1, \cdot}), \quad \bdy_r(X) \coloneqq (s_2, X_{s_2, \cdot}), \quad \bdy_b(X) \coloneqq (t_1, X_{\cdot, t_1}), \andd \bdy_u(X) \coloneqq (t_2, X_{\cdot, t_2}),
\end{align}
where $\bdy_l(X), \bdy_r(X) \in \bSigma^\infty_{1,v}$ and $\bdy_b(X), \bdy_u(X) \in \bSigma^\infty_{1,h}$. We define composition of horizontal paths
\begin{align}\nonumber
    (t, x^1) \in \R \times C^\infty([s_1, s_2], \V) \andd (t, x^2) \in \R \times C^\infty([s_2, s_3], \V) \quad \text{by} \quad (t, x^1) \concat (t, x^2) \coloneqq (t, x^1 \concat x^2),
\end{align}
where $x^1_{s_2} = x^2_{s_2}$, and path concatenation is defined as in~\Cref{eq:path_concat}. Similarly, we define composition of vertical paths 
\begin{align}\nonumber
    (s, y^1) \in \R \times C^\infty([t_1, t_2], \V) \andd (s, y^2) \in \R \times C^\infty([t_2, t_3], \V) \quad \text{by} \quad (s, y^1) \concat (s, y^2) \coloneqq (s, y^1 \concat y^2),
\end{align}
where $y^1_{t_2} = y^2_{t_2}$. Furthermore, horizontal and vertical composition for surfaces in $\bSigma^\infty_2$ is defined by~\Cref{eq:surf_hconcat} and~\Cref{eq:surf_vconcat}. We note that the conditions for both path and surface composability are exactly those specified by source, target and boundary maps in~\Cref{def:double_cat}. Finally, similar to the case of paths, our definition of parametrized surfaces in~\Cref{eq:surface} allows for degenerate surfaces parametrized on $[s_1, s_2] \times \{t\}$ and $\{s\} \times [t_1, t_2]$, and these degenerate surfaces are the identity squares in $\bSigma^\infty$, and thus this is a double category. Next, our desired notion of a structure-preserving map for surfaces is a functor between double categories.

\begin{definition}
    Let $\bsC$ and $\bsD$ be double categories. A \emph{double functor} $\bF: \bsC \to \bsD$ consists of maps
    \begin{align}\nonumber
        \bF_0: \bsC_0 \to \bsD_0, \quad \bF_{1,h}: \bsC_{1,h} \to \bsD_{1,h}, \quad \bF_{1,v}: \bsC_{1,v} \to \bsD_{1,v}, \andd \bF_2 : \bsC_2 \to \bsD_2,
    \end{align}
    where preserve the source, target, and boundary maps for $\bsC$ and $\bsD$. Furthermore, for appropriately composable $x^1, x^2 \in \bsC_{1,h}$ and $y^1, y^2 \in \bsC_{1,v}$, we have
    \begin{align}\nonumber
        \bF_{1,h}(x^1 \concat x^2) = \bF_{1,h}(x^1) \concat \bF_{1,h}(x^2) \andd \bF_{1,v}(y^1 \concat y^2) = \bF_{1,v}(y^1) \concat \bF_{1,v}(y^2),
    \end{align}
    and for appropriately composable $X,Y, Z \in \bsC_2$, we have
    \begin{align}\nonumber
        \bF_2(X \concat_h Y) = \bF_2(X) \concat_h \bF_2(Y) \andd \bF_2(X \concat_v Z) = \bF_2(X) \concat_v \bF_2(Z).
    \end{align}
\end{definition}

The following is a more formal definition of a double group.
\begin{definition}
    A \emph{double group} is a double groupoid with a single object. Given a crossed module of groups $\cmG = (\delta: G_1 \to G_0, \gt)$, the \emph{double group associated to $\cmG$}, denoted $\dg(\cmG)$, has a single object $\dg_0(\cmG)$, edges defined by $\dg_1(\cmG) = G_0$, and squares defined by
    \begin{align}\nonumber
        \dg_2(\cmG) \coloneqq \{(x,y,z,w, E) \in G_0^4 \times G_1 \, : \, \delta(E) = x\cdot y \cdot z^{-1} \cdot w^{-1}\}.
    \end{align}
\end{definition}
Thus, in the main text, the $\dg(\cmG)$ is considered to be the \emph{squares} of the double group.

\section{Explicit Section of \texorpdfstring{$[\fg_0, \fg_0]$}{[g_0,g_0]}.} \label{apxsec:algebra_section}
Here, we describe an explicit linear section of Lie generators $s: L \to \fg_1^{\sab}$, and study the properties of its induced algebra section $\ts: U([\fg_0, \fg_0]) \to T_1$. Recall that $\fg_0$ is the free Lie algebra over a finite dimensional Hilbert space $\V$. Let $e_1, \ldots, e_d$ denote an orthonormal basis of $\V$. By the Shirshov-Witt theorem~\cite[Theorem 2.5]{reutenauer_free_1993}, $[\fg_0, \fg_0]$ is free. We begin by providing an explicit set of Lie generators $L$ for $[\fg_0, \fg_0]$ based on the Hall basis of $\fg_0$ which is compatible with the derived series of $\fg_0$~\cite[Section 5.3]{reutenauer_free_1993}. \medskip

\textbf{Hall Basis for Derived Series.} We recall the construction from~\cite[Section 5.3]{reutenauer_free_1993}. Let $A$ be a totally ordered set, and let $\fh = \FL(A)$. We define a sequence of Lie ideals of $\fh$, called the \emph{derived series}, by
\begin{align}\nonumber
    D_0(\fh) \coloneqq \fh \andd D_{k+1}(\fh) \coloneqq [D_{k}(\fh), D_{k}(\fh)].
\end{align}
We define a linear basis of $\fh$ as follows. Define the totally ordered set $H_0(A) = A \subset \fh$. Next, we define recursively $H_{n+1}(A)$ as the set of elements
\begin{align}\nonumber
    a = [a_1, \ldots, [a_{m-2}, [a_{m-1}, a_m]] \ldots] \in \fh
\end{align}
where $m \geq 2$ and where $a_1, \ldots, a_m \in H_n(A)$ such that
\begin{align}\nonumber
    a_1 \leq \ldots \leq a_{m-2} \leq a_{m-1} > a_m.
\end{align}
We order $H_{n+1}(A)$ totally, and let $H(A) = \bigcup_{n \geq 0} H_n(A)$ with the total ordering $H_n(A) > H_{n+1}(A)$.

\begin{theorem}{\cite[Theorem 5.7]{reutenauer_free_1993}}
    For each $n \geq 0$, $\bigcup_{n \geq k} H_n(A)$ is a basis of $D_{k}(\fh)$.
\end{theorem}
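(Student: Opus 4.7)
The plan is to prove the theorem by induction on $k$, leveraging the Shirshov–Witt theorem at each step to recognize $D_k(\fh)$ as a free Lie algebra on the previous stratum of the construction. The key observation that drives the induction is that the recursive definition of $H_{n+1}(A)$ from $H_n(A)$ is precisely the classical \emph{Hall basis construction} applied one step up: the elements of $H_{n+1}(A)$ are Hall-type left-normed brackets in the ordered alphabet $H_n(A)$.

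The base case $k=0$ is the statement that $H(A) = \bigcup_{n \geq 0} H_n(A)$ is a basis of $\fh = \FL(A)$, which is the classical Hall basis theorem. I would not re-prove this; it is standard (see~\cite[Theorem 4.9]{reutenauer_free_1993}) and follows from the algorithmic rewriting of arbitrary Lie monomials into Hall-type elements together with a dimension count at each homogeneous degree. For the inductive step, assume $\bigcup_{n \geq k} H_n(A)$ is a linear basis of $D_k(\fh)$. By the Shirshov–Witt theorem, $D_k(\fh)$ is itself a free Lie algebra. The inductive hypothesis identifies $\bigcup_{n \geq k} H_n(A)$ as a set of free Lie generators for $D_k(\fh)$ (they are linearly independent, they span, and since they lie inside a free Lie algebra they have no Lie relations), so $D_k(\fh) \cong \FL(H_k(A) \sqcup H_{k+1}(A) \sqcup \cdots)$ with the total order on the generators supplied by the ordering on $H_n$'s given in the construction.

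Now apply the classical Hall basis theorem once more, this time to the free Lie algebra $D_k(\fh)$ with its ordered alphabet $\bigcup_{n \geq k} H_n(A)$. The Hall basis elements of degree $\geq 2$ in this alphabet form a basis of $[D_k(\fh), D_k(\fh)] = D_{k+1}(\fh)$; this is the standard fact that the derived subalgebra of a free Lie algebra $\FL(B)$ is spanned by Hall elements of bracket-length at least two in $B$. By the very definition of $H_{k+1}(A)$, these Hall elements of degree $\geq 2$ in the alphabet $\bigcup_{n\geq k} H_n(A)$ are precisely the elements of $H_{k+1}(A)$, together with all Hall elements that involve at least one factor from a strictly deeper layer, and iterating the identification shows they coincide with $\bigcup_{n \geq k+1} H_n(A)$. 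Linear independence of this set is inherited from the already-established linear independence of $H(A)$ inside $\fh$.

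The main technical obstacle is the last matching step: verifying that the Hall elements of bracket-length $\geq 2$ in the alphabet $\bigcup_{n \geq k} H_n(A)$ (as a totally ordered set) really do coincide with $\bigcup_{n \geq k+1} H_n(A)$, and not some strictly larger or smaller set. This requires checking that the ordering convention on $H_n(A)$ specified in the excerpt (with $H_n(A) > H_{n+1}(A)$) is compatible with the Hall condition $a_1 \leq \cdots \leq a_{m-1} > a_m$ at every subsequent level, so that no extraneous Hall elements are produced and none are omitted. This is a bookkeeping verification that hinges on the precise total order chosen on each $H_n(A)$, and it is the content of~\cite[Section 5.3]{reutenauer_free_1993}; I would cite that construction rather than redo it. Once this compatibility is established, the induction closes and the theorem follows.
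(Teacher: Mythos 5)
The inductive step contains a genuine error. You claim that, because $\bigcup_{n \geq k} H_n(A)$ is a linear basis of $D_k(\fh)$ and $D_k(\fh)$ is free (Shirshov--Witt), the set $\bigcup_{n\geq k} H_n(A)$ must be a set of \emph{free generators} for $D_k(\fh)$. This conflates two very different notions. A linear basis of a nonabelian free Lie algebra can never be a set of free generators: if $B$ freely generates $\FL(B)$ and $\FL(B)$ is nonabelian, the Hall basis of $\FL(B)$ is a linear basis that strictly contains $B$, so $B$ does not span. Your parenthetical justification (``linearly independent, span, and lie inside a free Lie algebra so have no Lie relations'') is not a valid criterion --- every linear basis of a nonabelian Lie algebra expresses brackets of basis elements as linear combinations of basis elements, and those are Lie relations. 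You can see the failure concretely by counting: for $|A| = 2$, $\dim D_1(\fh)$ in degree $5$ is $6$, but if $\bigcup_{n\geq 1} H_n(A)$ (which has six elements in degree $\leq 5$: one in degree $2$, two in degree $3$, three in degree $4$) were a free generating set, then $\FL\bigl(\bigcup_{n\geq 1} H_n(A)\bigr)$ would have dimension $8$ in degree $5$ (six generators plus two brackets of the degree-$2$ and degree-$3$ generators).

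The correct inductive claim is that the single stratum $H_k(A)$ alone freely generates $D_k(\fh)$: the base case $H_0(A) = A$ is trivial, and the inductive step is that the left-normed monotone brackets of length $\geq 2$ over the ordered alphabet $H_k(A)$ freely generate $[\FL(H_k(A)), \FL(H_k(A))]$ --- this is precisely~\Cref{prop:commutator_lie_generator} in the paper, proved using the method of~\cite[Theorem 2.5]{reutenauer_free_1993}. Once you have $D_k(\fh) = \FL(H_k(A))$, the linear-basis assertion follows not from applying the Hall basis theorem to a large alphabet, but from the graded vector-space decomposition $\FL(B) = B \oplus [\FL(B),\FL(B)]$ (degree $1$ is the alphabet, degree $\geq 2$ is the derived subalgebra), which here reads $D_k(\fh) = H_k(A) \oplus D_{k+1}(\fh)$; iterating gives $D_k(\fh) = \bigoplus_{n\geq k} H_n(A)$. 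Your final matching step inherits the same problem: Hall elements over the large alphabet $\bigcup_{n\geq k}H_n(A)$ include mixed brackets such as $[h_k, h_{k+1}]$ with $h_k \in H_k(A)$ and $h_{k+1}\in H_{k+1}(A)$, which do not belong to any $H_n(A)$ because the recursive construction only forms brackets whose letters all come from a single stratum.
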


\textbf{Lie Generators of $[\fg_0, \fg_0]$.} Now, we will use this basis to obtain an explicit set of Lie generators for $[\fg_0, \fg_0]$. 

\begin{proposition} \label{prop:commutator_lie_generator}
    Let $\V$ be a finite dimensional Hilbert space, and let $A = \{e_1 \leq \ldots \leq e_d\}$ be a totally ordered basis for $\V$. Let $H_1(A)$ be defined as above. Then, $[\fg_0, \fg_0] \cong \FL(H_1(A))$.
\end{proposition}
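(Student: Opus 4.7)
The plan is to construct a canonical Lie algebra morphism $\varphi: \FL(H_1(A)) \to [\fg_0, \fg_0]$ via the universal property of the free Lie algebra, and then to show $\varphi$ is an isomorphism by matching Hall bases on both sides. Each element of $H_1(A)$ is an iterated bracket of elements of $A$ of length at least $2$, so $H_1(A) \subset D_1(\fg_0) = [\fg_0, \fg_0]$; this set inclusion extends uniquely to a Lie algebra morphism $\varphi$. By the refined Hall basis theorem cited just above (\cite[Theorem 5.7]{reutenauer_free_1993}), the set $\bigcup_{n \geq 1} H_n(A)$ is a linear basis of $[\fg_0, \fg_0]$, so once we produce a basis of $\FL(H_1(A))$ which maps bijectively onto it, we are done.

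To produce such a basis, I will equip $H_1(A)$ with the restriction of the total order on $H(A)$ and run the Hall basis construction on $\FL(H_1(A))$. Setting $K_0 = H_1(A)$ and defining $K_{n+1}$ recursively as the set of iterated brackets $[b_1, [b_2, \ldots, [b_{m-1}, b_m] \ldots ]]$ with $b_i \in K_n$ satisfying $b_1 \leq \ldots \leq b_{m-1} > b_m$, the Hall basis theorem applied to $\FL(H_1(A))$ shows that $\bigcup_{n \geq 0} K_n$ is a linear basis of $\FL(H_1(A))$. The crux of the argument is an inductive identification $\varphi(K_n) = H_{n+1}(A)$ as subsets of $\fg_0$: the base case $n = 0$ is the definition of $\varphi$ on generators, and the inductive step follows because the combinatorial rule defining $K_{n+1}$ from $K_n$ is the same rule defining $H_{n+2}(A)$ from $H_{n+1}(A)$, while $\varphi$ is a Lie algebra morphism and so intertwines the two recursions.

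With the identification in hand, $\varphi$ sends the basis $\bigcup_{n \geq 0} K_n$ of $\FL(H_1(A))$ bijectively onto the basis $\bigcup_{n \geq 1} H_n(A)$ of $[\fg_0, \fg_0]$, and is therefore a Lie algebra isomorphism. The main subtlety is the compatibility of orderings: one must take the total order on $H_1(A)$ to be the restriction of the order on $H(A)$ used in the Hall construction of $\fg_0$, so that the Hall-inequality conditions $b_1 \leq \ldots \leq b_{m-1} > b_m$ in the recursion defining $K_{n+1}$ agree with those in the recursion defining $H_{n+2}(A)$ at every stage. Once this compatibility is recorded, the remainder of the argument is a bookkeeping verification using the Hall basis formalism of~\cite{reutenauer_free_1993}.
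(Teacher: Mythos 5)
Your proof is correct, but it takes a genuinely different route from the paper's. The paper retraces the proof of the Shirshov–Witt theorem: it filters $[\fg_0, \fg_0]$ by degree via $E^n = \{P \in [\fg_0,\fg_0] : \deg P \le n\}$ and $F^n = E^n \cap \langle E^{n-1}\rangle$, argues that the degree-$n$ piece $H_1^n(A)$ represents a basis of the quotient $E^n/F^n$, and concludes freeness by the argument from~\cite[Theorem~2.5]{reutenauer_free_1993}. You instead build the map $\varphi\colon \FL(H_1(A)) \to [\fg_0,\fg_0]$ from the universal property and show it is an isomorphism by matching Hall bases: running the derived-series Hall construction on $\FL(H_1(A))$ with $K_0 = H_1(A)$ and checking inductively that $\varphi(K_n) = H_{n+1}(A)$, so that $\varphi$ carries the basis $\bigcup_{n\ge 0} K_n$ of $\FL(H_1(A))$ bijectively onto the basis $\bigcup_{n\ge 1} H_n(A)$ of $D_1(\fg_0)$ furnished by~\cite[Theorem~5.7]{reutenauer_free_1993}. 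Your version is arguably more self-contained --- it uses only the Hall-basis-for-derived-series theorem, applied twice, and never opens up the Shirshov--Witt proof --- at the modest cost of the ordering bookkeeping you identify. That subtlety is real but benign: at each level the Hall construction permits any total order, so you are free to choose the order on $K_{n+1}$ to be the one transported from $H_{n+2}(A)$ through the bijection $\varphi|_{K_{n+1}}$, and the induction closes. One additional point worth making explicit when you write this up: the bijectivity of $\varphi$ on each $K_n$ uses that distinct Hall-basis brackets in $\fg_0$ are linearly independent (hence distinct) elements, so the recursive rule produces a genuine set bijection at each stage and not merely a surjection.
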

\begin{proof}
    The proof of this statement will follow the proof of the Shirshov-Witt theorem in~\cite[Theorem 2.5]{reutenauer_free_1993}. For each $n$, we define the subspace
    \begin{align}\nonumber
        E^n \coloneqq \{ P \in [\fg_0, \fg_0] \, : \, \deg(P) \leq n\}.
    \end{align}
     Then, we let $\langle E \rangle$ denote the subalgebra generated by $E\subset \fg_0$, and let $F^n \subset E^n$ be defined by
     \begin{align}\nonumber
        F^n \coloneqq E^n \cap \langle E^{n-1} \rangle,
     \end{align}
     the elements in $E^n$ which are generated by elements in $E^{n-1}$. Let $H_1^n$ denote the degree $n$ elements in $H_1(A)$. We note that $D_1(\fg_0) = [\fg_0, \fg_0]$, and note that $\langle E^{n} \rangle \subset D_2(\fg_0)$ for any $n$. Therefore, $H^n_1(A)$ is a basis for $E^n/F^n$, the subspace of elements in $E^{n}$, which \emph{are not} generated by elements in $E^{n-1}$. Furthermore, $H_1(A) = \bigcup_{n \geq 1} H_1^n(A)$, and by the proof of~\cite[Theorem 2.5]{reutenauer_free_1993}, $[\fg_0, \fg_0]$ is freely generated by $H_1(A)$.
\end{proof}

\textbf{Defining the Linear Section.}
From~\Cref{prop:commutator_lie_generator}, the Lie generators $L = H_1(A)$ are of the form
\begin{align}\nonumber
    [e_1, \ldots, [e_{m-2}, [e_m, e_{m-1}]] \ldots],
\end{align}
with $e_1 \leq \ldots \leq e_{m-2} \leq e_{m-1} > e_m$. Note that we have flipped the innermost elements, so that $e_m < e_{m-1}$. Now, we define the linear section $s: L \to \fg_1^{\sab} \subset T_1$ by
\begin{align} \label{eq:short_linear_section}
    s\Big([e_1, \ldots, [e_{m-2}, [e_m, e_{m-1}]] \ldots]\Big) \coloneqq \Big([e_1, \ldots, [e_{m-2}, e_{m, m-1}]_1 \ldots]_1\Big)^{\perp},
\end{align}
where $[\cdot, \cdot]_1$ denotes the commutator bracket in $T_1$ and $e_{i,j} \coloneqq e_i \wedge e_j \in \Lambda^2 \V$ for $i < j$ forms a basis of $\Lambda^2 \V$. Now, we embed $L$ into $T_0$ via the commutator bracket for $T_0$, and note that we have
\begin{align}\nonumber
    \Big\| [e_1, \ldots, [e_{m-2}, e_{m, m-1}]_1 \ldots]_1\Big\|_{T_1} \leq \Big\| [e_1, \ldots, [e_{m-2}, e_{m, m-1}]_1 \ldots]_1\Big\|_{\oT_1} < \Big\| [e_1, \ldots, [e_{m-2}, [e_m, e_{m-1}]_0]_0 \ldots ]_0\Big\|_{T_0}
\end{align}
Therefore, $\|s(\ell)\|_{T_1} \leq \|\ell\|_{T_0}$ for all $\ell \in L$. \medskip

Now, we consider the algebra section $\ts: U([\fg_0, \fg_0])\subset T_0 \to T_1$ induced by $s$. In particular, using the fact that $T_1$ is a Banach algebra, and the bound on generators, we have
\begin{align}\nonumber
    \Big\|\ts(\ell_1 \cdots \ell_n)\Big\|_{T_1} = \Big\|\ell_1 \cdots \ell_{n-1} \cdot \ts(\ell_n)\Big\|_{T_1} \leq \Big\|\ell_1 \cdots \ell_{n-1}\Big\|_{T_0} \cdot \Big\|\ts(\ell_n)\Big\|_{\oT_1} < \Big\| \ell_1 \cdots \ell_n\Big\|_{T_0}.
\end{align}
Thus, we have proved the following.

\begin{lemma} \label{lem:short_linear_section}
    The linear section $s: L \to \fg_1^{\sab} \subset T_1$ defined in~\Cref{eq:short_linear_section} induces an algebra section $\ts: U([\fg_0, \fg_0]) \subset T_0 \to T_1$ such that
    \begin{align}\nonumber
        \|\ts(\ell)\|_{T_1} \leq \|\ell\|_{T_0}
    \end{align}
    for all $\ell \in U([\fg_0, \fg_0])$.
\end{lemma}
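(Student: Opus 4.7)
The plan is to prove the two norm inequalities separately: first establish $\|s(\ell)\|_{T_1} \le \|\ell\|_{T_0}$ for each Lie generator $\ell \in L = H_1(A)$, then bootstrap this to the algebra section $\ts$ on all of $U([\fg_0,\fg_0])$ using the algebra structure of $\bT$. Both steps are essentially elementary norm estimates, with the only subtlety being how the $\Lambda^2 \V$ basis element $e_{i,j}$ interacts with the quotient norm on $T_1$.

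For the generator bound, I would begin by unpacking the definition in~\Cref{eq:short_linear_section}: for $\ell = [e_1, \ldots, [e_{m-2}, [e_m, e_{m-1}]]\ldots] \in L$, its image $s(\ell)$ is the orthogonal projection of $[e_1, \ldots, [e_{m-2}, e_{m,m-1}]_1 \ldots]_1 \in \oT_1$ onto $\Pf^\perp \cong T_1$. Since orthogonal projection onto a closed subspace is a short linear map, we get
\begin{equation*}
\|s(\ell)\|_{T_1} \le \bigl\|[e_1, \ldots, [e_{m-2}, e_{m,m-1}]_1 \ldots]_1\bigr\|_{\oT_1}.
\end{equation*}
The main comparison is then between this quantity and $\|\ell\|_{T_0} = \|[e_1, \ldots, [e_{m-2}, [e_m, e_{m-1}]_0]_0 \ldots]_0\|_{T_0}$. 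The point is that $e_{m,m-1} = e_m \wedge e_{m-1}$ is a single orthonormal basis vector of $\Lambda^2 \V$ with norm $1$, whereas $[e_m, e_{m-1}]_0 = e_m \cdot e_{m-1} - e_{m-1} \cdot e_m$ consists of two orthogonal terms of norm $1$ each, so $\|[e_m, e_{m-1}]_0\|_{T_0} = \sqrt{2}$. Expanding each subsequent outer commutator on both sides produces twice as many orthogonal summands in $T_0$ as in $\oT_1$ (each of the same unit norm, because left/right actions by $e_i$ on the basis elements of~\Cref{eq:oT1_basis} are norm-preserving per~\Cref{eq:norm_properties}). A straightforward induction on $m$ then gives
\begin{equation*}
\bigl\|[e_1, \ldots, [e_{m-2}, e_{m,m-1}]_1\ldots]_1\bigr\|_{\oT_1} = 2^{(m-1)/2} \le 2^{m-1} = \|\ell\|_{T_0},
\end{equation*}
which yields $\|s(\ell)\|_{T_1} \le \|\ell\|_{T_0}$.

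For the extension to $U([\fg_0,\fg_0])$, I would use that the product in $T_1$ satisfies $E * F = \delta(E)\cdot F$ (\Cref{eq:bT_product}), combined with $\delta \circ s = \id$ on $L$, so that for any word $\ell_1 \cdots \ell_n$ with $\ell_i \in L$ the algebra section gives
\begin{equation*}
\ts(\ell_1 \cdots \ell_n) = \ts(\ell_1) * \cdots * \ts(\ell_n) = \ell_1 \cdots \ell_{n-1} \cdot \ts(\ell_n).
\end{equation*}
Then the short-action bound~\Cref{eq:levelwise_short_action}, namely $\|a \cdot E\|_{T_1} \le \|a\|_{T_0} \cdot \|E\|_{T_1}$, combined with the generator bound on $\ts(\ell_n)$ and the multiplicativity of the norm on $T_0$ (\Cref{eq:norm_properties}), yields $\|\ts(\ell_1 \cdots \ell_n)\|_{T_1} \le \|\ell_1 \cdots \ell_n\|_{T_0}$. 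The statement for general elements of $U([\fg_0,\fg_0])$ then follows by linearity and the Poincar\'e--Birkhoff--Witt basis.

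The only place requiring care is the generator inequality: one must keep track of the fact that grouping two basis vectors into $e_{m,m-1}$ collapses two orthogonal summands into one, and this saving is preserved (or at worst not destroyed) when passing to the quotient $T_1 \cong \Pf^\perp$. All other steps are purely formal manipulations with the algebra and bimodule structure already set up in~\Cref{sssec:norms}.
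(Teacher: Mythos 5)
Your proof follows the paper's own argument in both steps: shortness of the projection onto $\Pf^\perp$ together with the $\sqrt{2}$ saving of collapsing $[e_m, e_{m-1}]_0$ to $e_{m,m-1}$ for the generator bound, and the identity $\ts(\ell_1 \cdots \ell_n) = \ell_1 \cdots \ell_{n-1} \cdot \ts(\ell_n)$ with the levelwise bimodule bound for the extension, so the route is essentially the paper's. One place to be careful: the explicit expansion norms you report are miscounted. With $m$ letters and $m-2$ outer brackets, full orthogonality would give $2^{m-2}$ summands in $\oT_1$ (norm $2^{(m-2)/2}$) and $2^{m-1}$ summands in $T_0$ (norm $2^{(m-1)/2}$); you wrote $2^{(m-1)/2}$ and $2^{m-1}$, the latter being the number of terms rather than its square root. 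Moreover, the orthogonality of the summands is not automatic: once the letters $e_i$ repeat within a Hall generator, bracketing can produce coinciding or cancelling monomials and the clean power-of-two count breaks down, so the induction is not as routine as ``straightforward.'' None of this reverses the direction of the inequality, and the paper itself simply asserts this middle estimate rather than counting, so you are on the paper's track; just the explicit numerics as you stated them are incorrect.
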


\section{Matrix Surface Holonomy} \label{apxsec:matrix_sh}
We begin with the example of surface holonomy valued in matrix double groups, which is analogous to the case of path holonomy valued in the classical matrix groups $\GL^n$. This was previous studied in detail in~\cite{lee_random_2023}; however, we will present this from the perspective of crossed modules of algebras, rather than Lie groups and Lie algebras. \medskip

\subsection{Crossed Module of Chain Maps and Homotopies}
A \emph{(Baez-Crans) 2-vector space}~\cite{baez_higher-dimensional_algebras_2004} is a 2-term chain complex; in other words, it consists of two vector spaces $W_0$ and $W_1$, and a linear map (the \emph{boundary map}) $\phi: W_1 \to W_0$. Given dimensions $n,m,p \geq 0$, we define the 2-vector space $\cW^{n,m,p}$ with $W_1 = \R^{n+p}$ and $W_0 = \R^{n+m}$, 
\begin{align}\nonumber
    \cW^{n,m,p} = (W_1 = \R^{n+p} \xrightarrow{\phi} W_0 =\R^{n+m}), \quad \text{where} \quad \phi = \pmat{I_n & 0 \\ 0 & 0},
\end{align}
and $I_n \in \Mat^n$ is the $n\times n$ identity matrix.\medskip

We will define a crossed module of algebras made up of chain maps and chain homotopies of $\cW^{n,m,p}$, and it is convenient to express these as linear automorphisms of the direct sum of $W_0$ and $W_1$, which we also denote by $\cW^{n,m,p} =  W_0  \oplus W_1 = \R^{n+m} \oplus \R^{n+p}$. Here, the boundary map is expressed as $\Phi : \cW^{n,m,p} \to \cW^{n,m,p}$ by
\begin{align}\nonumber
    \Phi = \pmat{0 & \phi \\ 0 & 0}.
\end{align}

First, we define the algebra of chain maps $F = (f,g) \in \Ch_0^{n,m,p}$, which consist of linear maps $f \in L(W_1, W_1)$ and $g \in L(W_0, W_0)$, arranged in a single matrix $F \in L(\cW^{n,m,p}, \cW^{n,m,p})$ as
\begin{align}\nonumber
    F = \pmat{g & 0 \\ 0 & f}
\end{align}
such that $\Phi F = F \Phi$ or equivalently $\phi f = g \phi$. Then, $\Ch_0^{n,m,p}$ simply the matrix multiplication structure (denoted $\cdot$), and the identity matrix is the unit. \medskip

Next, we consider the algebra of chain homotopies $H \in \Ch_1^{n,m,p}$, which consists of a linear map $h \in L(W_0, W_1)$, also represented as a matrix $H \in L(\cW^{n,m,p}, \cW^{n,m,p})$ as
\begin{align}\nonumber
    H = \pmat{ 0 & 0 \\ h & 0}.
\end{align}
We define a linear map $d: \Ch_1^{n,m,p} \to \Ch_0^{n,m,p}$ by
\begin{align}\nonumber
    d(H) = \Phi H + H \Phi = \pmat{0 & \phi \\ 0 & 0} \pmat{ 0 & 0 \\ h & 0 }  + \pmat{ 0 & 0 \\ h & 0 }\pmat{0 & \phi \\ 0 & 0} = \pmat{ \phi h & 0 \\ 0 &h \phi}.
\end{align}
We define the algebra structure on $\Ch_1^{n,m,p}$ by
\begin{align}\nonumber
    H * H' \coloneqq d(H) \cdot H' = H \cdot d(H') = H \cdot \Phi \cdot H' = \pmat{ 0 & 0 \\ h\phi h' & 0},
\end{align}
and therefore $d(H * H') = d(H) \cdot d(H')$, and $d$ is a morphism of algebras. \medskip

Finally, we define left and right actions, $\gtd$ and $\ltd$ of $\Ch_0^{n,m,p}$ on $\Ch_1^{n,m,p}$ by matrix multiplication,
\begin{align}\nonumber
    F \gtd H = F \cdot H = \pmat{0 & 0 \\ f\cdot h & 0 } \andd H \ltd F = H \cdot F = \pmat{0 & 0 \\ h \cdot g & 0}.
\end{align}

\begin{proposition}
    The structure $\cmCh^{n,m,p} = (d: \Ch_1^{n,m,p} \to \Ch_0^{n,m,p}, \gtd, \ltd)$ is a crossed module of algebras. 
\end{proposition}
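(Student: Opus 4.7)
The plan is to verify each axiom of a crossed module of algebras directly from the matrix formulas, leveraging three elementary identities that make the computations collapse. These are:
\begin{enumerate}
\item $\Phi^2 = 0$, since $\Phi$ is nilpotent of order two by its block form;
\item $H \cdot H' = 0$ for any $H, H' \in \Ch_1^{n,m,p}$, since both matrices are strictly lower block-triangular;
\item $\Phi F = F \Phi$ for any $F \in \Ch_0^{n,m,p}$, which is precisely a restatement of the chain-map condition $\phi f = g \phi$.
\end{enumerate}

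First I would verify that $\Ch_0^{n,m,p}$ is a unital associative algebra under matrix multiplication: closure under composition follows from $\phi(ff') = (g\phi) f' = g(g'\phi)$, and the identity matrix lies in $\Ch_0^{n,m,p}$. Next, associativity of $*$ on $\Ch_1^{n,m,p}$ is immediate from the matrix formula $H * H' = H \Phi H'$. The bimodule structure follows from observation (iii): the associativity laws and unitality reduce to matrix associativity and the fact that $I \cdot H = H = H \cdot I$, while the compatibilities $F \gtd (H * H') = (F \gtd H) * H'$ and $(H * H') \ltd F = H * (H' \ltd F)$ reduce to reassociation of the matrix product $F H \Phi H'$ and $H \Phi H' F$. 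Full interchangeability $(H \ltd F) * (F' \gtd H') = H * ((F \cdot F') \gtd H') = (H \ltd (F \cdot F')) * H'$ follows because (iii) lets us commute $F \Phi = \Phi F$ inside the product $H F \Phi F' H'$.

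Next I would check that $d$ is an algebra morphism. Expanding $d(H) \cdot d(H') = (\Phi H + H \Phi)(\Phi H' + H' \Phi)$, the term $H \Phi^2 H'$ vanishes by (i), and the term $\Phi H H' \Phi$ vanishes by (ii), leaving exactly $\Phi H \Phi H' + H \Phi H' \Phi$. On the other hand, $d(H * H') = d(H \Phi H') = \Phi H \Phi H' + H \Phi H' \Phi$, so the two agree. The first Peiffer identity is then a one-line check: $d(F \gtd H) = \Phi F H + F H \Phi = F \Phi H + F H \Phi = F \cdot d(H)$ using (iii), and analogously on the right. The second Peiffer identity uses both (i) and (ii): $d(H) \gtd H' = \Phi H H' + H \Phi H' = H \Phi H' = H * H'$, since $H H' = 0$, and symmetrically $H \ltd d(H') = H \Phi H'$.

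There is no substantial obstacle here; the statement is essentially a bookkeeping exercise. The only mild subtlety is organizing the verification so that each of the three identities (i)--(iii) is invoked exactly where it is needed, in particular noticing that the second Peiffer identity depends on $\Ch_1^{n,m,p} \cdot \Ch_1^{n,m,p} = 0$ rather than on $\Phi^2 = 0$ alone. Once this structural point is isolated, the rest of the proof is a direct matrix computation.
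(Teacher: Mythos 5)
Your proof is correct; note that the paper states this proposition in Appendix~C without giving a proof, so there is no paper argument to compare against, but the route via the three identities $\Phi^2 = 0$, $HH' = 0$, and $\Phi F = F\Phi$ is exactly the natural organizing principle, and your verification of each crossed-module axiom checks out. One tiny textual slip: you write that the second Peiffer identity ``uses both (i) and (ii),'' but the computation that follows (and your own closing remark) show that it needs only (ii), since $d(H) \gtd H' = \Phi H H' + H\Phi H'$ contains no $\Phi^2$ term; it is the morphism identity $d(H*H') = d(H)\cdot d(H')$ that needs both (i) and (ii).
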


\subsection{Matrix Surface Holonomy}
Here, we will fix a 2-vector space $\cW^{n,m,p}$ and supress the dimensions from the notation.
We begin by considering 2-connections valued in $\cmCh$. This consists of linear maps
\begin{align}\nonumber
    A = \pmat{\conb & 0 \\ 0 & \cona} \in L(\V, \Ch_0) \andd \Gamma = \pmat{ 0 & 0 \\ \conc & 0} \in L(\Lambda^2 \V, \Ch_1).
\end{align}
These has to satisfy the fake-flatness condition from~\Cref{eq:fake_flatness}, so that
\begin{align}\nonumber
     \pmat{\phi \gamma & 0 \\ 0 & \gamma \phi} = \pmat{ [\beta, \beta] & 0 \\ 0 & [\alpha, \alpha]}.
\end{align}
Parallel transport of a path $x \in C^\infty([0,1], \V)$ with respect to $(\cona, \conb)$ expressed in block form as
\begin{align}\nonumber
    F^{\cona, \conb}_t(x) = \pmat{ g^\conb_t(x) & 0 \\ 0 & f^\cona_t(x)},
\end{align}
where $f^{\cona}_t(x)$ and $g^{\conb}_t(x)$ are the usual parallel transport of paths with respect to $\cona$ and $\conb$ respectively. \medskip

Recall from~\Cref{def:sh} that surface holonomy is defined on the unital algebra $\hCh_1$. For 
\[
(\lambda, H), (\lambda', H') \in \hCh_1,
\]
multiplication is defined by
\begin{align}\nonumber
    (\lambda, H) * (\lambda', H') = (\lambda\cdot \lambda', \lambda H' + \lambda H + H\Phi H).
\end{align}
Furthermore, the boundary map is extended to $d: \hCh_1 \to \Ch_0$ by defining $d(1,0) = I$ to be the identity matrix. Thus, in general,
\begin{align}\nonumber
    d(1, H) = \Phi H + H \Phi + I = \pmat{\phi h + I & 0 \\ 0 & h \phi + I}.
\end{align}
Then, expressing the surface holonomy equation in this case, we obtain for a surface $X \in C^\infty([0,1]^2, \V)$,
\begin{align}\nonumber
    \frac{\partial \hH^\con_{s,t}(X)}{\partial t} = \hH^{\con}_{s,t}(X) * \int_0^s F^{\cona, \conb}(x^{s',t}) \cdot \Gamma\left( \frac{\partial X_{s',t}}{\partial s'}, \frac{\partial X_{s',t}}{\partial t}\right) \cdot F^{\cona, \conb}(x^{s',t})^{-1} \, ds', \quad \hH^\con_{s,0}(X) = (1,0).
\end{align}
We note that the integral has no unit component, and is thus valued in $\Ch_1$, and furthermore we can write $\hH^\con_{s,t}(X) = (1, H^\con_{s,t}(X))$, where $H^\con_{s,t}(X) \in \Ch_1$ is the non-unital component. Further decomposing
\begin{align}\nonumber
    H^{\con}_{s,t}(X) = \pmat{0 & 0 \\ h^{\con}_{s,t}(X) & 0},
\end{align}
we can explicitly express the differential equation for $h^{\con}_{s,t}(X)$ as
\begin{align}\nonumber
    \frac{\partial h^{\con}_{s,t}(X)}{\partial t} = (I + h^{\con}_{s,t}(X) \phi) \cdot \int_0^s f^\cona(x^{s',t}) \cdot \gamma\left( \frac{\partial X_{s',t}}{\partial s'}, \frac{\partial X_{s',t}}{\partial t}\right) \cdot g^{\conb}(x^{s',t})^{-1} \, ds', \quad h^{\con}_{s,0}(X) = 0,
\end{align}
which coincides with the matrix surface holonomy equation from~\cite{lee_random_2023}.

\section{Free Crossed Modules of Algebras} \label{apx:fxa}

\begin{proof}{(\Cref{prop:fxa_construction})}
    We begin by showing the $\cmFXA(\delta_0)$ is a crossed module. Let $E = a_1 \otimes w \otimes a_2, F = b_1 \otimes v \otimes b_2 \in A_0 \otimes W \otimes A_0$. Then, 
    \begin{align} \label{eq:fxa_delta_comp}
        \delta(E) \gtd F = (a_1 \cdot \delta_0(w) \cdot a_2 \cdot b_1) \otimes v \otimes b_2 \andd E \ltd \delta(F) = a_1 \otimes w \otimes (a_2 \cdot b_1 \cdot \delta_0(v) \cdot b_2);
    \end{align}
    thus $\delta(\delta(E) \gtd F - E \ltd \delta(F)) = 0$. Then, $\delta: \FXA_1(\delta_0) \to A_0$ is well-defined since $\delta(\Pf) = 0$. Furthermore, $\delta$ is a morphism of algebras when $\FXA_1(\delta_0)$ is equipped with the product in~\Cref{eq:fxa_product},
    \begin{align}\nonumber
        \delta(E*F) =\delta(\delta(E) \gtd F) = a_1 \cdot \delta_0(w) \cdot a_2 \cdot b_1 \cdot \delta_0(w) \cdot b_2= \delta(E) \cdot \delta(F),
    \end{align}
    using the computation above.
    Next, we note that $\Pf$ is closed under the $A_0$ action and is indeed an ideal. In particular, let $E, F \in A_0 \otimes W \otimes A_0$ as before and $c \in A_0$. Then,
    \begin{align}\nonumber
        c \gtd (\delta(E) \gtd F - E \ltd \delta(F)) &= \delta(c \gtd E) \gtd F - (c \gtd E) \ltd \delta(F)\\
        (\delta(E) \gtd F - E \ltd \delta(F)) \ltd c &= \delta(E) \gtd (F \ltd c) - E \ltd \delta(F \ltd c),
    \end{align}
    so $c \gtd \Pf,\, \Pf \ltd c \subset \Pf$ for any $c \in A_0$. Thus, the actions $\gtd$ and $\ltd$ are well-defined on $\FXA_1(\delta_0)$. The first Peiffer holds by definition of the actions, and the second Peiffer identity holds by definition of the multiplication in~\Cref{eq:fxa_product}. Finally, let $\iota: W \to \FXA_1(\delta_0)$ be the inclusion $\iota(w) = 1 \otimes w \otimes 1$. \medskip

    Now, we will show that this is free. Suppose $\cmA = (\delta^A : A_1 \to A_0)$ is a crossed module, and $\gamma: W \to A_1$ is a linear map such that $\delta_0 = \delta^A \circ \gamma$. We begin by defining a $A_0$-bimodule map $\tconc: A_0 \otimes W \otimes A_0 \to A_1$ by
    \begin{align}\nonumber
        \tconc(a_1 \otimes w \otimes a_2) = a_1 \gtd_A \gamma(w) \ltd_A a_2.
    \end{align}
    We note that this is the unique map of $A_0$-bimodules such that
    \[
        \begin{tikzcd}
            W \ar[r, "\iota"] \ar[dr, swap, "\conc"] & A_0 \otimes W \otimes A_0 \ar[r, "\delta"] \ar[d, dashed, "\tconc"] & A_0 \ar[d, equal] \\
            & A_1 \ar[r, "\delta^A"] & A_0
        \end{tikzcd}
    \]
    Now using the same notation for $E, F \in A_0 \otimes W \otimes A_0$ as above, we have
    \begin{align}\nonumber
        \tconc(\delta(E) \gtd F - E \ltd \delta(F)) & = (a_1 \cdot \delta_0(w) \cdot a_2 \cdot b_1) \gtd  \gamma(v) \ltd b_2 - a_1 \gtd \gamma(v) \ltd (a_2 \cdot b_1 \cdot \delta_0(v) \cdot b_2) \\
        & = a_1 \gtd \Big(\delta^A(\gamma(w) \ltd (a_2\cdot b_1)) \gtd \gamma(v) - \gamma(w) \ltd \delta^A((a_2 \cdot b_2) \gtd \gamma(v))\Big) \ltd b_2 \\
        & = a_2 \gtd \Big( (\gamma(w) \ltd (a_2\cdot b_1)) \gtd \gamma(v) - \gamma(w) \ltd ((a_2 \cdot b_2) \gtd \gamma(v)) \Big) \\
        & = 0,
    \end{align}
    where we use~\Cref{eq:fxa_delta_comp} in the first line, the fact that $\delta_0 = \delta^A\circ \gamma$ and the first Peiffer identity for $\cmA$ in the second line, and the second Peiffer identity for $\cmA$ in the third line. Thus, $\tconc$ is well-defined on $\FXA_1(\delta_0)$. Finally, $\tconc$ is an algebra morphism,
    \begin{align}\nonumber
        \tconc (P*Q) = \tconc( \delta(P) \gtd Q) = \delta(P) \gtd \tconc(Q) = \delta^A(\tconc(P)) \gtd \tconc(Q) = \tconc(P) * \tconc(Q).
    \end{align}
\end{proof}

\section{Additional Proofs for Surface Extension Theorem} \label{apxsec:additional_surface_extension}

\textbf{Bound on Arbitrary Partitions.}
\begin{proof}[Proof of~\Cref{lem:arbitrary_partition_bound}]
    We choose a specific ordering for $\gridmult_{\ppart}$ by first multiplying the grid horizontally, and then vertically. In particular, we set
    \begin{align}\nonumber
        \gridmult_{\ppart}[\trX] = \bigvmult{j=1}{b} \left(\bighmult{i=1}{a} \trX_{s_{i-1}, s_i; t_{j-1}, t_j}\right).
    \end{align}
    We note that $\trrX$ is still \emph{almost multiplicative}, since the same analysis will yield~\Cref{eq:almost_hmult_element} and~\Cref{eq:almost_vmult_element}. We note that $\|\trX^{\gr{n+1}}\|$ satisfies the standard surface regularity bound from~\Cref{eq:surface_regularity}, and since the proof of~\Cref{prop:almost_multiplicative} only depends on this bound, we obtain the same almost multiplicativity bounds for $\trX$ from~\Cref{prop:almost_multiplicative}

    We begin by bounding the horizontal compositions for a fixed row $[t_{j-1}, t_j]$ by removing one horizontal partition at a time and using~\Cref{lem:mult_bound_reduction},
    \begin{align}\nonumber
        \left\|\bighmult{i=1}{a} \trX_{s_{i-1}, s_i; t_{j-1}, t_j} - \trX_{\smp; t_{j-1}, t_j}\right\| &\leq \sum_{i'=1}^{a-1} \left\|\trX_{\smm, s_{i'}; t_{j-1}, t_j} \hmult \trX_{s_{i'}, s_{i'+1}; t_{j-1}, t_j} - \trX_{\smm, s_{i'+1}; t_{j-1}, t_j}\right\| \\
        & \leq \sum_{i'=1}^{a-1} \frac{2^{2(n+1)\orho}}{\beta^2} \sum_{q=1}^{2n+1} \frac{\ctr{q\orho}{\smm, s_{i'+1}} \ctr{(2n+2-q)\orho}{t_{j-1}, t_j}}{\ffact{q}{\orho} \ffact{2n+2-q}{\orho}} \nonumber\\
        & \leq (a-1) \frac{2^{2(n+1)\orho}}{\beta^2} \sum_{q=1}^{2n+1} \frac{\ctr{q\orho}{\smp} \ctr{(2n+2-q)\orho}{\tmp}}{\ffact{q}{\orho} \ffact{2n+2-q}{\orho}},\nonumber
    \end{align}
    where we use the almost multiplicativity bound for $\trX$ in the second line, and the fact that $\omega(\smm, s_{i'+1}) \leq \omega(\smp)$ in the last line. Then, replacing the horizontal compositions one by one, we obtain
    \begin{align}\nonumber
        \left\| \tgridmult_{\ppart}[\trX] - \bigvmult{j=1}{b}  \trX_{\smp; t_{j-1}, t_j}\right\| \leq b(a-1) \frac{2^{2(n+1)\orho}}{\beta^2} \sum_{q=1}^{2n+1} \frac{\ctr{q\orho}{\smp} \ctr{(2n+2-q)\orho}{\tmp}}{\ffact{q}{\orho} \ffact{2n+2-q}{\orho}},
    \end{align}
    Now, using the same method as the horizontal case, we obtain
    \begin{align}\nonumber
        \left\| \bigvmult{j=1}{b}  \trX_{\smp; t_{j-1}, t_j} - \trX_{\smp;\tmp}\right\| \leq (b-1) \frac{2^{2(n+1)\orho}}{\beta^2} \sum_{q=1}^{2n+1} \frac{\ctr{q\orho}{\smp} \ctr{(2n+2-q)\orho}{\tmp}}{\ffact{q}{\orho} \ffact{2n+2-q}{\orho}}.
    \end{align}
    Putting this together, we obtain
    \begin{align}\nonumber
        \left\| \tgridmult_{\ppart}[\trX] - \trX_{\smp;\tmp}\right\| \leq ab \frac{2^{2(n+1)\orho}}{\beta^2} \sum_{q=1}^{2n+1} \frac{\ctr{q\orho}{\smp} \ctr{(2n+2-q)\orho}{\tmp}}{\ffact{q}{\orho} \ffact{2n+2-q}{\orho}}.
    \end{align}
\end{proof}

\textbf{Proof of Modified Continuity for Path Extensions.}
\begin{proof}[Proof of~\Cref{thm:path_extension_cont}]
    Let $s_1 < s_2 < s_3$. We begin by considering a bound for
    \begin{align}\nonumber
        \Bigg\| \sum_{q=1}^n \bx^{\gr{q}}_{s_1, s_2} \cdot \bx^{\gr{n+1-q}}_{s_2, s_3} &- \by^{\gr{q}}_{s_1, s_2} \cdot \by^{\gr{n+1-q}}_{s_2, s_3} \Bigg\| \\
        & \leq \sum_{q=1}^n \left\| \bx^{\gr{q}}_{s_1, s_2} \cdot \bx^{\gr{n+1-q}}_{s_2, s_3}  - \by^{\gr{q}}_{s_1, s_2} \cdot\bx^{\gr{n+1-q}}_{s_2, s_3} + \by^{\gr{q}}_{s_1, s_2} \cdot\bx^{\gr{n+1-q}}_{s_2, s_3} - \by^{\gr{q}}_{s_1, s_2} \cdot \by^{\gr{n+1-q}}_{s_2, s_3} \right\| \nonumber\\
        & \leq \sum_{q=1}^n \left\| \bx^{\gr{q}}_{s_1, s_2} \cdot \bx^{\gr{n+1-q}}_{s_2, s_3}  - \by^{\gr{q}}_{s_1, s_2} \cdot\bx^{\gr{n+1-q}}_{s_2, s_3}\right\| + \left\|\by^{\gr{q}}_{s_1, s_2} \cdot\bx^{\gr{n+1-q}}_{s_2, s_3} - \by^{\gr{q}}_{s_1, s_2} \cdot \by^{\gr{n+1-q}}_{s_2, s_3} \right\|\nonumber\\
        & \leq \sum_{q=1}^n \left\|\bx^{\gr{q}}_{s_1, s_2} - \by^{\gr{q}}_{s_1, s_2}\right\| \cdot \left\|\bx^{\gr{n+1-q}}_{s_2, s_3}\right\| + \left\|\by^{\gr{q}}_{s_1, s_2} \right\|\cdot \left\|\bx^{\gr{n+1-q}}_{s_2, s_3} -  \by^{\gr{n+1-q}}_{s_2, s_3}\right\|\nonumber\\
        & \leq \frac{\epsilon}{\beta^2} \sum_{q=1}^n \frac{\ctr{(2q-1)\orho}{s_1, s_2}}{\ffact{2q-1}{\orho}} \cdot \frac{\ctr{(2n-2q+2)\orho}{s_2, s_3}}{\ffact{2n-2q+2}{\orho}} + \frac{\ctr{2q\orho}{s_1,s_2}}{\ffact{2q}{\orho}} \cdot \frac{\ctr{(2n-2q+1)\rho}{s_2, s_3}}{\ffact{2n-2q+1}{\orho}}\nonumber \\
        & \leq \frac{\epsilon}{\beta^2} \sum_{q'=1}^{2n} \frac{\ctr{q'\orho}{s_1, s_2}}{\ffact{q'}{\orho}} \cdot \frac{\ctr{(2n+1-q')\orho}{s_2, s_3}}{\ffact{2n+1-q'}{\orho}} \nonumber\\
        & \leq \frac{\epsilon}{\orho\beta^2} \frac{\ctr{(2n+1)\orho}{s_1, s_3}}{\ffact{2n+1}{\orho}}\nonumber
    \end{align}

    Now, we can consider the usual Young trick to prove a maximal inequality and conclude; in particular, we follow the proof of~\cite[Theorem 3.7]{lyons_differential_2007}.
\end{proof}

\textbf{Continuity of Modified Path Extensions}

In this section, we provide a modified continuity result for rough path functionals over a rectangular domain.

\begin{theorem} \label{thm:mixed_pathwise_ext_cont}
    Let $\rho \in (0,1]$ and let $n + \frac{1}{2} > \frac{1}{\rho}$ (or equivalently $2n +1 > \frac{1}{\orho}$). Let $(\rx^h, \rx^v)$ and $(\ry^h, \ry^v)$ be path-multiplicative path components of a double group function,
    \begin{align}\nonumber
        \rx^h, \ry^h : \Delta^2 \times [0,1] \to T_0^{\gr{\leq n}} \andd \rx^v, \ry^v : [0,1] \times \Delta^2 \to T_0^{\gr{\leq n}}.
    \end{align}
    Suppose both $(\rx^h, \rx^v)$ and $(\ry^h, \ry^v)$ satisfy path regularity and continuity in~\Cref{eq:path_regularity} and~\Cref{eq:path_continuity} respectively. Furthermore, for all $k \in [n]$ and some $\epsilon > 0$, we assume
    \begin{align}
        \left\| (\rx^h_{s_1, s_2; t} - \ry^h_{s_1, s_2; t})^{\gr{k}}\right\| \leq \epsilon \cdot \frac{\ctr{2k\orho}{s_1, s_2}}{\ffact{2k}{\orho}}, \quad &\left\| (\rx^v_{s; t_1, t_2} - \ry^v_{s; t_1, t_2})^{\gr{k}}\right\| \leq \epsilon \cdot \frac{\ctr{2k\orho}{t_1, t_2}}{\ffact{2k}{\orho}} \label{eq:pathwise_ext_cont_condition}\\
        \left\|\left((\rx^h_{s_1, s_2; t_2} - \rx^h_{s_1, s_2; t_1}) - (\ry^h_{s_1, s_2; t_2} - \ry^h_{s_1, s_2; t_1})\right)^{\gr{k}}\right\| &\leq \epsilon \, \frac{\ctr{(2k-1)\orho}{s_1, s_2} \ctr{\orho}{t_1, t_2}}{\beta \ffact{2k-1}{\orho} \ffact{1}{\orho}} \label{eq:pathwise_ext_hcont_condition}\\
        \left\|\left((\rx^v_{s_2; t_1, t_2} - \rx^v_{s_1; t_1, t_2}) - (\ry^v_{s_2; t_1, t_2} - \ry^v_{s_1; t_1, t_2})\right)^{\gr{k}}\right\| &\leq \epsilon \, \frac{\ctr{\orho}{s_1, s_2} \ctr{(2k-1)\orho}{t_1, t_2}}{ \beta\ffact{1}{\orho}\ffact{2k-1}{\orho}}\label{eq:pathwise_ext_vcont_condition}
    \end{align}
    Then, these continuity conditions hold for the pathwise extensions $(\orx^h, \orx^v)$ and $(\ory^h, \ory^v)$ at level $k=n+1$. 
\end{theorem}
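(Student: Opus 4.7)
The plan is to adapt the strategy of~\Cref{thm:path_extension_cont} to handle both the ordinary difference bound~\Cref{eq:pathwise_ext_cont_condition} and the two mixed conditions~\Cref{eq:pathwise_ext_hcont_condition}--\Cref{eq:pathwise_ext_vcont_condition} at level $n+1$. By the $s \leftrightarrow t$ symmetry only the horizontal component needs to be treated; I will write $\orx = \orx^h$ and $\ory = \ory^h$ below.

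First I would verify~\Cref{eq:pathwise_ext_cont_condition} at level $n+1$. For each fixed $t$, the functionals $\rx_{\cdot,\cdot;t}$ and $\ry_{\cdot,\cdot;t}$ are $\rho$-H\"older multiplicative functionals, so their extensions to $G_0^{\gr{\leq n+1}}$ coincide with $\orx_{\cdot,\cdot;t}$ and $\ory_{\cdot,\cdot;t}$ by the uniqueness part of~\Cref{thm:path_extension}. The hypothesized difference bound has $\omega(s_1,s_2)$-exponent $2k\orho = k\rho$, which differs from the $(2k-1)\orho$ appearing in~\Cref{thm:path_extension_cont}, so this is not a literal instance of that theorem, but the proof is structurally identical: starting from multiplicativity at level $n+1$, the remainder
\[
(\orx - \ory)^{\gr{n+1}}_{s_1, s_3;t} - (\orx - \ory)^{\gr{n+1}}_{s_1, s_2;t} - (\orx - \ory)^{\gr{n+1}}_{s_2, s_3;t} = \sum_{q=1}^n \Bigl[\orx^{\gr{q}}_{s_1,s_2;t}\orx^{\gr{n+1-q}}_{s_2,s_3;t} - \ory^{\gr{q}}_{s_1,s_2;t}\ory^{\gr{n+1-q}}_{s_2,s_3;t}\Bigr]
\]
is split termwise via $ab - cd = (a-c)b + c(b-d)$, each factor is bounded either by path regularity~\Cref{eq:path_regularity} or by the hypothesis at level $\le n$, and the neoclassical inequality~\Cref{eq:neoclassical} reassembles the $q$-sum into a bound of order $\epsilon\,\omega(s_1,s_3)^{(n+1)\rho}/\ffact{n+1}{\rho}$. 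A Young maximal inequality then closes the estimate.

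For the mixed condition~\Cref{eq:pathwise_ext_hcont_condition} at level $n+1$, introduce the second-order difference
\[
\Phi^{\gr{k}}_{s_1,s_2;t_1,t_2} \coloneqq \bigl(\orx^{\gr{k}}_{s_1,s_2;t_2} - \orx^{\gr{k}}_{s_1,s_2;t_1}\bigr) - \bigl(\ory^{\gr{k}}_{s_1,s_2;t_2} - \ory^{\gr{k}}_{s_1,s_2;t_1}\bigr),
\]
which already satisfies the targeted bound for $k \in [n]$ by hypothesis. Taking the $t$-difference of the Chen identity at level $n+1$ for each of the four multiplicative functionals $\orx_{\cdot,\cdot;t_1}, \orx_{\cdot,\cdot;t_2}, \ory_{\cdot,\cdot;t_1}, \ory_{\cdot,\cdot;t_2}$ yields
\[
\Phi^{\gr{n+1}}_{s_1,s_3;t_1,t_2} - \Phi^{\gr{n+1}}_{s_1,s_2;t_1,t_2} - \Phi^{\gr{n+1}}_{s_2,s_3;t_1,t_2} = \sum_{q=1}^n R_q,
\]
with $R_q = \bigl[\orx^{\gr{q}}_{s_1,s_2;\cdot}\orx^{\gr{n+1-q}}_{s_2,s_3;\cdot} - \ory^{\gr{q}}_{s_1,s_2;\cdot}\ory^{\gr{n+1-q}}_{s_2,s_3;\cdot}\bigr]_{t_1}^{t_2}$. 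Writing $a_i,b_i,c_i,d_i$ for the four factors evaluated at $t_i$ and setting $u_i = a_i - c_i$, $v_i = b_i - d_i$, the algebraic identity
\[
a_2b_2 - a_1b_1 - c_2d_2 + c_1d_1 = (u_2 - u_1) b_2 + u_1 (b_2 - b_1) + (c_2 - c_1) v_2 + c_1 (v_2 - v_1)
\]
decomposes $R_q$ into four pieces. In each piece, one factor is bounded by path regularity~\Cref{eq:path_regularity} or path continuity~\Cref{eq:path_continuity}, while the other is bounded by either the ordinary hypothesis~\Cref{eq:pathwise_ext_cont_condition} or the mixed hypothesis~\Cref{eq:pathwise_ext_hcont_condition} at level $\le n$; in each case exactly one $\omega(t_1,t_2)^{\orho}/\ffact{1}{\orho}$ factor appears, and summing in $q$ the neoclassical inequality collapses the $s$-factors into $\omega(s_1,s_3)^{(2n+1)\orho}/\ffact{2n+1}{\orho}$. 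A Young maximal inequality, as in the proof of~\Cref{thm:path_extension_cont}, then yields~\Cref{eq:pathwise_ext_hcont_condition} at level $n+1$; the vertical case~\Cref{eq:pathwise_ext_vcont_condition} follows by interchanging $s$ and $t$.

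The main obstacle is the bookkeeping. For each of the four pieces in the expansion of $R_q$ and each $q \in \{1,\ldots,n\}$, one must verify that the chosen combination of bounds yields precisely one $\omega(t_1,t_2)^{\orho}$ factor (any higher power would make the scaling incompatible with the target exponent) and that the $s$-powers $2q\orho, (2q-1)\orho, 2(n+1-q)\orho, (2(n+1-q)-1)\orho$ add to $(2n+1)\orho$ so that the neoclassical inequality produces the denominator $\ffact{2n+1}{\orho}$. This forces a specific pairing of hypothesis type (regularity, continuity, ordinary, mixed) with factor, and the pairing varies across the four pieces; once this bookkeeping is carried out the remaining estimates are routine Young-type arguments.
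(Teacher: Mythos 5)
Your proposal is correct and follows essentially the same approach as the paper: both proofs telescope the level-$(n+1)$ difference via Chen's identity, decompose each summand of the resulting second-order difference of products into four pieces (the paper quotes~\cite[Lemma E.25]{lee_random_2023}; you use the equivalent identity $(u_2 - u_1)b_2 + u_1(b_2-b_1) + (c_2-c_1)v_2 + c_1(v_2-v_1)$, which differs only in where the spectator factors are evaluated), pair each piece with the same combination of regularity, path continuity, ordinary and mixed hypotheses so that exactly one $\ctr{\orho}{t_1,t_2}$ factor appears, and close with the neoclassical inequality and the standard Young maximal inequality. One small correction: the level-$n+1$ version of~\Cref{eq:pathwise_ext_cont_condition} is a literal instance of the \emph{original} continuity theorem~\Cref{thm:path_ext_orig_cont}, since $\ctr{2k\orho}{s_1,s_2}/\ffact{2k}{\orho} = \ctr{k\rho}{s_1,s_2}/\ffact{k}{\rho}$; you compared the exponent against the modified theorem~\Cref{thm:path_extension_cont}, whose $(2k-1)\orho$ scaling is for the mixed condition, so no re-derivation is actually needed there.
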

\begin{proof}
    First, we note that the first two conditions in~\Cref{eq:pathwise_ext_cont_condition} follow immediately from the standard continuity of rough path liftings~\cite{lyons_differential_1998,lyons_differential_2007}. The argument for the other two bounds is similar, and we focus on the horizontal case of~\Cref{eq:pathwise_ext_hcont_condition}, where we omit the $h$ superscript to simplify notation (for instance, $\rx \coloneqq \rx^h$). The primary technical component of this proof is to bound
    \begin{align} \label{eq:mixed_pathwise_ext_cont_main_bound1}
        \left\| \sum_{q=1}^n \bx^{\gr{q}}_{s_1, s_2; t_2} \cdot \bx^{\gr{n+1-q}}_{s_2, s_3; t_2} - \bx^{\gr{q}}_{s_1, s_2; t_1} \cdot \bx^{\gr{n+1-q}}_{s_2, s_3; t_1} - \by^{\gr{q}}_{s_1, s_2; t_2} \cdot \by^{\gr{n+1-q}}_{s_2, s_3; t_2} + \by^{\gr{q}}_{s_1, s_2; t_1} \cdot \by^{\gr{n+1-q}}_{s_2, s_3; t_1} \right\|.
    \end{align}
    Then, for a fixed $q$, we have (see~\cite[Lemma E.25]{lee_random_2023})
    \begin{align}\nonumber
        \Big\|&\bx^{\gr{q}}_{s_1, s_2; t_2} \cdot \bx^{\gr{n+1-q}}_{s_2, s_3; t_2} - \bx^{\gr{q}}_{s_1, s_2; t_1} \cdot \bx^{\gr{n+1-q}}_{s_2, s_3; t_1} - \by^{\gr{q}}_{s_1, s_2; t_2} \cdot \by^{\gr{n+1-q}}_{s_2, s_3; t_2} + \by^{\gr{q}}_{s_1, s_2; t_1} \cdot \by^{\gr{n+1-q}}_{s_2, s_3; t_1}\Big\| \\
         &\leq  \left\|\bx^{\gr{q}}_{s_1, s_2; t_2}\right\| \cdot \left\| \bx^{\gr{n+1-q}}_{s_2, s_3; t_2} - \bx^{\gr{n+1-q}}_{s_2, s_3; t_1} - \by^{\gr{n+1-q}}_{s_2, s_3; t_2} + \by^{\gr{n+1-q}}_{s_2, s_3; t_1}\right\| + \left\|\bx^{\gr{q}}_{s_1, s_2; t_2} - \bx^{\gr{q}}_{s_1, s_2; t_1} - \by^{\gr{q}}_{s_1, s_2; t_2} + \by^{\gr{q}}_{s_1, s_2; t_1}\right\| \cdot \left\| \by^{\gr{n+1-q}}_{s_2, s_3; t_2}\right\| \nonumber \\
        & \hspace{5pt} + \left\|\bx^{\gr{q}}_{s_1, s_2; t_2} - \bx^{\gr{q}}_{s_1, s_2; t_1}\right\| \cdot \left\|\bx^{\gr{n+1-q}}_{s_2, s_3; t_1} - \by^{\gr{n+1-q}}_{s_2, s_3; t_1}\right\| + \left\|\bx^{\gr{q}}_{s_1, s_2; t_1} - \by^{\gr{q}}_{s_1, s_2; t_1}\right\| \cdot \left\|\by^{\gr{n+1-q}}_{s_2, s_3; t_2} - \by^{\gr{n+1-q}}_{s_2, s_3; t_1}\right\|.\nonumber
    \end{align}
    We can bound each of these four terms by using a combination of the regularity of $\rx$ and $\ry$ in~\Cref{eq:path_regularity}, the continuity properties of $\rx$ and $\ry$ in~\Cref{eq:path_continuity}, as well as the continuity properties between $\rx$ and $\ry$ in~\Crefrange{eq:pathwise_ext_cont_condition}{eq:pathwise_ext_vcont_condition} which are assumed by induction. In particular, we get 
    \begin{align}\nonumber
        \Big\|&\bx^{\gr{q}}_{s_1, s_2; t_2} \cdot \bx^{\gr{n+1-q}}_{s_2, s_3; t_2} - \bx^{\gr{q}}_{s_1, s_2; t_1} \cdot \bx^{\gr{n+1-q}}_{s_2, s_3; t_1} - \by^{\gr{q}}_{s_1, s_2; t_2} \cdot \by^{\gr{n+1-q}}_{s_2, s_3; t_2} + \by^{\gr{q}}_{s_1, s_2; t_1} \cdot \by^{\gr{n+1-q}}_{s_2, s_3; t_1}\Big\| \\ 
        &\leq  \left(\frac{\ctr{2q\orho}{s_1, s_2}}{\beta \ffact{2q}{\orho}}\right) \cdot \left(\epsilon \, \frac{\ctr{(2n-2q+1)\orho}{s_2, s_3} \ctr{\orho}{t_1, t_2}}{\beta \ffact{2n-2q+1}{\orho} \ffact{1}{\orho}}\right) + \left(\epsilon \, \frac{\ctr{(2q-1)\orho}{s_1, s_2} \ctr{\orho}{t_1, t_2}}{\beta \ffact{2q-1}{\orho} \ffact{1}{\orho}}\right) \cdot \left(\frac{\ctr{2(n+1-q)\orho}{s_2, s_3}}{\beta \ffact{2(n+1-q)}{\orho}}\right) \nonumber\\
        & \hspace{5pt} + \left(\frac{\ctr{(2q-1)\orho}{s_1, s_2} \ctr{\orho}{t_1, t_2}}{\beta \ffact{2q-1}{\orho} \ffact{1}{\orho}}\right) \cdot \left(\epsilon \cdot \frac{\ctr{2(n+1-q)\orho}{s_2, s_3}}{\beta \ffact{2(n+1-q)}{\orho}}\right) + \left(\epsilon \cdot \frac{\ctr{2q\orho}{s_1, s_2}}{\beta \ffact{2q}{\orho}}\right) \left(\frac{\ctr{(2n-2q+1)\orho}{s_2, s_3} \ctr{\orho}{t_1, t_2}}{\beta \ffact{2n-2q+1}{\orho} \ffact{1}{\orho}}\right) \nonumber\\
        & = \frac{2\epsilon}{\beta^2} \left( \frac{\ctr{2q\orho}{s_1, s_2}\ctr{(2n-2q+1)\orho}{s_2, s_3} \ctr{\orho}{t_1, t_2}}{ \ffact{2q}{\orho}\ffact{2n-2q+1}{\orho} \ffact{1}{\orho}} + \frac{\ctr{(2q-1)\orho}{s_1, s_2} \ctr{2(n+1-q)\orho}{s_2, s_3}\ctr{\orho}{t_1, t_2}}{ \ffact{2q-1}{\orho} \ffact{2(n+1-q)}{\orho}\ffact{1}{\orho}}\right)\nonumber
    \end{align}
    Then, taking the sum over $q$ and applying the neoclassical inequality, we obtain
    \begin{align} \label{eq:mixed_pathwise_ext_cont_main_bound2}
        \bigg\| \sum_{q=1}^n &\bx^{\gr{q}}_{s_1, s_2; t_2} \cdot \bx^{\gr{n+1-q}}_{s_2, s_3; t_2} - \bx^{\gr{q}}_{s_1, s_2; t_1} \cdot \bx^{\gr{n+1-q}}_{s_2, s_3; t_1} - \by^{\gr{q}}_{s_1, s_2; t_2} \cdot \by^{\gr{n+1-q}}_{s_2, s_3; t_2} + \by^{\gr{q}}_{s_1, s_2; t_1} \cdot \by^{\gr{n+1-q}}_{s_2, s_3; t_1} \bigg\|  \leq \frac{2\epsilon}{\orho\beta^2}  \frac{\ctr{(2n+1)\orho}{s_1, s_3} \ctr{\orho}{t_1, t_2}}{ \ffact{2n+1}{\orho}\ffact{1}{\orho}}
    \end{align}

    Once again, we consider the usual Young trick to conclude, see the proof of~\cite[Theorem 3.7]{lyons_differential_2007}.

\end{proof}

\begin{lemma} \label{lem:replacing_stuff}
    Let $a_1, \ldots, a_{k-1}, a_{k+1}, \ldots, a_n, b_1, \ldots, b_{k-1}, b_{k+1}, \ldots, b_n \in T_0$ and $a_k, b_k \in T_1$. Then, 
    \begin{align}
        \|a_1 \cdots a_n - b_1 \cdots b_n\| \leq \sum_{i=1}^n \|a_1\| \cdots \|a_{i-1}\| \cdot\|a_i - b_i\| \cdot \|b_{i+1}\| \cdots \|b_n\|.
    \end{align}
\end{lemma}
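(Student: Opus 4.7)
The plan is to prove this by a straightforward telescoping-sum argument combined with the submultiplicativity of the $T_0$-norm and the shortness of the $T_0$-bimodule actions on $T_1$ recorded in~\Cref{eq:norm_properties} and~\Cref{eq:levelwise_short_action}.

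First, I would expand the difference $a_1 \cdots a_n - b_1 \cdots b_n$ as a telescoping sum by inserting and removing the hybrid products $a_1 \cdots a_{i-1} b_i b_{i+1} \cdots b_n$ for $i = 1, \ldots, n-1$. This gives
\begin{align*}
a_1 \cdots a_n - b_1 \cdots b_n = \sum_{i=1}^n a_1 \cdots a_{i-1}\,(a_i - b_i)\, b_{i+1} \cdots b_n,
\end{align*}
where each summand is a well-defined element of $T_1$ since exactly one of its factors lies in $T_1$ (either $a_k$, $b_k$, or $a_k - b_k$ itself when $i = k$) and the remaining factors lie in $T_0$.

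Next, I would apply the triangle inequality in $T_1$ to bound the norm of the left-hand side by the sum over $i$ of the norms of the summands. To bound each individual summand, I would split into cases: if $i = k$ then the factor $(a_i - b_i) = (a_k - b_k)$ sits in $T_1$ and the surrounding factors $a_1,\ldots,a_{k-1}$ and $b_{k+1},\ldots,b_n$ are in $T_0$, so the bound follows directly from the short $T_0$-bimodule action of~\Cref{eq:levelwise_short_action}; if $i \neq k$ then the central factor $(a_i - b_i)$ is in $T_0$, but one of the surrounding strings $a_1 \cdots a_{i-1}$ (when $i > k$) or $b_{i+1} \cdots b_n$ (when $i < k$) contains the unique $T_1$-element, and we repeatedly apply submultiplicativity of $\|\cdot\|_{T_0}$ together with one application of the short bimodule action. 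In either case we obtain
\begin{align*}
\|a_1 \cdots a_{i-1}(a_i - b_i) b_{i+1} \cdots b_n\|_{T_1} \leq \|a_1\| \cdots \|a_{i-1}\| \cdot \|a_i - b_i\| \cdot \|b_{i+1}\| \cdots \|b_n\|,
\end{align*}
and summing over $i$ yields the claim.

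This statement is essentially a bookkeeping lemma, so I do not expect any real obstacle; the only mild subtlety is tracking which norm ($T_0$ or $T_1$) is used for each factor and confirming that exactly one $T_1$-valued factor appears in each summand, so that a single invocation of the short action suffices. No new tools beyond those developed in~\Cref{sssec:norms} are needed.
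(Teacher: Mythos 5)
Your proposal is correct and follows essentially the same telescoping-sum argument as the paper's proof: both decompose $a_1\cdots a_n - b_1\cdots b_n$ as $\sum_i a_1\cdots a_{i-1}(a_i-b_i)b_{i+1}\cdots b_n$ and then apply the norm properties from the earlier section. Your extra bookkeeping about which factor lies in $T_1$ versus $T_0$ is implicit in the paper's appeal to~\Cref{eq:norm_properties} but is a reasonable thing to spell out.
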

\begin{proof}
    This is done by first decomposing the left hand side, then using the properties of the norms on $T_0$ and $T_1$ from~\Cref{eq:norm_properties},
    \begin{align}\nonumber
        \|a_1 \cdots a_n - b_1 \cdots b_n\| &\leq \sum_{i=1}^n \|a_1 \cdots a_{i-1} \cdot a_i \cdot b_{i+1} \cdots b_n - a_1 \cdots a_{i-1} \cdot b_i \cdot b_{i+1} \cdots b_n\| \\
        & \leq \sum_{i=1}^n \|a_1 \cdots a_{i-1} \cdot (a_i-b_i) \cdot b_{i+1} \cdots b_n\| \nonumber\\
        & \leq \sum_{i=1}^n \|a_1\| \cdots \|a_{i-1}\| \cdot\|a_i - b_i\| \cdot \|b_{i+1}\| \cdots \|b_n\|.\nonumber
    \end{align}
\end{proof}

\subsection{Continuity of Surface Extension} \label{apxssec:continuity_surface_extension}

Proof of~\Cref{thm:cont_extension}

\begin{proof}
    The path continuity bounds in~\Cref{eq:ext_cont_path} holds by the continuity of rough path extensions~\cite{lyons_differential_1998,lyons_differential_2007}. The path continuity bounds for parallel paths in~\Cref{eq:ext_cont_path_hmixed} and~\Cref{eq:ext_cont_path_vmixed} hold by~\Cref{thm:mixed_pathwise_ext_cont}. For the surface component, we assume by induction that these continuity inequalities hold for the extensions $\trrX$ and $\trrY$ up to some $n \in \N$. With slight abuse of notation, we will denote the extension up to level $n$ by $\rrX$ and $\rrY$, and we will consider the case of $n+1$. \medskip
    
    \textbf{Step 1: Bounding Initial Pathwise Extension.} We begin with the pathwise extensions $\orrX$ and $\orrY$ to level $n+1$. Recall that the pathwise extensions $\orX$ and $\orY$ are defined by applying the algebra section $\ts$ to the extended path boundaries. Thus, we begin by bounding the difference between path boundaries of $\orrX$ and $\orrY$, similar to what was done in~\Cref{lem:pathwise_ext_dg_regularity}. In particular, we have
    \begin{align}
        \|(\orx^{h}_{s_1, s_2; t_1} \cdot \orx^{v}_{s_2; t_1, t_2} &\cdot \orx^{-h}_{s_1, s_2; t_2} \cdot \orx^{-v}_{s_1; t_1, t_2} - \ory^{h}_{s_1, s_2; t_1} \cdot \ory^{v}_{s_2; t_1, t_2} \cdot \ory^{-h}_{s_1, s_2; t_2} \cdot \ory^{-v}_{s_1; t_1, t_2})^{\gr{n+1}}\|  \label{eq:ext_cont_main_decomp}\\
        & \leq \| (\orx^{h}_{s_1, s_2; t_1} \cdot \orx^{-h}_{s_1, s_2; t_2} - \ory^{h}_{s_1, s_2; t_1} \cdot \ory^{-h}_{s_1, s_2; t_2})^{\gr{n+1}}\| + \| (\orx^{v}_{s_2; t_1, t_2} \cdot \orx^{-v}_{s_1;t_1, t_2} - \ory^{v}_{s_2; t_1, t_2} \cdot \ory^{-v}_{s_1;t_1, t_2})^{\gr{n+1}}\| \nonumber\\
        &+  \sum_{\substack{i + j + k + \ell = n+1 \\ i +k \geq 1, \, j + \ell \geq 1}} \|\orx^{h, \gr{i}}_{s_1, s_2; t_1} \cdot \orx^{v, \gr{j}}_{s_2; t_1, t_2} \cdot \orx^{-h,\gr{k}}_{s_1, s_2; t_2} \cdot \orx^{-v, \gr{\ell}}_{s_1; t_1, t_2} - \ory^{h, \gr{i}}_{s_1, s_2; t_1} \cdot \ory^{v, \gr{j}}_{s_2; t_1, t_2} \cdot \ory^{-h, \gr{k}}_{s_1, s_2; t_2} \cdot \ory^{-v, \gr{\ell}}_{s_1; t_1, t_2}\|.\nonumber
    \end{align}

    We start with the first term on the right hand side, where we use~\cite[Lemma 7.48]{friz_multidimensional_2010} to obtain
    \begin{align}
        \| (\orx^{h}_{s_1, s_2; t_1} \cdot \orx^{-h}_{s_1, s_2; t_2} - \ory^{h}_{s_1, s_2; t_1} \cdot \ory^{-h}_{s_1, s_2; t_2})^{\gr{n+1}}\|  \leq & \sum_{q=1}^n \left\|(\orx^{h, \gr{q}}_{s_1, s_2; t_1} - \orx^{h, \gr{q}}_{s_1, s_2; t_2}) \cdot \orx^{-h, \gr{n+1-q}}_{s_1, s_2; t_1} - (\ory^{h, \gr{q}}_{s_1, s_2; t_1} - \ory^{h, \gr{q}}_{s_1, s_2; t_2}) \cdot \ory^{-h, \gr{n+1-q}}_{s_1, s_2; t_1} \right\| \nonumber \\
        & + \|\orx^{h, \gr{n+1}}_{s_1, s_2; t_1} - \orx^{h, \gr{n+1}}_{s_1, s_2; t_2} - \ory^{h, \gr{n+1}}_{s_1, s_2; t_1} - \ory^{h, \gr{n+1}}_{s_1, s_2; t_2}\|. \label{eq:ext_cont_pure_horizontal_comp}
    \end{align}
    The second term on the right can be bound using~\Cref{eq:ext_cont_path_hmixed}. For a fixed $q \in [n]$, the first term on the right can be bound by
    \begin{align}\nonumber
        \bigg\|&(\orx^{h, \gr{q}}_{s_1, s_2; t_1} - \orx^{-h, \gr{q}}_{s_1, s_2; t_2}) \cdot \orx^{-h, \gr{n+1-q}}_{s_1, s_2; t_1} - (\ory^{h, \gr{q}}_{s_1, s_2; t_1} - \ory^{h, \gr{q}}_{s_1, s_2; t_2}) \cdot \ory^{-h, \gr{n+1-q}}_{s_1, s_2; t_1} \bigg\| \\
        & \leq \|\orx^{h, \gr{q}}_{s_1, s_2; t_1} - \orx^{h, \gr{q}}_{s_1, s_2; t_2} - \ory^{h, \gr{q}}_{s_1, s_2; t_1} + \ory^{h, \gr{q}}_{s_1, s_2; t_2} \| \cdot \|\orx^{-h, \gr{n+1-q}}_{s_1, s_2; t_1} \| + \| \ory^{h, \gr{q}}_{s_1, s_2; t_1} - \ory^{h, \gr{q}}_{s_1, s_2; t_2}\| \cdot \|\orx^{-h, \gr{n+1-q}}_{s_1, s_2; t_1} - \ory^{-h, \gr{n+1-q}}_{s_1, s_2; t_1}\| \nonumber\\
        & \leq \left(\epsilon \, \frac{\ctr{(2q-1)\orho}{s_1, s_2} \ctr{\orho}{t_1, t_2}}{\beta \ffact{2q-1}{\orho} \ffact{1}{\orho}} \right) \left( \frac{\ctr{2(n+1-q)\orho}{s_1, s_2}}{\beta \ffact{2(n+1-q)}{\orho}}\right) + \left( \frac{\ctr{(2q-1)\orho}{s_1, s_2} \ctr{\orho}{t_1, t_2}}{\beta \ffact{2q-1}{\orho} \ffact{1}{\orho}}\right) \left(\epsilon \frac{\ctr{2(n+1-q)\orho}{s_1, s_2}}{\beta \ffact{2(n+1-q)}{\orho}} \right) \nonumber\\
        & \leq \frac{2\epsilon}{\beta^2} \left( \frac{\ctr{(2n+1)\orho}{s_1, s_2} \ctr{\orho}{t_1, t_2}}{\ffact{2q-1}{\orho} \ffact{2(n+1-q)}{\orho} \ffact{1}{\orho} } \right).\nonumber
    \end{align}
    Then summing over $q$, including the $q = n+1$ term which already satisfies the bound~\Cref{eq:ext_cont_path_hmixed} due to~\Cref{thm:mixed_pathwise_ext_cont}, and applying~\Cref{eq:binomial_sum_bound}, the bound in~\Cref{eq:ext_cont_pure_horizontal_comp} becomes
    \begin{align}\nonumber
        \| (\orx^{h}_{s_1, s_2; t_1} \cdot \orx^{-h}_{s_1, s_2; t_2} - \ory^{h}_{s_1, s_2; t_1} \cdot \ory^{-h}_{s_1, s_2; t_2})^{\gr{n+1}}\| &\leq \epsilon \left(\frac{ 2^{2(n+1)\orho}}{\orho \beta^2} \right) \left( \frac{\ctr{(2n+1)\orho}{s_1, s_2} \ctr{\orho}{t_1, t_2}}{\ffact{2n+1}{\orho} \ffact{1}{\orho} } \right).
    \end{align}
    By the same argument, we obtain the analogous bound for the second term in~\Cref{eq:ext_cont_main_decomp},
    \begin{align}\nonumber
        \| (\orx^{v}_{s_2; t_1, t_2} \cdot \orx^{-v}_{s_1;t_1, t_2} - \ory^{v}_{s_2; t_1, t_2} \cdot \ory^{-v}_{s_1;t_1, t_2})^{\gr{n+1}}\| \leq \epsilon \left(\frac{ 2^{2(n+1)\orho}}{\orho \beta^2} \right) \left( \frac{\ctr{\orho}{s_1, s_2} \ctr{(2n+1)\orho}{t_1, t_2}}{\ffact{1}{\orho}\ffact{2n+1}{\orho}  } \right).
    \end{align}
    Then, by applying~\Cref{lem:replacing_stuff} and~\Cref{eq:ext_cont_path} for each summand in the third term of~\Cref{eq:ext_cont_main_decomp}, we have 
    \begin{align}\nonumber
        \|\orx^{h, \gr{i}}_{s_1, s_2; t_1} &\cdot \orx^{v, \gr{j}}_{s_2; t_1, t_2} \cdot \orx^{-h,\gr{k}}_{s_1, s_2; t_2} \cdot \orx^{-v, \gr{\ell}}_{s_1; t_1, t_2} - \ory^{h, \gr{i}}_{s_1, s_2; t_1} \cdot \ory^{v, \gr{j}}_{s_2; t_1, t_2} \cdot \ory^{-h, \gr{k}}_{s_1, s_2; t_2} \cdot \ory^{-v, \gr{\ell}}_{s_1; t_1, t_2}\|  \leq \frac{4\epsilon}{\beta^4} \frac{\ctr{2(i+k)\orho}{s_1, s_2} \ctr{2(j + \ell)\orho}{t_1, t_2}}{\ffact{i}{\orho} \ffact{j}{\orho}\ffact{k}{\orho} \ffact{\ell}{\orho}}.
    \end{align}
    After reindexing the sum in~\Cref{eq:ext_cont_main_decomp} as was done in~\Cref{eq:boundary_reg_bound3}, the sum is bound by
    \begin{align}\nonumber
        \frac{4 \epsilon}{\beta^4}\sum_{q=1}^n \sum_{i=1}^q \sum_{j=1}^{n+1-q} \frac{\ctr{2q\orho}{s_1, s_2} \, \ctr{(2n-2q+2)\orho}{t_1, t_2}}{\ffact{2i}{\orho} \, \ffact{2q-2i}{\orho}\, \ffact{2j}{\orho} \,\ffact{2n-2q-2j+2}{\orho}} & \leq \frac{4 \epsilon 2^{2(n+1)\orho}}{\orho^2 \beta^4} \sum_{q=1}^n \frac{\ctr{2q\orho}{s_1, s_2} \, \ctr{(2n-2q+2)\orho}{t_1, t_2}}{\ffact{2q}{\orho}\ffact{2n-2q+2}{\orho}}.
    \end{align}
    Then, putting this together in~\Cref{eq:ext_cont_main_decomp}, we get
    \begin{align}\nonumber
        \|(\orx^{h}_{s_1, s_2; t_1} \cdot \orx^{v}_{s_2; t_1, t_2} &\cdot \orx^{-h}_{s_1, s_2; t_2} \cdot \orx^{-v}_{s_1; t_1, t_2} - \ory^{h}_{s_1, s_2; t_1} \cdot \ory^{v}_{s_2; t_1, t_2} \cdot \ory^{-h}_{s_1, s_2; t_2} \cdot \ory^{-v}_{s_1; t_1, t_2})^{\gr{n+1}}\| \leq \frac{\epsilon}{2\beta} \CTR{n+1}{\orho}{s_1, s_2; t_1, t_2}.
    \end{align}
    Then, by definition of the pathwise extensions $\orX$ and $\orY$ in~\Cref{eq:rX_pathwise_extension}, and the fact that $\ts$ is a short linear map, we have
    \begin{align} \label{eq:cont_ext_pathwise_ext_final_bound}
        \|\orX^{\gr{n+1}}_{s_1, s_2; t_1, t_2} - \orY^{\gr{n+1}}_{s_1, s_2; t_1, t_2}\| \leq \frac{\epsilon}{2\beta} W^{n+1}_\orho(s_1, s_2; t_1, t_2).
    \end{align}

    \textbf{Step 2: Setup Full Extension.} Next, we will consider the bounds between the full extensions $\trX$ and $\trY$, where we will follow the construction and notation of~\Cref{prop:first_sewing}. Let $(\smp; \tmp) \in \Delta^2_\dya \times \Delta^2_\dya$ be a dyadic rectangle, and we will omit the subscripts from the grid multiplication to simplify notation. We will need to bound the difference between successive grid multiplications, which we do by bounding intermediate grid multiplications, as was done in~\Cref{eq:max_ineq_intermediate_subdivisions},
    \begin{align} \label{eq:cont_ext_decompose_into_intermediate}
        \|(\gridmult^{m+1}[\orX] - \gridmult^m[\orX] - \gridmult^{m+1}[\orY] + \gridmult^m[\orY])^{\gr{n+1}}\| \leq \sum_{u = 1}^{4^m} \left\|(\gridmult^{m, u}[\orX] - \gridmult^{m, u-1}[\orX] - \gridmult^{m, u}[\orY] + \gridmult^{m, u-1}[\orY])^{\gr{n+1}}\right\|.
    \end{align}
    Recall from the definition of the intermediate grid multiplication in~\Cref{def:intermediate_subdivision} that for a fixed $u \in [4^m]$, $\gridmult^{m, u}[\orX]$ and $\gridmult^{m, u-1}[\orX]$ differ by subdividing a certain subspace $[s_{i-1}, s_i] \times [t_{j-1}, t_j] \in \dpart_m(\smp; \tmp)$. Then, following the argument from~\Crefrange{eq:consec_intermediate_subdivision}{eq:consec_intermediate_subdivision2}, we get
    \begin{align} \label{eq:cont_ext_consec_intermediate_subdivision}
        \bigg\|(\gridmult^{m, u}[\orX] - &\gridmult^{m, u-1}[\orX] - \gridmult^{m, u}[\orY] + \gridmult^{m, u-1}[\orY])^{\gr{n+1}}\bigg\| \\
         \leq& \left\|\left(\orX_{s_{i-1}, s; t_{j-1}, t} \hmult \orX_{s, s_i; t_{j-1}, t} - \orX_{s_{i-1}, s_i; t_{j-1}, t}\right)^{\gr{n+1}} - \left(\orY_{s_{i-1}, s; t_{j-1}, t} \hmult \orY_{s, s_i; t_{j-1}, t} - \orY_{s_{i-1}, s_i; t_{j-1}, t}\right)^{\gr{n+1}}\right\| \\
        & + \left\|\left(\orX_{s_{i-1}, s; t, t_j} \hmult \orX_{s, s_i; t, t_j} - \orX_{s_{i-1}, s_i; t, t_j}\right)^{\gr{n+1}} - \left(\orY_{s_{i-1}, s; t, t_j} \hmult \orY_{s, s_i; t, t_j} - \orY_{s_{i-1}, s_i; t, t_j}\right)^{\gr{n+1}}\right\| \nonumber\\
        & + \left\|\left(\orX_{s_{i-1}, s_i; t_{j-1}, t} \vmult \orX_{s_{i-1}, s_i; t, t_j} - \orX_{s_{i-1}, s_i; t_{j-1}, t_j}\right)^{\gr{n+1}} - \left(\orY_{s_{i-1}, s_i; t_{j-1}, t} \vmult \orY_{s_{i-1}, s_i; t, t_j} - \orY_{s_{i-1}, s_i; t_{j-1}, t_j}\right)^{\gr{n+1}}\right\|.\nonumber
    \end{align}
    Thus, we must now consider the difference between the obstruction to multiplicativity of $\orX$ and $\orY$. \medskip
    
    \textbf{Step 3: Almost Multiplicativity.}
    We simplify the notation to prove this almost multiplicativity bound, and consider $s_1 < s_2 < s_3$ and $t_1 < t_2$. This part largely follows the proof of~\Cref{prop:almost_multiplicative}. Using the explicit form of this obstruction from~\Cref{eq:almost_hmult_element}, and the bound in~\Cref{eq:almost_mult_initial_bound_short_ts}, we have
    \begin{align} \label{eq:cont_ext_almost_mult_initial_bound}
        \bigg\|\left(\orX_{s_1, s_2; t_1, t_2} \hmult \orX_{s_2, s_3; t_1, t_2} - \orX_{s_1, s_3; t_1, t_2}\right)^{\gr{n+1}} &- \left(\orY_{s_1, s_2; t_1, t_2} \hmult \orY_{s_2, s_3; t_1, t_2} - \orY_{s_1, s_3; t_1, t_2}\right)^{\gr{n+1}}\bigg\| \\
        & \leq  3 \left\|(\orx^h_{s_1, s_2; t_1} \gt \orX_{s_2, s_3; t_1, t_2} - \ory^h_{s_1, s_2; t_1} \gt \orY_{s_2, s_3; t_1, t_2})^{\gr{n+1}}\right\|\nonumber
    \end{align}
    Then, by definitions of the action, we have 
    \begin{align}\nonumber
        \left\|(\orx^h_{s_1, s_2; t_1} \gt \orX_{s_2, s_3; t_1, t_2} - \ory^h_{s_1, s_2; t_1} \gt \orY_{s_2, s_3; t_1, t_2})^{\gr{n+1}}\right\| & \leq \sum_{k=0}^{n-1} \sum_{i=0}^k \left\| \orx^{h, \gr{i}}_{s_1, s_2; t_1} \cdot \orX^{\gr{n+1-k}}_{s_2, s_3; t_1, t_2} \cdot \orx^{-h, \gr{i}}_{s_1, s_2; t_1} - \ory^{h, \gr{i}}_{s_1, s_2; t_1} \cdot \orY^{\gr{n+1-k}}_{s_2, s_3; t_1, t_2} \cdot \ory^{-h, \gr{i}}_{s_1, s_2; t_1} \right\|
    \end{align}
    Then, applying~\Cref{lem:replacing_stuff} and using the induction hypothesis that the continuity properties in the theorem statement are true for levels $k \in [n]$, we get
    \begin{align}\nonumber
        \bigg\|(\orx^h_{s_1, s_2; t_1} \gt \orX_{s_2, s_3; t_1, t_2} - \ory^h_{s_1, s_2; t_1} \gt \orY_{s_2, s_3; t_1, t_2})^{\gr{n+1}}\bigg\| &\leq \frac{3\epsilon}{\beta^3} \sum_{k=0}^{n-1} \sum_{i=0}^k \frac{\ctr{2k\orho}{s_1, s_2} \CTR{n+1-k}{\orho}{s_2, s_3; t_1, t_2}}{\ffact{2i}{\orho}\ffact{2k - 2i}{\orho}} \\
        & \leq \frac{3\epsilon}{\orho\beta^3} \sum_{k=0}^{n-1} 2^{2k\orho} \frac{\ctr{2k\orho}{s_1, s_2} \CTR{n+1-k}{\orho}{s_2, s_3; t_1, t_2}}{\ffact{2k}{\orho}}\nonumber\\
        & \leq \frac{3\epsilon 2^{2(n+1)\orho}}{\orho\beta^3} \sum_{q=0}^{2n+1} \sum_{k=0}^{k_{n,q}} \frac{\ctr{2k\orho}{s_1, s_2}\ctr{(2(n-k+1)-q)\orho}{s_2, s_3} \ctr{q\orho}{t_1, t_2}}{\ffact{2k}{\orho}\ffact{2(n-k+1)-q}{\orho} \ffact{q}{\orho}} \nonumber\\
        & \leq \frac{3\epsilon 2^{2(n+1)\orho}}{\orho^2\beta^3} \sum_{q=0}^{2n+1} \frac{\ctr{(2n+2-q)\orho}{s_1, s_3} \ctr{q\orho}{t_1, t_2}}{\ffact{2n+2-q}{\orho} \ffact{q}{\orho}} \nonumber\\
        & = \frac{3\epsilon}{\orho^2 \beta^3} \CTR{n+1}{\orho}{s_1, s_3; t_1, t_2}.\nonumber
    \end{align}
    where $k_{n,q} = \min\{n-1, \lfloor n - \frac{q-1}{2}\rfloor\}$. Then, substituting this into~\Cref{eq:cont_ext_almost_mult_initial_bound}, and repeating this argument for the vertical case, we get
    \begin{align}\nonumber
        \bigg\|\left(\orX_{s_1, s_2; t_1, t_2} \hmult \orX_{s_2, s_3; t_1, t_2} - \orX_{s_1, s_3; t_1, t_2} - \orY_{s_1, s_2; t_1, t_2} \hmult \orY_{s_2, s_3; t_1, t_2} + \orY_{s_1, s_3; t_1, t_2}\right)^{\gr{n+1}}\bigg\| & \leq \frac{9\epsilon}{\orho^2 \beta^3} \CTR{n+1}{\orho}{s_1, s_3; t_1, t_2}\\
        \bigg\|\left(\orX_{s_1, s_2; t_1, t_2} \vmult \orX_{s_1, s_2; t_2, t_3} - \orX_{s_1, s_2; t_1, t_3} - \orY_{s_1, s_2; t_1, t_2} \vmult \orY_{s_1, s_2; t_2, t_3} + \orY_{s_1, s_2; t_1, t_3}\right)^{\gr{n+1}}\bigg\| & \leq \frac{9\epsilon}{\orho^2 \beta^3} \CTR{n+1}{\orho}{s_1, s_2; t_1, t_3}.\nonumber
    \end{align}
    \medskip

    \textbf{Step 4: Bounding Full Extension}
    Now, we return to bounding the full extension. Applying these almost multiplicativity bounds to~\Cref{eq:cont_ext_consec_intermediate_subdivision}, we obtain
    \begin{align}\nonumber
        \bigg\|(\gridmult^{m, u}[\orX] - &\gridmult^{m, u-1}[\orX] - \gridmult^{m, u}[\orY] + \gridmult^{m, u-1}[\orY])^{\gr{n+1}}\bigg\| \leq \frac{27\epsilon}{\orho^2 \beta^3} \CTR{n+1}{\orho}{s_{i-1}, s_i; t_{j-1}, t_j}.
    \end{align}
    Then, since $\CTR{n+1}{\orho}{s_{i-1}, s_i; t_{j-1}, t_j} = 2^{-2m(n+1)\orho} \CTR{n+1}{\orho}{\smp; \tmp}$, and summing up over all $4^m$ subsquares from~\Cref{eq:cont_ext_decompose_into_intermediate}, we get
    \begin{align}\nonumber
        \|(\gridmult^{m+1}[\orX] - \gridmult^m[\orX] - \gridmult^{m+1}[\orY] + \gridmult^m[\orY])^{\gr{n+1}}\| \leq \frac{27\epsilon}{\orho^2 \beta^3} 2^{2m(1-(n+1)\orho)} W^{n+1}_\orho(\smp; \tmp).
    \end{align}
    Next, summing up the difference between consecutive grid multiplications, we obtain
    \begin{align}\nonumber
        \|(\gridmult^{m+1}[\orX] - \gridmult^{m+1}[\orY]  - \orX_{\smp; \tmp} + \orY_{\smp; \tmp})^{\gr{n+1}}\| &\leq \frac{27\epsilon}{\orho^2 \beta^3} \left(\sum_{m=0}^\infty 2^{2m(1-(n+1)\orho)}\right) \CTR{n+1}{\orho}{\smp; \tmp} \\
        & \leq \frac{\epsilon}{2\beta} W^{n+1}_\orho(\smp; \tmp).
    \end{align}
    Finally, combining this with~\Cref{eq:cont_ext_pathwise_ext_final_bound}, we obtain
    \begin{align}\nonumber
        \|(\gridmult^{m+1}[\orX] - \gridmult^{m+1}[\orY])^{\gr{n+1}}\| \leq \frac{\epsilon}{\beta} \CTR{n+1}{\orho}{\smp; \tmp}.
    \end{align}
    Then, since this bound does not depend on $m$, we have
    \begin{align}\nonumber
        \|\trX^{\gr{n+1}}_{\smp;\tmp} - \trY^{\gr{n+1}}_{\smp;\tmp}\| \leq \frac{\epsilon}{\beta} \CTR{n+1}{\orho}{\smp; \tmp}.
    \end{align}

\end{proof}

\subsection{Completeness of Rough Surface Space}

\begin{proof}[Proof of~\Cref{prop:rough_surface_complete}]
    This proof is fairly standard and follows the structure of the proof that the space of path functionals equipped with the rough path metric is complete. We begin by showing that the spaces of path and surface functionals
    \begin{align}\nonumber
        (\HPF_\rho^{\gr{\leq n}}, d^h_\rho), \quad (\VPF_\rho^{\gr{\leq n}}, d^v_\rho) \andd (\SF_\rho^{\gr{\leq n}}, d^s_\rho)
    \end{align}
    are complete. We begin with the horizontal path components. Suppose $\rx^h(r) \in \HPF_\rho^{\gr{\leq n}}$ is a Cauchy sequence with respect to $d^h_\rho$. In particular, each level $\rx^{h, \gr{k}}(r)$ is Cauchy, and by definition of the $d^h_\rho$ metric, this implies that for $r, r' \in \N$,
    \begin{align}\nonumber
        \|\rx^{h, \gr{k}}_{s_1, s_2; t}(r) - \rx^{h, \gr{k}}_{s_1, s_2; t}(r')\| \leq d^h_\rho\left(\rx^{h, \gr{k}}(r), \rx^{h, \gr{k}}(r')\right) \Delta_{s_1, s_2}^{k\rho} \leq d^h_\rho\left(\rx^{h, \gr{k}}(r), \rx^{h, \gr{k}}(r')\right),
    \end{align}
    so it is Cauchy in $C(\Delta^2 \times [0,1], T_0^{\gr{k}})$ equipped with the uniform topology. Thus, $\rx^{h, \gr{k}}(r)$ converges uniformly to a function $\rx^{h, \gr{k}} \in C(\Delta^2 \times [0,1], T_0^{\gr{k}})$. Next, we must show that $\rx^{h, \gr{k}} \in \HPF_\rho^{\gr{k}}$. \medskip
    
    \textbf{Path Functional: Regularity Condition.} Because $\rx^{h, \gr{k}}(r)$ is Cauchy in the $d^h_\rho$ metric, for a fixed $\epsilon > 0$, there exists some $R \in \N$ such that for $r, r' \geq R$ such that
    \begin{align}\nonumber
        \|\rx^{h, \gr{k}}_{s_1, s_2; t}(r) - \rx^{h, \gr{k}}_{s_1, s_2; t}(r')\| \leq \epsilon \Delta_{s_1, s_2}^{k\rho}.
    \end{align}
    Then, for $r \geq R$, we have
    \begin{align}\nonumber
        \|\rx^{h, \gr{k}}_{s_1, s_2; t}(r)\| & \leq \|\rx^{h, \gr{k}}_{s_1, s_2; t}(r) - \rx^{h, \gr{k}}_{s_1, s_2; t}(R)\| + \|\rx^{h, \gr{k}}_{s_1, s_2; t}(R)\| \\
        & \leq (C_R + \epsilon) \Delta_{s_1, s_2}^{k\rho},
    \end{align}
    where $C_R > 0$ is the constant such that $\|\rx^{h, \gr{k}}_{s_1, s_2; t}(R)\| \leq C_R \Delta_{s_1, s_2}^{k\rho}$. Thus, there exists some fixed $M> 0$ such that
    \begin{align}\nonumber
        \|\rx^{h, \gr{k}}_{s_1, s_2; t}(r)\| \leq M \Delta_{s_1, s_2}^{k\rho}.
    \end{align}
    Fix $\epsilon > 0$. For each $(s_1, s_2; t) \in \Delta^2 \times [0,1]$, there exists some $R_{s_1, s_2; t} \in \N$ such that
    \begin{align}\nonumber
        \| \rx^{h, \gr{k}}_{s_1, s_2; t}(R_{s_1, s_2; t}) - \rx^{h, \gr{k}}_{s_1, s_2; t}\| \leq \epsilon \Delta_{s_1, s_2}^{k\rho}.
    \end{align}
    by uniform convergence.
    Then, we have
    \begin{align}\nonumber
        \|\rx^{h, \gr{k}}_{s_1, s_2; t}\| &\leq \| \rx^{h, \gr{k}}_{s_1, s_2; t}(r) - \rx^{h, \gr{k}}_{s_1, s_2; t}\| + \|\rx^{h, \gr{k}}_{s_1, s_2; t}(r)\| \\
        & \leq (M + \epsilon) \Delta_{s_1, s_2}^{k\rho}.\nonumber
    \end{align}
    Thus, $\rx^{h, \gr{k}}$ satisfies the path regularity condition. \medskip

    \textbf{Path Functional: Continuity Condition.} This argument will be similar to the previous. For a fixed $\epsilon> 0$, there exists some $R \in \N$ such that for $r,r' \geq R$, we have
    \begin{align}\nonumber
        \|\rx^{h, \gr{k}}_{s_1, s_2; t_1}(r) - \rx^{h, \gr{k}}_{s_1, s_2; t_2}(r) - \rx^{h, \gr{k}}_{s_1, s_2; t_1}(r') + \rx^{h, \gr{k}}_{s_1, s_2; t_2}(r')\| \leq \epsilon \inc{s_1}{s_2}^{(2k-1)\orho} \inc{t_1}{t_2}^{\orho}.
    \end{align}
    Then, for $r \geq R$, we have
    \begin{align}\nonumber
        \|\rx^{h, \gr{k}}_{s_1, s_2; t_1}(r) - \rx^{h, \gr{k}}_{s_1, s_2; t_2}(r)\| &\leq \|\rx^{h, \gr{k}}_{s_1, s_2; t_1}(r) - \rx^{h, \gr{k}}_{s_1, s_2; t_2}(r) - \rx^{h, \gr{k}}_{s_1, s_2; t_1}(R) + \rx^{h, \gr{k}}_{s_1, s_2; t_2}(R)\| + \|\rx^{h, \gr{k}}_{s_1, s_2; t_1}(R) - \rx^{h, \gr{k}}_{s_1, s_2; t_2}(R)\| \\
        & \leq (C_R + \epsilon)\inc{s_1}{s_2}^{(2k-1)\orho} \inc{t_1}{t_2}^{\orho},\nonumber
    \end{align}
    where $C_R > 0$ is the constant such that $\|\rx^{h, \gr{k}}_{s_1, s_2; t_1}(R) - \rx^{h, \gr{k}}_{s_1, s_2; t_2}(R)\| \leq C_R \inc{s_1}{s_2}^{(2k-1)\orho} \inc{t_1}{t_2}^{\orho}$. Thus, there exists some fixed $M>0$ such that for all $r \in \N$,
    \begin{align}\nonumber
        \|\rx^{h, \gr{k}}_{s_1, s_2; t_1}(r) - \rx^{h, \gr{k}}_{s_1, s_2; t_2}(r)\| \leq M \inc{s_1}{s_2}^{(2k-1)\orho} \inc{t_1}{t_2}^{\orho}.
    \end{align}
    Fix $\epsilon > 0$, then by the same argument as the path regularity condition, we have 
    \begin{align}\nonumber
        \|\rx^{h, \gr{k}}_{s_1, s_2; t_1} - \rx^{h, \gr{k}}_{s_1, s_2; t_2}\| \leq (M+\epsilon) \inc{s_1}{s_2}^{(2k-1)\orho} \inc{t_1}{t_2}^{\orho}.
    \end{align}
    Thus, we have $\rx^{h, \gr{k}} \in \HPF_\rho^{\gr{k}}$. \medskip

    \textbf{Path Functional: Convergence in $d^h_\rho$.} Now, we need to make sure that $\rx^{h, \gr{k}}(r)$ converges to $\rx^{h, \gr{k}}$ in the $d^h_\rho$ metric. For $\epsilon > 0$, there exists $R\in \N$ such that for $r, r' \geq R$, we have
    \begin{align}\nonumber
        \|\rx^{h, \gr{k}}_{s_1, s_2; t}(r) - \rx^{h, \gr{k}}_{s_1, s_2; t}(r')\| \leq \epsilon \Delta_{s_1, s_2}^{k\rho}.
    \end{align}
    Then, by uniform convergence, we can find some $r' > R$ sufficiently large such that
    \begin{align}\nonumber
        \|\rx^{h, \gr{k}}_{s_1, s_2; t}(r') - \rx^{h, \gr{k}}_{s_1, s_2; t}\| \leq \epsilon \Delta_{s_1, s_2}^{k\rho},
    \end{align}
    so that
    \begin{align}\nonumber
        \|\rx^{h, \gr{k}}_{s_1, s_2; t}(r) - \rx^{h, \gr{k}}_{s_1, s_2; t}\| \leq 2\epsilon \Delta_{s_1, s_2}^{k\rho}.
    \end{align}
    We can make the same argument for the continuity component of $d^h_\rho$, and thus $\rx^{h, \gr{k}}(r)$ converges to $\rx^{h, \gr{k}}$ in the $d^h_\rho$ metric. Thus $(\HPF_\rho^{\gr{\leq n}}, d^h_\rho)$ is complete. Note that we can make the same arguments to show that $(\VPF_\rho^{\gr{\leq n}}, d^v_\rho)$ is complete. The same is true for the surface functional $(\SF_\rho^{\gr{\leq n}}, d^s_\rho)$. \medskip

    \textbf{Multiplicative Double Group Functional.} Then, since all of these sequences converge uniformly, if $\rrX(r) = (\rx^h(r), \rx^v(r), \rX(r))$ is path multiplicative and/or multiplicative, the limit will be as well. Furthermore, since $\delta$ is a continuous map, the boundary condition also holds in the limit. Thus, $(\DGF^\rho(\bT^{\gr{\leq{n}}}), d_\rho)$ is complete.
\end{proof}

\section{Universal Crossed Module of Locally \texorpdfstring{$m$}{m}-Convex Algebras} \label{apxsec:universal_locally_convex}

The universal property of the free associative algebra $T_0(\V)$ allows us to lift a linear map $\cona \in L(\V, A_0)$ into an associative algebra $A_0$, to an algebra morphism $\tcona: T_0(\V) \to A_0$. However, this construction ignores topological considerations, which are required to formulate the universal property of signatures. In particular, if $A_0$ is a Banach algebra, and $\cona \in L(\V, A_0)$ is a continuous linear map, we require a topology on $T_0(\V)$ such that $\tcona$ is continuous.

Here, we will follow the construction in~\cite[Section 2]{chevyrev_characteristic_2016} to construct a universal locally $m$-convex algebra. We refer the reader to~\cite{mallios_topological_2011,treves_topological_2006} for background on locally convex spaces and topological algebras. Here, we fix $\V$ to be a finite dimensional Hilbert space. \medskip

We can simply define a topology on $T_0(\V)$ by taking the coarsest topology such that the extension $\tcona$ is continuous for any continuous linear map $\cona \in L(\V, A_0)$ into a Banach algebra $A_0$. In fact, by~\cite[Corollary 2.5]{chevyrev_characteristic_2016}, $T_0(\V)$ becomes a locally $m$-convex algebra, where the topology is generated by the family $\{p_\lambda\}_{\lambda >0}$ of norms defined in~\Cref{eq:plambda_norm}. The completion of $T_0(\V)$ is $E_0(\V)$ and can be expressed as~\Cref{eq:E0_characterization}, and is equipped with the universal property in~\Cref{cor:E0_universal}. \medskip

Our goal is to develop the crossed module analogue of this construction such that the unique morphisms obtained in the universal property of $\bT(\V)$ in~\Cref{prop:univ_2con_alg} are continuous. We begin with the free bimodule construction. We use $\otimes$ for the algebraic tensor product, $\otimes_\pi$ for the projective tensor product, and $\hotimes_\pi$ for the completion of the projective tensor product.

\begin{lemma} \label{lem:projective_bimodule}
    Let $(A, \cdot, 1)$ be a unital locally $m$-convex algebra, and $W$ a locally convex vector space. The \emph{projective $A$-bimodule over $W$} is $\PBi(A, W) \coloneqq A \hotimes W \hotimes A$, where $\hotimes$ is the completion of the projective tensor product. The left and right actions are continuous and this satisfies the universal property that for any $A$-bimodule $M$ with continuous actions and a continuous linear map $f \in L(W, M)$, there exists a unique continuous morphism of $A$-bimodules $\tf: \PBi(A, W) \to M$ such that the following commutes.
    \[
        \begin{tikzcd}
            W \ar [r, "f"] \ar[d, swap,"\iota"] & M \\
            \PBi(A,W) \ar[ur, swap, dashed, "\tf"]&
        \end{tikzcd}
    \]
\end{lemma}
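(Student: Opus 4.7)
The plan is to first construct the map on the algebraic tensor product $A \otimes W \otimes A$ using the universal property of the projective tensor product of locally convex spaces, then verify that it is an $A$-bimodule morphism on a dense subspace, and finally extend by continuity to the completion $\PBi(A,W) = A \hotimes W \hotimes A$.

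First I would establish continuity of the bimodule actions on $\PBi(A,W)$. The left and right multiplication maps $A \times (A \otimes W \otimes A) \to A \otimes W \otimes A$ defined on elementary tensors by $a \gtd (b \otimes w \otimes c) = (a \cdot b) \otimes w \otimes c$ and $(b \otimes w \otimes c) \ltd a = b \otimes w \otimes (c \cdot a)$ are continuous with respect to the projective topology. This follows from the submultiplicativity of the defining seminorms $\{p_\alpha\}$ on the locally $m$-convex algebra $A$: if $p_\alpha$ is submultiplicative and $q_\beta$ is a seminorm on $W$, then the seminorm $p_\alpha \otimes_\pi q_\beta \otimes_\pi p_\alpha$ satisfies $(p_\alpha \otimes_\pi q_\beta \otimes_\pi p_\alpha)(a \gtd x) \leq p_\alpha(a)(p_\alpha \otimes_\pi q_\beta \otimes_\pi p_\alpha)(x)$ by taking the infimum over representations. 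The actions then extend continuously to the completion, giving $\PBi(A,W)$ the structure of a topological $A$-bimodule.

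Next I would construct the map $\tf$. Given a continuous linear map $f: W \to M$, I would consider the trilinear map $\Phi: A \times W \times A \to M$ defined by $\Phi(a, w, b) \coloneqq a \gtd_M f(w) \ltd_M b$. Since the actions on $M$ are continuous and $f$ is continuous, $\Phi$ is continuous as a trilinear map. By the universal property of the (completed) projective tensor product of locally convex spaces, $\Phi$ factors uniquely through a continuous linear map $\tf: A \hotimes W \hotimes A \to M$ with $\tf(a \otimes w \otimes b) = a \gtd_M f(w) \ltd_M b$. The identity $\tf \circ \iota = f$ is immediate since $\iota(w) = 1 \otimes w \otimes 1$ and $1$ acts as the identity on $M$. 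Verifying that $\tf$ is a bimodule morphism is then a direct computation on elementary tensors: $\tf(a \gtd (b \otimes w \otimes c)) = \tf((ab) \otimes w \otimes c) = (ab) \gtd_M f(w) \ltd_M c = a \gtd_M \tf(b \otimes w \otimes c)$, and similarly for the right action. Since elementary tensors span a dense subspace and all the operations involved are continuous, the bimodule morphism property extends to all of $\PBi(A,W)$.

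For uniqueness, any continuous $A$-bimodule morphism $g: \PBi(A,W) \to M$ with $g \circ \iota = f$ must satisfy $g(a \otimes w \otimes b) = a \gtd_M g(1 \otimes w \otimes 1) \ltd_M b = a \gtd_M f(w) \ltd_M b = \tf(a \otimes w \otimes b)$ on elementary tensors, and therefore agrees with $\tf$ on the dense subspace generated by them; continuity then forces $g = \tf$ everywhere. The main subtlety I anticipate is purely technical: since $A_0$ and $A_1$ in the intended application (the construction of $E_1(\V)$ in the main text) carry the locally $m$-convex topologies generated by the families $\{p_\lambda\}$ and $\{P_\lambda\}$, one must check compatibility of these seminorms with the projective tensor product so that the resulting $\PBi$ construction recovers the completion $E_1(\V)$ described after \Cref{eq:Plambda_norm}; this is handled by the same arguments as in~\cite[Section 2]{chevyrev_characteristic_2016} and poses no genuine obstacle beyond careful bookkeeping with the defining seminorms.
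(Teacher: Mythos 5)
Your proof is correct and reaches the same conclusion, but by a genuinely different route. For the universal property, the paper reuses the purely algebraic free-bimodule universal property (\Cref{lem:fbi_universal}) to get $\tf$ on $A \otimes W \otimes A$, then verifies continuity by observing that $\tf$ factors as $\id \otimes f \otimes \id$ followed by the action map, both of which are continuous; you instead construct $\tf$ directly from the universal property of the completed projective tensor product applied to the continuous trilinear map $\Phi(a,w,b) = a \gtd_M f(w) \ltd_M b$, which avoids invoking the algebraic lemma but requires you to re-verify the bimodule-morphism identity by hand. For the continuity of the actions, the paper factors the action through $A \otimes_\pi (A \hotimes_\pi W \hotimes_\pi A) \to A \hotimes_\pi W \hotimes_\pi A$ and invokes continuity of multiplication in a locally $m$-convex algebra; you make the same underlying fact explicit with the seminorm estimate $(p_\alpha \otimes_\pi q_\beta \otimes_\pi p_\alpha)(a \gtd x) \leq p_\alpha(a)\,(p_\alpha \otimes_\pi q_\beta \otimes_\pi p_\alpha)(x)$, which is exactly what makes the second map in the paper's factorization continuous. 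The trade-off is transparency versus modularity: your version is self-contained at the price of repeating computations the paper outsources to \Cref{lem:fbi_universal}; the paper's version is shorter given the preceding lemma but leaves the seminorm bookkeeping implicit. One small imprecision in your writeup: when you invoke the ``universal property of the completed projective tensor product'' for a trilinear map, you should iterate the bilinear universal property (e.g.\ through $(A \hotimes_\pi W) \hotimes_\pi A$), since the universal property is stated for bilinear maps; this is standard but worth making explicit. Your closing remark about compatibility of the $\{p_\lambda\}$ and $\{P_\lambda\}$ topologies is not part of this lemma — that issue is handled separately in \Cref{lem:Plambda_topology} — so it would not belong in a proof of this statement.
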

\begin{proof}
    The left action is continuous as a bilinear map since it factors as
    \begin{align}\nonumber
        A \times (A \hotimes_\pi W \hotimes_\pi A) \rightarrow A \otimes_\pi A \hotimes_\pi W \hotimes_\pi A \xrightarrow{\cdot \otimes \id \otimes \id} A \hotimes_\pi W \hotimes_\pi A,
    \end{align}
    where the first map is continuous by the definition of the projective tensor product~\cite[Definition 43.2]{treves_topological_2006}, and the second is continuous since $A$ is a locally $m$-convex algebra and thus multiplication is continuous~\cite[Proposition 1.6]{mallios_topological_2011}. The same argument holds for the right action. \medskip
    
    By the universality of $\FBi(A, W) = A \otimes W \otimes A$, there exists a unique morphism of $A$-bimodules $\tf: \FBi(A,W) \to M$ with the desired properties. Now, we note that $\tf$ factors as
    \begin{align}\nonumber
        \tf : A \otimes W \otimes A \xrightarrow{\id \otimes f \otimes \id} A \otimes M \otimes A \rightarrow M,
    \end{align}
    which is continuous as a map $\tf: A \otimes_\pi W \otimes_\pi A \to M$, and therefore its completion $\tf: \PBi(A, W) \to M$ is also continuous.
\end{proof}

We will be primarily interested in the case of the projective bimodule
\begin{align}\nonumber
    \oE_1(\V) \coloneqq \PBi(E_0(\V), \Lambda^2\V) = E_0(\V) \hotimes_\pi \Lambda^2 \V \hotimes_\pi E_0(\V).
\end{align}
In fact, we can characterize this as a subset of $\oT_1\ps{\V}$ in the same way as~\Cref{eq:E0_characterization}.

\begin{lemma} \label{lem:Plambda_topology}
    Equip each $\oT_1^{\gr{k}}(\V)$ with the norms discussed in~\Cref{sssec:norms}. Then,
    \begin{align}\nonumber
        \oE_1(\V) \coloneqq \left\{ E \in \oT_1\ps{\V} \, : \, \sum_{n=2}^\infty \lambda^n \|E^{\gr{n}}\| < \infty \, \text{ for all } \, \lambda > 0 \right\}.
    \end{align}
\end{lemma}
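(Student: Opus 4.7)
The plan is to characterize the topology on $\oE_1(\V) = E_0(\V) \hotimes_\pi \Lambda^2\V \hotimes_\pi E_0(\V)$ by showing that the family $\{P_\lambda\}_{\lambda > 0}$ defined in~\Cref{eq:Plambda_norm} generates the same topology on the algebraic tensor product $E_0(\V) \otimes \Lambda^2\V \otimes E_0(\V)$ as the family of projective cross-seminorms $\{p_\lambda \otimes \|\cdot\| \otimes p_\lambda\}_{\lambda > 0}$. Once both families are shown to be equivalent, the characterization as a subset of $\oT_1\ps{\V}$ will follow from a level-wise Cauchy argument on truncations.

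First I would establish that $P_\lambda$ is dominated by the projective cross-seminorm. For an elementary tensor $a \otimes w \otimes b$, its level-$n$ component is $\sum_{k + \ell = n-2} a^{\gr{k}} \otimes w \otimes b^{\gr{\ell}}$, and the multiplicative compatibility of the Hilbert norms in~\Cref{eq:norm_properties} gives
\begin{align*}
    P_\lambda(a \otimes w \otimes b) \leq \sum_{n \geq 2} \lambda^n \sum_{k+\ell = n-2} \|a^{\gr{k}}\|\,\|w\|\,\|b^{\gr{\ell}}\| = \lambda^2\, p_\lambda(a)\, \|w\|\, p_\lambda(b).
\end{align*}
Extending by subadditivity to finite sums and taking the infimum over representations shows $P_\lambda \leq \lambda^2\,(p_\lambda \otimes \|\cdot\| \otimes p_\lambda)$, so each $P_\lambda$ is continuous on the projective tensor product and extends continuously to $\oE_1(\V)$. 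This yields the inclusion of $\oE_1(\V)$ into the set prescribed on the right-hand side of the stated identity.

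The main obstacle is the reverse estimate, namely controlling the projective cross-seminorm by $P_\lambda$ at a rescaled parameter. Here I would exploit the finite-dimensional Hilbert structure of each graded piece $\oT_1^{\gr{n,k}}(\V) = \V^{\otimes k} \otimes \Lambda^2\V \otimes \V^{\otimes(n-k-2)}$. Expanding $E^{\gr{n,k}} = \sum_\alpha c_\alpha e_\alpha$ in the orthonormal basis of~\Cref{eq:oT1_basis}, each basis vector $e_\alpha$ is an elementary tensor with $(p_\mu \otimes \|\cdot\| \otimes p_\mu)$-seminorm equal to $\mu^{n-2}$. A Cauchy--Schwarz estimate for $\ell^1$ versus $\ell^2$ norms over the $d^{n-2}\binom{d}{2}$ basis vectors gives
\begin{align*}
    (p_\mu \otimes \|\cdot\| \otimes p_\mu)(E^{\gr{n,k}}) \leq \mu^{n-2}\sqrt{d^{n-2}\binom{d}{2}}\,\|E^{\gr{n,k}}\|.
\end{align*}
Summing over $k \in \{0, \ldots, n-2\}$ using $\|E^{\gr{n,k}}\| \leq \|E^{\gr{n}}\|$, and then over $n$, one absorbs the polynomial prefactor $(n-1)$ into any geometric rate $\lambda > \mu\sqrt{d}$ to obtain an estimate of the form $(p_\mu \otimes \|\cdot\| \otimes p_\mu)(E) \leq C_{\mu,\lambda}\, P_\lambda(E)$ for every $E$ in the algebraic tensor product. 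This equivalence of seminorm families identifies the two topologies.

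Given the topology equivalence, the proof concludes by completion. For any $E \in \oT_1\ps{\V}$ with $P_\lambda(E) < \infty$ for every $\lambda > 0$, the truncations $E_{\leq N} := \sum_{n=2}^N E^{\gr{n}} \in T_0(\V) \otimes \Lambda^2\V \otimes T_0(\V)$ satisfy $P_\lambda(E_{\leq N} - E_{\leq M}) = \sum_{n=M+1}^N \lambda^n\|E^{\gr{n}}\| \to 0$ as $M,N \to \infty$, hence form a Cauchy net in every $P_\lambda$ and therefore in every projective cross-seminorm. They thus converge in $\oE_1(\V)$, and the continuity of the level-$n$ projection $\pi_n : \oE_1(\V) \to \oT_1^{\gr{n}}(\V)$ forces the limit to have $n^{\text{th}}$ component $E^{\gr{n}}$, identifying the limit with $E$ itself. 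This matches the two sets and completes the characterization.
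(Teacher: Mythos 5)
Your proof takes essentially the same approach as the paper: compare the family $\{P_\lambda\}$ with the projective cross-seminorm family on $E_0(\V) \otimes \Lambda^2\V \otimes E_0(\V)$, show they generate the same locally convex topology, and then pass to the completion. Where you are more careful than the paper's own argument is in the reverse estimate. The paper invokes finite-dimensional equivalence of norms on each graded piece $\oT_1^{\gr{n,k}}(\V)$ and writes $p_{\lambda,\mu}(E) \leq C_{\lambda,\mu} P_\mu(E)$ with the \emph{same} parameter $\mu$ appearing on both sides, but the level-wise equivalence constants between the projective tensor norm and the Hilbert norm grow like $\sqrt{d^{n-2}}$, so no $n$-uniform constant $C_{\lambda,\mu}$ exists for that literal inequality. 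Your explicit Cauchy--Schwarz computation exposes exactly this dimension factor and correctly absorbs it by passing to a strictly larger parameter $\lambda > \mu\sqrt{d}$, which is what the topology-equivalence conclusion genuinely needs; so your version repairs an imprecision in the paper's statement of the estimate while arriving at the same conclusion. Your completion step is also spelled out more explicitly than the paper's (which stops at the seminorm comparison), and the logic there is sound.
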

\begin{proof}
    We will need to distinguish between different norms. For $\lambda > 0$, let $p_\lambda$ be the norm on $T_0(\V)$ (and $E_0(\V)$) defined by~\Cref{eq:plambda_norm}, $q$ denote the norm on $\Lambda^2 \V$, and $P_\lambda$ denote the norm on $\oT_1(\V)$ given by
    \begin{align}\nonumber
        P_\lambda(E) \coloneqq \sum_{k=2}^\infty \lambda^k \|E^{\gr{k}}\|.
    \end{align}
    Furthermore, for $\lambda, \mu > 0$, let $p_{\lambda, \mu}$ be a norm on $\oT_1(\V)$ given by the projective tensor product of norms $p_{\lambda, \mu} \coloneqq p_\lambda \otimes_\pi q \otimes_\pi p_\mu$.
    Finally, let $Q^{\gr{n,k}}$ be the projective tensor norm on $\oT_1^{\gr{n,k}}(\V) = \V^{\otimes k} \otimes \Lambda^2 \otimes \V^{\otimes n-k-2}$. Now, we note that for $E \in \oT_1(\V)$, we have
    \begin{align}\nonumber
        p_{\lambda, \mu}(E) &= \inf \sum_{i=1}^r p_\lambda(a_i) \cdot q(v) \cdot q_\mu(b_i) \\
        & = \sum_{n=2}^\infty \sum_{k=0}^{n-2} \, \inf \, \sum_{i=1}^n  \lambda^k \mu^{n-k-2} \|a_i^{\gr{k}}\| \cdot \|v_i\| \cdot \|b_i^{\gr{n-k-2}}\| 
    \end{align}
    where the infimum is taken over all representations $\sum_{i=1}^r a_i \otimes v \otimes b_i = E$. Suppose $\lambda \leq \mu$, then,
    \begin{align}\nonumber
        p_{\lambda, \mu}(E)\leq \sum_{n=2}^\infty \mu^{n-2} \sum_{k=0}^{n-2} Q^{\gr{n,k}}(E^{\gr{n,k}}),
    \end{align}
    and since all norms on the finite dimensional spaces $\oT_1^{\gr{n,k}}(\V)$ are equivalent, there exists some constant $C_{\lambda, \mu}>0$ such that
    \begin{align}\nonumber
        p_{\lambda, \mu}(E) \leq C_{\lambda,\mu} P_\mu(E).
    \end{align}
    Similarly, we can find a constant $c_{\lambda,\mu} > 0$ such that
    \begin{align}\nonumber
        c_{\lambda, \mu} P_\lambda(E) \leq p_{\lambda, \mu}(E) \leq C_{\lambda, \mu} P_\mu(E),
    \end{align}
    and thus the families $\{p_{\lambda, \mu}\}_{\lambda, \mu > 0}$ (which defines the topology of $\oE_1(\V)$) and $\{P_\lambda\}_{\lambda >0}$ define the same topology on $\oT_1(\V)$. Thus, we obtain our desired result. 
\end{proof} 

Next, consider the quotient of $\oT_1(\V)$ (equipped with the topology generated by the norms $P_\lambda$) by the Peiffer ideal $\Pf$. This quotient is still a locally convex space, and its topology is generated by the corresponding quotient norms~\cite[Proposition 7.9]{treves_topological_2006}. Then, we denote the completion by $E_1(\V)$, which can be characterized as a subset of $T_1\ps{\V}$ as
\begin{align}\nonumber
    E_1(\V) \coloneqq \left\{ E \in T_1\ps{\V} \, : \, \sum_{k=2}^\infty \lambda^k \|E^{\gr{k}}\| < \infty \, \text{ for all } \, \lambda > 0 \right\},
\end{align}
where $\|E^{\gr{n}}\|$ here denotes the quotient norms on $E^{\gr{n}} \in T_1^{\gr{n}}$ defined in~\Cref{sssec:norms}.

\begin{lemma}
    The structure $\bE(\V) = (\delta: E_1(\V) \to E_0(\V), \gtd, \ltd)$ is a crossed module of locally $m$-convex algebras.
\end{lemma}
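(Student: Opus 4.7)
The plan is to verify the four defining properties of a crossed module of locally $m$-convex algebras: that both $E_0(\V)$ and $E_1(\V)$ are locally $m$-convex, that $\delta : E_1(\V) \to E_0(\V)$ is a continuous algebra morphism, that the bimodule actions $\gtd$ and $\ltd$ are continuous, and that the two Peiffer identities hold. The statement for $E_0(\V)$ is exactly the result of Chevyrev cited above. The remaining work is essentially bookkeeping, provided one has the right estimates for each defining seminorm $P_\lambda$ and then observes that every algebraic identity from $\bT(\V)$ extends by continuity.

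First I would establish continuity of the relevant structure maps on $T_1(\V) \hookrightarrow E_1(\V)$ by transporting the level-wise bounds from~\Cref{sssec:norms}. The key observations are threefold: (i) the level-wise boundary $\delta_{\gr{n}} : T_1^{\gr{n}} \to \V^{\otimes n}$ has operator norm at most $\sqrt{2}$ (inherited from $\oT_1^{\gr{n}}$ via the quotient), which gives $p_\lambda(\delta E) \leq \sqrt{2}\, P_\lambda(E)$; (ii) by~\Cref{eq:levelwise_short_action} the level-wise left and right actions of $\V^{\otimes n}$ on $T_1^{\gr{m}}$ are short, yielding $P_\lambda(a \gtd E) \leq p_\lambda(a)\, P_\lambda(E)$ and similarly for $\ltd$; and (iii) combining (i) with the defining identity $E * F = \delta(E) \cdot F = E \cdot \delta(F)$ from~\Cref{eq:bT_product} gives $P_\lambda(E * F) \leq \sqrt{2}\, P_\lambda(E)\, P_\lambda(F)$. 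The $\sqrt{2}$ factor can be absorbed by rescaling the defining family of seminorms, so $(E_1(\V), *)$ is indeed locally $m$-convex after completion. These bounds ensure that $*$, $\delta$, and the two actions all extend continuously from $T_1(\V)$ (resp.~$T_0(\V)$) to the completions.

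For the algebraic axioms, the two Peiffer identities, the bimodule compatibility relations, and the identity $\delta(E * F) = \delta(E) \cdot \delta(F)$ are all polynomial identities in the structure maps that hold on $\bT(\V)$ by~\Cref{prop:fxa_construction}. Since $T_0(\V)$ and $T_1(\V)$ are dense in $E_0(\V)$ and $E_1(\V)$ respectively, and each identity defines a closed subset of an appropriate product of completions (because all structure maps are continuous), each identity extends automatically to $\bE(\V)$.

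I expect the main subtlety to lie not in any individual step but in ensuring that the quotient norms on $T_1^{\gr{n}}$ behave well enough for the level-wise estimates to transport cleanly through the completion. In particular, one needs that the Peiffer subspace $\Pf \subset \oT_1(\V)$ has closure compatible with the topology generated by the $P_\lambda$, so that the quotient is Hausdorff and the induced $P_\lambda$ on $T_1(\V)$ are genuine seminorms with the required submultiplicative behaviour. Both points follow from the construction of $E_1(\V)$ as the completion of $\oT_1(\V)/\Pf$ with respect to the quotient seminorms, together with the explicit characterization of $E_1(\V)$ inside $T_1\ps{\V}$ recorded immediately before the lemma.
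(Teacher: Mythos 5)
Your proof is correct, and it takes a genuinely different route from the paper's. The paper's proof (in \Cref{apxsec:universal_locally_convex}) is very terse: it obtains continuity of $\delta$ from level-wise continuity, but for the actions it invokes the abstract universal property of the projective bimodule $\oE_1(\V) = E_0(\V) \hotimes \Lambda^2\V \hotimes E_0(\V)$ via \Cref{lem:projective_bimodule}, which shows the left and right actions factor through continuous maps built from the projective tensor product. Your approach instead derives everything directly from the level-wise estimates in \Cref{sssec:norms}: $\|\delta_{\gr{n}}\|_{\op}\leq\sqrt{2}$, the shortness bound \Cref{eq:levelwise_short_action}, and the formula $E*F=\delta(E)\gtd F$. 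This is more self-contained and yields explicit submultiplicativity constants for the $P_\lambda$ family, at the cost of being slightly more computational; the paper's route has the advantage that the projective-bimodule lemma is needed elsewhere in the appendix anyway (for the universal property of $E_1(\V)$), so citing it comes for free.

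Two small remarks. First, you can avoid the $\sqrt{2}$ and the rescaling step entirely: the paper's direct computation in \Cref{eq:levelwise_ba} already shows the level-wise product in $T_1$ is submultiplicative with constant $1$, by optimizing over the freedom to add elements of $\Pf_n$ before taking the norm. Deriving the bound through $\delta$ as you do is valid but strictly coarser. Second, your identified ``main subtlety'' — whether the quotient by $\Pf$ is Hausdorff and the quotient seminorms are genuine — is in fact handled automatically by the graded structure: $\Pf = \bigoplus_n \Pf_n$ is a direct sum of finite-dimensional (hence closed) subspaces of the finite-dimensional Hilbert spaces $\oT_1^{\gr{n}}(\V)$, and the paper takes the quotient level-by-level via orthogonal complements. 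The appendix also notes that the quotient topology of a locally convex space is generated by quotient seminorms (Tr\`eves, Prop.~7.9). So your instinct was right that this needs to be checked, but the grading makes it essentially automatic rather than a genuine subtlety.
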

\begin{proof}
    First, $\delta$ is continuous because it is continuous levelwise (with respect to the usual norms on $T_1^{\gr{n}}$ and $T_0^{\gr{n}}$). Furthermore, the actions are continuous since the actions at the level of the projective bimodule (before taking quotients) is continuous (\Cref{lem:projective_bimodule}).
\end{proof}

Recall the universal 2-connection $\zeta \in L(\V, T_0(\V))$ and $Z \in L(\Lambda^2 \V, T_1(\V))$ from~\Cref{prop:univ_2con_alg}, which can be embedded into $E_0(\V)$ and $E_1(\V)$ as continuous linear maps $\zeta \in L(\V, E_0(\V))$ and $L(\Lambda^2 \V, E_1(\V))$. Then, this space has the desired universal property.

\begin{proposition}
    Let $\cmA = (\delta^A: A_1 \to A_0, \gtd, \ltd)$ be a crossed module of Banach algebras. Let $\cona \in L(\V, A_0)$ and $\conc \in L(\Lambda^2 \V, A_1)$ be continuous linear maps which define a 2-connection. Then, there exist unique continuous morphisms of algebras $\tcona: E_0(\V) \to A_0$ and $\tconc: E_1(\V) \to A_1$ such that
    \begin{align}\nonumber
        \cona = \tcona \circ \zeta \andd \conc = \tconc \circ Z.
    \end{align}
\end{proposition}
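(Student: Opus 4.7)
The strategy is to adapt the algebraic argument of \Cref{prop:univ_2con_alg} to the topological setting by invoking the universal properties of $E_0(\V)$ (\Cref{cor:E0_universal}) and of projective bimodules (\Cref{lem:projective_bimodule}) at each step. The map $\tcona$ is immediate: since $\cona$ is a continuous linear map into the Banach (hence locally $m$-convex) algebra $A_0$, \Cref{cor:E0_universal} yields a unique continuous algebra morphism $\tcona \colon E_0(\V) \to A_0$ with $\cona = \tcona \circ \zeta$.

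To build $\tconc$, first equip $A_1$ with a continuous $E_0(\V)$-bimodule structure via $a \gtd E \coloneqq \tcona(a) \gtd_A E$ and $E \ltd a \coloneqq E \ltd_A \tcona(a)$; continuity holds because $\tcona$ is continuous and the actions of $A_0$ on $A_1$ are short. Applying \Cref{lem:projective_bimodule} to the continuous map $\conc$ produces a unique continuous morphism of $E_0(\V)$-bimodules $\overline{\conc} \colon \oE_1(\V) \to A_1$ extending $\conc$. I would then verify that $\overline{\conc}$ vanishes on the algebraic Peiffer ideal $\Pf \subset \oT_1(\V)$ by the same calculation as in \Cref{prop:univ_2con_alg}: for $E = a_1 \otimes w \otimes a_2$ and $F = b_1 \otimes v \otimes b_2$, the fake-flatness identity $\delta^A \circ \conc = \tcona \circ \kappa$ combined with the second Peiffer identity of $\cmA$ and the interchangeability of the left and right $A_0$-actions on $A_1$ give $\overline{\conc}(\delta(E) \gtd F - E \ltd \delta(F)) = 0$. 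Since each $\Pf_n$ is finite-dimensional (hence closed) and the topology used to define $E_1(\V)$ is exactly the completion of the quotient topology on $T_1(\V) = \oT_1(\V)/\Pf$, the restriction $\overline{\conc}|_{\oT_1(\V)}$ descends to a continuous map on $T_1(\V)$ and extends uniquely, by completeness of $A_1$ and the universal property of the completion, to a continuous linear map $\tconc \colon E_1(\V) \to A_1$.

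It then remains to check that $(\tcona, \tconc)$ forms a continuous morphism of crossed modules and to derive uniqueness. The compatibilities $\delta^A \circ \tconc = \tcona \circ \delta$ and $\tconc(a \gtd E) = \tcona(a) \gtd_A \tconc(E)$ (and likewise for the right action) hold on the generators $Z(\Lambda^2 \V)$ by construction of $\overline{\conc}$, extend to $T_1(\V)$ by the $E_0(\V)$-bimodule property, and then to all of $E_1(\V)$ by continuity and density; the algebra morphism property $\tconc(E * F) = \tconc(E) * \tconc(F)$ follows from the definition $E * F = \delta(E) \gtd F$ in $T_1(\V)$ together with the second Peiffer identity in $\cmA$. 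For uniqueness, any other continuous algebra morphism compatible with the bimodule actions and extending $\conc$ must coincide with $\tconc$ on $Z(\Lambda^2 \V)$, hence on the $E_0(\V)$-sub-bimodule it generates, namely $T_1(\V)$, which is dense in $E_1(\V)$. The principal technical point I expect to navigate is the interplay between the two natural topologies at the bimodule level: the projective tensor topology defining $\oE_1(\V)$ and the quotient topology on $T_1(\V)$; confirming that $\overline{\conc}$ passes through the quotient as a \emph{continuous} (not merely linear) map should follow from \Cref{lem:Plambda_topology} together with the levelwise boundedness of the orthogonal projection onto $\Pf_n^\perp$.
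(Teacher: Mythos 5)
Your proposal is correct and follows essentially the same route as the paper's proof. Both constructions factor $\tconc$ through the bimodule level $T_0(\V) \otimes \Lambda^2\V \otimes T_0(\V) \to A_0 \otimes \Lambda^2\V \otimes A_0 \to A_1$ using the continuity of $\tcona$ and the shortness of the $A_0$-actions, invoke projective tensor continuity (\Cref{lem:projective_bimodule} / \Cref{lem:Plambda_topology}), and descend to the Peiffer quotient; the paper obtains the underlying algebra morphisms first from \Cref{prop:univ_2con_alg} and then verifies continuity, whereas you build the continuous map directly, but the two orderings are equivalent. One small remark on your final technical worry: once $\overline{\conc} \colon \oT_1(\V) \to A_1$ is continuous and vanishes on $\Pf$, continuity of the induced map on $T_1(\V) = \oT_1(\V)/\Pf$ is automatic from the definition of the quotient topology, so no extra appeal to levelwise projections onto $\Pf_n^\perp$ is needed beyond what \Cref{lem:Plambda_topology} already supplies to identify the topology.
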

\begin{proof}
    By~\Cref{prop:univ_2con_alg}, there exist algebra morphisms $\tcona : T_0(\V) \to A_0$ and $\tconc: T_1(\V) \to A_1$ such that the desired properties hold. Furthermore, by~\Cref{cor:E0_universal}, we can extend $\tcona: E_0(\V) \to A_0$ to a continuous map. It remains to show that $\tconc$ is continuous when $T_1(\V)$ is equipped with the topology generated by $\{P_\lambda\}_{\lambda > 0}$.

    Recall the construction of $\tconc$ from~\Cref{prop:univ_2con_alg}, where it is defined to be the composition
    \begin{align} \label{eq:apx_tconc_decomposition}
        \tconc: T_1(\V) \xrightarrow{\FXA_1(\tcona)} \FXA_1(\delta_0) \xrightarrow{\overline{\conc}} A_1,
    \end{align}
    where $\delta_0 = \delta_A \circ \conc : \Lambda^2 \V \to A_0$, $\overline{\conc}$ comes from the universal property of the free crossed module of $\FXA_1(\delta_0)$ (\Cref{prop:fxa_construction}) and $\FXA_1(\tcona)$ comes from the functoriality of the free crossed module construction (\Cref{prop:fxa_functorial}). Both of these maps are first constructed at the level of bimodules,
    \begin{align}\nonumber
        T_0(\V) \otimes \Lambda^2 \V \otimes T_0(\V) \xrightarrow{\tcona \otimes \id \otimes \tcona} A_0 \otimes \Lambda^2 \V \otimes A_0 \xrightarrow{\gtd \gamma \ltd} A_1.
    \end{align}
    Then, when we consider the topology on $T_0(\V)$ generated by the $p_\lambda$, and all tensor products to be projective tensor products, this composition is continuous. In particular, this implies the quotient in~\Cref{eq:apx_tconc_decomposition} is continuous when $T_1(\V)$ is equipped with the topology generated by the $P_\lambda$ (\Cref{lem:Plambda_topology}). 
\end{proof}

\section{Surface Extension with Rectangular Increments} \label{apxsec:surface_extension_rectangular}

In~\Cref{ssec:holder_dgf}, the definition of a $\rho$-H\"older double group functional in~\Cref{def:holder_dgf} was motivated by the surface signature of $\rho$-H\"older surfaces (\Cref{eq:intro_holder_surfaces}). These surfaces are not equipped with additional regularity conditions on the rectangular increments $\square[X]$, but we use the natural $\rho/2$-H\"older bounds on the rectangular increments from~\Cref{lem:holder_2d_increment}.
However, we can also consider 2D rectangular $\rho$-H\"older surfaces $C^{\square, \rho}([0,1]^2, \V)$ from \Cref{eq:2d_rectangular_holder_space}, which additionally satisfy the rectangular regularity condition in~\Cref{eq:2d_rectangular_holder_bound}. In particular, the classes of $\rho$-H\"older and rectangular $\rho$-H\"older surfaces are distinct, and have distinct generalizations of Young integration, as discussed in~\Cref{sssec:intro_rough_surfaces}.

In this appendix, we show that we can make slight modifications to the notion of a $\rho$-H\"older double group functional such that the surface extension theorem still holds, and show that we can compute the surface signature of rectangular $\rho$-H\"older surfaces in the Young regime, with $\rho \in (\frac{1}{2},1]$. In particular, the only part we must change is the path continuity condition in the path functionals (\Cref{def:holder_path_functionals}).\medskip

\begin{definition}
    Let $\rho \in (0,1]$ and $n \in \N$. We define \emph{rectangular $\rho$-H\"older horizontal and vertical path functionals} to be path functionals $\rx^h \in \HPF(G_0^{\gr{\leq n}})$ and $\rx^v \in \VPF(G_0^{\gr{\leq n}})$ which satisfy for all $k \in [n]$ the path regularity conditions in~\Cref{eq:path_regularity} and the \emph{rectanglar path continuity conditions},
    \begin{align} \label{eq:rectangular_path_continuity_condition}
        \left\|\rx^{h, \gr{k}}_{s_1, s_2; t_1} - \rx^{h, \gr{k}}_{s_1, s_2; t_2}\right\| \leq \frac{\ctr{k\rho}{s_1, s_2}\ctr{\rho}{t_1, t_2}}{\beta \ffact{k}{\rho} \ffact{1}{\rho}} \andd \left\|\rx^{v, \gr{k}}_{s_1; t_1, t_2} - \rx^{v, \gr{k}}_{s_2; t_1, t_2}\right\| \leq \frac{\ctr{\rho}{s_1, s_2}\ctr{k\rho}{t_1, t_2}}{\beta  \ffact{1}{\orho} \ffact{k}{\orho}}.
    \end{align}
    The sets of rectangular $\rho$-H\"older horizontal and vertical path functionals are denoted $\HPF^{\square, \rho}(G_0^{\gr{\leq n}})$ and $\VPF^{\square, \rho}(G_0^{\gr{\leq n}})$. Furthermore, a double group functional $\rrX \in \DGF(\cmG^{\gr{\leq n}})$ is \emph{rectangular $\rho$-H\"older} if it satisfies the path regularity conditions in~\Cref{eq:path_regularity}, the rectangular path continuity conditions here, and the surface regularity conditions~\Cref{eq:surface_regularity}, and the space of such functionals is denoted $\DGF^{\square, \rho}(\cmG^{\gr{\leq n}})$. 
\end{definition}

The primary way the path continuity condition is used in the surface extension theorem is in showing that the surface component $\orX$ of the initial pathwise extension $\orrX$ satisfies the surface regularity condition by~\Cref{lem:boundary_signature_bound}. In fact, we prove that this bound still holds for rectangular $\rho$-H\"older path functionals.

\begin{lemma} \label{lem:boundary_signature_bound_rectangular}
    Suppose $\rx^h \in \HPF^{\square, \rho}(G_0^{\gr{\leq n}})$ and $\rx^v \in \VPF^{\square, \rho}(G_0^{\gr{\leq n}})$. Then, for any rectangle $[s_1, s_2]\times[t_1, t_2] \subset [0,1]^2$ and any $k \in [n]$, we have
    \begin{align} \label{eq:boundary_signature_bound_rectangular}
        \left\|(\rx^h_{s_1, s_2; t_1} \cdot \rx^v_{s_2; t_1, t_2} \cdot \rx^{-h}_{s_1, s_2; t_2} \cdot \rx^{-v}_{s_1; t_1, t_2})^{\gr{k}}\right\|  \leq \frac{2^{(2k-1)\orho}}{\beta} \sum_{q=1}^{2k-1} \frac{\ctr{q\orho}{s_1, s_2}\, \ctr{(2k-q)\orho}{t_1, t_2}}{\ffact{q}{\orho} \ffact{2k-q}{\orho}} = \frac{1}{2\beta} \CTR{k}{\orho}{s_1, s_2; t_1, t_2}.
    \end{align}
\end{lemma}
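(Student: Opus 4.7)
The plan is to follow the decomposition of the original Lemma \ref{lem:boundary_signature_bound} essentially verbatim, substituting the rectangular path continuity condition \eqref{eq:rectangular_path_continuity_condition} for the standard one \eqref{eq:path_continuity}, and then verifying that the resulting expressions still fit inside $\CTR{k}{\orho}{s_1,s_2;t_1,t_2}/(2\beta)$ after a trivial trade of H\"older regularity against a bounded constant.

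First, I expand the product exactly as in \eqref{eq:boundary_reg_main_decomp} into three pieces: the pure horizontal loop $(\rx^h_{s_1,s_2;t_1}\cdot \rx^{-h}_{s_1,s_2;t_2})^{\gr{k}}$, the pure vertical loop, and the mixed sum. The mixed sum uses only the path regularity conditions, which are identical in both definitions, so the bound \eqref{eq:boundary_reg_bound3} carries over unchanged (up to the same choice of $\beta$).

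For the horizontal loop, apply the same decomposition \eqref{eq:boundary_reg_horizontal_decomp} from~\cite[Lemma 7.48]{friz_multidimensional_2010}, and now estimate each summand by combining the path regularity \eqref{eq:path_regularity} with the \emph{rectangular} continuity \eqref{eq:rectangular_path_continuity_condition}. This produces the bound
\begin{align*}
    \left\| \left(\rx^h_{s_1, s_2; t_1} \cdot \rx^{-h}_{s_1, s_2; t_2}\right)^{\gr{k}}\right\| &\leq \frac{1}{\beta^2 \ffact{1}{\rho}}\ctr{\rho}{t_1,t_2}\sum_{q=1}^k \frac{\ctr{q\rho}{s_1,s_2}\,\ctr{(k-q)\rho}{s_1,s_2}}{\ffact{q}{\rho}\,\ffact{k-q}{\rho}} \leq \frac{2^{k\rho}}{\rho\beta^2}\cdot\frac{\ctr{k\rho}{s_1,s_2}\,\ctr{\rho}{t_1,t_2}}{\ffact{k}{\rho}\,\ffact{1}{\rho}},
\end{align*}
using \Cref{eq:binomial_sum_bound} for the last step. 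An entirely symmetric computation handles the pure vertical loop, with the roles of $s$ and $t$ swapped.

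The remaining step is to reconcile this bound with the target polynomial $\CTR{k}{\orho}$. Since $k\rho = 2k\orho$ and $\rho = 2\orho$, write
\begin{align*}
    \ctr{k\rho}{s_1,s_2}\,\ctr{\rho}{t_1,t_2} = \ctr{\orho}{s_1,s_2}\,\ctr{\orho}{t_1,t_2}\cdot \ctr{(2k-1)\orho}{s_1,s_2}\,\ctr{\orho}{t_1,t_2} \leq C_\omega^{2\orho}\,\ctr{(2k-1)\orho}{s_1,s_2}\,\ctr{\orho}{t_1,t_2},
\end{align*}
which identifies the right-hand side with a constant multiple of the $q=2k-1$ summand in $\CTR{k}{\orho}{s_1,s_2;t_1,t_2}$. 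The analogous trade on the vertical side produces the $q=1$ summand. Absorbing the constants $C_\omega^{2\orho}$, $2^{k\rho}/\rho$, and the factorial mismatch between $\ffact{k}{\rho}\ffact{1}{\rho}$ and $\ffact{2k-1}{\orho}\ffact{1}{\orho}$ into a single enlargement of $\beta$ (consistent with \eqref{eq:beta}) gives the claimed bound $\frac{1}{2\beta}\CTR{k}{\orho}{s_1,s_2;t_1,t_2}$.

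The only mild obstacle is bookkeeping: verifying that the factorial ratios arising from the two different conventions ($\rho$ versus $\orho$) stay uniformly bounded in $k$, so that the constant absorbed into $\beta$ does not blow up with the truncation level. This follows because $\Gamma(2x+1)/\Gamma(x+1)$ is controlled by a factor of the form $4^x\Gamma(x+\tfrac12)/\sqrt{\pi}$ via the duplication formula, which is dominated by the neoclassical factor $2^{k\rho}$ already present in the definition of $\CTR{k}{\orho}$; thus $\beta_{\RS}$ in \eqref{eq:rs_beta} already accommodates this trade.
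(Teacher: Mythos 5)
Your proof is correct and follows essentially the same route as the paper's: expand the boundary product as in \Cref{eq:boundary_reg_main_decomp}, leave the mixed sum alone (it only uses the path regularity conditions, which are unchanged), and swap the rectangular continuity condition \eqref{eq:rectangular_path_continuity_condition} into the pure horizontal and vertical loop terms, absorbing the extra factor of $\ctr{\rho}{0,1} = C_\omega^\rho = C_\omega^{2\orho}$ coming from the higher $s$-H\"older exponent — exactly the same absorption the paper performs implicitly. Your closing paragraph about the Gamma duplication formula is unnecessary and somewhat muddled: the relevant factorial ratio $\ffact{2k-1}{\orho}\ffact{1}{\orho} / (\ffact{2k}{\orho}\ffact{2}{\orho})$ is uniformly bounded in $k$ (it is eventually decreasing, tending to zero), and the $2^{k\rho}$ factor cancels against the $2^{(2k-1)\orho}$ already present in $\CTR{k}{\orho}$ up to a bounded constant $2^{\orho}$, so no duplication-formula estimate is needed — but this does not affect the correctness of the argument.
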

\begin{proof}
    This proof follows the same structure as~\Cref{lem:boundary_signature_bound}, except we adjust some bounds. 
    We begin by decomposing the boundary in the same way as in~\Cref{eq:boundary_reg_main_decomp}. However, instead of using the usual path continuity condition in~\Cref{eq:boundary_reg_horizontal_decomp}, we use the rectangular path continuity condition to get
    \begin{align}\nonumber
        \left\| \left(\rx^h_{s_1, s_2; t_1} \cdot \rx^{-h}_{s_1, s_2; t_1}\right)^{\gr{k}}\right\| &\leq \frac{\ctr{\rho}{s_1,s_2}}{\beta^2} \left(\frac{\ffact{k-1}{\rho}}{\ffact{k}{\rho}}\right) \sum_{q=1}^{k} \frac{\ctr{(q-1)\rho}{s_1, s_2} \ctr{\rho}{t_1, t_2}}{\ffact{k-1}{\rho} \ffact{1}{\rho}} \frac{\ctr{(k-q)\rho}{s_1, s_2}}{\ffact{k-q}{\rho}} \\
        & \leq \frac{2^{k\rho}\ctr{\rho}{0,1}}{\orho\beta^2}\left(\frac{\ffact{k-1}{\rho}}{\ffact{k}{\rho}}\right) \frac{\ctr{(k-1)\rho}{s_1, s_2} \ctr{\rho}{t_1, t_2}}{\ffact{k-1}{\rho} \ffact{1}{\rho}}.\nonumber
    \end{align}
    Then, the remainder of the proof remains the same, and we obtain our desired result.
\end{proof}

\begin{remark}
    In fact, this proof provides a slightly stronger surface regularity condition,
    \begin{align}\nonumber
        \left\|(\rx^h_{s_1, s_2; t_1} \cdot \rx^v_{s_2; t_1, t_2} \cdot \rx^{-h}_{s_1, s_2; t_2} \cdot \rx^{-v}_{s_1; t_1, t_2})^{\gr{k}}\right\|  \lesssim \sum_{q=1}^{k-1} \frac{\ctr{q\rho}{s_1, s_2}\, \ctr{(k-q)\rho}{t_1, t_2}}{\ffact{q}{\rho} \ffact{2k-q}{\rho}},
    \end{align}
    which eliminates the need to consider the $\sigma = \rho/2$ exponents.
\end{remark}

This allows us to show the surface extension theorem for rectangular $\rho$-H\"older double group functionals.

\begin{theorem} \label{thm:surface_extension_rectangular}
    Let $\rho \in (0,1]$ and $n \geq \left\lfloor \frac{2}{\rho} \right\rfloor$. Suppose $\rrX \in \DGF^{\square,\rho}(\cmG^{\gr{\leq n}})$ is a multiplicative double group functional. Then, there exists a multiplicative double group functional $\trrX \in \DGF^{\square, \rho}(\cmG^{\gr{\leq n+1}})$ such that $\trrX^{\gr{k}} = \rrX^{\gr{k}}$ for all $k \in [n]$. Furthermore, $\trrX$ is unique in the sense that if $\rrY = (\ry^h, \ry^v, \rY) \in \DGF(\cmG^{\gr{\leq n+1}})$ is another double group functional such that $\rrY^{\gr{k}} = \rrX^{\gr{k}}$ for all $k \in [n]$, and 
    \begin{align}\nonumber
        \|\rY^{\gr{n+1}}_{s_1, s_2; t_1, t_2}\| \leq C (\inc{s_1}{s_2} + \inc{t_1}{t_2})^{\theta}
    \end{align}
    for a constant $C>0$ and any $\theta > 1$, then $\trrX = \rrY$. 
\end{theorem}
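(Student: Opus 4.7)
The plan is to mirror the five-step argument used for Theorem~\ref{thm:surface_extension} (pathwise extension, almost multiplicativity, dyadic limit, multiplicativity/continuous extension, uniqueness), and to verify that each step is compatible with the stronger path-continuity condition~\eqref{eq:rectangular_path_continuity_condition}. The surface regularity condition on $\rrX$ is \emph{unchanged} from Definition~\ref{def:holder_dgf}, so the three ingredients in the proof that only depend on surface regularity (almost multiplicativity, maximal inequality for the dyadic limit, uniqueness via intermediate subdivisions) transfer verbatim; the real work is in Step 1.

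First, I would define the pathwise extension $\orrX = (\orx^h, \orx^v, \orX)$ exactly as in Section~\ref{ssec:surface_extension_proof}: extend each slice $\rx^h_{\cdot,\cdot;t}$ and $\rx^v_{s;\cdot,\cdot}$ one level via Lyons' extension theorem (Theorem~\ref{thm:path_extension}), and define $\orX^{\gr{n+1}}$ by applying the algebra section $\ts$ to the level-$(n+1)$ component of the boundary loop, as in~\eqref{eq:rX_pathwise_extension}. The path regularity at level $n+1$ is automatic, and the surface regularity bound~\eqref{eq:pathwise_ext_interior_bound} follows immediately from Lemma~\ref{lem:boundary_signature_bound_rectangular} together with $\|\ts\|_{\mathrm{op}}\le 1$, so $\orrX$ satisfies the $\rho$-H\"older surface regularity. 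The group-like property of $\orX^{\gr{n+1}}$ is proved exactly as in Lemma~\ref{lem:pathwise_ext_dg_regularity}.

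The main obstacle is establishing the rectangular path continuity~\eqref{eq:rectangular_path_continuity_condition} at the new level $n+1$. This requires an analogue of the continuity-of-extension theorem (Theorem~\ref{thm:path_extension_cont}) adapted to rectangular regularity: if $\bx = \rx^h_{\cdot,\cdot;t_1}$ and $\by = \rx^h_{\cdot,\cdot;t_2}$ satisfy $\|\bx^{\gr{k}}_{\smp}-\by^{\gr{k}}_{\smp}\| \le \epsilon\, \ctr{k\rho}{\smp}/(\beta\ffact{k}{\rho})$ for each $k\in[n]$ with $\epsilon = \ctr{\rho}{t_1,t_2}/\ffact{1}{\rho}$, then the multiplicative extensions $\tbx,\tby$ satisfy the same inequality at level $n+1$. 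I would prove this by tracking the telescoping decomposition of $\bx_{s_1,s_2}^{\gr{n+1}} - \by_{s_1,s_2}^{\gr{n+1}}$ through the Young/Lyons construction: a term-by-term bound on $\|\bx^{\gr{q}}_{s_1,s_2}\cdot\bx^{\gr{n+1-q}}_{s_2,s_3} - \by^{\gr{q}}_{s_1,s_2}\cdot\by^{\gr{n+1-q}}_{s_2,s_3}\|$ together with the neoclassical inequality~\eqref{eq:neoclassical} applied with exponent $\rho$ (rather than $\orho$) yields control by $\ctr{(n+1)\rho}{s_1,s_3}/\ffact{n+1}{\rho}$, scaled by $\epsilon$. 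This is strictly simpler than Theorem~\ref{thm:path_extension_cont} because all exponents are $\rho$ (not a mixture of $\rho$ and $\orho$), so the standard Young trick from~\cite[Theorem 3.7]{lyons_differential_2007} applies without modification, subject to $n \ge \lfloor 1/\rho\rfloor$, which holds since $n\ge \lfloor 2/\rho\rfloor$.

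With this modified path-continuity propagation in hand, Steps 2--5 carry over essentially unchanged. Almost multiplicativity (Proposition~\ref{prop:almost_multiplicative}) depends only on the path regularity bounds~\eqref{eq:path_regularity} and the surface regularity bound, and its proof produces the same $\tfrac{1}{2\beta}\CTR{n+1}{\orho}{\cdot}$ bound. The dyadic limit argument (Proposition~\ref{prop:first_sewing}), the arbitrary-partition bound (Lemma~\ref{lem:arbitrary_partition_bound}), and the multiplicativity argument (Proposition~\ref{prop:dya_multiplicative}) then go through verbatim, producing $\trrX \in \DGF^{\square,\rho}_\dya(\cmG^{\gr{\leq n+1}})$. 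Uniform continuity and continuous extension to all rectangles follow exactly as in Proposition~\ref{prop:surface_extension_unif_cont}; the only item requiring a one-line check is that the extension's horizontal/vertical path components still satisfy~\eqref{eq:rectangular_path_continuity_condition}, which is preserved under uniform limits since the inequality is closed. Finally, uniqueness is proved word-for-word as in Lemma~\ref{lem:uniqueness}, since that argument depends only on the $\theta > 2$ polynomial growth hypothesis and the multiplicativity of both extensions, not on the specific form of the path-continuity condition.
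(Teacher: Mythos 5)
Your proposal is correct and follows the same route as the paper's proof. There is one place where you overcomplicate: the ``adapted continuity-of-extension theorem'' you propose to prove in Step~1 is literally Theorem~\ref{thm:path_ext_orig_cont} (the \emph{original} Lyons continuity theorem, already stated in the paper), applied with $\epsilon = \ctr{\rho}{t_1,t_2}/\ffact{1}{\rho}$. The rectangular path continuity condition~\eqref{eq:rectangular_path_continuity_condition} has exactly the scaling $\ctr{k\rho}{s_1,s_2}/\ffact{k}{\rho}$ that Theorem~\ref{thm:path_ext_orig_cont} propagates to level $n+1$, so there is nothing new to prove: simply cite it. (The modified Theorem~\ref{thm:path_extension_cont} with mixed $\rho/\orho$ exponents was only needed for the non-rectangular case of Definition~\ref{def:holder_path_functionals}; the rectangular setting is the one the classical theorem already handles.) The remainder of your plan --- surface regularity of the pathwise extension via Lemma~\ref{lem:boundary_signature_bound_rectangular}, transfer of Steps~2--5 verbatim since they depend only on the path regularity~\eqref{eq:path_regularity} and surface regularity~\eqref{eq:surface_regularity} bounds, the closure of the rectangular continuity condition under uniform limits, and the uniqueness argument --- matches the paper exactly.
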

\begin{proof}
    The entire proof of~\Cref{thm:cont_extension} as given in~\Cref{ssec:surface_extension_proof} holds in this setting. In particular, the original continuity theorem for path extensions in~\Cref{thm:path_ext_orig_cont} allows us to lift the path functionals with the desired rectangular path continuity conditions in~\Cref{eq:rectangular_path_continuity_condition}. Then,~\Cref{lem:boundary_signature_bound_rectangular} allows us to show that the surface component of the pathwise extension $\orrX$ has the desired surface regularity condition. The only other place where the path continuity condition is used is in proving that the extension $\trrX$ on dyadics is uniformly continuous in~\Cref{prop:surface_extension_unif_cont}, and this still holds with the rectangular continuity conditions. The remainder of the proof only relies on other properties, and the desired result follows.
\end{proof}

We can now prove~\Cref{prop:young_lifting}, which shows that for a surface $X \in C^{\square, \rho}([0,1]^2, \V)$ in the Young regime (with $\rho \in (\frac{1}{2},1]$), we can define a level $3$ rectangular $\rho$-H\"older multiplicative double group functional, which we can extend a multiplicative functional in $\DGF^{\square, \rho}(\cmG)$. 

\begin{proof}[Proof of~\Cref{prop:young_lifting}]
    The fact that the path components satisfy the path regularity and continuity conditions is given by the path extension and the original continuity theorems (\Cref{thm:path_extension} and~\Cref{thm:path_extension_cont} respectively). Now, we will show that the surface component is $\rho$-H\"older. \medskip

    \textbf{Step 1: The Area Process is 2D Rectangular $\rho$-H\"older.} This is proved in the 2D $p$-variation setting in~\cite[Proposition E.16]{lee_random_2023} and we use the same method here. We note that the 2D increment of the area process is
    \begin{align}\nonumber
        \square_{s_1, s_2; t_1, t_2}[A(X)] \coloneqq S^{\gr{2}}\left(\partial (X|_{[s_1, s_2]\times[t_1, t_2]})\right) = \rX^{\gr{2}}_{s_1, s_2; t_1, t_2}.
    \end{align}
    Then, using~\Cref{lem:boundary_signature_bound_rectangular}, we see that $A(X) \in C^{\square, \rho}([0,1], \Lambda^2 \V)$. \medskip

    \textbf{Step 2: Level 3 Surface Component via 2D Increment Integrals.}
    Next, we define 
    \begin{align}\nonumber
        f: [0,1]^2 \to L(\Lambda^2 \V, T^{\gr{3}}_1(\V)) \quad \text{by} \quad f_{s,t}(v) \coloneqq (X_{s,t} - X_{0,0}) \gt v.
    \end{align}
    By using the regularity of $X$, we get $f \in C^{\square, \rho}\left([0,1]^2, L(\Lambda^2 \V, T^{\gr{3}}_1(\V))\right)$. Then, by using the 2D Young integral from~\Cref{thm:towghi_young}, we obtain
    \begin{align}\nonumber
        \|\rX^{\gr{3}}_{s_1, s_2; t_1, t_2}\| \leq C \left(\inc{\smm}{\spp}^{2\rho} \inc{\tmm}{\tpp}^\rho + \inc{\smm}{\spp}^{\rho} \inc{\tmm}{\tpp}^{2\rho}\right) \leq \CTR{3}{\orho}{s_1, s_2; t_1, t_2}.
    \end{align}
    Thus, $\rX$ satisfies the surface regularity condition. \medskip

    \textbf{Step 3: Multiplicative Double Group Functional.}
    Finally, we show that $\rrX$ is a multiplicative double group functional by considering smooth approximations. Let $\rho' \in (\frac{1}{2}, \rho)$, and let
    \begin{align}\nonumber
        C^{0, \square, \rho'}([0,1]^2, \V) \subset C^{\square, \rho'}([0,1]^2, \V)
    \end{align}
    be the closure of smooth surfaces $C^\infty([0,1]^2, \V)$ (with respect to $\|X\|_\infty + \|X\|_{\rho'} + \|X\|_{\square, \rho'}$) in $C^{\square, \rho'}([0,1]^2, \V)$, and note that
    \begin{align}\nonumber
        C^{\square, \rho}([0,1]^2, \V) \subset C^{0, \square, \rho'}([0,1]^2, \V).
    \end{align}
    The analogous statement for 2D $p$-variation is shown in~\cite[Proposition 7.6]{lee_random_2023}, and the H\"older variant can be shown in the same manner. We note that for a smooth surface $Y \in C^\infty([0,1]^2, \V)$, we have
    \begin{align}\nonumber
        \int_{[0,1]^2} (Y_{s,t} - Y_{0,0}) \gt dA_{s,t}(Y) &= \int_{[0,1]^2}(Y_{s,t} - Y_{0,0}) \gt \left( \frac{\partial^2 A_{s,t}(Y)}{\partial s \partial t}\right) ds dt \\
        &= \int_{[0,1]^2}(Y_{s,t} - Y_{0,0}) \gt J_{s,t}(X) ds dt,\nonumber
    \end{align}
    which is the level $3$ surface signature, which is multiplicative and satisfies the boundary condition in~\Cref{eq:rrX_boundary_condition}. Then, since the area process is continuous~\cite[Proposition E.24]{lee_random_2023} and the Young integral is continuous, this also holds in the limit. Thus, $\rrX \in \DGF^{\square,\rho}(\cmG^{\gr{\leq 3}})$ is a rectangular $\rho$-H\"older double group functional, and~\Cref{thm:surface_extension_rectangular} provides the unique extension. 
\end{proof}

\bibliographystyle{plain}
\bibliography{sewing}

\end{document}